 \DeclareFontFamily{U}{wncy}{}
    \DeclareFontShape{U}{wncy}{m}{n}{<->wncyr10}{}
    \DeclareSymbolFont{mcy}{U}{wncy}{m}{n}      
    \DeclareMathSymbol{\Sh}{\mathord}{mcy}{"58} 
\newcounter{themargin}
\long\def\forget#1{}
\theoremstyle{plain} 
\newtheorem{Thm}{Theorem}[section]
\newtheorem{Prop}[Thm]{Proposition}
\newtheorem{Lem}[Thm]{Lemma}
\newtheorem{Cor}[Thm]{Corollary}
\newtheorem{thmx}{Lemma}
\theoremstyle{definition} 
\newtheorem{Def}[Thm]{Definition}
\newtheorem{Ex}[Thm]{Example}
\theoremstyle{remark} 
\newtheorem{Rem}[Thm]{Remark}
\newtheorem{Ass}[Thm]{Assumption}
\hskip\labelsep {\textsc{Proof} \rm #1}:]}%
\labelsep \textsc{Proof}:]}%
\def\EndOfProof{\hskip .5em \vrule width 1.0ex height 1.0ex depth 0.3ex}
\newbox\dottobox
\wd\dottobox{\hss$
\UseComputerModernTips\xymatrix@C=.5cm{\ar@{.>}[r]&\\}
                                      $\hss}
\newbox\leftdottobox
\wd\leftdottobox{\hss$
\UseComputerModernTips\xymatrix@C=.5cm{\ar@{<.}[r]&\\}
                                      $\hss}
\newbox\dotintobox
\wd\dotintobox{\hss$
\UseComputerModernTips\xymatrix@C=.5cm{\ar@{^{ (}.>}[r]&\\}
                                      $\hss}
\newcommand{\Tempnewpage}
{\newpage}
\title{Unobstructedness of Galois deformation rings associated to RACSDC automorphic representations}
\author{David-A. Guiraud  \\
	IWR Heidelberg  
	}
\date{February 28, 2017}
\begin{document}

\maketitle

\begin{abstract}
Let $F$ be a CM field and let $(\overline{r}_{\pi,\lambda})_{\lambda}$ be the 
compatible system of residual $\mathcal{G}_n$-valued representations of $\operatorname{Gal}_{F}$ attached to a RACSDC automorphic representation $\pi$ of $\operatorname{GL}_n(\mathbb{A})$, as studied by Clozel, Harris and Taylor \cite{CHT} and others. Under mild assumptions, we prove that 
the fixed-determinant universal deformation rings attached to $\overline{r}_{\pi,\lambda}$ are
unobstructed for all places $\lambda$ in a subset of Dirichlet density $1$, continuing the
investigations of Mazur, Weston and Gamzon. During the proof, we develop a general framework for 
proving unobstructedness (which could be useful for other applications in future) and 
an $R=T$-theorem, relating the universal crystalline deformation ring of $\overline{r}_{\pi,\lambda}$ and a certain unitary fixed-type 
Hecke algebra.
\end{abstract}

\section{Introduction}
This article is concerned with unobstructedness of 
certain Galois deformation rings. For the purpose of this 
introduction, let $F$ be a number field, let $k$ be a finite field of characteristic $\ell$ and fix an 
absolutely irreducible representation
\begin{equation*}
\overline{\rho}: \operatorname{Gal}_{F, S} \rightarrow \operatorname{GL}_n(k),
\end{equation*}
where $S\subset \operatorname{Pl}_F$ is a finite set of places.
Then assigning to a complete Noetherian local algebra $A$ over the 
ring $W$ of Witt vectors of $k$ the set of all $\operatorname{GL}_n(A)$-valued deformations of $\overline{\rho}$ 
defines a functor which is representable by a universal deformation 
ring $R_S(\overline{\rho})$, studied first by 
Mazur \cite{Mazur_deforming}. It is easily seen that a vanishing of the cohomology group 
$H^2(\operatorname{Gal}_{F,S}, \operatorname{ad}\overline{\rho})$ implies that 
$R_S(\overline{\rho})$ is formally smooth, i.e. isomorphic to a power
series ring over $W$. 
In this sense, the group $H^2(\operatorname{Gal}_{F,S}, \operatorname{ad}\overline{\rho})$ can
be interpreted as the obstruction to the smoothness of 
$R_S(\overline{\rho})$, and we say that $R_S(\overline{\rho})$ is 
\textit{unobstructed} if 
$H^2(\operatorname{Gal}_{F,S}, \operatorname{ad}\overline{\rho})=0$.
We remark the following connection with a conjecture of Jannsen:
Assume that $\overline{\rho}$ is the reduction of the $\ell$-adic
representation $\rho_{f,\ell}$ attached to a cuspidal modular eigenform 
$f$ (cf. \cite{Deligne,Shimura,Deligne_Serre}).
Then the Frobenius eigenvalues of $\rho_{f,\ell}$ are Weil-numbers of 
some fixed weight $w$, i.e. $\rho_{f,\ell}$ is pure of 
weight $w$. A conjecture of Jannsen \cite[Conjecture 1]{Jannsen}
(see also \cite[Conjecture 5.1]{Bellaiche}) predicts the vanishing 
of $H^2(\operatorname{Gal}_{F,S}, \operatorname{ad}{\rho})$.
This implies that $H^2(\operatorname{Gal}_{F,S}, \Lambda)$ is 
finite and torsion, where $\Lambda\subset \operatorname{ad}\rho$
denotes an integral $\operatorname{Gal}_{F,S}$-stable
lattice. On the other hand, our residual $H^2$-vanishing 
implies the vanishing of $H^2(\operatorname{Gal}_{F,S}, \Lambda)$ 
by Nakayama's Lemma. This, in turn, implies the vanishing 
of $H^2(\operatorname{Gal}_{F,S}, \operatorname{ad}{\rho})$, as 
predicted by Jannsen. Besides this application, the numerous usages of Galois deformation
theoretic methods in number theory indicate that the structure of  
universal deformation rings is of independent interest.

Unobstructedness for Galois representations attached to automorphic
objects can usually not be expected for all choices of 
$\ell$. The best we can hope for is that unobstructedness holds for
almost all primes (or: for all primes in a subset of Dirichlet density
$1$), and this question has been studied (under different technical 
assumptions) in the following cases:
\begin{itemize}
\item For $\overline{\rho}$ the reduction of 
the representation $\rho_{E, \ell}$ attached to an elliptic 
curve $E$ over $F= \mathbb{Q}$, cf. \cite{Mazur1};
\item for $\overline{\rho}$ the reduction of 
the representation $\rho_{f, \ell}$ attached to
a newform $f$ of weight $k\geq 3$ over $F= \mathbb{Q}$, cf. \cite{Weston}
(but see also \cite{Yamagami,Hatley});
\item for $\overline{\rho}$ the reduction of 
the representation $\rho_{f, \ell}$ attached to  
a Hilbert eigenform $f$ over a totally real field $F$, cf. \cite{Gamzon}.
\end{itemize}
Note that $n=2$ in all these cases. 

In this article, we develop a general framework for proving 
unobstructedness, which differs significantly from the previous 
approaches and which uses an $R=T$-theorem as the main ingredient.
We apply this framework to the reduction of the Galois representation
attached to a regular algebraic conjugate self-dual\footnote{We remark that, in light of 
the results of \cite{BLGGT}, it should be possible to weaken the conjugate self-duality assumption to an essentially self-duality assumption, thus treating RACESDC automorhphic representations.}
cuspidal (RACSDC) automorphic representation $\pi$ of $\operatorname{GL}_n(\mathbb{A}_F)$ with
ramification set $S$, where $F$ is a CM field. 
In order to give a more precise statement, we have to recall 
that $\pi$ gives in the first instance rise not to 
$\operatorname{GL}_n$-valued representations, but to morphisms 
${r}_{\pi, \lambda}: \operatorname{Gal}_{F^+}
\rightarrow \mathcal{G}_n(\overline{\mathbb{Q}}_{\ell(\lambda)})$, where
$\lambda$ runs through the places of the coefficient field of $\pi$, where $\mathcal{G}_n$ denotes the group scheme from 
\cite[Section 2.1]{CHT} and where $\ell(\lambda)$ denotes the rational prime below $\lambda$.
We make the following assumption:
\begin{Ass}
The set of the $\lambda$ for which the $\operatorname{GL}_n$-valued representation $\overline{r}_{\pi, \lambda}|\operatorname{Gal}_F$ 
is absolutely irreducible has Dirichlet density $1$.
\end{Ass}
We remark that this assumption is fulfilled e.g. if $n\leq 5$ or if $\pi$ is extremely 
regular, or would follow from absolute irreducibility of the $\ell$-adic system 
$(r_{\pi, \lambda}|\operatorname{Gal}_F)$, cf. Remark \ref{20150407_irreducibility_remark}. For the following, 
we fix for each $\lambda$ a lift $\chi$ of the character $\texttt{m}\circ \overline{r}_{\pi, \lambda}$ of $\operatorname{Gal}_F$, where $\texttt{m}$ is the multiplier character of 
the group $\mathcal{G}_n$, cf. Section \ref{20170407_section_grp_gn}. 
By $R^{\chi}_{S_{\ell}}(\overline{r}_{\pi, \lambda})$ we denote the 
universal ring parametrizing deformations $r$ of $\overline{r}_{\pi, \lambda}$
which are unramified outside the places 
which are in $S$ or divide $\infty. \ell(\lambda)$
and which fulfill $\texttt{m}\circ r = \chi$. 
The correct unobstructedness requirement is then the vanishing of
$H^2(\operatorname{Gal}_{F, S_{\ell}}, \mathfrak{g}^{\tt{der}}_n)$, where 
$\mathfrak{g}^{\tt{der}}_n$ denotes the Lie algebra of the derived subgroup of 
$\mathcal{G}_n$.
Our main result is:
\begin{Thm}\label{20170407_main_thm_intro}
Assume that all Hodge-Tate weights of $r_{\pi, \lambda}$ (which are independent
of $\lambda$, as the $r_{\pi, \lambda}$ form a compatible system) 
are non-consecutive: if $a,b\in \mathbb{Z}$
show up as Hodge-Tate weights, then $|a-b|\neq 1$. Then, for all 
$\lambda$ in a set of places of Dirichlet density $1$ the universal deformation ring $R^{\chi}_{S_{\ell}}(\overline{r}_{\pi, \lambda})$
is unobstructed.
\end{Thm}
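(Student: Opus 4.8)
The plan is to deduce $H^2(\operatorname{Gal}_{F,S_\ell}, \mathfrak{g}^{\tt{der}}_n)=0$ for a density-$1$ set of $\lambda$ from an $R=T$-theorem identifying $R^{\chi}_{S_\ell}(\overline{r}_{\pi,\lambda})$, or rather a crystalline-at-$\ell$ quotient of it, with a Hecke algebra whose structure we understand. First I would set up the local conditions at the places above $\ell$: the non-consecutivity hypothesis on the Hodge--Tate weights is exactly what makes the local crystalline deformation problem at each $v\mid\ell$ formally smooth over $W$ of the expected dimension, via Fontaine--Laffaille theory, and it also guarantees that the associated local Galois cohomology $H^2$ of $\mathfrak{g}^{\tt{der}}_n$ vanishes locally (consecutive weights being the only obstruction, by the explicit description of the Bloch--Kato Selmer conditions in the Fontaine--Laffaille range). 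Then, using the global Euler characteristic formula for $\mathfrak{g}^{\tt{der}}_n$ over $\operatorname{Gal}_{F,S_\ell}$, one sees that the global $H^2$ is controlled by a dual Selmer group $H^1_{\mathcal{L}^\perp}(\operatorname{Gal}_{F^+,S_\ell}, (\mathfrak{g}^{\tt{der}}_n)^*(1))$ together with certain local terms at $v\in S$ and at the archimedean places, all of which one arranges to vanish for density-$1$ many $\lambda$ by the usual large-image and Chebotarev arguments (here one invokes the Assumption to get absolute irreducibility of $\overline{r}_{\pi,\lambda}|\operatorname{Gal}_F$, plus the compatible-system machinery to control the image, adequacy, and the vanishing of $H^0$ and $H^1$ of various small modules, as in Weston and Gamzon).

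Next I would invoke the $R=T$-theorem promised in the abstract: for density-$1$ many $\lambda$, the universal crystalline fixed-determinant deformation ring $R^{\chi,\regcr}_{S_\ell}(\overline{r}_{\pi,\lambda})$ is finite flat over $W$ and isomorphic to the localized fixed-type unitary Hecke algebra $\mathbb{T}$, which by classical multiplicity-one / newvector theory at the unramified-away-from-$S_\ell$ level is itself étale over $W$ at the maximal ideal cut out by $\pi$ — i.e. $R^{\chi,\regcr}_{S_\ell}(\overline{r}_{\pi,\lambda})\cong W$. Since the crystalline condition at $\ell$ is formally smooth of the predicted codimension inside the unrestricted deformation functor (again by non-consecutivity / Fontaine--Laffaille), the tangent space computation $\dim_k H^1_{S_\ell}=\dim_k H^1_{\mathcal{L}}$ forces the unrestricted ring $R^{\chi}_{S_\ell}(\overline{r}_{\pi,\lambda})$ to be formally smooth over $W$ of the expected dimension, hence $H^2(\operatorname{Gal}_{F,S_\ell},\mathfrak{g}^{\tt{der}}_n)=0$. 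In other words, the étaleness of the Hecke algebra (multiplicity one) plus the smoothness of the local-at-$\ell$ crystalline condition together pin down the global deformation ring, and unobstructedness drops out.

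The main obstacle will be establishing the $R=T$-theorem in the generality needed and, in tandem, showing that for density-$1$ many $\lambda$ the relevant dual Selmer group and the local correction terms at places in $S$ vanish. Proving $R=T$ here requires a Taylor--Wiles--Kisin patching argument for $\mathcal{G}_n$-valued deformations with the crystalline condition at $\ell$, which in turn needs: (i) a suitable big-image/adequacy hypothesis on $\overline{r}_{\pi,\lambda}|\operatorname{Gal}_F$, true for density-$1$ many $\lambda$ by the compatible-system arguments of Barnet-Lamb--Gee--Geraghty--Taylor and the Assumption; (ii) the existence of sufficiently many auxiliary Taylor--Wiles primes, which follows once $H^1$ and $H^1_{\perp}$ of $\mathfrak{g}^{\tt{der}}_n$ are controlled; and (iii) base change to a CM field where $\pi$ becomes unramified at all finite places not above $\ell$, so that the Hecke algebra is a localization of the spherical Hecke algebra and multiplicity one gives $\mathbb{T}=W$. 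The Galois cohomology side — proving that the finitely many "bad" $\lambda$ where some small cohomology group fails to vanish form a density-zero set — is routine given the compatible-system framework, but making all the hypotheses of the patching argument simultaneously valid for a common density-$1$ set of $\lambda$ is where the bookkeeping is delicate; I expect this to be the heart of the proof.
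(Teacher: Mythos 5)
Your high-level outline is the right one — non-consecutive Hodge--Tate weights giving local unobstructedness at $\ell$ via Fontaine--Laffaille, an $R=T$-theorem with $\mathbb{T}\cong W$, and a dimension-counting comparison to propagate smoothness of the crystalline quotient to the full ring — and this is indeed how the paper proceeds. But there is one step in your plan that would genuinely fail, and it is precisely the place where the paper introduces new machinery.

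Your item (iii) — ``base change to a CM field where $\pi$ becomes unramified at all finite places not above $\ell$, so that the Hecke algebra is a localization of the spherical Hecke algebra'' — is impossible in general. If $\pi_v$ is Steinberg (or more generally has non-trivial monodromy in its Weil--Deligne parameter), no solvable base change makes it unramified: the base change of Steinberg is Steinberg. The paper only reduces to the case of unipotent ramification at $S$, not to the unramified case, and it handles the residual ramification via a very different mechanism: it introduces the local \emph{minimally ramified} deformation condition $\tt{min}$, identifies it with a \emph{fixed inertial type} quotient $R^{\boxempty,\tau}$ through a combinatorial comparison of Young diagrams (Theorem \ref{20150813_min_type_comparison_theorem}), and correspondingly defines a \emph{fixed-type Hecke algebra} $\mathbb{T}^{\tt{min}}$ by cutting out the eigenforms whose $K$-types at $S$ are $\sigma(\tau)$. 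The $R=T$-theorem then reads $R^{\tt{min,crys}}\cong\mathbb{T}^{\tt{min}}$, and $\mathbb{T}^{\tt{min}}\cong W$ for almost all $\lambda$ via the $T=O$-argument. The minimally ramified condition at $S$ is eventually removed not by base change but by a \emph{congruence argument}: for almost all $\lambda$ there are no congruences between distinct Hecke eigenspaces at $\ell(\lambda)$ (Proposition \ref{20170404_prop_cong_arg}), hence every automorphic lift of $\overline\rho_\lambda$ is automatically minimally ramified, forcing $R^{\tt{crys}}\cong R^{\tt{min,crys}}$ (Theorem \ref{20170213_global_rrmin_thm}) and in turn $R^{\boxempty,\chi_\nu,\tt{min}}\cong R^{\boxempty,\chi_\nu}$ locally at $\nu\in S$ (Corollary \ref{20170314_cor_on_min_is_all}). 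This is a genuinely different idea from what you propose and is the crux of making things work at ramified places.

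Two smaller issues: your phrase ``the tangent space computation $\dim_k H^1_{S_\ell}=\dim_k H^1_{\mathcal L}$ forces the unrestricted ring to be formally smooth'' glosses over a real argument. Smoothness of $R^{\boxempty,\chi,\tt{min,crys}}$ does not directly imply smoothness of $R^{\boxempty,\chi,\tt{min,sm}}$ — the paper proves this via the pushout diagram in Theorem \ref{20160908_MainThm}, counting generators of the kernel against the presentability assumption (condition 5.) and comparing against the dimension bookkeeping of conditions (sm), (crys), (min), ($\infty$). Second, since the $R=T$-theorem and the congruence argument each require their own tower of (quadratic, pre-admissible) base-change extensions $L^+_j$, one must be careful that the failure sets accumulate to density zero and that the dual-Selmer-group vanishing descends back from $L^+_{(\lambda)}$ to $F^+$; this is handled by the notion of dual-pre-conditions and Lemma \ref{20170404_pot_unobstr} (potential unobstructedness), plus a Chebotarev estimate (Lemma \ref{20170308_Lemma_on_prime_densities}). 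You flag this bookkeeping as delicate, which is correct, but you do not supply the mechanism that makes it work.
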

Remark that we do not require a particular splitting behavior at the
places in $S$.
We also want to stress that the developed framework is flexible and in principle 
applicable to Galois representations with values in other groups and can be
used to establish unobstructedness of universal deformation rings with imposed 
deformation conditions which are more sophisticated than the fixed-determinant 
condition $\texttt{m}\circ r = \chi$. Therefore, we hope that the framework 
will be useful for other applications, as better modularity lifting
results become available in future. We also remark that presently the condition on the Hodge-Tate
weights is necessary in order to use a local unobstructedness property at the 
places above $\ell(\lambda)$, a technical inconvenience we expect to  
weaken in future work.

We give a short outline of the article: After some remarks about notation, we start 
in Section \ref{20170405_section_liftings_and_defos} with a collection of the 
general deformation theoretic methods we will use. Moreover, we will
define a suitably flexible notion of unobstructedness for conditioned deformation
functors (Definition \ref{20170407_def_glob_unobstr}).
In Section \ref{20170405_section_on_framework}, we state and prove the core 
framework (Theorem \ref{20160908_MainThm}), which uses a list of six assumptions
as input and provides unobstructedness as output. This framework is presented 
with respect to local deformation conditions $\texttt{crys, min, sm}$, which have
a purely formal meaning throughout Section \ref{20170405_section_on_framework}. 
The main input is the formal smoothness of the deformation ring with respect 
to the conditions $\texttt{min}$ and $\texttt{crys}$, which is the natural
output of a suitable $R=T$-theorem, and the desired unobstructedness is then deduced 
by commutative algebra arguments and comparing dimensions. 
It is the purpose of Section \ref{20170316_sect_defo_conds} to introduce and study useful 
local conditions which will be plugged in into the framework theorem later.
After a reminder on the association of Galois deformations to automorphic forms, 
the additional results are provided in Section 
\ref{20160407_sect_cons}: We consider the deformation ring 
$R^{\tt{min, crys}}:= R^{\chi,\tt{min,crys}}_{S_{\ell}}(\overline{r}_{\pi, \lambda})$ parametrizing
those lifts which are minimally ramified (in the sense of 
Section \ref{20170321_sect_min_ram}) at all places in $S$ and 
crystalline (in the Fontaine-Laffaille range) at all places dividing $\ell$. Morover,
we consider a corresponding Hecke algebra 
$\mathbb{T}^{\tt{min}}$ which is defined as the localization of a certain endomorphism algebra
of automorphic forms of the same weight and level as $\pi$, and with a certain
fixed type-requirement at the places in $S$. Then, using the modularity lifting
results of \cite{BLGGT}, we show
\begin{Thm}
$R^{\tt{min, crys}}\cong \mathbb{T}^{\tt{min}}$ and, for almost all $\lambda$, 
$\mathbb{T}^{\tt{min}} \cong W$.
\end{Thm}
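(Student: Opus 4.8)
The plan is to split the statement into two parts: first to obtain $R^{\tt{min, crys}}\cong\mathbb{T}^{\tt{min}}$ from a minimal modularity lifting theorem, and second to show that for almost all $\lambda$ no automorphic form of the prescribed weight, level and type other than $\pi$ itself is congruent to $\pi$ modulo $\lambda$, which forces the Hecke algebra to collapse to $W$.

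For the first part I would fix $\lambda$ in the Dirichlet density $1$ set of the Assumption, so that $\overline{r}_{\pi,\lambda}|\operatorname{Gal}_F$ is absolutely irreducible, and then discard the remaining $\lambda$, a set of density $0$: those for which $\ell(\lambda)$ ramifies in the coefficient field of $\pi$, those for which $\ell(\lambda)$ is too small for Fontaine--Laffaille theory to handle the (fixed, $\lambda$-independent) Hodge--Tate weights, and those for which $\overline{r}_{\pi,\lambda}(\operatorname{Gal}_{F(\zeta_{\ell(\lambda)})})$ fails to be adequate — the last exclusion affecting only a density $0$ set of $\lambda$ by the standard analysis of residual images in the compatible system $(r_{\pi,\lambda})$, cf. \cite{BLGGT}. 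For the surviving $\lambda$, $\overline{r}_{\pi,\lambda}$ is automorphic of exactly the prescribed minimal level and crystalline type — it arises from $\pi$, whose inertial types at the places of $S$ were used to define $\mathbb{T}^{\tt{min}}$ — and is essentially conjugate self-dual and odd by the construction of the $\mathcal{G}_n$-valued lift. Using local--global compatibility of $\overline{r}_{\pi,\lambda}$ at the places of $S$ together with the characterization of minimally ramified lifts from Section \ref{20170321_sect_min_ram}, the deformation problem defining $R^{\tt{min, crys}}$ matches, on the level of Galois representations, the forms contributing to $\mathbb{T}^{\tt{min}}$; the tautological surjection $R^{\tt{min, crys}}\twoheadrightarrow\mathbb{T}^{\tt{min}}$ carried by the universal Galois representation over $\mathbb{T}^{\tt{min}}$ is then an isomorphism by the minimal Taylor--Wiles--Kisin patching argument of \cite{BLGGT}. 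In particular $R^{\tt{min, crys}}\cong\mathbb{T}^{\tt{min}}$ is a reduced, $\ell(\lambda)$-torsion free, finite flat $W$-algebra.

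For the second part I would analyse $\mathbb{T}^{\tt{min}}$ directly. After inverting $\ell(\lambda)$ its spectrum is indexed by the automorphic forms of the same weight and level as $\pi$, with the prescribed type at $S$, whose residual Galois representation is isomorphic to $\overline{r}_{\pi,\lambda}$; dropping the congruence requirement, these span a \emph{fixed, finite dimensional} space independent of $\lambda$, so there are finitely many relevant automorphic representations $\pi=\pi_1,\pi_2,\dots,\pi_m$, independent of $\lambda$. For each $i\geq 2$ one has $\pi_i\neq\pi$, hence by strong multiplicity one their Satake parameters differ at some split place $v_i\notin S$ not above $\infty$; fixing one such $v_i$ for every $i$, the differences of the corresponding Hecke eigenvalues are finitely many nonzero algebraic numbers, hence of trivial $\lambda$-adic valuation for all but finitely many $\lambda$. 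Thus for almost all $\lambda$ no $\pi_i$ with $i\geq 2$ is congruent to $\pi$ modulo $\lambda$, so $\mathbb{T}^{\tt{min}}$ is supported on $\pi$ alone and $\mathbb{T}^{\tt{min}}\otimes_W\operatorname{Frac}(W)$ is the subfield of $\overline{\mathbb{Q}}_{\ell(\lambda)}$ generated over $\operatorname{Frac}(W)$ by the Hecke eigenvalues of $\pi$. Since $\ell(\lambda)$ is unramified in the coefficient field of $\pi$, after enlarging $k$ once and for all so that $\overline{r}_{\pi,\lambda}$ and this eigenvalue system are defined over $k$, these eigenvalues already lie in $\operatorname{Frac}(W)$, so $\mathbb{T}^{\tt{min}}\otimes_W\operatorname{Frac}(W)=\operatorname{Frac}(W)$. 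Hence $\mathbb{T}^{\tt{min}}$ is a finite $W$-subalgebra of $\operatorname{Frac}(W)$, therefore integral over the integrally closed ring $W$, therefore equal to $W$; combined with the first part this yields $R^{\tt{min, crys}}\cong\mathbb{T}^{\tt{min}}\cong W$ for almost all $\lambda$.

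I expect the main obstacle to lie in the first part: checking on the nose that every hypothesis of the modularity lifting theorem of \cite{BLGGT} is met on a set of $\lambda$ of density $1$ — most delicately the density $1$ adequacy of $\overline{r}_{\pi,\lambda}(\operatorname{Gal}_{F(\zeta_{\ell(\lambda)})})$ and the exact compatibility, via local--global compatibility, between the minimally ramified deformation condition at the places of $S$ and the fixed-type condition built into $\mathbb{T}^{\tt{min}}$ — and in the bookkeeping needed to obtain the full isomorphism $R=\mathbb{T}$ rather than merely its reduced quotient. The congruence and field-theoretic arguments of the second part are comparatively soft and should go through essentially as sketched.
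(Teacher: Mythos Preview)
Your plan for the second part is correct and essentially the paper's own argument (Corollary~\ref{20170307_TO_corollary}): there are only finitely many eigenforms of the given weight, level and type, and the integral Hecke algebra they generate becomes a product of copies of the coefficient ring after inverting finitely many primes. Your phrasing via strong multiplicity one and the finitely many nonzero differences of Hecke eigenvalues is an equivalent way to say the same thing.

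For the first part there is a genuine gap, which you have half-identified yourself. The results of \cite{BLGGT} actually available here are Theorem~2.3.1 (every $\overline{\mathbb{Q}}_\ell$-point of $R^{\tt{min,crys}}$ is automorphic) and Theorem~2.3.2 ($R^{\tt{min,crys}}$ is a finite $W$-module); neither is an $R=\mathbb{T}$ theorem, and \cite{BLGGT} does not run a minimal patching argument with this paper's fixed-type Hecke algebra as target. Together these results yield at best $R^{\tt{min,crys}}_{\tt{red}}\cong\mathbb{T}^{\tt{min}}$, and even that only once one knows the generic fibre is nonempty. The paper closes the gap with two ingredients you do not mention:
\begin{itemize}
\item a balanced presentation $R^{\tt{min,crys}}\cong W\llbracket X_1,\dots,X_m\rrbracket/(f_1,\dots,f_m)$ coming from the local dimension counts at the places in $S_\ell$ (Corollary~\ref{2017_cor_on_repn}); combined with finiteness modulo $\ell$ this forces $R^{\tt{min,crys}}$ to be finite \emph{flat} over $W$ by the standard complete-intersection argument (B\"ockle's appendix to \cite{Khare_isos}), hence $\ell$-torsion-free with nonempty generic fibre;
\item Allen's vanishing of the adjoint Bloch--Kato Selmer group at each characteristic-zero point \cite[Theorem~3.1.3]{Allen_polarized}, which shows that every local ring of the Artinian algebra $R^{\tt{min,crys}}[1/\ell]$ has zero tangent space and is therefore a field, so $R^{\tt{min,crys}}$ is reduced.
\end{itemize}
Without these two inputs your sketch produces only the surjection $R^{\tt{min,crys}}\twoheadrightarrow\mathbb{T}^{\tt{min}}$ with kernel a priori contained in the $\ell$-torsion plus the nilradical, and you have supplied no mechanism to kill either.
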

This result is crucial to prove in Section \ref{20170407_main_sect} that, 
for almost all $\lambda$, there exists a suitable finite solvable extension $F'$ 
of $F$ such that the deformation ring $R^{\chi, \tt{min}}_{S_{\ell}}(\overline{r}_{\pi, \lambda}|\operatorname{Gal}_{F', S})$, parametrizing deformations 
of the base change of 
$\overline{r}_{\pi, \lambda}$ to $F'$ which are minimally ramified at all places above $S$, 
is unobstructed.
Moreover, we show that the minimally-ramified condition can be waived
for almost all $\lambda$ (Theorem \ref{20170213_global_rrmin_thm}).
It is important to keep track of the different field extensions necessary
when running through all $\lambda$, so that we are left with a set of Dirichlet density $1$
to which we can apply a result on potential unobstructedness (Lemma 
\ref{20170404_pot_unobstr}) and finally deduce the claim of Theorem \ref{20170407_main_thm_intro}.

\paragraph{Acknowledgements} The main part of this article grew out of my 
PhD thesis \cite{MyPhd} at the University of Heidelberg. I would like to thank my
doctoral advisor Gebhard B\"ockle for his support and help, throughout my PhD studies
and thereafter. Moreover, I would like to thank the Graduiertenkolleg Heidelberg (Stipendium
nach dem Landesgraduiertenf\"ordergesetz), MATCH (The Mathematics Center Heidelberg),
the DAAD (Promos Stipendium) and the DFG (FG1920) for the funding received during my
PhD studies and thereafter.

\section{Notation}
Before we start with the main body of this article, let us make some remarks on the used notation: If $F$ denotes a number field, we denote by $\operatorname{Pl}_F$ the set 
of places of $F$ and by $\operatorname{Pl}_F^{\text{fin}}$ the set 
of finite places of $F$. Moreover, we set $\Omega^F_{\infty} = \operatorname{Pl}_F\backslash
\operatorname{Pl}_F^{\text{fin}}$ and, for a rational prime $\ell$, we denote by $\Omega^F_{\ell}$ the set of places of $F$ dividing $\ell$. If $F$ is understood, we will
simply write $\Omega_{\infty}$ and $\Omega_{\ell}$. For a place $\lambda\in 
\operatorname{Pl}_F^{\text{fin}}$ we define $\ell(\lambda)$ (or $\ell$, if $\lambda$ is understood) as the rational prime below $\lambda$. If $S\subset \operatorname{Pl}_F^{\text{fin}}$ and $\ell$ is some rational prime, we set $S_{\ell} := S \cup \Omega_{\infty} \cup \Omega_{\ell}$.\\
We denote by $\widetilde{F}$ the Galois closure of $F$.
When dealing with a quadratic extension $F|F^+$, we will denote by $c$ the non-trivial
element of the Galois group $\operatorname{Gal}(F|F^+)$.
Moreover, for a rational prime $\ell$, we denote by $\epsilon_{\ell}:
\operatorname{Gal}_F\rightarrow \overline{\mathbb{Z}}_{\ell}^{\times}$ the $\ell$-adic
cyclotomic character and by $\overline{\epsilon}_{\ell}$ its mod-$\ell$ reduction.
\\
If $L|F$ is a finite extension and $S$ is a fixed set of places of $F$, we will write $S\langle L\rangle$ for the set $\{\nu' \in \operatorname{Pl}_L: \nu' \text{ divides some }\nu\in S\}$. 
For example, if 
$\operatorname{Gal}_{F, S}$ denotes the Galois group of the maximal, unramified outside $S$ extension of $F$, we will write $\operatorname{Gal}_{L, S\langle L\rangle}$ for the Galois group of the maximal, unramified outside $S\langle L\rangle$ extension of $L$.
However, if 
there is no risk of confusion, we will often simplify this and write $S$ in place of $S\langle L\rangle$
and hence $\operatorname{Gal}_{L, S}$ instead of $\operatorname{Gal}_{L, S\langle L\rangle}$.
In a completely analogous way, if $S$ is a subset of $\operatorname{Pl}_L$, we will
write $S\langle F\rangle$ (or, if there is no risk of confusion, simply $S$) for the set
$\{\nu' \in \operatorname{Pl}_F: \nu' \text{ is divided by some }\nu\in S\}$. 
%
%
%
If $\rho$ is a representation of $\operatorname{Gal}_F$ and 
$\nu$ a place of $F$, we will use the symbol $\rho_{\nu}$ for the restriction of 
$\rho$ to a decomposition subgroup at $\nu$.\\
For a topological group $\Gamma$ and a topological ring $R$, we denote by $\operatorname{Rep}_R(\Gamma)$
the category of finitely generated $R$-modules with a continuous $\Gamma$-action.
If $A$ is a $\Gamma$-module, we denote by $A^\ast$ the Pontryagin dual and by $A^\vee$ the Tate dual of $A$.\\
We will often make statements concerning variations of deformation rings and we will
shorten this using brackets. E.g., we will use the notation $R^{(\chi), [\tt{min}]}(\overline{\rho})=0$ as a shortcut for the four statements 
$R(\overline{\rho})=0, R^{\chi}(\overline{\rho})=0, R^{\tt{min}}(\overline{\rho})=0$
and $R^{\chi, \tt{min}}(\overline{\rho})=0$.
For cohomology groups, we abbreviate $h^i(\ast, \ast)$ for 
$\operatorname{dim} H^i(\ast, \ast)$.\\
Let $k$ be a finite field of characteristic $\ell$.
For the valuation ring $\Lambda$ of a finite extension of $\mathbb{Q}_{\ell}$ 
with residue field $k_{\Lambda} = k$, we will consider the category
$\mathcal{C}_{\Lambda}$ of complete Noetherian local $\Lambda$-algebras $A$ fulfilling
$k_{A}= k$.
\section{Liftings and deformations}\label{20170405_section_liftings_and_defos}
In this section, which contains nothing original, we recall the main results on deformation theory.
For general background literature, we refer the reader to \cite{Tilouine,MaugerThesis,Levin,Balaji,Bleher_Chinburg}. 
Let us first fix a finite field $k$ and denote $\ell= \operatorname{char}(k)$. We will denote
the ring of Witt vectors over $k$ by $W(k)$, or, if $k$ is understood, by $W$. Moreover,
let us fix a profinite group $\Gamma$ which fulfills the $\ell$-finiteness condition $(\Phi_{\ell})$
of \cite{Mazur_deforming}: For any open subgroup $H\subset \Gamma$, the maximal
pro-$\ell$ quotient of $H$ is topologically finitely generated.

Let $G$ be a smooth linear algebraic group over $W$ and fix a continuous group homomorphism
$\overline{\rho}:\Gamma \rightarrow G(k)$, where $G(k)$ carries the discrete topology.
\paragraph{Basic facts on coefficient rings} Let us first state some basic facts on the category $\mathcal{C}_{\Lambda}$, whose proofs we leave to the reader: The pullback in $\mathcal{C}_{\Lambda}$ is realized by the completed tensor product
$\widehat{\otimes}$, cf. \cite[\S 12]{Mazur1}. Consequently, if $C\leftarrow A \rightarrow B$
is a diagram in $\mathcal{C}_{\Lambda}$, then $\operatorname{Hom}_{\mathcal{C}_{\Lambda}}(
B\widehat{\otimes}_A C, \underline{\;\;})$ is the pullback of the diagram 
of functors $\operatorname{Hom}_{\mathcal{C}_{\Lambda}}(C, \underline{\;\;})\rightarrow 
\operatorname{Hom}_{\mathcal{C}_{\Lambda}}(A, \underline{\;\;})\leftarrow
\operatorname{Hom}_{\mathcal{C}_{\Lambda}}(B, \underline{\;\;})$.\\
Consider a pushout diagram in $\mathcal{C}_{\Lambda}$ where one arrow (say, $f$)
is surjective. This implies that the parallel arrow (say, $g$) is surjective as well, so
taking $I=\operatorname{ker}(f)$ and $J=\operatorname{ker}(g)$ we can extend the orthogonal
arrow (say, $\pi$) to a map of short exact sequences of $\Lambda$-modules:
\begin{equation*}
\xymatrix{
A\ar@{->>}^f[r]\ar_{\pi}[d] & B\ar@{.>}[d] &\ar@{}[d]^{\leadsto}&&0\ar[r]&I\ar[r]\ar_{\pi|I}[d]&A\ar^f[r]\ar_{\pi}[d]&B\ar[r]\ar[d]&0\\
C\ar@{.>}_g[r] & P &&&0\ar[r]&J\ar[r]&C\ar_g[r]&P\ar[r]&0
}
\end{equation*}
If $\mathfrak{I}$ is a finitely generated ideal of some $D\in \mathcal{C}_{\Lambda}$ we
denote cardinality of a minimal set of generators of $\mathfrak{I}$ by $\operatorname{gen}_D(\mathfrak{I}) := \operatorname{dim}_k\mathfrak{I}/\mathfrak{m}_D\mathfrak{I}$. Then, we easily 
see that the following holds for the above diagram:
\begin{Prop}
$\operatorname{gen}_C(J)\leq \operatorname{gen}_A(I)$.
\end{Prop}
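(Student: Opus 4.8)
The plan is to prove the inequality $\operatorname{gen}_C(J) \leq \operatorname{gen}_A(I)$ by exhibiting an explicit surjection from a minimal generating set of $I$ onto a generating set of $J$, using the pushout structure. Recall the setup: we have a pushout square in $\mathcal{C}_\Lambda$ with $f : A \twoheadrightarrow B$ surjective, $\pi : A \to C$, and the induced $g : C \to P$, $B \to P$; the pushout $P$ is realized by $C \,\widehat\otimes_A\, B$ (using the dual statement to the one recorded above for pullbacks, together with the fact that pushouts in $\mathcal{C}_\Lambda$ are computed by the completed tensor product). Since $f$ is surjective with kernel $I$, the ring $B$ is a quotient $A/I$, and hence $P = C\,\widehat\otimes_A\,(A/I) \cong C/\pi(I)C$; that is, $J = \ker(g) = \pi(I)C$, the ideal of $C$ generated by the image of $I$ under $\pi$.

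First I would establish this identification $J = \pi(I)C$ carefully — this is really the only substantive point. It follows because completed tensor product is right exact in the appropriate sense on $\mathcal{C}_\Lambda$ (applying $C\,\widehat\otimes_A\,(-)$ to $I \to A \to A/I \to 0$), or, more elementarily, because $P$ represents the pushout functor and one checks directly that $C/\pi(I)C$ has the correct universal property: a pair of maps out of $B = A/I$ and $C$ agreeing on $A$ is exactly a map out of $C$ killing $\pi(I)$. Once we know $J = \pi(I)C$, the conclusion is immediate: if $x_1,\dots,x_m \in I$ generate $I$ with $m = \operatorname{gen}_A(I)$, then $\pi(x_1),\dots,\pi(x_m)$ generate $\pi(I)C = J$ as an ideal of $C$, so $J$ admits a generating set of size at most $m$, whence $\operatorname{gen}_C(J) = \dim_k J/\mathfrak{m}_C J \leq m = \operatorname{gen}_A(I)$.

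Alternatively, and perhaps cleaner for a written proof, I would argue directly at the level of the $k$-vector spaces $I/\mathfrak{m}_A I$ and $J/\mathfrak{m}_C J$: the map $\pi|_I : I \to J$ from the diagram, composed with the quotient $J \twoheadrightarrow J/\mathfrak{m}_C J$, kills $\mathfrak{m}_A I$ (since $\pi(\mathfrak{m}_A) \subset \mathfrak{m}_C$), hence factors through $I/\mathfrak{m}_A I \to J/\mathfrak{m}_C J$; and this induced map is surjective precisely because $J = \pi(I)C$, i.e. $J/\mathfrak{m}_C J$ is spanned by the images of elements of $\pi(I)$. A surjection of finite-dimensional $k$-vector spaces gives the dimension inequality.

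The main obstacle — such as it is — is the identification $J = \pi(I)C$, i.e. being precise about right-exactness of $\widehat\otimes$ in $\mathcal{C}_\Lambda$ (completion can behave subtly, though here all the modules in sight are finitely generated over Noetherian complete local rings, so completion is exact and no pathology arises); everything after that is formal. Since the paper explicitly leaves these basic facts about $\mathcal{C}_\Lambda$ to the reader and has already recorded the pullback/pushout compatibility with $\widehat\otimes$, I would keep the argument to a few lines, invoking that compatibility and $\emph{right exactness}$ of $C\,\widehat\otimes_A\,(-)$ to conclude $P \cong C/\pi(I)C$ and hence $J = \pi(I)C$, then read off the inequality.
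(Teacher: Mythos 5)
Your proof is correct; the paper leaves this proposition without a written proof (``we easily see that the following holds''), and your argument supplies the expected reasoning. The crux, $J = \pi(I)C$, follows exactly as you say, since $P \cong C\,\widehat{\otimes}_A (A/I) \cong C/\pi(I)C$ (the completion is vacuous here because $C/\pi(I)C$ is already a complete Noetherian local ring), and then the $\pi$-images of a minimal generating set of $I$ generate $J$, which gives $\operatorname{gen}_C(J) \le \operatorname{gen}_A(I)$ — equivalently, your reformulation that $\pi|_I$ induces a surjection $I/\mathfrak{m}_A I \twoheadrightarrow J/\mathfrak{m}_C J$ is the Nakayama-level version of the same fact.
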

Moreover, we have the following results, which follow easily from standard facts about regular systems of parameters (cf. \cite[Proposition 22]{SerreLocalAlg} and its use in Section 2 of 
\cite{MyPhd}) :
\begin{Lem}
Suppose $A=\Lambda\llbracket x_1, \ldots, x_a\rrbracket, B=\Lambda\llbracket x_1, \ldots, x_b\rrbracket\in 
\mathcal{C}_{\Lambda}$ and let $J\subset A$ be an ideal of the form $J=(f_1, \ldots, 
f_u)$ with $f_i \in A$ and $u\leq a$. Suppose moreover that there exists a surjective morphism $f: A/J \twoheadrightarrow B$ and denote its kernel by $I$. Then
the following are equivalent:
\begin{itemize}
\item $A/J \cong \Lambda \llbracket x_1, \ldots, x_{a-u}\rrbracket$;
\item $\operatorname{gen}_{A/J}(I) 
= a-u-b$;
\item $\operatorname{gen}_{A/J}(I) 
\leq a-u-b $.
\end{itemize}
\end{Lem}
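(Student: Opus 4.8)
The plan is to analyse the ideal $I$ through the ideal $\mathfrak{a}\subseteq A$ that is the kernel of the composite surjection $A\twoheadrightarrow A/J\stackrel{f}{\twoheadrightarrow}B$; then $J\subseteq\mathfrak{a}$ and $I$ is the image of $\mathfrak{a}$ in $A/J$. Since $A/\mathfrak{a}\cong B$ is regular, the standard facts on regular systems of parameters (cf.\ \cite[Proposition 22]{SerreLocalAlg}, used as in Section~2 of \cite{MyPhd}) show that $\mathfrak{a}$ is generated by a subset of a regular system of parameters of $A$; such a subset is a minimal generating set, so $\operatorname{gen}_A(\mathfrak{a})=\operatorname{dim}A-\operatorname{dim}B=(a+1)-(b+1)=a-b$. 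In particular $\mathfrak{a}\cap\mathfrak{m}_A^2=\mathfrak{m}_A\mathfrak{a}$, so the natural map $\mathfrak{a}/\mathfrak{m}_A\mathfrak{a}\hookrightarrow\mathfrak{m}_A/\mathfrak{m}_A^2=:V$ is injective. Writing $\overline{f_i}\in V$ for the class of $f_i$ and $d:=\operatorname{dim}_k\operatorname{span}_k(\overline{f_1},\ldots,\overline{f_u})$, I would then compute $I/\mathfrak{m}_{A/J}I=\mathfrak{a}/(\mathfrak{m}_A\mathfrak{a}+J)$ and, using the above injection to identify the image of $J$ in $\mathfrak{a}/\mathfrak{m}_A\mathfrak{a}$ with $\operatorname{span}_k(\overline{f_1},\ldots,\overline{f_u})$, arrive at the key identity
\begin{equation*}
\operatorname{gen}_{A/J}(I)=(a-b)-d.
\end{equation*}

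From here everything is bookkeeping. Since $d\leq u$ trivially, the identity gives $\operatorname{gen}_{A/J}(I)\geq a-u-b$ unconditionally, so the second and third conditions are each equivalent to $\operatorname{gen}_{A/J}(I)=a-u-b$, i.e.\ to $d=u$, i.e.\ to the linear independence of $\overline{f_1},\ldots,\overline{f_u}$ in $V$. I would also record, for use below, that $\overline{\pi_{\Lambda}}\notin\operatorname{span}_k(\overline{f_1},\ldots,\overline{f_u})$ for a uniformiser $\pi_{\Lambda}$ of $\Lambda$: otherwise $\pi_{\Lambda}\in\mathfrak{a}+\mathfrak{m}_A^2$, contradicting that $\pi_{\Lambda}$ is a regular parameter of $B=A/\mathfrak{a}=\Lambda\llbracket x_1,\ldots,x_b\rrbracket$.

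It then remains to identify $d=u$ with the first condition. If $d=u$, the classes $\overline{f_1},\ldots,\overline{f_u},\overline{\pi_{\Lambda}}$ are linearly independent in $V$; I extend them to a $k$-basis of $V$, lift the remaining basis vectors to $y_{u+1},\ldots,y_a\in\mathfrak{m}_A$, and consider the continuous $\Lambda$-algebra endomorphism $\phi$ of $A$ with $\phi(x_i)=f_i$ for $i\leq u$ and $\phi(x_j)=y_j$ for $j>u$. By construction $\phi$ is bijective on $V$, hence surjective, hence --- a surjective endomorphism of a Noetherian ring being injective --- a $\Lambda$-algebra automorphism of $A$ carrying the ideal $(x_1,\ldots,x_u)$ onto $J$; therefore $A/J\cong A/(x_1,\ldots,x_u)\cong\Lambda\llbracket x_1,\ldots,x_{a-u}\rrbracket$, which is the first condition. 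Conversely, if $A/J\cong\Lambda\llbracket x_1,\ldots,x_{a-u}\rrbracket$, then comparing cotangent spaces gives $a-u+1=\operatorname{dim}_k\mathfrak{m}_{A/J}/\mathfrak{m}_{A/J}^2=\operatorname{dim}_k\bigl(V/\operatorname{span}_k(\overline{f_1},\ldots,\overline{f_u})\bigr)=(a+1)-d$, hence $d=u$, whence the second condition via the key identity. This closes all three equivalences.

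The only point that genuinely requires care --- and the reason the three conditions coincide at all --- is recognising $A/J$ as a power series ring over the \emph{correct} coefficient ring $\Lambda$ and in the \emph{correct} number $a-u$ of variables. This is exactly where the hypothesised surjection onto $B=\Lambda\llbracket x_1,\ldots,x_b\rrbracket$ is indispensable: it fixes $\operatorname{dim}(A/J)$ and forces the uniformiser $\pi_{\Lambda}$ to survive as a regular parameter of $A/J$, ruling out an equicharacteristic quotient. Everything else reduces to the cited structure theory of regular local rings and routine manipulations with cotangent spaces.
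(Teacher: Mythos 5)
The paper does not actually supply a proof of this lemma; it only asserts that it ``follow[s] easily from standard facts about regular systems of parameters,'' citing Serre, \emph{Local Algebra}, Proposition 22 and the author's thesis, so there is no explicit argument in the paper to compare against. Your proof is correct and is a careful instance of exactly those facts: passing to $\mathfrak{a}=\ker(A\to B)$, using regularity of $A/\mathfrak{a}\cong B$ to get $\operatorname{gen}_A(\mathfrak{a})=a-b$ and the embedding $\mathfrak{a}/\mathfrak{m}_A\mathfrak{a}\hookrightarrow\mathfrak{m}_A/\mathfrak{m}_A^2$, deriving the identity $\operatorname{gen}_{A/J}(I)=(a-b)-d$ which at once makes the last two conditions equivalent to $d=u$, and then the observation that $\overline{\pi_\Lambda}\notin\operatorname{span}_k(\overline{f_1},\ldots,\overline{f_u})$ (forced because the surjection onto $B$ is a $\Lambda$-algebra map) is exactly what legitimises the automorphism construction linking $d=u$ to the first condition.
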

\begin{Prop}\label{20170324_some_prop}
Let $m\in \mathbb{N}$. Then $A\in \mathcal{C}_{\Lambda}$ is regular if and only if 
$A\llbracket x_1, \ldots, x_m\rrbracket$ is regular.
\end{Prop}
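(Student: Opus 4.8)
The statement to prove is Proposition \ref{20170324_some_prop}: for $A \in \mathcal{C}_\Lambda$ and $m \in \mathbb{N}$, $A$ is regular iff $A\llbracket x_1, \ldots, x_m\rrbracket$ is regular. By an obvious induction on $m$ it suffices to treat $m=1$, so I would reduce immediately to proving that $A$ is regular iff $A\llbracket x\rrbracket$ is regular. Write $B = A\llbracket x\rrbracket$; this is again a complete Noetherian local ring in $\mathcal{C}_\Lambda$, with maximal ideal $\mathfrak{m}_B = \mathfrak{m}_A B + (x)$ and residue field $k_B = k_A = k$. The plan is to compare Krull dimensions and embedding dimensions of $A$ and $B$ and invoke the definition of regularity ($\dim = \dim_k \mathfrak{m}/\mathfrak{m}^2$).

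First I would record the dimension formula $\dim B = \dim A + 1$. This is standard for the formal power series ring in one variable over a Noetherian local ring: $x$ is a nonzerodivisor contained in $\mathfrak{m}_B$, $B/(x) \cong A$, so $\dim B \ge \dim A + 1$, while the reverse inequality follows because a system of parameters of $A$ together with $x$ generates an $\mathfrak{m}_B$-primary ideal of $B$ (one can cite \cite[Proposition 22]{SerreLocalAlg}-style facts, or the treatment in Section 2 of \cite{MyPhd} that the authors already reference for the preceding lemma). Second, I would compute the embedding dimension: since $\mathfrak{m}_B = \mathfrak{m}_A B + (x)$ and the class of $x$ is not in $(\mathfrak{m}_A B + (x))^2 \cdot$-stuff — more precisely, $\mathfrak{m}_B/\mathfrak{m}_B^2 \cong \mathfrak{m}_A/\mathfrak{m}_A^2 \oplus k\cdot\bar x$ as $k$-vector spaces — we get $\dim_k \mathfrak{m}_B/\mathfrak{m}_B^2 = \dim_k \mathfrak{m}_A/\mathfrak{m}_A^2 + 1$. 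The cleanest way to see the embedding-dimension identity is to note $B/\mathfrak{m}_A B \cong k\llbracket x\rrbracket$ and chase the short exact sequence $0 \to \mathfrak{m}_A B \to \mathfrak{m}_B \to (x)\bmod \mathfrak{m}_A B \to 0$ modulo $\mathfrak{m}_B^2$.

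Combining the two computations: $A$ regular $\iff \dim A = \dim_k \mathfrak{m}_A/\mathfrak{m}_A^2 \iff \dim A + 1 = \dim_k \mathfrak{m}_A/\mathfrak{m}_A^2 + 1 \iff \dim B = \dim_k \mathfrak{m}_B/\mathfrak{m}_B^2 \iff B$ regular. One always has $\dim \le \dim_k \mathfrak{m}/\mathfrak{m}^2$ for Noetherian local rings, so no subtlety is hidden in which direction the inequalities go; the equivalence is a genuine "if and only if." Then a finite induction gives the result for general $m$.

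The only mild obstacle is bookkeeping: making sure $A\llbracket x\rrbracket$ genuinely lies in $\mathcal{C}_\Lambda$ (completeness of the power series ring over a complete Noetherian local ring, and that it is Noetherian — the latter is the power-series analogue of the Hilbert basis theorem), and keeping the two identities $\dim B = \dim A + 1$ and $\operatorname{edim} B = \operatorname{edim} A + 1$ cleanly separated so the equivalence is transparent. None of this is deep, which is presumably why the paper states it as a Proposition with the remark that it "follow[s] easily from standard facts about regular systems of parameters"; I would keep the write-up to a few lines, citing \cite[Proposition 22]{SerreLocalAlg} and Section 2 of \cite{MyPhd} for the parameter-system input.
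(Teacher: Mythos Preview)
Your proof is correct and matches the paper's treatment: the paper does not give a separate proof of this proposition but groups it with the surrounding results as ``follow[ing] easily from standard facts about regular systems of parameters (cf.\ \cite[Proposition 22]{SerreLocalAlg} and its use in Section 2 of \cite{MyPhd})'', and your reduction to $m=1$ together with the two identities $\dim A\llbracket x\rrbracket=\dim A+1$ and $\operatorname{edim}A\llbracket x\rrbracket=\operatorname{edim}A+1$ is exactly such a standard argument.
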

\begin{Prop}
Let $f:A\rightarrow B$ be a morphism in $\mathcal{C}_{\Lambda}$. 
\begin{enumerate}
\item $f$ is formally smooth
(cf. \cite[\S 19]{Grothendieck_ega}) if and only if $B$ is isomorphic to a formal power series
ring over $A$;
\item Assume that $A$ is formally smooth over $\Lambda$ of relative dimension $d$ and that $f$ is
surjective. Then $f$ is an isomorphism if $\operatorname{rdim}_{\Lambda}(B) = d+1$ (where
$\operatorname{rdim}_{\Lambda}(B) = \operatorname{dim}_k \mathfrak{m}_B/(\mathfrak{m}_B^2, 
\mathfrak{m}_{\Lambda})$ denotes the relative dimension over $\Lambda$).
\end{enumerate}
\end{Prop}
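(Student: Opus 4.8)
The plan is to treat the two parts in turn; both are standard facts about complete Noetherian local rings, so I only indicate the arguments.

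For part~(1), the ``if'' direction is the familiar lifting argument: given a surjection $C\twoheadrightarrow\overline{C}$ in $\mathcal C_\Lambda$ and an $A$-algebra morphism $u\colon A\llbracket x_1,\dots,x_m\rrbracket\to\overline{C}$, the elements $u(x_i)\in\mathfrak m_{\overline{C}}$ admit lifts $c_i\in\mathfrak m_C$, and since $C$ is complete the assignment $x_i\mapsto c_i$ extends to a continuous $A$-algebra morphism $A\llbracket x_1,\dots,x_m\rrbracket\to C$ lifting $u$; hence $A\llbracket x_1,\dots,x_m\rrbracket$ is formally smooth over $A$. For the converse, assume $f$ formally smooth, put $m=\dim_k\mathfrak m_B/(f(\mathfrak m_A)B+\mathfrak m_B^2)$, lift a $k$-basis of this relative cotangent space to $x_1,\dots,x_m\in\mathfrak m_B$, and let $g\colon P:=A\llbracket x_1,\dots,x_m\rrbracket\to B$ be the induced continuous $A$-algebra morphism, which is surjective by the topological Nakayama lemma. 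Writing $I=\ker g$, formal smoothness of $B$ over $A$ makes the conormal sequence $0\to I/I^2\to\widehat{\Omega}_{P/A}\otimes_P B\to\widehat{\Omega}_{B/A}\to 0$ split exact, with $\widehat{\Omega}_{P/A}\otimes_P B$ free of rank $m$ over $B$ and $\widehat{\Omega}_{B/A}$ free over $B$ of rank $\dim_k(\widehat{\Omega}_{B/A}\otimes_B k)=\dim_k\mathfrak m_B/(f(\mathfrak m_A)B+\mathfrak m_B^2)=m$, again by formal smoothness. A surjection between finite free modules of equal rank is an isomorphism, so $I/I^2=0$, whence $I=0$ by Nakayama; thus $g$ identifies $B$ with the power series ring $P$ over $A$. (Equivalently, one argues with an $A$-algebra section of $P/I^2\twoheadrightarrow B$ to obtain $\operatorname{gen}_P(I)\le m$ together with $I\subseteq\mathfrak m_A P+\mathfrak m_P^2$, in the spirit of the Lemma preceding Proposition~\ref{20170324_some_prop}; this is the argument of \cite[\S 19]{Grothendieck_ega}.)

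For part~(2), apply part~(1) to the structure morphism $\Lambda\to A$: being formally smooth over $\Lambda$ of relative dimension $d$, $A$ is a formal power series ring $\Lambda\llbracket x_1,\dots,x_d\rrbracket$, which is regular by Proposition~\ref{20170324_some_prop} since $\Lambda$ is a discrete valuation ring; in particular $A$ is an integral domain. As $f$ is surjective, $B\cong A/I$ with $I=\ker f\subseteq\mathfrak m_A$, and the hypothesis on $\operatorname{rdim}_\Lambda(B)$ forces the Krull dimensions of $A$ and $B$ to coincide (here one uses that $A$, a power series ring over a discrete valuation ring, is catenary and equidimensional, so a proper nonzero ideal strictly decreases the dimension). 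Then $\dim(A/I)=\dim A$ forces $I$ to lie in a minimal prime of $A$; since $A$ is a domain this minimal prime is $(0)$, so $I=0$ and $f$ is an isomorphism.

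The only step with real content is the ``only if'' half of~(1): extracting $I=0$ from formal smoothness, which rests on the two standard inputs of split exactness of the conormal sequence of a formally smooth morphism and freeness of the relative differentials. Part~(2) is then pure dimension bookkeeping, and I expect the single delicate point there to be matching the normalisation of ``relative dimension'' with the convention encoded in the displayed formula for $\operatorname{rdim}_\Lambda$, so that the numerical hypothesis really does pin down $\dim B$.
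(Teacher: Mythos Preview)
The paper does not give a proof of this proposition: it is grouped with several facts said to ``follow easily from standard facts about regular systems of parameters (cf.\ \cite[Proposition 22]{SerreLocalAlg} and its use in Section 2 of \cite{MyPhd}).'' So there is no argument to compare against line by line, only that hint.

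Your treatment of part~(1) via the conormal sequence is correct and is one of the standard proofs; the paper's pointer to regular systems of parameters is aimed more at the neighbouring statements (the preceding Lemma and Proposition~\ref{20170324_some_prop}) than at this one.

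For part~(2) your strategy---identify $A\cong\Lambda\llbracket x_1,\dots,x_d\rrbracket$ as a regular local ring, hence an integral domain of Krull dimension $d+1$, and then argue that a surjection onto a ring of the same Krull dimension has zero kernel---is exactly the right one and is what the paper's hint suggests. The only weak point is the sentence ``the hypothesis on $\operatorname{rdim}_\Lambda(B)$ forces the Krull dimensions of $A$ and $B$ to coincide,'' which you leave unjustified. In fact the hypothesis as literally printed cannot be met: a surjection $A\twoheadrightarrow B$ induces a surjection of relative cotangent spaces, so $\operatorname{rdim}_\Lambda(B)\le\operatorname{rdim}_\Lambda(A)=d$, never $d+1$. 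Nor can the intended hypothesis be $\operatorname{rdim}_\Lambda(B)=d$, since $I=(\varpi)$ gives $B=k\llbracket x_1,\dots,x_d\rrbracket$ with $\operatorname{rdim}_\Lambda(B)=d$ and $f$ not an isomorphism. The statement only becomes correct (and non-vacuous) if one reads the hypothesis as $\dim B=d+1$ in the sense of Krull dimension, which is evidently what is meant. You anticipated this in your closing remark about normalisations; you should simply make the substitution explicit and then your dimension argument is complete.
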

\begin{Lem}
Let $A\in \mathcal{C}_{\Lambda}, m\in\mathbb{N}$ such that 
$\Lambda\llbracket x_1, \ldots, x_m\rrbracket \cong A\,\widehat{\otimes}_{\Lambda}\, 
\Lambda\llbracket x\rrbracket$. Then $A \cong \Lambda\llbracket x_1, \ldots, x_{m-1}\rrbracket$.
\end{Lem}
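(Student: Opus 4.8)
The plan is to reduce the assertion to the characterization of formally smooth morphisms of $\mathcal C_\Lambda$ as formal power series rings. First I would rewrite the hypothesis: for every $A\in\mathcal C_\Lambda$ there is a canonical isomorphism $A\,\widehat{\otimes}_\Lambda\,\Lambda\llbracket x\rrbracket\cong A\llbracket x\rrbracket$ (the completed tensor product realizes the relevant pushout in $\mathcal C_\Lambda$, cf.\ the basic facts on coefficient rings above), so the assumption says precisely that $A\llbracket x\rrbracket\cong\Lambda\llbracket x_1,\dots,x_m\rrbracket$ as $\Lambda$-algebras; in particular $A\llbracket x\rrbracket$ is formally smooth over $\Lambda$.

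The key observation is that $A$ is a retract of $A\llbracket x\rrbracket$ inside $\mathcal C_\Lambda$: the inclusion of constants $\iota\colon A\hookrightarrow A\llbracket x\rrbracket$ and the augmentation $\pi\colon A\llbracket x\rrbracket\twoheadrightarrow A$, $x\mapsto 0$, are morphisms of $\mathcal C_\Lambda$ with $\pi\circ\iota=\operatorname{id}_A$. A retract of a $\Lambda$-algebra that is formally smooth is itself formally smooth: given a square-zero extension $C'\twoheadrightarrow C$ in $\mathcal C_\Lambda$ and a morphism $f\colon A\to C$, formal smoothness of $A\llbracket x\rrbracket$ produces a lift $g\colon A\llbracket x\rrbracket\to C'$ of $f\circ\pi$, and then $g\circ\iota\colon A\to C'$ is a lift of $f$, since composing it with $C'\twoheadrightarrow C$ gives $f\circ\pi\circ\iota=f$. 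Thus $A$ is formally smooth over $\Lambda$, and by the preceding Proposition $A\cong\Lambda\llbracket x_1,\dots,x_d\rrbracket$ for some $d\geq 0$.

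It remains to pin down $d$. Substituting back, $A\llbracket x\rrbracket\cong\Lambda\llbracket x_1,\dots,x_d,x\rrbracket$; comparing with $A\llbracket x\rrbracket\cong\Lambda\llbracket x_1,\dots,x_m\rrbracket$ forces the number of power series variables to agree (it equals $\operatorname{rdim}_\Lambda$ of the ring, or equivalently its Krull dimension minus $\dim\Lambda$), so $d+1=m$, i.e.\ $d=m-1$. Hence $A\cong\Lambda\llbracket x_1,\dots,x_{m-1}\rrbracket$.

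I do not expect a genuine obstacle here; the only two points requiring a little care are the identification $A\,\widehat{\otimes}_\Lambda\,\Lambda\llbracket x\rrbracket\cong A\llbracket x\rrbracket$ and the (purely formal) retract argument above. A more hands-on alternative would run as follows: $A\llbracket x\rrbracket$ is regular, so $A$ is regular by Proposition \ref{20170324_some_prop}; a uniformizer of $\Lambda$ is nonzero in $\Lambda\llbracket x_1,\dots,x_m\rrbracket\cong A\llbracket x\rrbracket$, hence nonzero in the domain $A$, so $A$ is flat over $\Lambda$; and a $\Lambda$-flat complete regular local ring with residue field $k$ is a power series ring over $\Lambda$ by the Cohen structure theorem, after which one concludes on dimensions as before. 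I would nevertheless present the retract argument as the principal one, since it stays entirely within the formal-smoothness formalism already developed.
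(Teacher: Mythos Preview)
Your retract argument is correct, and so is the alternative via regularity and $\Lambda$-flatness. The paper does not spell out a proof here; it simply remarks that this lemma (along with the neighbouring ones) ``follows easily from standard facts about regular systems of parameters'' and points to \cite[Proposition~22]{SerreLocalAlg}. That is precisely the line of your alternative argument: regularity of $A\llbracket x\rrbracket$ forces regularity of $A$ (Proposition~\ref{20170324_some_prop}), the uniformizer is nonzero in $A$ because it is nonzero in the domain $A\llbracket x\rrbracket$, and then one finishes via the structure theory of complete regular local $\Lambda$-algebras.

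Your primary approach is therefore a genuinely different route. The retract argument stays entirely within the lifting-criterion definition of formal smoothness and avoids any direct appeal to regularity or regular systems of parameters; it is shorter and more conceptual, and it transports unchanged to settings where ``formally smooth'' is available but Serre's regularity criteria are less convenient. The paper's intended approach, by contrast, is more hands-on and makes the connection to the other statements in that block (which are also proved via regular systems of parameters) more visible. Either is fine here; your write-up would benefit from stating explicitly that the retract argument shows, in general, that a retract in $\mathcal{C}_\Lambda$ of a formally smooth $\Lambda$-algebra is formally smooth.
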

\begin{Lem}\label{20170405_lemma_res_of_coeff_ring}
Let $\Lambda', R\in \mathcal{C}_{\Lambda}$, such that the structure
morphism $\Lambda\rightarrow \Lambda'$ is flat. Then $R$ is formally
smooth of relative dimension $d\in \mathbb{N}$ over $\Lambda$ if and only if 
$\Lambda' \widehat{\otimes}_{\Lambda} R$ is formally smooth of relative dimension
$d$ over $\Lambda'$.
\end{Lem}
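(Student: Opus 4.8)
The plan is to reduce everything to the structural description recorded in the preceding Proposition: a morphism in $\mathcal{C}_\Lambda$ is formally smooth of relative dimension $d$ exactly when its target is a power series ring in $d$ variables over its source. The implication ``$\Rightarrow$'' is then a direct computation with completed tensor products, while for ``$\Leftarrow$'' the two ingredients are that the special fibres agree and that flatness descends along the faithfully flat base change $\Lambda\to\Lambda'$.

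For ``$\Rightarrow$'', suppose $R\cong\Lambda\llbracket x_1,\ldots,x_d\rrbracket$. Using the universal property of $\widehat{\otimes}_\Lambda$ (it computes the coproduct in $\mathcal{C}_\Lambda$, i.e. $\operatorname{Hom}_{\mathcal{C}_\Lambda}(\Lambda'\widehat{\otimes}_\Lambda A,-)=\operatorname{Hom}_{\mathcal{C}_\Lambda}(\Lambda',-)\times\operatorname{Hom}_{\mathcal{C}_\Lambda}(A,-)$, since $\operatorname{Hom}_{\mathcal{C}_\Lambda}(\Lambda,-)$ is a point) one gets $\Lambda'\widehat{\otimes}_\Lambda\Lambda\llbracket x\rrbracket\cong\Lambda'\llbracket x\rrbracket$, and hence by associativity $\Lambda'\widehat{\otimes}_\Lambda R\cong\Lambda'\llbracket x_1,\ldots,x_d\rrbracket$, which is formally smooth of relative dimension $d$ over $\Lambda'$. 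No flatness hypothesis enters here.

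For ``$\Leftarrow$'', set $S:=\Lambda'\widehat{\otimes}_\Lambda R$ and assume $S\cong\Lambda'\llbracket x_1,\ldots,x_d\rrbracket$. First, I claim the special fibres agree: $S/\mathfrak{m}_{\Lambda'}S\cong R/\mathfrak{m}_\Lambda R$. Indeed, writing $R$ as a quotient $\Lambda\llbracket y_1,\ldots,y_m\rrbracket/J$ one has $S\cong\Lambda'\llbracket y_1,\ldots,y_m\rrbracket/J\Lambda'\llbracket y\rrbracket$ (because $\widehat{\otimes}_\Lambda$ commutes with passage to quotients and with forming power series rings), and reducing this modulo $\mathfrak{m}_{\Lambda'}$ resp. $R$ modulo $\mathfrak{m}_\Lambda$ yields in both cases $k\llbracket y_1,\ldots,y_m\rrbracket$ modulo the image of $J$, since $\Lambda'/\mathfrak{m}_{\Lambda'}=k=\Lambda/\mathfrak{m}_\Lambda$. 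In particular, as $S/\mathfrak{m}_{\Lambda'}S\cong k\llbracket x_1,\ldots,x_d\rrbracket$, also $R/\mathfrak{m}_\Lambda R\cong k\llbracket x_1,\ldots,x_d\rrbracket$. Second, $R$ is flat over $\Lambda$: the structure map $\Lambda\to\Lambda'$ is a flat local homomorphism of Noetherian local rings, hence faithfully flat, so $R\to S=R\widehat{\otimes}_\Lambda\Lambda'$ is faithfully flat (base change of a faithfully flat map, followed by completion of a Noetherian ring); since $S$ is flat over $\Lambda'$ and $\Lambda'$ over $\Lambda$, $S$ is flat over $\Lambda$, and flatness of $R$ over $\Lambda$ then follows by faithfully flat descent along $R\to S$.

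To conclude, I would lift the power series variables of $R/\mathfrak{m}_\Lambda R$ to elements $x_1,\ldots,x_d\in\mathfrak{m}_R$, obtaining a morphism $\varphi\colon\Lambda\llbracket x_1,\ldots,x_d\rrbracket\to R$ in $\mathcal{C}_\Lambda$ which is surjective by the complete Nakayama lemma (it is onto on $\mathfrak{m}/(\mathfrak{m}^2,\mathfrak{m}_\Lambda)$ by construction). Tensoring the short exact sequence $0\to I\to\Lambda\llbracket x_1,\ldots,x_d\rrbracket\to R\to 0$, with $I:=\ker\varphi$, over $\Lambda$ with $k$ and using $\operatorname{Tor}_1^\Lambda(R,k)=0$ gives an exact sequence $0\to I/\mathfrak{m}_\Lambda I\to k\llbracket x_1,\ldots,x_d\rrbracket\to R/\mathfrak{m}_\Lambda R\to 0$, in which the right-hand map carries each $x_i$ to a power series variable; it is therefore a surjection between two copies of $k\llbracket x_1,\ldots,x_d\rrbracket$, hence an isomorphism (surjective endomorphisms of Noetherian rings are injective), so $I/\mathfrak{m}_\Lambda I=0$ and $I=0$ by Nakayama. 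Thus $R\cong\Lambda\llbracket x_1,\ldots,x_d\rrbracket$ is formally smooth of relative dimension $d$ over $\Lambda$. (Alternatively, once flatness and the identification of special fibres are in place, one may invoke the fibral criterion for formal smoothness of \cite{Grothendieck_ega}, using that $k$ is perfect.) I expect the only genuinely delicate point to be the routine but necessary bookkeeping with $\widehat{\otimes}_\Lambda$ — that it commutes with forming power series rings and with quotients, and that it preserves flatness over Noetherian rings; everything else is formal once the isomorphism $S/\mathfrak{m}_{\Lambda'}S\cong R/\mathfrak{m}_\Lambda R$ and the faithful flatness of $\Lambda\to\Lambda'$ are in hand.
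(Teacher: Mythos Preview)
Your argument is correct. The paper does not actually prove this lemma: it is one of the ``basic facts on the category $\mathcal{C}_\Lambda$, whose proofs we leave to the reader'' announced at the beginning of Section~\ref{20170405_section_liftings_and_defos}, so there is nothing to compare against beyond checking your proof on its own merits.

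One small remark on presentation. In your descent step you write that flatness of $R$ over $\Lambda$ ``follows by faithfully flat descent along $R\to S$''; this is fine (given $R\to S$ faithfully flat and $S$ flat over $\Lambda$, an injection $M\hookrightarrow N$ of $\Lambda$-modules stays injective after $-\otimes_\Lambda R$ because it does so after the further faithfully flat base change $-\otimes_R S$), but in the present setting there is an even more elementary route you might mention: $\Lambda$ is a DVR, so flatness is torsion-freeness, and since $R\hookrightarrow S$ (faithful flatness of $\Lambda\to\Lambda'$ gives $R\hookrightarrow R\otimes_\Lambda\Lambda'$, and completion of a Noetherian local ring is faithfully flat) with $S\cong\Lambda'\llbracket x_1,\dots,x_d\rrbracket$ flat over $\Lambda$, the torsion-freeness of $R$ is immediate. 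Likewise, when you conclude that the induced surjection $k\llbracket x_1,\dots,x_d\rrbracket\twoheadrightarrow R/\mathfrak m_\Lambda R\cong k\llbracket x_1,\dots,x_d\rrbracket$ is an isomorphism, the phrase ``surjective endomorphisms of Noetherian rings are injective'' is slightly imprecise (it is not literally an endomorphism); the cleaner justification is that a surjection between Noetherian integral domains of the same Krull dimension has trivial kernel. Neither point affects the validity of your proof.
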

\paragraph{Liftings and deformations of $G$-valued representations}
\begin{Def}\label{20170406_def_lifting}
\begin{enumerate}
\item A lifting of $\overline{\rho}$ to an $A\in \mathcal{C}_{\Lambda}$ is a continuous group
homomorphism $\rho: \Gamma \rightarrow G(A)$ fulfilling $\operatorname{mod}_{\mathfrak{m}_A}\circ\rho = \overline{\rho}$, where $\operatorname{mod}_{\mathfrak{m}_A}: G(A)
\rightarrow G(A/\mathfrak{m}_A) = G(k)$ is the canonical reduction;
\item Denote by $D_{\Lambda}^{\boxempty}(\overline{\rho}): \mathcal{C}_{\Lambda} \rightarrow
\operatorname{Sets}$ the functor which assigns to an object $A\in \mathcal{C}_{\Lambda}$ the
set of all liftings of $\overline{\rho}$ to $A$.
\end{enumerate}
\end{Def}
By \cite[Theorem 1.2.2]{Balaji}, $D_{\Lambda}^{\boxempty}(\overline{\rho})$ is representable
by an object $R_{\Lambda}^{\boxempty}(\overline{\rho})\in \mathcal{C}_{\Lambda}$. As an 
examination of its proof easily yields, we get (with respect to the ring of integers
$\Lambda'$ of some finite extension of $\operatorname{Quot}(\Lambda)$ with residue
field $k_{\Lambda'}=k$) an isomorphism
\begin{equation}\label{20170316_lifting_ring_base_change}
R_{\Lambda'}^{\boxempty}(\overline{\rho}) \cong
\Lambda'\, \widehat{\otimes}_{\Lambda}\, R_{\Lambda}^{\boxempty}(\overline{\rho}).
\end{equation}
\begin{Def}\label{20170316_def_lifting_cond}
A lifting condition is a family $\mathcal{D}= (S(A))_{A\in \mathcal{C}_{\Lambda}}$ of subsets
$S(A)\subset D_{\Lambda}^{\boxempty}(\overline{\rho})(A)$ s.t.
\begin{enumerate}
\item $\overline{\rho}\in S(k)$;
\item If $f:A\rightarrow B$ is a morphism in $\mathcal{C}_{\Lambda}$ and $\rho\in S(A)$, then
$G(f)\circ \rho \in S(B)$;
\item Let $f_1: A_1 \rightarrow A, f_2: A_2 \rightarrow A$ be morphisms in $\mathcal{C}_{\Lambda}$ and let $\rho_3$ be a lifting of $\overline{\rho}$ to $A_3 := A_1\times_A A_2$.
For $i=1,2$ denote by $\pi_i: A_3\rightarrow A_i$ the canonical map and by $\rho_i$ the lifting
$G(\pi_i)\circ \rho_3$ of $\overline{\rho}$ to $A_i$. Then, $\rho_3\in S(A_3)$ if and only
if $\rho_1 \in S(A_1)$ and $\rho_2 \in S(A_2)$.
\end{enumerate}
\end{Def}
Condition 2. guarantees that $\mathcal{D}$ defines a subfunctor 
$D_{\Lambda}^{\boxempty, \mathcal{D}}(\overline{\rho})\subset
D_{\Lambda}^{\boxempty}(\overline{\rho})$. Condition 3. is a variation of the 
Mayer-Vietoris property, so a standard argument yields
\begin{Prop}\label{20170324_prop_rev_rep}
$D_{\Lambda}^{\boxempty, \mathcal{D}}(\overline{\rho})$ is a relatively representable
subfunctor (in the sense of \cite[\S 19]{Mazur1}) of $D_{\Lambda}^{\boxempty}(\overline{\rho})$, i.e. representable by some $R_{\Lambda}^{\boxempty, \mathcal{D}}(\overline{\rho})\in \mathcal{C}_{\Lambda}$. On the other hand, any representable subfunctor
$F\subset D_{\Lambda}^{\boxempty}(\overline{\rho})$ yields a lifting condition 
$\mathcal{D}= (S(A))_{A\in \mathcal{C}_{\Lambda}}$ via $S(A) := F(A)$.
\end{Prop}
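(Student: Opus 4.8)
The strategy is to recognise conditions (1)--(3) of Definition~\ref{20170316_def_lifting_cond} as precisely the input required by the relative representability criterion of Schlessinger--Grothendieck (in the form recalled in \cite[\S 19]{Mazur1}), and to feed them into it. We already know that $D_{\Lambda}^{\boxempty}(\overline{\rho})$ is representable, by $R_{\Lambda}^{\boxempty}(\overline{\rho})\in\mathcal{C}_{\Lambda}$ (\cite[Theorem~1.2.2]{Balaji}). For a subfunctor $F$ of a representable functor $h_{R}$, being \emph{relatively representable} amounts to being representable, and then necessarily by a quotient of $R$: the inclusion $F\hookrightarrow h_{R}$ is a monomorphism of functors, so if $F\cong h_{R'}$ the corresponding morphism $R\to R'$ is an epimorphism in $\mathcal{C}_{\Lambda}$, which one recalls is therefore surjective. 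Thus it suffices to prove that $D_{\Lambda}^{\boxempty,\mathcal{D}}(\overline{\rho})$ is representable by an object of $\mathcal{C}_{\Lambda}$.

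For this I would verify the hypotheses of the criterion for $F:=D_{\Lambda}^{\boxempty,\mathcal{D}}(\overline{\rho})$. First, the value of $F$ on the terminal object $k$ of $\mathcal{C}_{\Lambda}$ is $S(k)=\{\overline{\rho}\}$, a one-point set, by condition (1). Second, $F$ is left exact, i.e.\ it sends a fibre product $A_{3}=A_{1}\times_{A}A_{2}$ in $\mathcal{C}_{\Lambda}$ to the corresponding fibre product of sets. Indeed, $D_{\Lambda}^{\boxempty}(\overline{\rho})$ has this property because it is representable (a fibre product of rings is a fibre product in $\mathcal{C}_{\Lambda}$, hence is preserved by $\operatorname{Hom}_{\mathcal{C}_{\Lambda}}(R_{\Lambda}^{\boxempty}(\overline{\rho}),-)$); condition (2) ensures that the common $A$-restriction of two $\mathcal{D}$-liftings is again a $\mathcal{D}$-lifting, so that the fibre products of the subsets $S(-)$ are formed over the correct base; and condition (3) says exactly that a lifting $\rho_{3}$ to $A_{3}$ lies in $S(A_{3})$ if and only if $\rho_{1}\in S(A_{1})$ and $\rho_{2}\in S(A_{2})$ --- which together with the previous two remarks means that the canonical map $S(A_{3})\to S(A_{1})\times_{S(A)}S(A_{2})$ is a bijection. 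Third, the tangent space $F(k[\epsilon])=S(k[\epsilon])$ is finite-dimensional over $k$: applying condition (3) to the fibre product $k[\epsilon]\times_{k}k[\epsilon]\cong k[\epsilon_{1},\epsilon_{2}]$ exhibits it as a $k$-subspace of $D_{\Lambda}^{\boxempty}(\overline{\rho})(k[\epsilon])$, which is finite-dimensional because $\Gamma$ satisfies $(\Phi_{\ell})$ (cf.\ \cite{Mazur_deforming}). By the criterion, $F$ is representable by some $R_{\Lambda}^{\boxempty,\mathcal{D}}(\overline{\rho})\in\mathcal{C}_{\Lambda}$, which by the first paragraph we may take to be a quotient of $R_{\Lambda}^{\boxempty}(\overline{\rho})$.

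For the converse, let $F\subseteq D_{\Lambda}^{\boxempty}(\overline{\rho})$ be a representable subfunctor and put $S(A):=F(A)$. Condition (1) holds because $F(k)$ is contained in $D_{\Lambda}^{\boxempty}(\overline{\rho})(k)=\{\overline{\rho}\}$ and is nonempty (there is a residue morphism from the representing ring of $F$ to $k$), so $F(k)=\{\overline{\rho}\}$. Condition (2) is just the statement that $F$ is a subfunctor. For condition (3), note that $F$ commutes with fibre products (being representable), that so does $D_{\Lambda}^{\boxempty}(\overline{\rho})$, and that $F\subseteq D_{\Lambda}^{\boxempty}(\overline{\rho})$; chasing the two compatible bijections onto the respective fibre products of sets, one reads off that $\rho_{3}\in F(A_{3})$ if and only if $\rho_{1}\in F(A_{1})$ and $\rho_{2}\in F(A_{2})$.

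The only substantive ingredient is the representability of the ambient lifting functor $D_{\Lambda}^{\boxempty}(\overline{\rho})$, imported from \cite{Balaji}; everything else is formal bookkeeping with fibre products. The one point I would spell out with care is the passage between the form of the criterion phrased over Artinian coefficient rings (and fibre products along small surjections) and the statement over all of $\mathcal{C}_{\Lambda}$: this relies on the functors in play being continuous, i.e.\ determined by their restrictions to the Artinian quotients $A/\mathfrak{m}_{A}^{n}$. For $D_{\Lambda}^{\boxempty}(\overline{\rho})$ this is the familiar fact that a continuous representation into $G(A)$ is reassembled from its reductions modulo the powers of $\mathfrak{m}_{A}$; for $D_{\Lambda}^{\boxempty,\mathcal{D}}(\overline{\rho})$ it is the compatibility of the lifting condition with this inverse limit, which is part of the standard package. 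I expect this bookkeeping, rather than any single difficult step, to be where the ``standard argument'' actually has to be carried out.
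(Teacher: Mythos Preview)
Your proposal is correct and spells out precisely the ``standard argument'' that the paper alludes to without giving details: the paper merely remarks that condition~3 of Definition~\ref{20170316_def_lifting_cond} is a Mayer--Vietoris property and invokes \cite[\S19]{Mazur1}, and your verification of the Schlessinger-type criterion (single point over $k$, preservation of fibre products, finite-dimensional tangent space) together with the converse bookkeeping is exactly what is meant. The one place you might tighten is the claim that epimorphisms in $\mathcal{C}_{\Lambda}$ are surjective: the cleanest route is to note that the inclusion of tangent spaces $F(k[\epsilon])\subseteq D_{\Lambda}^{\boxempty}(\overline{\rho})(k[\epsilon])$ dualises to a surjection on relative cotangent spaces, whence $R_{\Lambda}^{\boxempty}(\overline{\rho})\to R_{\Lambda}^{\boxempty,\mathcal{D}}(\overline{\rho})$ is surjective by the complete Nakayama lemma.
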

We have the following conditioned version of (\ref{20170316_lifting_ring_base_change}): 
\begin{equation}\label{20170406_conditioned_tensor_prod}
R_{\Lambda'}^{\boxempty, \mathcal{D}'}(\overline{\rho}) \cong
\Lambda'\, \widehat{\otimes}_{\Lambda}\, R_{\Lambda}^{\boxempty,\mathcal{D}}(\overline{\rho}),
\end{equation}
where the condition $\mathcal{D}'$ on the left is a truncated version of $\mathcal{D}$, i.e.
denotes the family of those $S(A)$ as in the definition of $\mathcal{D}$ for which $A\in 
\mathcal{C}_{\Lambda'}$. We will often omit this distinction and write $\mathcal{D}$ in 
place of $\mathcal{D}'$.
\begin{Rem}\label{20170406_remark_extended_conds}
Let $\Lambda$ be as above and let $^{\ast}\mathcal{C}_{\Lambda}$ denote the category of complete Noetherian local $\Lambda$-algebras $A$ such that $[k_A:k]$ is finite. Then one can extend $D^{\boxempty}_{\Lambda}(\overline{\rho})$ to a functor on $^{\ast}\mathcal{C}_{\Lambda}$ by considering $A$-valued liftings of $\overline{\rho}$ as continuous group homomorphisms
$\rho:\Gamma \rightarrow G(A)$ which fulfill $\operatorname{mod}_{\mathfrak{m}_A}\circ\rho = \iota_{k\subset k_A}\circ\overline{\rho}$, where $\iota_{k\subset k_A}: G(k)\rightarrow 
G(k_A)$ is the map induced by the structure map $\Lambda\rightarrow A$. It is easy to check
that this extended functor is representable by the same universal object $R^{\boxempty}_{\Lambda}(\overline{\rho})$ as the functor from Definition \ref{20170406_def_lifting}. Moreover, if $\Lambda'$ is the ring of integers of some finite extension of $\operatorname{Quot}(\Lambda)$ such that $[k_{\Lambda'}:k]<\infty$, we have the following 
version of (\ref{20170316_lifting_ring_base_change}): 
\begin{equation*}
R_{\Lambda'}^{\boxempty}(\iota_{k\subset k_A}\circ\overline{\rho}) \cong
\Lambda'\, \widehat{\otimes}_{\Lambda}\, R_{\Lambda}^{\boxempty}(\overline{\rho}).
\end{equation*}
Moreover, if $\mathcal{D}$ is an extended lifting condition, i.e. a family 
$(S(A))_{A\in \,^{\ast}\mathcal{C}_{\Lambda}}$
fulfilling the analogue conditions of 
Definition \ref{20170316_def_lifting_cond} (with $A,A_i, B\in \,^{\ast}\mathcal{C}_{\Lambda}$), we have the following conditioned version
of (\ref{20170406_conditioned_tensor_prod}):
\begin{equation*}
R_{\Lambda'}^{\boxempty, \mathcal{D}}(\iota_{k\subset k_A}\circ\overline{\rho}) \cong
\Lambda'\, \widehat{\otimes}_{\Lambda}\, R_{\Lambda}^{\boxempty,\mathcal{D}}(\overline{\rho}),
\end{equation*}
where $\mathcal{D}$ on the left hand side is to be understood as the $\Lambda'$-truncated 
version of the condition $\mathcal{D}$, i.e. a family indexed by $^{\ast}\mathcal{C}_{\Lambda'}$ instead of $^{\ast}\mathcal{C}_{\Lambda}$. Moreover, the statement of 
Lemma \ref{20170405_lemma_res_of_coeff_ring} holds if $\Lambda'$ is in $^{\ast}\mathcal{C}_{\Lambda}$ instead of $\mathcal{C}_{\Lambda}$. (The content of this remark is strongly
inspired by the treatment in \cite[Appendix A]{CDT} and \cite{Mazur1}.)
\end{Rem}

\begin{Def}
\begin{enumerate}
\item A deformation of $\overline{\rho}$ to $A\in \mathcal{C}_{\Lambda}$ is an equivalence
class of liftings to $A$, where two lifts are taken to be equivalent if they are
conjugate by some element of $\widehat{G}(A) := \operatorname{ker}(\operatorname{mod}_{\mathfrak{m}_A})$.
\item Denote by $D_{\Lambda}(\overline{\rho}): \mathcal{C}_{\Lambda} \rightarrow
\operatorname{Sets}$ the functor which assigns to an object $A\in \mathcal{C}_{\Lambda}$ the
set of all deformations of $\overline{\rho}$ to $A$.
\end{enumerate}
\end{Def}
For the following, denote by $Z_G$ the center of $G$ and by $\mathfrak{g}$ (resp. by $\mathfrak{z}$) the Lie algebra of
the special fiber of $G$ (resp. of $Z_G$). We assume from now on that $Z_G$ is formally smooth over $\Lambda$.
\begin{Thm}[{\cite[Theorem 3.3]{Tilouine}}]\label{20170316_Tilouines_representability_thm}
If $H^0(\Gamma, \mathfrak{g}) = 
\mathfrak{z}$ then $D_{\Lambda}(\overline{\rho})$ is representable by an object 
$R_{\Lambda}(\overline{\rho})\in \mathcal{C}_{\Lambda}$.
\end{Thm}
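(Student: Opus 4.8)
The plan is to apply Schlessinger's representability criterion to $D_{\Lambda}(\overline{\rho})$. Recall that $D_{\Lambda}(\overline{\rho})$ is the quotient $D_{\Lambda}^{\boxempty}(\overline{\rho})/\widehat{G}$ of the already representable lifting functor by the action of the smooth group functor $A\mapsto\widehat{G}(A)=\ker(G(A)\to G(k))$; since $D_{\Lambda}^{\boxempty}(\overline{\rho})(k)=\{\overline{\rho}\}$ is a single point, so is $D_{\Lambda}(\overline{\rho})(k)$, and it remains to verify Schlessinger's conditions (H1)--(H4). Throughout one uses two consequences of the smoothness hypotheses: first, since $G$ is smooth over $\Lambda$, any surjection $A'\twoheadrightarrow A$ in $\mathcal{C}_{\Lambda}$ induces a surjection $\widehat{G}(A')\twoheadrightarrow\widehat{G}(A)$ (lift a given $g$ by formal smoothness; its reduction to $k$ is still $1$), and likewise for $Z_G$ since $Z_G$ is formally smooth over $\Lambda$; second, a lifting to a fibre product $A_1\times_A A_2$ is the same as a compatible pair of liftings to $A_1$ and $A_2$, because $G(A_1\times_A A_2)=G(A_1)\times_{G(A)}G(A_2)$ for the affine scheme $G$, and the same holds for $\widehat{G}$.

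For (H1) and (H2) (surjectivity part): given $[\rho_1]\in D(A_1)$ and $[\rho_2]\in D(A_2)$ with the same image in $D(A)$, choose liftings $\rho_i$; their images in $D_{\Lambda}^{\boxempty}(A)$ differ by conjugation by some $g\in\widehat{G}(A)$, which we lift to $\widehat{G}(A_2)$ and use to replace $\rho_2$, so that $\rho_1$ and $\rho_2$ now agree in $D_{\Lambda}^{\boxempty}(A)$; gluing produces a lifting $\rho_3$ to $A_1\times_A A_2$ whose class maps to $([\rho_1],[\rho_2])$. For the injectivity part of (H2), where $A=k$ and hence $\widehat{G}(k)=1$: if $[\sigma],[\tau]\in D(A_1\times_k k[\varepsilon])$ have the same image, conjugate $\sigma$ to $\tau$ over $A_1$ and over $k[\varepsilon]$ by elements $h_1\in\widehat{G}(A_1)$, $h_2\in\widehat{G}(k[\varepsilon])$, which automatically agree over $k$; glue them to $h\in\widehat{G}(A_1\times_k k[\varepsilon])$ and conclude $h\sigma h^{-1}=\tau$, so $[\sigma]=[\tau]$. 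Finally (H3): the tangent space $D(k[\varepsilon])$ is computed in the usual way as $H^1(\Gamma,\mathfrak{g})$, the cocycles $Z^1(\Gamma,\mathfrak{g})=D_{\Lambda}^{\boxempty}(\overline{\rho})(k[\varepsilon])$ modulo the coboundaries coming from $\widehat{G}(k[\varepsilon])\cong\mathfrak{g}$, and this is finite-dimensional over $k$ by the condition $(\Phi_{\ell})$. At this stage Schlessinger's theorem already yields a hull; the hypothesis $H^0(\Gamma,\mathfrak{g})=\mathfrak{z}$ has not yet been used, and it is precisely what is needed for (H4).

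The heart of the argument, and the step I expect to be the main obstacle, is (H4), which rests on the following centralizer lemma, proved by induction on the length of $A$: \emph{if $H^0(\Gamma,\mathfrak{g})=\mathfrak{z}$, then for every lifting $\rho$ of $\overline{\rho}$ to $A\in\mathcal{C}_{\Lambda}$ the stabilizer $\{g\in\widehat{G}(A):g\rho g^{-1}=\rho\}$ equals $\widehat{Z_G}(A):=\ker(Z_G(A)\to Z_G(k))$.} For the inductive step across a small extension $A\twoheadrightarrow A_0$ with kernel $I$, one uses the inductive hypothesis together with surjectivity of $\widehat{Z_G}(A)\to\widehat{Z_G}(A_0)$ to reduce to $g\equiv 1\bmod I$, writes $g=1+X$ with $X\in\mathfrak{g}\otimes_k I$, and observes that $g\rho g^{-1}=\rho$ becomes $\mathrm{Ad}(\overline{\rho}(\gamma))X=X$ for all $\gamma$ (the higher-order terms vanish since $I\cdot\mathfrak{m}_A=0$), i.e. $X\in H^0(\Gamma,\mathfrak{g})\otimes_k I=\mathfrak{z}\otimes_k I$, so $g\in\widehat{Z_G}(A)$. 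Granting this, (H4) follows: given $[\sigma],[\tau]\in D(A'\times_A A')$ with the same image in $D(A')\times_{D(A)}D(A')$, pick liftings and elements $h_1,h_2\in\widehat{G}(A')$ conjugating the two projections of $\sigma$ onto those of $\tau$; over $A$ the elements $h_1,h_2$ differ by an element stabilizing the common reduction, hence — by the centralizer lemma — by an element of $\widehat{Z_G}(A)$, which, using smoothness of $Z_G$ and the fact that central elements act trivially by conjugation, may be absorbed so that $h_1$ and $h_2$ acquire the same image in $\widehat{G}(A)$; gluing via $\widehat{G}(A'\times_A A')=\widehat{G}(A')\times_{\widehat{G}(A)}\widehat{G}(A')$ gives $h$ with $h\sigma h^{-1}=\tau$, so $[\sigma]=[\tau]$. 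By Schlessinger's criterion $D_{\Lambda}(\overline{\rho})$ is therefore pro-representable, and the finiteness of the tangent space forces the pro-representing object $R_{\Lambda}(\overline{\rho})$ to be a complete Noetherian local $\Lambda$-algebra with residue field $k$, i.e. to lie in $\mathcal{C}_{\Lambda}$. This is the classical argument of Mazur, adapted to general smooth $G$ as in \cite{Tilouine}.
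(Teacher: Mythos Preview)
Your argument is correct and is precisely the classical route via Schlessinger's criterion, with the hypothesis $H^0(\Gamma,\mathfrak{g})=\mathfrak{z}$ entering exactly where it should, namely in the centralizer lemma needed for (H4). Note, however, that the paper does not actually prove this theorem: it is simply quoted from \cite[Theorem 3.3]{Tilouine}, so there is no ``paper's own proof'' to compare against. What you have written is essentially Tilouine's (and Mazur's) argument, so in that sense your approach and the cited source agree.
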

Observe that in the case $G=\operatorname{GL}_n$, the condition of 
Theorem \ref{20170316_Tilouines_representability_thm} becomes the usual centralizer condition 
$\operatorname{End}_{k[\Gamma]}(\overline{\rho}) = k$. In practice, this is often deduced
from absolute irreducibility of $\overline{\rho}$ by Schur's Lemma. This reasoning can 
be adopted to more general groups $G$ as follows:
\begin{Def}[Absolute Irreducibility, cf. \cite{SerreMorsound}]\label{20170322_defn_abs_irred}
We say that $\overline{\rho}$ is absolutely irreducible if there does not exist a proper 
parabolic subgroup $P\subsetneq G$ over $\overline{k}$ such that $\overline{\rho}(\Gamma) \subset P$.
\end{Def}
Then the following can be deduced from \cite[Proposition 2.13]{LiebeckTesterman}:
\begin{Lem}[Schur's Lemma]
Assume that $\ell$ is very good for $G$ (cf. \cite[Section 2]{BMRT}) or that there exists an 
embedding $G\hookrightarrow \operatorname{GL}(V)$ such that $(\operatorname{GL}(V), G)$ is
a reductive pair (in the sense of \cite[Definition 3.32]{LiebeckTesterman}). Then $H^0(\Gamma, \mathfrak{g}) = 
\mathfrak{z}$ if $\overline{\rho}$ is absolutely irreducible.
\end{Lem}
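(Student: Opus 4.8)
The plan has three moves. First I would note that, since $\Gamma$ acts on $\mathfrak g$ through $\operatorname{Ad}\circ\overline\rho$, one has $H^0(\Gamma,\mathfrak g)=\mathfrak g^{\overline\rho(\Gamma)}$, and $\mathfrak z=\operatorname{Lie}(Z_G)\subseteq\mathfrak g^{\overline\rho(\Gamma)}$ because the adjoint action is trivial on $\mathfrak z$. Taking invariants commutes with the field extension $k\subseteq\overline{k}$, and an inclusion of $k$-subspaces of the finite-dimensional space $\mathfrak g$ is an equality as soon as it becomes one after $\otimes_k\overline{k}$; so it suffices to prove $(\mathfrak g\otimes_k\overline{k})^{H}=\mathfrak z\otimes_k\overline{k}$, where $H:=\overline\rho(\Gamma)$ is a finite --- hence Zariski-closed --- subgroup of $G_{\overline{k}}$. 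Working over $\overline{k}$ from now on, I would deduce this from the two assertions $\operatorname{Lie}(C_G(H))=\mathfrak g^{H}$ and $C_G(H)^\circ=Z_G^\circ$ (equality of algebraic groups over $\overline{k}$): together they give $\mathfrak g^{H}=\operatorname{Lie}(C_G(H))=\operatorname{Lie}(Z_G^\circ)=\mathfrak z\otimes_k\overline{k}$, using that $Z_G$ is formally smooth with $\operatorname{Lie}(Z_G)=\mathfrak z$.

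The first assertion, $\operatorname{Lie}(C_G(H))=\mathfrak g^{H}$, is \emph{separability} of $H$ in $G$ --- equivalently, smoothness of the scheme-theoretic centralizer $C_G(H)$ --- and this is exactly what \cite[Proposition 2.13]{LiebeckTesterman} provides under either of our hypotheses on $G$ (``$\ell$ very good for $G$'', or ``$(\operatorname{GL}(V),G)$ a reductive pair for some faithful $G\hookrightarrow\operatorname{GL}(V)$''). The second assertion is where absolute irreducibility is used: by Definition \ref{20170322_defn_abs_irred}, $H$ --- and therefore every subgroup of $G_{\overline{k}}$ containing $H$ --- lies in no proper parabolic subgroup. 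Set $C:=C_G(H)^\circ$. Then $C$ is reductive: if its unipotent radical $R_u(C)$ were nontrivial it would be a nontrivial connected unipotent subgroup centralized by $H$, and the Borel--Tits theorem would give a proper parabolic $P$ with $R_u(C)\subseteq R_u(P)$ and $N_G(R_u(C))\subseteq P$, so that $H\subseteq C_G(R_u(C))\subseteq N_G(R_u(C))\subseteq P$, a contradiction. Next, the connected centre $S:=Z(C)^\circ$ is a torus centralized by $H$; if $S$ were not central in $G$ then $C_G(S)$ would be a proper Levi subgroup containing $H$, which is impossible, so $S\subseteq Z_G$. Finally, if $C$ were not itself a torus, its derived group would be a nontrivial semisimple subgroup; a maximal torus $T_0$ of it is centralized by $H$ and is not central in $G$ (otherwise it would be central in that semisimple group, which a maximal torus never is), so $C_G(T_0)$ would be a proper Levi containing $H$ --- impossible. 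Hence $C=S\subseteq Z_G$, which together with the evident inclusion $Z_G^\circ\subseteq C_G(H)$ gives $C_G(H)^\circ=Z_G^\circ$.

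The one nonformal ingredient, and the step I expect to be the real obstacle, is separability (the input from \cite[Proposition 2.13]{LiebeckTesterman}): in characteristic $\ell$ the scheme $C_G(H)$ can be non-reduced, so that in general only $\operatorname{Lie}(C_G(H))\subseteq\mathfrak g^{H}$ holds, and it is precisely the ``very good'' or ``reductive pair'' hypothesis on $G$ that excludes this pathology. Once separability is in hand, the rest is routine bookkeeping with parabolic and Levi subgroups as above.
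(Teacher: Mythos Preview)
Your argument is correct and matches the paper's approach: the paper gives no proof beyond the sentence ``the following can be deduced from \cite[Proposition 2.13]{LiebeckTesterman}'', and your write-up is precisely a careful elaboration of that deduction, using the cited result for separability of $H=\overline\rho(\Gamma)$ in $G$ (i.e.\ smoothness of the centralizer, giving $\operatorname{Lie}(C_G(H))=\mathfrak g^{H}$) and then a standard Borel--Tits/Levi argument to identify $C_G(H)^\circ$ with $Z_G^\circ$ from $G$-irreducibility. Nothing needs to be added.
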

We now give an appropriate version of Definition \ref{20170316_def_lifting_cond}:
\begin{Def}\label{20170323_defn_defo_cond}
A deformation condition is a 
lifting condition in the sense of 
Definition \ref{20170316_def_lifting_cond} 
which fulfills additionally
\begin{enumerate}
\item[4.] If ${\rho}\in S(A)$ and $g\in \widehat{G}(A)$, then $g\rho g^{-1} \in S(A)$.
\end{enumerate}
\end{Def}
This defines a relatively representable subfunctor $D^{\mathcal{D}}_{\Lambda}(\overline{\rho})$
of $D_{\Lambda}(\overline{\rho})$: If $D_{\Lambda}(\overline{\rho})$ is representable,
the so is $D^{\mathcal{D}}_{\Lambda}(\overline{\rho})$ and the representing object
$R^{\mathcal{D}}_{\Lambda}(\overline{\rho})$ is a quotient of 
$R_{\Lambda}(\overline{\rho})$.
In addition to the conditions appearing in Section \ref{20170316_sect_defo_conds} below, we will be interested in the following conditions:
\begin{enumerate}
\item If $\Delta\subset \Gamma$ is a profinite subgroup and $\overline{\rho}(\Delta) = \{1\}$, then the assignment $S(A) := \{\rho | \rho(\Delta) = \{1\}\}$ defines a deformation condition. In the case $\Gamma = \operatorname{Gal}_K$ for a local field $K$ and $\Delta = I_K$, we call 
this the unramified lifting condition and write $D_{\Lambda}^{(\boxempty), \tt{nr}}(\overline{\rho})$ for the corresponding subfunctor.
\item Fix a representation $\chi: \Gamma \rightarrow G^{\tt{ab}}(\Lambda)$ such that $d(k)\circ\overline{\rho}= \overline{\chi}$, where $d: G\rightarrow G^{\tt{ab}}$ is the canonical
projection modulo the derived subgroup $G^{\tt{der}}$ and where $\overline{\chi}$ denotes 
the reduction of $\chi$. In accordance with the case $G=\operatorname{GL}_n$, we call this 
the fixed deformation condition and write $D_{\Lambda}^{(\boxempty), \chi}(\overline{\rho})$ for the corresponding subfunctor.
\item Let $\Gamma = \operatorname{Gal}_{F}$ for a global field $F$ and $\Sigma\subset \operatorname{Pl}_F$ a set of places and fix for each $\nu\in \Sigma$ a local condition $D_{\nu}$ of the functor $D^{(\boxempty)}_{\Lambda}(\overline{\rho}_{\nu})$, where $\overline{\rho}_{\nu}$ denotes the restriction of $\overline{\rho}$ to a decomposition group at $\nu$. Then the assignment $S(A) = \{\rho|\rho_{\nu} \in D^{(\boxempty), D_{\nu}}_{\Lambda}(\overline{\rho}_{\nu}) \, \forall \nu\in \Sigma\}$ defines a global condition, denoted by 
$\mathcal{D}= (D_{\nu})_{\nu\in \Sigma}$. The afforded subfunctor of $D^{(\boxempty)}_{\Lambda}(\overline{\rho})$ is denoted by $D^{(\boxempty), \mathcal{D}}_{\Lambda}(\overline{\rho})$.
\item If $\Gamma, F, \Sigma$ are as above and if $\overline{\rho}$ is unramified outside $\Sigma$, then requiring that a lift $\rho$ is unramified outside $\Sigma$ defines a global deformation condition, and we denote the corresponding subfunctor by  
$D^{(\boxempty)}_{\Sigma,\Lambda}(\overline{\rho})$. It is easily seen that studying these
lifts is equivalent to studying unconditioned lifts of $\overline{\rho}$, understood as a 
representation of the Galois group $\operatorname{Gal}_{F, \Sigma}$ of the maximal, unramified outside $\Sigma$ extension $F_{\Sigma}$ of $F$.
\end{enumerate}
It is easily seen that decreeing multiple conditions defines another condition, i.e. it makes sense to write for example $D^{\boxempty, \chi, \tt{nr}}_{\Lambda}(\overline{\rho})$.
\paragraph{Multiply framed deformations}
Continue to denote $\Gamma = \operatorname{Gal}_F$ and fix finite subsets $\Sigma \subset S\subset
\operatorname{Pl}_F$ such that $\overline{\rho}$ is unramified outside $S$.
\begin{Def}\label{20170328_multiply_framed_defo_functor}
Following \cite[Section 4.1.1]{KW2}, we define the functor $D^{\boxempty_{\Sigma}}_{\Lambda}(\overline{\rho}):\mathcal{C}_{\Lambda}
\rightarrow \operatorname{Sets}$  by
\begin{equation*}
A \mapsto \Bigl\{ (\rho, (\rho_{\nu},\beta_{\nu})_{\nu\in\Sigma}) \;\Bigl|\; 
\substack{
\rho\in D^{\boxempty}_{\Lambda}(\overline{\rho})(A),\;
\rho_{\nu}\in D^{\boxempty}_{\Lambda}(\overline{\rho}_{\nu})(A),\; \beta_{\nu}\in \widehat{G}(A)\\ \text{s. t. }
\rho|\operatorname{Gal}(F_{\nu}) = \beta_{\nu}\rho_{\nu}\beta_{\nu}^{-1}}
\Bigr\}_{/\sim}
\end{equation*}
where $(\rho, (\rho_{\nu},\beta_{\nu})_{\nu\in\Sigma})$ and $(\rho', (\rho_{\nu}',\beta_{\nu}')_{\nu\in\Sigma})$ are taken 
to be equivalent if $\rho_{\nu} = \rho_{\nu}'$ for all $\nu$ and if there is a $\gamma \in \widehat{G}(A)$
such that $\rho' = \gamma \rho \gamma^{-1}$ and $\beta_{\nu}' = \gamma^{-1}\beta_{\nu}$ for all $\nu$.
\end{Def}
Note that specifying the $\rho_{\nu}$ is not strictly necessary, as they can be obtained from $\rho$ and $\beta_{\nu}$. We can impose a deformation condition $\mathcal{D}=(S(A))_{A \in \mathcal{C}_{\Lambda}}$ on multiply framed deformations in the same way we did for liftings and deformations, i.e. we allow only those triples $(\rho, (\rho_{\nu},\beta_{\nu})_{\nu\in\Sigma})$ for which $\rho\in S(A)$.
The following assertions are immediate, cf. \cite[Proposition 4.1]{KW2} or \cite[Proposition 2.62]{MyPhd}:
\begin{Prop}\label{20170525_rep_res_proposition}
\begin{enumerate}
\item $D^{\boxempty_{\Sigma}, (\chi), \mathcal{D}}_{[S], \Lambda}$ is representable and 
we denote the afforded deformation ring by $R^{\boxempty_{\Sigma}, (\chi), \mathcal{D}}_{[S], \Lambda}$ (if $\Sigma=\emptyset$, we have to assume $H^0(\Gamma, \mathfrak{g}) = 
\mathfrak{z}$);
\item If $\#\Sigma = 1$, then the functors $D^{\boxempty_{\Sigma}, (\chi), \mathcal{D}}_{[S], \Lambda}$ and $D^{\boxempty, (\chi), \mathcal{D}}_{[S], \Lambda}$ are naturally isomorphic;
\item If $\Sigma\neq \emptyset$, then 
\begin{equation*}
R^{\boxempty_{\Sigma}, (\chi), \mathcal{D}}_{[S], \Lambda} \cong
R^{\boxempty, (\chi), \mathcal{D}}_{[S], \Lambda}\llbracket x_1, \ldots, x_{t}\rrbracket
\text{ and, if }H^0(\Gamma, \mathfrak{g}) = 
\mathfrak{z}, \text{ then also }
R^{\boxempty, (\chi), \mathcal{D}}_{[S], \Lambda} \cong
R^{(\chi), \mathcal{D}}_{[S], \Lambda}\llbracket x_1, \ldots, x_{u}\rrbracket
\end{equation*}
with $t= \operatorname{dim}(\mathfrak{g}).(\#\Sigma -1), u = 
\operatorname{dim}(\mathfrak{g}) - \operatorname{dim}(\mathfrak{z}) = 
\operatorname{dim}(\mathfrak{g}^{\tt{der}})$.
%
\end{enumerate}
\end{Prop}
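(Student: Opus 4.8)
The plan is to establish Proposition \ref{20170525_rep_res_proposition} by reducing everything to standard representability and relative-representability facts, together with a careful bookkeeping of the group-theoretic dimensions. First I would handle part (1): representability of $D^{\boxempty_{\Sigma}, (\chi), \mathcal{D}}_{[S], \Lambda}$. Following \cite[Section 4.1.1]{KW2}, the starting point is that the unframed lifting functor $D^{\boxempty}_{\Lambda}(\overline{\rho})$ is representable by \cite[Theorem 1.2.2]{Balaji} (via $R^{\boxempty}_{\Lambda}(\overline{\rho})$), hence so is the global ``unramified outside $S$'' version, which amounts to replacing $\Gamma$ by $\operatorname{Gal}_{F,S}$ (this uses the $(\Phi_\ell)$-finiteness noted earlier, plus the last item in the list of deformation conditions). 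The multiply framed functor adds, for each $\nu\in\Sigma$, a copy of the formally smooth group $\widehat{G}$ over $\mathcal{C}_\Lambda$, taken modulo the diagonal $\widehat{G}$-action; one checks by the usual argument that this $\widehat{G}$-torsor-type structure keeps the functor representable (the diagonal action is free on the level of the $\beta_\nu$'s because $\widehat{G}(A)\to\widehat{G}(k)=\{1\}$ makes the relevant stabilisers trivial, so no non-trivial automorphisms obstruct representability even without a Schur condition). The fixed-multiplier condition $(\chi)$ and the deformation condition $\mathcal{D}$ cut out a relatively representable subfunctor by Propositions \ref{20170324_prop_rev_rep} and the remark that intersecting conditions gives a condition; so the intersected functor is still representable. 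The parenthetical caveat for $\Sigma=\emptyset$ is precisely the need for Theorem \ref{20170316_Tilouines_representability_thm}, i.e.\ $H^0(\Gamma,\mathfrak{g})=\mathfrak{z}$, since without a frame one must pass to deformations rather than liftings.

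Next, part (2): when $\#\Sigma=1$, write $\Sigma=\{\nu_0\}$. A multiply framed deformation is then a triple $(\rho,\rho_{\nu_0},\beta_{\nu_0})$ with $\rho|_{\operatorname{Gal}(F_{\nu_0})}=\beta_{\nu_0}\rho_{\nu_0}\beta_{\nu_0}^{-1}$, modulo $\gamma\in\widehat{G}(A)$ acting by $\rho\mapsto\gamma\rho\gamma^{-1}$, $\beta_{\nu_0}\mapsto\gamma^{-1}\beta_{\nu_0}$. I would exhibit the natural transformation to $D^{\boxempty}_{[S],\Lambda}$ sending such a class to $\beta_{\nu_0}^{-1}\rho\beta_{\nu_0}$ (a genuine lifting, since conjugating by $\widehat{G}(A)$ does not change the reduction), which is well-defined on equivalence classes precisely because the $\gamma$-action is absorbed. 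Its inverse sends a lifting $\rho'$ to $(\rho',\rho'|_{\operatorname{Gal}(F_{\nu_0})},1)$. One checks these are mutually inverse natural isomorphisms, compatibly with imposing $(\chi)$ and $\mathcal{D}$. This is the base case of the dimension count in (3).

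For part (3), the isomorphism $R^{\boxempty_{\Sigma}, (\chi), \mathcal{D}}_{[S], \Lambda}\cong R^{\boxempty, (\chi), \mathcal{D}}_{[S], \Lambda}\llbracket x_1,\dots,x_t\rrbracket$ with $t=\dim(\mathfrak{g})\cdot(\#\Sigma-1)$ follows by comparing the $\#\Sigma$-frame functor with the single-frame functor from (2): the extra data is, for each of the $\#\Sigma-1$ places in $\Sigma\setminus\{\nu_0\}$, an element $\beta_\nu\in\widehat{G}(A)$ with no constraint beyond fixing $\rho|_{\operatorname{Gal}(F_\nu)}$ up to conjugation, and the local lift $\rho_\nu$ is then determined. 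Since $\widehat{G}$ is formally smooth of relative dimension $\dim(\mathfrak{g})$ over $\Lambda$ (because $G$ is smooth and $\widehat{G}$ is the formal completion along the identity of the special fibre), each free $\beta_\nu$ contributes a formal power series ring in $\dim(\mathfrak{g})$ variables; the one leftover frame at $\nu_0$ is exactly the single-frame situation, so no variables are over-counted — giving $t=\dim(\mathfrak{g})(\#\Sigma-1)$. For the second isomorphism, relating the one-frame ring to the unframed deformation ring, one uses that passing from liftings to deformations quotients by the free action of $\widehat{G}/Z_{\widehat{G}}$ — a formal group of relative dimension $\dim(\mathfrak{g})-\dim(\mathfrak{z})=\dim(\mathfrak{g}^{\tt{der}})$ (here one uses formal smoothness of $Z_G$, assumed from now on, to split off $\mathfrak{z}$) — and that under the hypothesis $H^0(\Gamma,\mathfrak{g})=\mathfrak{z}$ this action is free in the appropriate sense, so the torsor trivialises over the (local, complete) base and contributes exactly $u=\dim(\mathfrak{g}^{\tt{der}})$ power series variables. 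This is essentially \cite[Proposition 4.1]{KW2} or \cite[Proposition 2.62]{MyPhd}, and I would cite those for the technical verification that the relevant torsors are indeed trivial (which rests on the vanishing of the obstruction $H^1$ for smooth affine group schemes over complete local rings).

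The main obstacle I anticipate is not any single deep point but the careful verification, in part (3), that the ``extra frames'' really do contribute \emph{free} power series variables and not some quotient — i.e.\ that the stabiliser of the diagonal $\widehat{G}$-action, restricted to the data $(\rho,(\rho_\nu,\beta_\nu))$, is trivial on $\mathcal{C}_\Lambda$-points. This is exactly where the distinction between liftings (where framing makes stabilisers automatically trivial, no Schur hypothesis needed) and deformations (where one genuinely needs $H^0(\Gamma,\mathfrak{g})=\mathfrak{z}$) enters, and keeping the hypotheses attached to the right clauses is the delicate part of the bookkeeping. Everything else — representability of the base functors, relative representability of conditioned subfunctors, and Proposition \ref{20170324_some_prop}-style power-series stability — is already available in the excerpt or in the cited references.
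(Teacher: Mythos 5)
Your proposal is correct in substance and follows the same torsor-counting route that the cited references (\cite[Proposition 4.1]{KW2}, \cite[Proposition 2.62]{MyPhd}) use; the paper itself merely defers to those sources, so your write-up is a reasonable filling-in of the standard argument. You correctly locate the one genuinely delicate point: the first isomorphism of part (3) needs no Schur-type hypothesis (left multiplication of $\widehat{G}(A)$ on the $\beta_\nu$'s is automatically free, so the extra frames contribute honest power series variables), whereas the second isomorphism does need $H^0(\Gamma,\mathfrak{g})=\mathfrak{z}$ to trivialise the $\widehat{G}/Z_{\widehat{G}}$-action on liftings over deformations.

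One small slip in part (2): with the equivalence of Definition \ref{20170328_multiply_framed_defo_functor}, normalising to $\beta_{\nu_0}'=1$ forces $\gamma=\beta_{\nu_0}$, hence the representative lifting is $\beta_{\nu_0}\rho\beta_{\nu_0}^{-1}$, not $\beta_{\nu_0}^{-1}\rho\beta_{\nu_0}$ as you wrote. This is a sign convention issue (and the compatibility of the two equations in Definition \ref{20170328_multiply_framed_defo_functor} already suggests the sign there deserves a second look), but it does not affect the conclusion; you should simply check that your proposed natural transformation and its claimed inverse really are mutually inverse with the convention you adopt. Also, your justification for freeness (``$\widehat{G}(A)\to\widehat{G}(k)=\{1\}$ makes stabilisers trivial'') is a red herring: the stabiliser is trivial simply because $\gamma^{-1}\beta_\nu=\beta_\nu$ forces $\gamma=1$ whenever $\Sigma\neq\emptyset$, with no need to invoke the special fibre.
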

From now on, let us suppose 
\begin{Ass}\label{20150904_Ass_H0_vanishing_for_presentations}
$H^0(\operatorname{Gal}_{F,S}, \mathfrak{g}^{\tt{der}})= 0$.
\end{Ass}
With respect to a deformation condition $\mathcal{D}= (D_{\nu})_{\nu\in \Sigma}$ as in 
example 4. above, we set 
\begin{equation*}
R_{\Lambda}^{\tt{loc}_{\Sigma}, (\chi), \mathcal{D}}(\overline{\rho})\; := \;\widehat{\bigotimes}_{\nu\in \Sigma}
\;R_{\Lambda}^{\boxempty, (\chi_{\nu}), D_{\nu}}(\overline{\rho}_{\nu}).
\end{equation*}
The following is essentially a special case of 
\cite[Proposition 4.2.5]{Balaji} (which goes back
to \cite[Proposition 4.1.5]{Kisin2}):
\begin{Prop} \label{20170329_first_rep_result}
Assume that $D^{(\chi)}_{\Lambda}(\overline{\rho})$ is representable. Then
\begin{equation*}
R^{\boxempty_{\Sigma}, (\chi), \mathcal{D}}_{S, \Lambda}(\overline{\rho}) \cong
R_{\Lambda}^{\tt{loc}_{\Sigma}, (\chi), \mathcal{D}}(\overline{\rho})\llbracket x_1, \ldots, x_{a+b}\rrbracket/(f_1, \ldots, f_a)
\end{equation*}
for suitable $a\in \mathbb{N}, f_i \in R_{\Lambda}^{\tt{loc}_{\Sigma}, (\chi), \mathcal{D}}(\overline{\rho})\llbracket x_1, \ldots, x_{a+b}\rrbracket$ and with $b=0$ if the determinant is not fixed (resp. $b=(\#\Sigma -1).\operatorname{dim}(\mathfrak{g}^{\tt{ab}})$ if the 
determinant is fixed).
\end{Prop}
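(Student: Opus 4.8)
This is the multiply framed, $G$-valued analogue of Kisin's presentation of a global deformation ring over the product of the corresponding local ones, so the plan is to adapt the arguments behind \cite[Proposition 4.1.5]{Kisin2} and \cite[Proposition 4.2.5]{Balaji}. First I would record the canonical morphism $\iota\colon R_\Lambda^{\tt{loc}_\Sigma,(\chi),\mathcal D}(\overline\rho)\to R^{\boxempty_\Sigma,(\chi),\mathcal D}_{S,\Lambda}(\overline\rho)$ in $\mathcal C_\Lambda$: by universality it classifies, for each $\nu\in\Sigma$, the $D_\nu$-lifting $\beta_\nu^{-1}(\rho^{\mathrm{univ}}|\operatorname{Gal}(F_\nu))\beta_\nu$ of $\overline\rho_\nu$, the framings $\beta_\nu$ of Definition \ref{20170328_multiply_framed_defo_functor} being precisely the data needed to transport the restriction of the universal global framed deformation into a lift satisfying the local condition. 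Both rings are complete Noetherian local, so $R^{\boxempty_\Sigma}_{S}$ is topologically generated as an $R^{\tt{loc}_\Sigma}$-algebra by $g$ elements, where $g=\dim_k\bigl(\mathfrak m_{R^{\boxempty_\Sigma}_{S}}/(\mathfrak m_{R^{\boxempty_\Sigma}_{S}}^{2}+\mathfrak m_{R^{\tt{loc}_\Sigma}}R^{\boxempty_\Sigma}_{S})\bigr)$ is the dimension of the relative reduced cotangent space; this yields a surjection $R^{\tt{loc}_\Sigma}\llbracket x_1,\dots,x_g\rrbracket\twoheadrightarrow R^{\boxempty_\Sigma}_{S}$ whose kernel $J$ is finitely generated, say by $\mu:=\operatorname{gen}(J)$ elements. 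The whole point is then to check $g-\mu\geq b$: once that is known one adjoins $g-\mu-b$ redundant generators to a minimal generating set of $J$, obtaining a presentation $R^{\boxempty_\Sigma}_{S}\cong R^{\tt{loc}_\Sigma}\llbracket x_1,\dots,x_{a+b}\rrbracket/(f_1,\dots,f_a)$ with $a:=g-b$, of the asserted shape.

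Both $g$ and $\mu$ are to be read off from Galois cohomology. The relative cotangent space is $k$-dual to the kernel of the restriction map from the mod-$\mathfrak m$ tangent space of $D^{\boxempty_\Sigma,(\chi),\mathcal D}_{S}$ to the direct sum over $\nu\in\Sigma$ of the mod-$\mathfrak m$ tangent spaces of the local framed conditioned functors $D^{\boxempty,(\chi_\nu),D_\nu}(\overline\rho_\nu)$; unwinding the framed formalism as in \cite{Kisin2} this kernel is, up to the contributions of the framings $\beta_\nu$, the Selmer group $H^1_{\mathcal D}$ of $\operatorname{Gal}_{F,S}$ with coefficients in $\mathfrak g$ (resp.\ in $\mathfrak g^{\tt{der}}$ in the fixed-determinant case), so that $g$ is \emph{exactly} $h^1_{\mathcal D}$ plus an explicit combination of the $h^0(\operatorname{Gal}_{F_\nu},\mathfrak g)$, the dimensions of the tangent conditions $\mathcal L_\nu$, and a $(\#\Sigma-1)\dim\mathfrak g$-term coming from the $\#\Sigma$ framings taken modulo the single overall conjugation. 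Dually, $\mu$ is bounded above by the relative obstruction space, which Poitou--Tate duality identifies, again up to explicit local terms, with the dual Selmer group $h^1_{\mathcal D^\perp}$ of $\operatorname{Gal}_{F,S}$ in $\mathfrak g^\vee$ (resp.\ $(\mathfrak g^{\tt{der}})^\vee$)---this is the obstruction-theoretic bound $J/\mathfrak m J\hookrightarrow H^2_{\mathcal D,\mathrm{rel}}$, valid because all purely local obstructions are already carried by $R^{\tt{loc}_\Sigma}$ thanks to relative representability (Proposition \ref{20170324_prop_rev_rep}). Subtracting, $g-\mu\geq g-h^1_{\mathcal D^\perp}-(\cdots)$, and the right-hand side is the Greenberg--Wiles / Euler characteristic expression, which I would evaluate with Wiles' product formula, Tate local duality at the places of $\Sigma$, Tate's global Euler characteristic formula over $F$, the vanishing $H^0(\operatorname{Gal}_{F,S},\mathfrak g^{\tt{der}})=0$ of Assumption \ref{20150904_Ass_H0_vanishing_for_presentations}, and---when the determinant is fixed---the identity $H^0(\operatorname{Gal}_{F},\mathfrak g)=\mathfrak z$, available because $D^{(\chi)}_\Lambda(\overline\rho)$ is assumed representable; the sum then telescopes.

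What the telescoping leaves behind is the claimed value of $b$. With the determinant not fixed, the archimedean term in the global formula and the global-versus-local adjoint terms cancel, leaving $g-\mu\geq 0$, i.e.\ $b=0$ (the familiar fact that, over the base at hand, the global deformation ring is expectedly no larger than the product of the local ones). When the determinant is fixed, the deformations are confined to $\mathfrak g^{\tt{der}}$ but the framings $\beta_\nu\in\widehat G(A)$ still range over all of $\mathfrak g$, so precisely $\#\Sigma-1$ copies of the discrepancy $\dim\mathfrak g-\dim\mathfrak g^{\tt{der}}=\dim\mathfrak g^{\tt{ab}}$ survive, giving $b=(\#\Sigma-1)\dim\mathfrak g^{\tt{ab}}$; Proposition \ref{20170525_rep_res_proposition}, which already isolates the framing and multiplier variables, serves as a consistency check here. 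I expect the one genuine obstacle to be this Euler characteristic bookkeeping in the present generality: keeping the archimedean local terms straight (they are nontrivial when the base field has real places, as $F^{+}$ in the intended application, and when $G$ is a disconnected group such as $\mathcal G_n$) and verifying that the passage from $\operatorname{GL}_n$---where \cite{Kisin2} and \cite{Balaji} apply essentially verbatim---to $\mathcal G_n$ changes nothing beyond replacing $\operatorname{ad}\overline\rho,\operatorname{ad}^0\overline\rho$ by $\mathfrak g,\mathfrak g^{\tt{der}}$ and invoking absolute irreducibility of $\overline\rho$ to secure $H^0(\operatorname{Gal}_{F},\mathfrak g)=\mathfrak z$. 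Everything else is the kind of formal commutative algebra already used in the lemmas above, so in practice it is enough to cite \cite[Proposition 4.2.5]{Balaji}.
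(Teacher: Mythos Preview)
Your proposal is correct and matches the paper's treatment: the paper does not give a proof at all but simply records the proposition as ``essentially a special case of \cite[Proposition 4.2.5]{Balaji} (which goes back to \cite[Proposition 4.1.5]{Kisin2})'', and your sketch is precisely an unpacking of that citation, culminating in the same reference. Your final remark that ``in practice it is enough to cite \cite[Proposition 4.2.5]{Balaji}'' is exactly how the paper handles it.
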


\begin{Cor}\label{2017_cor_on_repn}
Assume that each $R^{\boxempty, (\chi_{\nu}), D_{\nu}}(\overline{\rho}_{\nu})$ is a complete intersection
ring of relative dimension $d_{\nu}$ over $\Lambda$. Assume moreover that $D^{\mathcal{D}}_{[S], \Lambda}(
\overline{\rho})$ is representable and that $d := \sum_{\nu\in\Sigma} d_{\nu} > \operatorname{dim}(\mathfrak{g}).\#\Sigma - \operatorname{dim}(\mathfrak{z})-b$ (with $b$ as in Proposition \ref{20170329_first_rep_result}). Then there exists a presentation 
\begin{equation*}
R^{\mathcal{D}}_{[S], \Lambda}(
\overline{\rho}) \cong \Lambda\llbracket x_1, \ldots, x_m\rrbracket /(f_1, \ldots, f_m)
\end{equation*}
for a suitable $m\in \mathbb{N}$.
\end{Cor}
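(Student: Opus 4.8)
The plan is to feed the hypotheses into Proposition~\ref{20170329_first_rep_result} and then extract a presentation with equally many generators and relations by a dimension count, using the Lemma on regular systems of parameters and the complete-intersection hypothesis on the local rings. First I would apply Proposition~\ref{20170329_first_rep_result} to write
\begin{equation*}
R^{\boxempty_{\Sigma}, \mathcal{D}}_{S, \Lambda}(\overline{\rho}) \cong
R_{\Lambda}^{\tt{loc}_{\Sigma}, \mathcal{D}}(\overline{\rho})\llbracket x_1, \ldots, x_{a+b}\rrbracket/(f_1, \ldots, f_a).
\end{equation*}
Since each local factor $R^{\boxempty, D_{\nu}}(\overline{\rho}_{\nu})$ is a complete intersection of relative dimension $d_{\nu}$ over $\Lambda$, it has a presentation $\Lambda\llbracket y_1,\ldots,y_{c_\nu}\rrbracket/(g_1^{(\nu)},\ldots,g_{r_\nu}^{(\nu)})$ with $c_\nu - r_\nu = d_\nu$; the completed tensor product $R_{\Lambda}^{\tt{loc}_{\Sigma}, \mathcal{D}}(\overline{\rho})$ is then a complete intersection of relative dimension $d=\sum_\nu d_\nu$, say $\Lambda\llbracket z_1,\ldots,z_N\rrbracket/(h_1,\ldots,h_{N-d})$. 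Substituting into the display above gives
\begin{equation*}
R^{\boxempty_{\Sigma}, \mathcal{D}}_{S, \Lambda}(\overline{\rho}) \cong
\Lambda\llbracket z_1, \ldots, z_N, x_1, \ldots, x_{a+b}\rrbracket/(h_1, \ldots, h_{N-d}, f_1, \ldots, f_a),
\end{equation*}
a presentation with $N+a+b$ generators and $N-d+a$ relations.

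Next I would pass from the framed object to $R^{\mathcal{D}}_{[S], \Lambda}(\overline{\rho})$ using Proposition~\ref{20170525_rep_res_proposition}(3), which identifies $R^{\boxempty_{\Sigma}, \mathcal{D}}_{[S], \Lambda}$ with a power series ring in $t=\dim(\mathfrak g)\cdot(\#\Sigma-1)$ variables over $R^{\boxempty,\mathcal D}_{[S],\Lambda}$, and the latter with a power series ring in $u=\dim(\mathfrak g)-\dim(\mathfrak z)=\dim(\mathfrak g^{\tt der})$ variables over $R^{\mathcal D}_{[S],\Lambda}$; one must also keep straight whether the determinant is fixed, which is exactly what the parameter $b$ in Proposition~\ref{20170329_first_rep_result} bookkeeps. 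Killing these $t+u$ power-series variables from the presentation above (they can be taken among the $x_i$, or adjoined) leaves a presentation of $R^{\mathcal{D}}_{[S], \Lambda}(\overline{\rho})$ with some number $m_{\text{gen}}$ of generators and $m_{\text{rel}}=N-d+a$ of relations. The point of the hypothesis $d>\dim(\mathfrak g)\cdot\#\Sigma-\dim(\mathfrak z)-b$ is precisely that it forces $m_{\text{rel}}\le m_{\text{gen}}$: the number of relations coming out of Proposition~\ref{20170329_first_rep_result} is $a$ plus the $N-d$ local relations, while the ``expected'' generator count is inflated by the same $a$ plus the defect $\dim(\mathfrak g)\cdot\#\Sigma-\dim(\mathfrak z)-b-d<0$, so relations do not outnumber generators. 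Once $m_{\text{rel}}\le m_{\text{gen}}$, I would simply adjoin $m_{\text{gen}}-m_{\text{rel}}$ redundant (zero) relations to arrive at a presentation with an equal number $m$ of generators and relations, which is the assertion.

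The one genuine subtlety — and the step I expect to be the main obstacle — is the careful bookkeeping of the relative dimensions and of the role of $b$ through the chain of isomorphisms (framed $\to$ semi-framed $\to$ deformation ring), making sure the inequality on $d$ translates into ``relations $\le$ generators'' \emph{after} stripping off all the auxiliary power-series variables, in both the fixed- and non-fixed-determinant cases simultaneously; the relatively-representable hypothesis on $D^{\mathcal D}_{[S],\Lambda}(\overline\rho)$ is what lets us invoke Proposition~\ref{20170525_rep_res_proposition}(3) in the first place and must be carried along. Everything else — that completed tensor products of complete intersections are complete intersections of the added relative dimension, and that power series rings preserve the count — is standard commutative algebra of the sort already invoked via \cite[Proposition~22]{SerreLocalAlg} and Proposition~\ref{20170324_some_prop} in the excerpt.
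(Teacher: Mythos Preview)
Your proposal is correct and follows essentially the same strategy as the paper: combine Proposition~\ref{20170329_first_rep_result} with the complete-intersection hypothesis on the local rings to get a presentation of the framed ring, then use Proposition~\ref{20170525_rep_res_proposition}(3) and the numerical hypothesis on $d$ to conclude that generators outnumber relations for the unframed ring, and finally pad with zero relations.

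The one difference in execution is how the passage from framed to unframed is handled. You strip the $r=\dim(\mathfrak{g})\cdot\#\Sigma-\dim(\mathfrak{z})$ power-series variables directly from the presentation (which is valid: lifts of these variables are part of a regular system of parameters in the ambient power-series ring, so after a change of coordinates they can be set to zero, leaving the same number of relations and $r$ fewer generators). The paper instead takes a minimal presentation $\Lambda\llbracket x_1,\ldots,x_u\rrbracket/(f_1,\ldots,f_v)$ of the unframed ring via Cohen's structure theorem, re-inflates it to a presentation of the framed ring, and then compares the two presentations of the framed ring to deduce $u-v\geq b+d-r$. Both routes yield the same inequality; yours is arguably more direct, while the paper's avoids having to justify the variable-stripping step explicitly.
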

\begin{proof}
Using Proposition \ref{20170329_first_rep_result} and the assumption on $\mathcal{D}$, we can
write
\begin{equation*}
R^{\boxempty_{\Sigma}, (\chi), \mathcal{D}}_{S, \Lambda}(\overline{\rho}) \cong
R_{\Lambda}^{\tt{loc}_{\Sigma}, (\chi), \mathcal{D}}(\overline{\rho})\llbracket x_1, \ldots, x_{a+b}\rrbracket/(f_1, \ldots, f_a)
\cong 
\Lambda\llbracket x_1, \ldots, x_{a+b+c+d}\rrbracket/(f_1, \ldots, f_{a+c})
\end{equation*}
for $a, b$ as above and for a suitably chosen $c\in\mathbb{N}_0$. On the other hand, 
by Cohen's structure theorem we can write 
$R^{(\chi), \mathcal{D}}_{S, \Lambda}(\overline{\rho}) \cong
\Lambda\llbracket x_1, \ldots, x_{u}\rrbracket/(f_1, \ldots, f_v)$ for suitable $u,v\in \mathbb{N}_0$ (and we assume that this is a minimal presentation, i.e. that the quantity
$u-v$ is maximal among all ways to write $R^{(\chi), \mathcal{D}}_{S, \Lambda}(\overline{\rho})$ as a quotient of a power series ring), so 
by the third part of Proposition \ref{20170525_rep_res_proposition}
we have
\begin{equation*}
R^{\boxempty_{\Sigma}, (\chi), \mathcal{D}}_{S, \Lambda}(\overline{\rho}) \cong
R^{(\chi), \mathcal{D}}_{S, \Lambda}(\overline{\rho})\llbracket x_1, \ldots, x_r\rrbracket
\cong \Lambda\llbracket x_1, \ldots, x_{r+u}\rrbracket/(f_1, \ldots, f_v)
\end{equation*}
with $r = \operatorname{dim}(\mathfrak{g}).\#\Sigma - \operatorname{dim}(\mathfrak{z})$.
Comparing these two presentations, we get
\begin{equation*}
u-v + \operatorname{dim}(\mathfrak{g}).\#\Sigma - \operatorname{dim}(\mathfrak{z}) \geq
b + d \Rightarrow u-v \geq b+d -\operatorname{dim}(\mathfrak{g}).\#\Sigma + \operatorname{dim}(\mathfrak{z}).
\end{equation*}
Thus, the claim follows immediately from our assumption on $d$.
\end{proof}

\paragraph{Tangent spaces and systems of local conditions}
With respect to a deformation condition $\mathcal{D}$ will consider the tangent space $t_{D^{(\boxempty), \mathcal{D}}_{\Lambda}} = D^{(\boxempty),\mathcal{D}}_{\Lambda}(k[\epsilon])$, which we consider as a (finite-dimensional) $k$-vector space (cf.
\cite[Lecture 2]{Gouvea}). There are canonical isomorphisms
\begin{equation*}
t_{D^{\boxempty, (\chi)}_{\Lambda}} \cong Z^1(\Gamma, \mathfrak{g}^{(\tt{der})}),
t_{D_{\Lambda}} \cong H^1(\Gamma, \mathfrak{g})
 \text{ and } 
t_{D_{\Lambda}^{\chi}} \cong H^1(\Gamma, \mathfrak{g}^{\tt{der}})':= 
\operatorname{im}\bigl(H^1(\Gamma, \mathfrak{g}^{\tt{der}}) \rightarrow
H^1(\Gamma, \mathfrak{g})\bigr),
\end{equation*}
so via the embedding $D^{(\chi),\mathcal{D}}_{\Lambda}(k[\epsilon]) \hookrightarrow
D^{(\chi)}_{\Lambda}(k[\epsilon])$ we are provided with an assignment 
$\mathcal{D}\mapsto L(\mathcal{D})^{(\chi)} := D^{(\chi),\mathcal{D}}_{\Lambda}(k[\epsilon])$ from deformation conditions to subspaces of $H^1(\Gamma, \mathfrak{g})$
(resp. $H^1(\Gamma, \mathfrak{g}^{\tt{der}})'$).
In the case $\Gamma = \operatorname{Gal}_F$ for a number field $F$ and if $\mathcal{D}= 
(D_{\nu})_{\nu\in \Sigma}$, we call the afforded family $\mathcal{L}^{(\chi)} = 
(L(D_{\nu}))_{\nu\in\operatorname{Pl}_F}$ of subspaces of $H^1(\operatorname{Gal}_{F_{\nu}}, \mathfrak{g})$
(resp. of $H^1(\operatorname{Gal}_{F_{\nu}}, \mathfrak{g}^{\tt{der}})'$)  
a \textit{system of local conditions}.
Also note that there is an exact sequence
\begin{equation*}
0 \rightarrow \mathfrak{g}/\mathfrak{g}^{\Gamma} \rightarrow t_{D^{\boxempty, (\chi)}_{\Lambda}} \rightarrow t_{D^{(\chi)}_{\Lambda}}
\end{equation*}
where, in case $\ell\gg 0$ (such that $\mathfrak{g}= \mathfrak{g}^{\tt{der}}\oplus
\mathfrak{g}^{\tt{ab}}$), the object $\mathfrak{g}/\mathfrak{g}^{\Gamma}$ can be replaced 
by $\mathfrak{g}^{\tt{der}}/(\mathfrak{g}^{\tt{der}})^{\Gamma}$.
\paragraph{Liftings at infinity}
\begin{Prop}\label{archProp}
Assume $\Gamma = \mathbb{Z}/2\mathbb{Z} = \{1,c\}$ and $\ell = \operatorname{char}(\mathbb{F}) \neq 2$.  Then 
\begin{equation*}
R^{\boxempty}_{\Lambda}(\overline{\rho}) \cong \Lambda\llbracket x_1, \ldots, x_m\rrbracket \text{ with } m = \operatorname{dim}(\mathfrak{g}^{c = -1}).
\end{equation*}
If $\psi$ is a lift of the determinant, then the same result holds for $R^{\boxempty,\psi}_{\Lambda}(\overline{\rho})$ after replacing
$\mathfrak{g}$ by $\mathfrak{g}^{\tt{der}}$.
\end{Prop}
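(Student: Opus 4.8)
The plan is to deduce this from the general lifting-theoretic machinery recalled above, using that $\Gamma=\mathbb{Z}/2\mathbb{Z}$ is finite and $\ell\neq 2$ means $|\Gamma|$ is invertible in $k$, so all higher cohomology of $\Gamma$ with coefficients in any $\ell$-torsion module vanishes. Concretely, I would first observe that $H^i(\Gamma,\mathfrak{g})=0$ for $i\geq 1$ (and likewise for $\mathfrak{g}^{\tt{der}}$), since these are finite $\ell$-groups and multiplication by $|\Gamma|=2$ is simultaneously invertible and zero on them. The obstruction theory for the lifting functor $D^{\boxempty}_{\Lambda}(\overline{\rho})$ then shows that $D^{\boxempty}_{\Lambda}(\overline{\rho})$ is formally smooth: obstructions to lifting along a small extension $A'\twoheadrightarrow A$ with kernel $I$ live in $H^2(\Gamma,\mathfrak{g}\otimes I)=0$, so every lifting extends, which by the first part of Proposition on formal smoothness (or directly by Schlessinger) gives $R^{\boxempty}_{\Lambda}(\overline{\rho})\cong \Lambda\llbracket x_1,\dots,x_m\rrbracket$ for $m=\dim_k t_{D^{\boxempty}_{\Lambda}}$.

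Next I would compute the tangent dimension. By the identification recalled in the paragraph on tangent spaces, $t_{D^{\boxempty}_{\Lambda}}\cong Z^1(\Gamma,\mathfrak{g})$. Since $H^1(\Gamma,\mathfrak{g})=0$ we get $Z^1(\Gamma,\mathfrak{g})=B^1(\Gamma,\mathfrak{g})$, and $\dim_k B^1(\Gamma,\mathfrak{g}) = \dim_k \mathfrak{g} - \dim_k \mathfrak{g}^{\Gamma} = \dim_k \mathfrak{g} - \dim_k \mathfrak{g}^{c=1}$. Because $\ell\neq 2$, the involution $c$ acting on $\mathfrak{g}$ is semisimple and $\mathfrak{g}=\mathfrak{g}^{c=1}\oplus\mathfrak{g}^{c=-1}$, whence $\dim_k\mathfrak{g}-\dim_k\mathfrak{g}^{c=1}=\dim_k\mathfrak{g}^{c=-1}$. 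This gives $m=\dim(\mathfrak{g}^{c=-1})$ as claimed. One subtlety to check: the coboundary map $\mathfrak{g}\to B^1$ has kernel exactly $\mathfrak{g}^{\Gamma}=\mathfrak{g}^{c=1}$ because the $\Gamma$-action on $\mathfrak{g}$ here is via $\overline{\rho}$ composed with the adjoint action — but $\overline{\rho}(\Gamma)$ only affects the action by an inner automorphism, and the relevant invariants/coinvariants dimensions are unchanged; more cleanly, $t_{D^\boxempty}\cong Z^1$ regardless and the Euler-characteristic/vanishing argument applies verbatim.

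For the fixed-determinant statement, the same argument runs with $\mathfrak{g}$ replaced by $\mathfrak{g}^{\tt{der}}$: the functor $D^{\boxempty,\psi}_{\Lambda}(\overline{\rho})$ has tangent space $Z^1(\Gamma,\mathfrak{g}^{\tt{der}})$ and obstruction group $H^2(\Gamma,\mathfrak{g}^{\tt{der}})=0$ (again because $2$ is invertible), so it is formally smooth of relative dimension $\dim_k Z^1(\Gamma,\mathfrak{g}^{\tt{der}}) = \dim_k B^1(\Gamma,\mathfrak{g}^{\tt{der}}) = \dim_k\mathfrak{g}^{\tt{der}} - \dim_k(\mathfrak{g}^{\tt{der}})^{c=1} = \dim_k(\mathfrak{g}^{\tt{der}})^{c=-1}$. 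Here I am implicitly using that $Z_G$ is formally smooth over $\Lambda$ (our standing hypothesis) so that passing to the derived subgroup / fixed-determinant condition behaves well and $\mathfrak{g}^{\tt{der}}$ is the right Lie algebra. The only genuine point to be careful about is the clean vanishing of $H^1$ and $H^2$ of a cyclic group of order $2$ in characteristic $\neq 2$ — this is standard (cohomology of a finite group is annihilated by its order), and it is exactly where the hypothesis $\ell\neq 2$ is used; everything else is the formal smoothness criterion already recorded in the excerpt together with a dimension count of coboundaries.
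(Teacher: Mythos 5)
Your proof is correct and takes essentially the same approach as the paper: show $H^2(\Gamma,\mathfrak{g}^{(\tt{der})})=0$ using that $2$ is invertible in $k$, then count $\dim Z^1$ to get the number of variables. The only (minor) difference is in the dimension count: the paper parametrizes cocycles directly, observing that a $1$-cocycle is determined by $v=f(c)$ and the cocycle condition forces $v\in\mathfrak{g}^{c=-1}$, whereas you invoke $H^1=0$ to identify $Z^1=B^1$ and then compute $\dim B^1=\dim\mathfrak{g}-\dim\mathfrak{g}^{c=1}=\dim\mathfrak{g}^{c=-1}$ via the $c$-eigenspace decomposition; both routes are equally valid and give the same answer.
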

\begin{proof}
We use the general formula $H^2(\mathbb{Z}/n\mathbb{Z}, M) = M^{\mathbb{Z}/n\mathbb{Z}} / \operatorname{im}(\varphi)$
with 
\begin{equation*}
\varphi: M\rightarrow M\qquad m\mapsto  \sum_{j=0}^{n-1} j.m.
\end{equation*}
Now, if 
$x\in \mathfrak{g}^{\{1,\tilde{c}\}}$, we see that $(\tilde{c} +1)(\frac{1}{2}x) = x \in \operatorname{im}(\tilde{c}+1)$, hence
$H^2(\{1,c\}, \mathfrak{g})=0$ and the lifting ring is unobstructed. To get the number of variables we have to evaluate 
\begin{equation*}
Z^1(\{1,c\}, \mathfrak{g}) = \{f:\{1,c\}\rightarrow \mathfrak{g}\;|\; f(xy)= f(x)+\,^xf(y)\}. 
\end{equation*}
Looking at $x=y=c$, we see that $f$ is uniquely determined by a vector $v = f(c)$. Looking at $x=1, y=c$, we see that $f(1) = v + \,^cv = 0$, 
i. e. that $v \in \mathfrak{g}^{c=-1}$. On the other hand, any such $v$ defines an $f\in Z^1$ via $1\mapsto 0, c\mapsto v$.\\
The modifications of this argument for the fixed-determinant case are straight-forward.
\end{proof}
\paragraph{A simple criterion for the vanishing of cohomology groups}
Now assume that $\Gamma= \operatorname{Gal}_K$ for a local field $K$.
Recall that, by local Tate duality, the Pontryagin dual of $H^2(\Gamma,\mathfrak{g})$ can 
be identified with $H^0(\Gamma, \mathfrak{g}^{\vee}) = (\mathfrak{g}^{\vee})^{\Gamma}$. Together with the identification of $(\operatorname{ad}\overline{\rho}^{(0)})^{\vee}$ and
$(\operatorname{ad}\overline{\rho}^{(0)})(1)$ via the trace pairing, this implies the following
criterion for the vanishing of $H^2(\Gamma, \mathfrak{g}^{\tt{der}})$ in the case $G=\operatorname{GL}_n$:
\begin{Lem} [Local case] \label{20150830_lemma_h2_vanishing_crit}
Let $\Gamma$ be the absolute Galois group of a non-archimedean local field, $k$ be a finite field of characteristic $\ell$ and
\begin{equation*}
\overline{\rho}: \Gamma \rightarrow \operatorname{GL}_n(k)
\end{equation*}
a representation. 
\begin{enumerate}
\item If $\operatorname{Hom}_{\Gamma}( \overline{\rho}, \overline{\rho}(1))$ vanishes, then 
$H^2(\Gamma, \operatorname{ad}\overline{\rho})$ vanishes.
\item Assume that $\ell\not|\, n$. 
Then, if $\operatorname{Hom}_{\Gamma}( \overline{\rho}, \overline{\rho}(1))$ vanishes, also 
$H^2(\Gamma, \operatorname{ad}\overline{\rho}^{0})$ vanishes.
\end{enumerate}
\end{Lem}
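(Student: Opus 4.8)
The plan is to deduce both parts purely formally from local Tate duality together with the trace-pairing self-duality of the adjoint module, exactly as announced in the discussion preceding the statement.

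First I would treat part (1). By local Tate duality the Pontryagin dual of $H^2(\Gamma, \operatorname{ad}\overline{\rho})$ is $H^0\bigl(\Gamma, (\operatorname{ad}\overline{\rho})^{\vee}\bigr)$. The trace form $(X,Y)\mapsto \operatorname{tr}(XY)$ on $\operatorname{End}_k(\overline{\rho})$ is a perfect, symmetric, $\Gamma$-equivariant bilinear pairing, and its perfectness on $M_n(k)$ holds over any field with no hypothesis on $n$; hence it induces a $\Gamma$-equivariant isomorphism $\operatorname{Hom}_k(\operatorname{ad}\overline{\rho}, k)\cong \operatorname{ad}\overline{\rho}$, so that $(\operatorname{ad}\overline{\rho})^{\vee}\cong (\operatorname{ad}\overline{\rho})(1)$ (this is the identification used in the paragraph above the statement). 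Therefore
\[
H^0\bigl(\Gamma, (\operatorname{ad}\overline{\rho})^{\vee}\bigr) \cong \operatorname{Hom}_{k[\Gamma]}\bigl(k, \operatorname{End}_k(\overline{\rho})(1)\bigr) \cong \operatorname{Hom}_{k[\Gamma]}\bigl(\overline{\rho}, \overline{\rho}(1)\bigr) = \operatorname{Hom}_{\Gamma}\bigl(\overline{\rho}, \overline{\rho}(1)\bigr),
\]
which vanishes by hypothesis, and dualizing back gives $H^2(\Gamma, \operatorname{ad}\overline{\rho}) = 0$.

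For part (2) I would assume $\ell\nmid n$, so that $n^{-1}\operatorname{id}\in\operatorname{ad}\overline{\rho}$ is defined and the maps $X\mapsto X - n^{-1}\operatorname{tr}(X)\operatorname{id}$ and $X\mapsto n^{-1}\operatorname{tr}(X)\operatorname{id}$ exhibit a $\Gamma$-equivariant direct sum decomposition $\operatorname{ad}\overline{\rho} = \operatorname{ad}^{0}\overline{\rho}\oplus k$. Since group cohomology commutes with finite direct sums, $H^2(\Gamma, \operatorname{ad}^{0}\overline{\rho})$ is a direct summand of $H^2(\Gamma, \operatorname{ad}\overline{\rho})$, which is $0$ by part (1); hence $H^2(\Gamma, \operatorname{ad}^{0}\overline{\rho}) = 0$.

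I do not expect any genuine obstacle: the argument is entirely formal once local Tate duality is granted. The only points requiring a little care are the compatibility of local duality with the $k$-linear (rather than merely $\mathbb{Z}/\ell$-linear) structure on the Tate dual, which is standard and is exactly what the preceding discussion invokes via the trace pairing, and the remark that the self-duality of $\operatorname{ad}\overline{\rho}$ needs no divisibility hypothesis, so that $\ell\nmid n$ enters only in splitting off the trace-zero part in part (2).
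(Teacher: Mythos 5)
Your proposal is correct, and part (1) is exactly the argument the paper has in mind: local Tate duality plus the trace-pairing identification $(\operatorname{ad}\overline{\rho})^{\vee}\cong(\operatorname{ad}\overline{\rho})(1)$, followed by the observation that $H^0(\Gamma,\operatorname{ad}\overline{\rho}(1))=\operatorname{Hom}_{\Gamma}(\overline{\rho},\overline{\rho}(1))$. For part (2) you take a mildly different route than the one the paper's prelude signals: the paper identifies $(\operatorname{ad}\overline{\rho}^{0})^{\vee}$ with $(\operatorname{ad}\overline{\rho}^{0})(1)$ by restricting the trace pairing to $\operatorname{ad}^{0}$ (which is nondegenerate precisely when $\ell\nmid n$) and then observes $H^0(\Gamma,\operatorname{ad}^{0}\overline{\rho}(1))\subset H^0(\Gamma,\operatorname{ad}\overline{\rho}(1))=0$, whereas you split $\operatorname{ad}\overline{\rho}=\operatorname{ad}^{0}\overline{\rho}\oplus k$ as $\Gamma$-modules and read off $H^2(\Gamma,\operatorname{ad}^{0}\overline{\rho})$ as a direct summand of the group shown to vanish in part (1). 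These two arguments are two sides of the same coin: the hypothesis $\ell\nmid n$ is used in each case to produce the orthogonal splitting $\mathfrak{gl}_n=\mathfrak{sl}_n\oplus k\cdot I$, and either one suffices. Your version is slightly more elementary in that it avoids a second invocation of Tate duality, while the paper's version has the small side benefit of recording that $\operatorname{ad}^{0}\overline{\rho}$ is itself self-dual up to twist, a fact reused implicitly elsewhere (e.g.\ in the Selmer-group computations with $\mathfrak{g}^{\mathtt{der},\vee}$). Both are sound, and your care about the $k$-linear versus $\mathbb{F}_{\ell}$-linear structure of the Tate dual (handled via the trace $\operatorname{tr}_{k/\mathbb{F}_{\ell}}$) is the right point to flag.
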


In the global case, there is no such duality and we record the following:
\begin{Lem} [Global case] \label{20150904_Lemma_glob_case}
Let $\Gamma=\operatorname{Gal}_{F,S}$ for a number field $F$ and a (possibly empty) finite set of places of $F$. Let $k,\overline{\rho}$ be as in Lemma \ref{20150830_lemma_h2_vanishing_crit} above.
\begin{enumerate}
\item If $\operatorname{Hom}_{\Gamma}( \overline{\rho}, \overline{\rho}(1))$ vanishes, then 
$H^0(\Gamma, (\operatorname{ad}\overline{\rho})^{\vee})$ vanishes.
\item Assume that $\ell\not|\, n$. 
Then, if $\operatorname{Hom}_{\Gamma}( \overline{\rho}, \overline{\rho}(1))$ vanishes, also 
$H^0(\Gamma, (\operatorname{ad}\overline{\rho}^{0})^{\vee})$ vanishes.
\end{enumerate}
\end{Lem}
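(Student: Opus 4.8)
The plan is to reduce both assertions to the $H^0$-half of the reasoning behind Lemma~\ref{20150830_lemma_h2_vanishing_crit}; the reason for isolating them is precisely that globally there is no analogue of local Tate duality, so the statement about $H^2$ is lost while the statement about $H^0$ of the Tate dual survives unchanged.

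First I would identify $H^0\bigl(\Gamma,(\operatorname{ad}\overline{\rho})^{\vee}\bigr)$ with $\operatorname{Hom}_{\Gamma}(\overline{\rho},\overline{\rho}(1))$. Writing $V=k^{n}$ for the underlying space of $\overline{\rho}$, one has $\operatorname{ad}\overline{\rho}=\operatorname{Hom}_{k}(V,V)$ with its conjugation action, and the trace pairing $(f,g)\mapsto\operatorname{tr}(fg)$ is a perfect $\Gamma$-equivariant $k$-bilinear form on it (perfectness of the trace form needs nothing beyond working over a field). As recalled before Lemma~\ref{20150830_lemma_h2_vanishing_crit}, this identifies $(\operatorname{ad}\overline{\rho})^{\vee}$ with $(\operatorname{ad}\overline{\rho})(1)$ as $\Gamma$-modules. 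On the other hand $\operatorname{Hom}_{\Gamma}(\overline{\rho},\overline{\rho}(1))=\operatorname{Hom}_{k}\bigl(V,V(1)\bigr)^{\Gamma}=\bigl((\operatorname{ad}\overline{\rho})(1)\bigr)^{\Gamma}=H^0\bigl(\Gamma,(\operatorname{ad}\overline{\rho})(1)\bigr)$, so combining the two gives a canonical isomorphism $H^0\bigl(\Gamma,(\operatorname{ad}\overline{\rho})^{\vee}\bigr)\cong\operatorname{Hom}_{\Gamma}(\overline{\rho},\overline{\rho}(1))$. This proves part~1 (indeed as an equivalence, not merely an implication).

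For part~2 I would use that, when $\ell\nmid n$, the scalar endomorphisms split off $\Gamma$-equivariantly: since $\operatorname{tr}(\lambda\cdot\operatorname{id}_{V})=n\lambda\neq0$ for $0\neq\lambda\in k$, the line $k\cdot\operatorname{id}_{V}$ (a trivial $\Gamma$-submodule) and its trace-orthogonal complement $\operatorname{ad}\overline{\rho}^{0}$ are complementary, non-degenerate $\Gamma$-submodules, so $\operatorname{ad}\overline{\rho}\cong\operatorname{ad}\overline{\rho}^{0}\oplus k$. Dualizing, $(\operatorname{ad}\overline{\rho}^{0})^{\vee}$ is a $\Gamma$-direct summand of $(\operatorname{ad}\overline{\rho})^{\vee}$, hence $H^0\bigl(\Gamma,(\operatorname{ad}\overline{\rho}^{0})^{\vee}\bigr)$ is a subspace of $H^0\bigl(\Gamma,(\operatorname{ad}\overline{\rho})^{\vee}\bigr)$, which vanishes by part~1 under the hypothesis $\operatorname{Hom}_{\Gamma}(\overline{\rho},\overline{\rho}(1))=0$.

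I do not foresee a real obstacle: the whole content is unwinding the Tate-dual and twist conventions, and noting that $\ell\nmid n$ is exactly what is needed in part~2 to ensure the scalars are a genuine $\Gamma$-complement of $\operatorname{ad}\overline{\rho}^{0}$ rather than a submodule of it. The only minor care point is to keep the direction of the twist straight, so that the invariants of $(\operatorname{ad}\overline{\rho})(1)$ come out as $\operatorname{Hom}_{\Gamma}(\overline{\rho},\overline{\rho}(1))$, matching the hypothesis of the lemma.
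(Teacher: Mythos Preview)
Your proposal is correct and matches the paper's approach: the paper leaves this lemma without explicit proof, merely recording it after the remark that the trace pairing identifies $(\operatorname{ad}\overline{\rho}^{(0)})^{\vee}$ with $(\operatorname{ad}\overline{\rho}^{(0)})(1)$, and your argument is precisely the intended unwinding of that remark. The only cosmetic difference is that for part~2 the paper's phrasing suggests using the trace pairing directly on $\operatorname{ad}\overline{\rho}^{0}$ (which is non-degenerate there exactly when $\ell\nmid n$), whereas you pass through the splitting $\operatorname{ad}\overline{\rho}\cong\operatorname{ad}\overline{\rho}^{0}\oplus k$; these are equivalent formulations of the same observation.
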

We easily deduce the following result, which also implies the vanishing of the error term $\delta$ in \cite{Boeckle_CRM} (see Remark 5.2.3.(d) of loc.cit.) for large $\ell$:
\begin{Cor}\label{20150904_newH1dualvanishing}
There exists a constant $C$, depending only on $n$ and $F$, such that Assumption \ref{20150904_Ass_H0_vanishing_for_presentations}
holds if $\operatorname{char}(k)>C, G=\operatorname{GL}_n$ and $\overline{\rho}$ is irreducible.
\end{Cor}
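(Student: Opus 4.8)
The plan is to unwind the statement for $G=\operatorname{GL}_n$ and reduce Assumption \ref{20150904_Ass_H0_vanishing_for_presentations} to the classical assertion $\operatorname{End}_{k[\operatorname{Gal}_{F,S}]}(\overline{\rho})=k$, which Schur's Lemma then supplies, the role of $\ell$ being only to separate the scalar part of the adjoint representation.

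First I would record that for $G=\operatorname{GL}_n$ one has $\mathfrak{g}=\operatorname{ad}\overline{\rho}=M_n(k)$ with the conjugation action of $\Gamma:=\operatorname{Gal}_{F,S}$, and $\mathfrak{g}^{\tt der}=\mathfrak{sl}_n=\operatorname{ad}^0\overline{\rho}$ (trace-zero matrices). Take $C\geq n$, so that $\ell=\operatorname{char}(k)\nmid n$; then $n\in k^{\times}$ and $A\mapsto\bigl(A-\tfrac{\operatorname{tr}A}{n}\cdot\mathrm{id},\,\tfrac{\operatorname{tr}A}{n}\cdot\mathrm{id}\bigr)$ is a $\Gamma$-equivariant splitting $\operatorname{ad}\overline{\rho}\xrightarrow{\;\sim\;}\operatorname{ad}^0\overline{\rho}\oplus k\cdot\mathrm{id}$, whence $H^0(\Gamma,\operatorname{ad}^0\overline{\rho})=0$ if and only if $H^0(\Gamma,\operatorname{ad}\overline{\rho})=k$. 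Since $H^0(\Gamma,\operatorname{ad}\overline{\rho})=\operatorname{End}_{k[\Gamma]}(\overline{\rho})$, the Assumption is equivalent to $\operatorname{End}_{k[\Gamma]}(\overline{\rho})=k$. Now I would invoke that $\overline{\rho}$ is absolutely irreducible (which is the pertinent notion, cf.\ the introduction; for a merely irreducible $\overline{\rho}$ the commutant is a nontrivial finite extension of $k$ and the statement in fact fails for all large $\ell$): by flat base change $\operatorname{End}_{k[\Gamma]}(\overline{\rho})\otimes_k\overline{k}\cong\operatorname{End}_{\overline{k}[\Gamma]}(\overline{\rho}\otimes_k\overline{k})=\overline{k}$, so $\dim_k\operatorname{End}_{k[\Gamma]}(\overline{\rho})=1$. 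This already proves the Corollary with $C=n$; strictly speaking the dependence on $F$ is superfluous for the stated conclusion.

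The dependence on $F$ does enter, however, in the companion vanishing $H^0(\Gamma,(\operatorname{ad}^0\overline{\rho})^{\vee})=0$ behind the remark on B\"ockle's error term $\delta$. There I would apply Lemma \ref{20150904_Lemma_glob_case}(2): it suffices that $\operatorname{Hom}_{\Gamma}(\overline{\rho},\overline{\rho}(1))=0$. As $\overline{\rho}$ is absolutely irreducible this can fail only if $\overline{\rho}\cong\overline{\rho}(1)=\overline{\rho}\otimes\overline{\epsilon}_{\ell}$, and taking determinants then forces $\overline{\epsilon}_{\ell}^{\,n}=1$ on $\operatorname{Gal}_F$. But $\overline{\epsilon}_{\ell}|_{\operatorname{Gal}_F}$ has order $[F(\zeta_{\ell}):F]\geq(\ell-1)/[F:\mathbb{Q}]$, so enlarging $C$ so that $(\ell-1)/[F:\mathbb{Q}]>n$ excludes this case; this single inequality is the only place where $F$ intervenes.

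There is no serious obstacle: the mild point is the cyclotomic-twisting argument of the last paragraph, while the rest is the routine bookkeeping of peeling off the scalars from $\operatorname{ad}\overline{\rho}$ and keeping $\ell\nmid n$.
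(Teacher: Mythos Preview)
Your argument is correct and is exactly the one the paper has in mind: the paper gives no explicit proof, merely saying the corollary is ``easily deduced'' from the preceding lemmas, and your two paragraphs spell out precisely that deduction --- Schur's lemma (after splitting off scalars via $\ell\nmid n$) for Assumption~\ref{20150904_Ass_H0_vanishing_for_presentations} itself, and Lemma~\ref{20150904_Lemma_glob_case} together with the cyclotomic-order bound for the companion vanishing of $H^0(\Gamma,(\operatorname{ad}^0\overline{\rho})^{\vee})$ behind the error term~$\delta$. Your observations that absolute irreducibility is what is actually needed, and that the dependence on $F$ enters only through the second part, are both accurate and worth noting.
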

\paragraph{Unobstructedness}
\begin{Def}\label{20170322_def_easiest_unobstructedness}
The functor $D^{(\boxempty), [\chi]}_{\Lambda}(\overline{\rho})$ is called unobstructed if $h^2(\Gamma, \mathfrak{g}^{[\tt{der}]})=0$.
\end{Def}
\begin{Def}
A relatively representable subfunctor of $D^{(\boxempty),[\chi]}_{\Lambda}(\overline{\rho})$ is called smooth (of dimension $m$) if its representing object is isomorphic to $\Lambda\llbracket
x_1, \ldots, x_m\rrbracket$.
\end{Def}
The most apparent application of the unobstructedness-property is that it implies the smoothness of the lifting/deformation ring, cf. \cite{Boeckle_presentations}: Assume that $D^{\boxempty, (\chi)}_{\Lambda}(\overline{\rho})$ is smooth and (in the fixed-determinant case) that $\ell\gg 0$ and (in the non-framed case) that 
$D^{(\chi)}_{\Lambda}(\overline{\rho})$ is representable. Then
\begin{equation*}
D^{\boxempty, (\chi)}_{\Lambda}(\overline{\rho}) \cong 
\llbracket x_1,\ldots,x_{a(+c)}\rrbracket \text{ and } 
 D^{(\chi)}_{\Lambda}(\overline{\rho}) \cong 
\llbracket x_1,\ldots,x_{b(+c)}\rrbracket
\end{equation*}
with $b = h^1(\Gamma, \mathfrak{g}), c= h^1(\Gamma, \mathfrak{g}^{\tt{der}})' - b, 
a = b + \operatorname{dim}(\mathfrak{g}^{(\tt{der})}) - h^0(\Gamma, \mathfrak{g}^{(\tt{der})})$. 
The converse direction (i.e. that smoothness implies unobstructedness) is known not to hold (for general profinite groups $\Gamma$), cf. 
\cite{Sprang}.
%
%
%
%

In order to relax this notion to functors corresponding to deformation conditions, we restrict to the case $\Gamma=\operatorname{Gal}_{F, S}$. 
%
Let $\mathcal{L}^{(\chi)}= (L^{(\chi)}_{\nu})_{\nu\in \operatorname{Pl}_F}$ be a system 
of local conditions and $\mathcal{D}^{(\chi)} = (D_{\nu}^{(\chi)})_{\nu\in \operatorname{Pl}_F}$ the corresponding global deformation condition.

Denote by $\mathfrak{g}^{(\tt{der}), \vee}$ the Tate dual of $\mathfrak{g}^{\tt{der}}$ and by
$L_{\nu}^{(\chi), \perp}$ the annihilator of $L_{\nu}^{(\chi)}$ under the Tate pairing
\begin{equation*}
H^i(F_{\nu}, \mathfrak{g}^{\tt{der}, \vee}) \times 
H^{2-i}(F_{\nu}, \mathfrak{g}^{\tt{der}}) \longrightarrow
H^2(F_{\nu}, k(1)) \cong \mathbb{Q}/\mathbb{Z}
\end{equation*}
for $i=1$, cf. \cite[(7.2.6) Theorem]{NSW}. Then we denote the corresponding dual Selmer group
by
\begin{equation*}
H^1_{\mathcal{L}^{(\chi), \perp}}(F, \mathfrak{g}^{(\tt{der}), \vee}) :=
\operatorname{ker}\Bigl(
\bigoplus_{\nu\in \operatorname{Pl}} \operatorname{res}_{\nu}: 
H^1(F, \mathfrak{g}^{(\tt{der}), \vee}) \longrightarrow 
\bigoplus_{\nu\in \operatorname{Pl}}
H^1(F, \mathfrak{g}^{(\tt{der}), \vee})/L^{(\chi), \perp}_{\nu}
\Bigr).
\end{equation*}
From now on, let us assume that $D^{(\chi)}_{\nu}$ for $\nu\notin S$ parametrizes unramified 
deformations.
\begin{Def}\label{20170407_def_glob_unobstr}
We say that $D^{\mathcal{D}^{(\chi)}}_{S, \Lambda}(\overline{\rho})$ (or  
$D^{\boxempty, \mathcal{D}^{(\chi)}}_{S, \Lambda}(\overline{\rho})$, 
or  
$D^{\boxempty_{\Sigma}, \mathcal{D}^{(\chi)}}_{S, \Lambda}(\overline{\rho})$ for some set 
of places $\Sigma$) has vanishing dual Selmer group if 
$H^1_{\mathcal{L}^{(\chi), \perp}}(F, \mathfrak{g}^{(\tt{der}), \vee})=0$.
\end{Def}
\begin{Def}
Let $\textbf{m} = (m_{\nu})_{\nu\in S}\in \mathbb{N}^{S}_0$. We say that  
$D^{\mathcal{D}^{(\chi)}}_{S, \Lambda}(\overline{\rho})$ (or  
$D^{\boxempty, \mathcal{D}^{(\chi)}}_{S, \Lambda}(\overline{\rho})$, 
or  
$D^{\boxempty_{\Sigma}, \mathcal{D}^{(\chi)}}_{S, \Lambda}(\overline{\rho})$) is 
globally unobstructed (of local dimensions $\textbf{m}$) if its dual Selmer group vanishes
and if each $D^{\boxempty, \mathcal{D}^{(\chi_{\nu})}}_{\Lambda}(\overline{\rho}_{\nu})$ for
$\nu \in S$ is smooth (of dimension $m_{\nu}$).
\end{Def}
We remark that if $D^{\mathcal{D}^{(\chi)}}_{S, \Lambda}(\overline{\rho})$ is globally unobstructed and representable, then by \cite[Theorem 5.2]{Boeckle_presentations} the 
representing object $R^{\mathcal{D}^{(\chi)}}_{S, \Lambda}(\overline{\rho})$
is isomorphic to a power series ring in 
$h^1_{\mathcal{L}^{(\chi)}}(F, \mathfrak{g}^{(\tt{der})})^{(\prime)}$ variables. 
The following results directly from the exact sequece of \cite[p. 7]{Boeckle_presentations},
\begin{equation*}
0 \rightarrow \Sh^2_{S}(\mathfrak{g}^{(\tt{der})}) \rightarrow 
H^2(\operatorname{Gal}_{F,S}, \mathfrak{g}^{(\tt{der})}) \rightarrow
\bigoplus_{\nu\in S} H^2(F_{\nu}, \mathfrak{g}^{(\tt{der})})
\rightarrow 
H^0(F, \mathfrak{g}^{(\tt{der}), \vee})^{\ast} \rightarrow 0,
\end{equation*}
where $H^0(F, \mathfrak{g}^{(\tt{der}), \vee})^{\ast}$ vanishes
for $\ell \gg 0$:
\begin{Prop}
Assume that $D^{(\chi_{\nu})}_{\Lambda}(\overline{\rho}_{\nu})$ is unobstructed (for all $\nu\in S$) and that 
$D^{(\chi)}_{S, \Lambda}(\overline{\rho})$ is globally unobstructed (without making an assumption on the 
dimension). Then $D^{(\chi)}_{S, \Lambda}(\overline{\rho})$ is unobstructed
in the sense of Definition \ref{20170322_def_easiest_unobstructedness}. For $\ell \gg 0$, also the converse is true.
\end{Prop}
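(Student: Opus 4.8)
The plan is to read off everything from the four-term exact sequence quoted just before the statement, namely
\begin{equation*}
0 \rightarrow \Sh^2_{S}(\mathfrak{g}^{(\tt{der})}) \rightarrow
H^2(\operatorname{Gal}_{F,S}, \mathfrak{g}^{(\tt{der})}) \rightarrow
\bigoplus_{\nu\in S} H^2(F_{\nu}, \mathfrak{g}^{(\tt{der})})
\rightarrow
H^0(F, \mathfrak{g}^{(\tt{der}), \vee})^{\ast} \rightarrow 0,
\end{equation*}
together with the standard identification of the Tate–Shafarevich group $\Sh^2_S$ with the dual Selmer group. First I would observe that global unobstructedness, by Definition \ref{20170407_def_glob_unobstr}, means exactly $H^1_{\mathcal{L}^{(\chi),\perp}}(F,\mathfrak{g}^{(\tt{der}),\vee})=0$. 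Since $D^{(\chi)}_\nu$ for $\nu\notin S$ is the unramified condition, the unramified-at-$\nu$ local conditions are self-dual up to the places in $S$, and the full (unconditioned at $S$) dual Selmer group is precisely $\Sh^2_S(\mathfrak{g}^{(\tt{der})})$ in the Poitou–Tate setup; hence the hypothesis forces the leftmost term of the sequence to vanish. (Strictly: the Selmer group with the conditions $\mathcal L^{(\chi)}$ is allowed to be \emph{larger}, so its vanishing a fortiori kills $\Sh^2_S$.)

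Next I would use the local unobstructedness hypothesis: each $D^{(\chi_\nu)}_\Lambda(\overline\rho_\nu)$ is unobstructed, i.e. $H^2(F_\nu,\mathfrak{g}^{(\tt{der})})=0$ for every $\nu\in S$ by Definition \ref{20170322_def_easiest_unobstructedness}. Therefore the middle map in the exact sequence has both its source bounded by $\Sh^2_S$ (now zero) from the left and target zero from the right, so $H^2(\operatorname{Gal}_{F,S},\mathfrak{g}^{(\tt{der})})=0$. By Definition \ref{20170322_def_easiest_unobstructedness} this is exactly the assertion that $D^{(\chi)}_{S,\Lambda}(\overline\rho)$ is unobstructed, proving the first claim.

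For the converse, suppose $\ell\gg 0$, so that the term $H^0(F,\mathfrak{g}^{(\tt{der}),\vee})^{\ast}$ in the sequence vanishes (this is the standard fact that for large residue characteristic the relevant invariants of the Tate dual vanish, as noted in the excerpt; e.g. via Lemma \ref{20150904_Lemma_glob_case} applied after enlarging $F$, or simply because $\mathfrak{g}^{(\tt{der}),\vee}$ has no Galois invariants for $\ell$ outside a finite set). Assume $H^2(\operatorname{Gal}_{F,S},\mathfrak{g}^{(\tt{der})})=0$. Then the exact sequence immediately gives $\Sh^2_S(\mathfrak{g}^{(\tt{der})})=0$ and, by exactness on the right, also $\bigoplus_{\nu\in S} H^2(F_\nu,\mathfrak{g}^{(\tt{der})})=0$; the first gives the vanishing of the dual Selmer group and the second gives local unobstructedness at each $\nu\in S$. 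Together these are exactly the hypotheses of the direct implication, so $D^{(\chi)}_{S,\Lambda}(\overline\rho)$ is globally unobstructed, completing the proof.

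The only subtle point — and the step I would be most careful about — is the bookkeeping identifying the leftmost term $\Sh^2_S(\mathfrak{g}^{(\tt{der})})$ of the quoted sequence with (a quotient of) the dual Selmer group $H^1_{\mathcal{L}^{(\chi),\perp}}(F,\mathfrak{g}^{(\tt{der}),\vee})$ appearing in Definition \ref{20170407_def_glob_unobstr}: one must check that the local conditions at $\nu\notin S$ are unramified (given, since $D^{(\chi)}_\nu$ parametrizes unramified deformations there) so that the Poitou–Tate nine-term sequence degenerates in the expected way, and that imposing the extra conditions $L^{(\chi)}_\nu$ at $\nu\in S$ only shrinks the Selmer group, so that its vanishing still forces $\Sh^2_S=0$. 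Everything else is a formal diagram chase in the displayed four-term sequence.
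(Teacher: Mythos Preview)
Your proposal is correct and follows exactly the route the paper intends: both arguments read off the forward and backward implications directly from the displayed Poitou--Tate four-term sequence, using that the rightmost term vanishes for $\ell\gg 0$. One small clarification: for the unconditioned functor $D^{(\chi)}_{S,\Lambda}(\overline{\rho})$ the local condition at $\nu\in S$ is the full $H^1$, so $L_\nu^{\perp}=0$ and the dual Selmer group is \emph{equal} to $\Sh^1_S(\mathfrak{g}^{(\tt{der}),\vee})\cong \Sh^2_S(\mathfrak{g}^{(\tt{der})})^{\ast}$, not merely a supergroup of it; your parenthetical about ``only shrinking'' is therefore unnecessary but harmless.
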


\section{A general framework for unobstructedness}\label{20170405_section_on_framework}
For this section, we take the following static point of view: Let $k$ be a finite field with ring of Witt vectors $W=W(k)$, 
let $S$ be a finite set of finite places of $F$. We assume $\ell :=\operatorname{char}(k) \notin S\cup \{2\}$. 
Then we fix  a continuous representation
\begin{equation*}
\overline{\rho}: \operatorname{Gal}_{F,S}\rightarrow G(k)
\end{equation*}
together with a lift $\chi: \operatorname{Gal}_{F,S}\rightarrow G^{\tt{ab}}(k)$ of the determinant. Let us moreover fix a Borel subgroup $B\subset G$ and denote by
$\mathfrak{g}^{\tt{der}}$ (resp. $\mathfrak{b}^{\tt{der}}$) the Lie algebra of the derived subgroup 
$G^{\tt{der}}$ (resp. the Lie algebra of $B\cap G^{\tt{der}}$). 

With respect to some choice\footnote{During the following applications of the presented material, we will consider for $\tt{min}$ the condition of Section \ref{20170321_sect_min_ram},
for $\tt{crys}$ the condition of Section \ref{20170321_sect_crys_defos} and for 
$\tt{sm}$ the unconditioned deformation condition. We stress, however, that for the purpose of this section we treat $\tt{min, crys, sm}$ purely formally as deformation conditions satisfying the listed assumptions of Definition \ref{20170316_def_lifting_cond}.} of local deformation conditions 
\begin{itemize}
\item $\tt{min}$ of the restriction $\overline{\rho}_{\nu}$ of $\overline{\rho}$ to a decomposition group at $\nu\in S$,
\item $\tt{sm}$ and $\tt{crys}$ of the restriction $\overline{\rho}_{\nu}$ of $\overline{\rho}$ to a decomposition group at $\nu |\ell$,
\end{itemize}
consider the following list of assumptions, where we leave out the $W$ in the subscript
of the occurring deformation functors and rings:
\begin{enumerate}
\item[1.] {\textbf{(sm/$k$):}} For each $\nu |\ell$, the subfunctor $D^{\boxempty, \chi_{\nu}, \tt{sm}}(\overline{\rho}_{\nu})$ of 
$D^{\boxempty, \chi_{\nu}}(\overline{\rho}_{\nu})$ is representable by a formally smooth (over $W$) object $R_{\nu}^{\boxempty, \chi_{\nu}, \tt{sm}}$ (and 
we denote the dimension by $d_{\nu}^{\boxempty, \tt{sm}}$).
\item[2.] {\textbf{(crys):}} For each $\nu |\ell$, the subfunctor $D^{\boxempty, \chi_{\nu}, \tt{crys}}(\overline{\rho}_{\nu})$ of 
$D^{\boxempty, \chi_{\nu}}(\overline{\rho}_{\nu})$ is representable by a formally smooth (over $W$) object $R_{\nu}^{\boxempty, \chi_{\nu}, \tt{crys}}$ 
of relative dimension
\begin{equation*}
d^{\boxempty, \tt{crys}}_{\nu} = \operatorname{dim}(\mathfrak{g}^{\tt{der}}) + \bigl(\operatorname{dim}(\mathfrak{g}^{\tt{der}})
- \operatorname{dim}(\mathfrak{b}^{\tt{der}})\bigr)[F_{\nu}:\mathbb{Q}_{\ell}].
\end{equation*}
\item[3.] {\textbf{(min):}} For each $\nu\in S$, the subfunctor $D^{\boxempty, \chi_{\nu}, \tt{min}}(\overline{\rho}_{\nu})$ of 
$D^{\boxempty, \chi_{\nu}}(\overline{\rho}_{\nu})$ is representable by a formally smooth (over $W$) object $R_{\nu}^{\boxempty, \chi_{\nu}, \tt{min}}$ 
of relative dimension
\begin{equation*}
d^{\boxempty, \tt{min}}_{\nu} = \operatorname{dim}(\mathfrak{g}^{\tt{der}}).
\end{equation*}
\item[4.] {\textbf{($\infty$):}} For each $\nu|\infty$, the functor $D^{\boxempty, \chi_{\nu}}(\overline{\rho}_{\nu})$ is representable
by an object (over $W$) of relative dimension 
$d^{\boxempty}_{\nu} = \operatorname{dim}(\mathfrak{b}^{\tt{der}})$. (As $\ell > 2 = \#\operatorname{Gal}_{F_{\nu}}$, the
strict $\ell$-cohomological dimension $\operatorname{scd}_{\ell}(\operatorname{Gal}_{F_{\nu}})$ is zero, i.e. the representing 
object is automatically formally smooth over $W$.)
\item[5.] {\textbf{(Presentability):}} There exists a presentation
\begin{equation*}
R^{\boxempty_{S_{\ell}}, \chi, \tt{min}, \tt{sm}}_{S_{\ell}} \cong R^{\tt{loc, min, sm}}_{S_{\ell}}\llbracket x_1, 
\ldots, x_a \rrbracket/(f_1, \ldots, f_b)
\end{equation*}
for integers $a,b$ fulfilling $a-b= (\#S_{\ell} - 1).\operatorname{dim}(\mathfrak{g}^{\tt{ab}})$. In this equation, we take
\begin{equation}\label{20160209_Rloc_defn}
R^{\text{loc}, \tt{min}, \tt{sm}}_{S_{\ell}} = \widehat{\bigotimes_{\nu\in S_{\ell}}}\; \widetilde{R}_{\nu} \; \text{with} \;
\widetilde{R}_{\nu} = 
\begin{cases} 
R_{\nu}^{\boxempty, \chi_{\nu}, \tt{min}} & \text{ if $\nu \in S$};\\
R_{\nu}^{\boxempty, \chi_{\nu}, \tt{sm}} & \text{ if $\nu|\ell$};\\
R_{\nu}^{\boxempty, \chi_{\nu}} & \text{ if $\nu|\infty$}.
\end{cases}
\end{equation}
\item[6.]{\textbf{($R=T$):}} The ring $R^{\boxempty_{S_{\ell}}, \chi, \tt{min}, \tt{crys}}_{S_{\ell}}$ is formally smooth of relative dimension
\begin{equation*}
r_0 := \operatorname{dim}(\mathfrak{g}).\#S_{\ell} - \operatorname{dim}(\mathfrak{g}^{\tt{ab}}).
\end{equation*}
\end{enumerate}
\begin{Rem}[Taylor-Wiles condition]
Let $\nu|\infty$ so that $\operatorname{scd}_{\ell}(\operatorname{Gal}_{F_{\nu}}) = 0$, then it follows from condition $(\infty)$, $\operatorname{scd}_{\ell}(\operatorname{Gal}_{F_{\nu}}) = 0$ and the remark following Definition \ref{20170322_def_easiest_unobstructedness} that
\begin{equation*}
\operatorname{dim}(\mathfrak{b}^{\tt{der}}) = \operatorname{dim}_W(R^{\boxempty}) = 
h^1(\operatorname{Gal}_{F_{\nu}}, \mathfrak{g}^{\tt{der}})' + \operatorname{dim}(\mathfrak{g}^{\tt{der}})
- h^0(\operatorname{Gal}_{F_{\nu}}, \mathfrak{g}^{\tt{der}}) = \operatorname{dim}(\mathfrak{g}^{\tt{der}})
- h^0(\operatorname{Gal}_{F_{\nu}}, \mathfrak{g}^{\tt{der}}).
\end{equation*}
This implies
\begin{equation}\label{20160908_TWFormula}
\sum_{\nu | \infty} h^0(\operatorname{Gal}_{F_{\nu}}, \mathfrak{g}^{\tt{der}}) = [F:\mathbb{Q}].\bigl(\mathfrak{g}^{\tt{der}}
- \operatorname{dim}(\mathfrak{b}^{\tt{der}})\bigr).
\end{equation}
\end{Rem}
We can now state the main result of this section:
\begin{Thm}\label{20160908_MainThm}
Suppose conditions 1.-6. are met and, for $\nu|\ell$, write $d^{\boxempty, \tt{sm}}_{\nu}= \operatorname{dim}(\mathfrak{g}^{\tt{der}}).([F_{\nu}:\mathbb{Q}_{\ell}]
+1) - \delta_{\nu}$ for suitable numbers $\delta_{\nu}\in \mathbb{N}_0$. 
\begin{enumerate}
\item The ring $R^{\boxempty_{S_{\ell}}, \chi, \tt{min}, \tt{sm}}_{S_{\ell}}$ is formally smooth of relative dimension 
\begin{equation*}
\# S_{\ell}.\operatorname{dim}(\mathfrak{g}) - \operatorname{dim}(\mathfrak{g}^{\tt{ab}}) + 
[F:\mathbb{Q}].\operatorname{dim}(\mathfrak{b}^{\tt{der}}) - \sum_{\nu|\ell} \delta_{\nu}.
\end{equation*}
If the unframed deformation functor $D^{\chi, \tt{min}, \tt{sm}}_{S_{\ell}}$ is representable, then 
$R^{\chi, \tt{min}, \tt{sm}}_{S_{\ell}}$ is formally smooth of relative dimension $[F:\mathbb{Q}].\operatorname{dim}(\mathfrak{b}^{\tt{der}})
- \sum_{\nu|\ell} \delta_{\nu}$.
\item Let $\mathcal{L}:= (L_{\nu}^{\chi})_{\nu}$ be the system of local conditions corresponding to the deformation functor
$D^{\chi, \tt{min}, \tt{sm}}_{S_{\ell}}(\overline{\rho})$. Assume 
\begin{enumerate}
\item[a)] $\mathfrak{g}= \mathfrak{g}^{\tt{der}} \oplus \mathfrak{g}^{\tt{ab}}$ (e.g. because $\ell\gg 0$);
\item[b)] $H^{0}(\operatorname{Gal}_{F}, \mathfrak{g}^{\tt{der}, \vee}) =0$;
\item[c)] For $\nu\in S$, we have $\operatorname{dim}(L_{\nu}) = h^0(\operatorname{Gal}_{F_{\nu}}, \mathfrak{g}^{\tt{der}})$;
\item[d)] All $\delta_{\nu}$ vanish.
\end{enumerate}
Then $H^1_{\mathcal{L}^{\perp}}(\operatorname{Gal}_{F, S}, \mathfrak{g}^{\tt{der}, \vee}) = H^0(\operatorname{Gal}_{F,S}, \mathfrak{g}^{\tt{der}}) = 0$.
\end{enumerate}
\end{Thm}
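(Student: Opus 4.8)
The plan is to derive part 1 from the $R=T$-input (condition 6) by descending formal smoothness along the local lifting rings at $\ell$, and then to feed the resulting relative dimension into a Greenberg--Wiles count for part 2. Throughout, observe that condition 4 forces every archimedean place of $F$ to be real (otherwise the archimedean lifting ring would have relative dimension $\operatorname{dim}(\mathfrak{g}^{\tt{der}})\neq\operatorname{dim}(\mathfrak{b}^{\tt{der}})$), so $\#\Omega_\infty=[F:\mathbb{Q}]$; I write $D_{\tt{sm}}$, resp. $D_{\tt{crys}}$, for the relative dimension over $W$ of $R^{\tt{loc,min,sm}}_{S_\ell}$, resp. of the analogue $R^{\tt{loc,min,crys}}_{S_\ell}$ of $(\ref{20160209_Rloc_defn})$ with the crystalline local rings at $\nu\mid\ell$.

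For part 1: since the crystalline lifting ring $R^{\boxempty,\chi_\nu,\tt{crys}}_\nu$ is a quotient of $R^{\boxempty,\chi_\nu,\tt{sm}}_\nu$ for $\nu\mid\ell$, and both are regular by conditions 1--2, each kernel is generated by $d^{\boxempty,\tt{sm}}_\nu-d^{\boxempty,\tt{crys}}_\nu$ elements that are part of a regular system of parameters. Taking $\widehat{\otimes}$ over $S_\ell$ (the factors at $\nu\in S$ and $\nu\mid\infty$ being unchanged), and using that this property is preserved under completed tensor products of formally smooth $W$-algebras and under adjoining power-series variables, one gets a surjection $R^{\tt{loc,min,sm}}_{S_\ell}\twoheadrightarrow R^{\tt{loc,min,crys}}_{S_\ell}$ whose kernel is generated by $D_{\tt{sm}}-D_{\tt{crys}}$ such elements; a short computation with the dimension formulas of conditions 2--4 gives $D_{\tt{sm}}-D_{\tt{crys}}=[F:\mathbb{Q}]\operatorname{dim}(\mathfrak{b}^{\tt{der}})-\sum_{\nu\mid\ell}\delta_\nu$ and $D_{\tt{crys}}=\#S_\ell\operatorname{dim}(\mathfrak{g}^{\tt{der}})$. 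Base-changing the presentation of condition 5 along this surjection — using the standard identification $R^{\boxempty_{S_\ell},\chi,\tt{min},\tt{crys}}_{S_\ell}\cong R^{\boxempty_{S_\ell},\chi,\tt{min},\tt{sm}}_{S_\ell}\,\widehat{\otimes}_{R^{\tt{loc,min,sm}}_{S_\ell}}R^{\tt{loc,min,crys}}_{S_\ell}$ underlying Proposition \ref{20170329_first_rep_result} — presents $R^{\boxempty_{S_\ell},\chi,\tt{min},\tt{crys}}_{S_\ell}$ as a quotient of the regular ring $R^{\tt{loc,min,sm}}_{S_\ell}\llbracket x_1,\dots,x_a\rrbracket$ by the $b$ relations $f_1,\dots,f_b$ of condition 5 together with $D_{\tt{sm}}-D_{\tt{crys}}$ further elements. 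Its relative dimension is at least $D_{\tt{sm}}+a-b-(D_{\tt{sm}}-D_{\tt{crys}})=D_{\tt{crys}}+(a-b)$, and using $a-b=(\#S_\ell-1)\operatorname{dim}(\mathfrak{g}^{\tt{ab}})$ together with $D_{\tt{crys}}=\#S_\ell\operatorname{dim}(\mathfrak{g}^{\tt{der}})$ this lower bound equals exactly $r_0$; since condition 6 asserts the relative dimension is $r_0$, the bound is attained, so all these generators — in particular $f_1,\dots,f_b$ alone — have $k$-linearly independent images in the relative cotangent space. Hence $R^{\boxempty_{S_\ell},\chi,\tt{min},\tt{sm}}_{S_\ell}$ is formally smooth over $W$ of relative dimension $D_{\tt{sm}}+(a-b)$, which rearranges to the asserted value. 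The unframed statement then follows from the third part of Proposition \ref{20170525_rep_res_proposition} and Proposition \ref{20170324_some_prop}, subtracting $\operatorname{dim}(\mathfrak{g}).(\#S_\ell-1)+\operatorname{dim}(\mathfrak{g}^{\tt{der}})$ power-series variables.

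For part 2: unwinding Definition \ref{20170328_multiply_framed_defo_functor} over $k[\epsilon]$ (the infinitesimal framings $\beta_\nu$ and the $\widehat{G}$-coboundaries contributing only purely group-theoretic terms) identifies the tangent space of $D^{\boxempty_{S_\ell},\chi,\tt{min},\tt{sm}}_{S_\ell}(\overline{\rho})$ as having dimension $h^1_{\mathcal{L}}(\operatorname{Gal}_{F,S_\ell},\mathfrak{g}^{\tt{der}})-h^0(\operatorname{Gal}_{F,S_\ell},\mathfrak{g}^{\tt{der}})+\#S_\ell\operatorname{dim}(\mathfrak{g})-\operatorname{dim}(\mathfrak{g}^{\tt{ab}})$. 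Equating this with the relative dimension obtained in part 1 and using d) gives $h^1_{\mathcal{L}}(\operatorname{Gal}_{F,S_\ell},\mathfrak{g}^{\tt{der}})=h^0(\operatorname{Gal}_{F,S_\ell},\mathfrak{g}^{\tt{der}})+[F:\mathbb{Q}]\operatorname{dim}(\mathfrak{b}^{\tt{der}})$. Now apply the Greenberg--Wiles formula $h^1_{\mathcal{L}}-h^1_{\mathcal{L}^\perp}=h^0(\operatorname{Gal}_{F,S_\ell},\mathfrak{g}^{\tt{der}})-h^0(\operatorname{Gal}_{F,S_\ell},\mathfrak{g}^{\tt{der},\vee})+\sum_{\nu\in S_\ell}\bigl(\operatorname{dim}(L_\nu)-h^0(F_\nu,\mathfrak{g}^{\tt{der}})\bigr)$: assumption b) kills the $\mathfrak{g}^{\tt{der},\vee}$-term; c) kills the $\nu\in S$ summands; for $\nu\mid\infty$ the summand is $-(\operatorname{dim}(\mathfrak{g}^{\tt{der}})-\operatorname{dim}(\mathfrak{b}^{\tt{der}}))$ by condition 4 and Proposition \ref{archProp}; and for $\nu\mid\ell$, using a), the local Euler characteristic, and the fact that d) together with condition 1 forces $H^2(F_\nu,\mathfrak{g}^{\tt{der}})=0$, the summand is $[F_\nu:\mathbb{Q}_\ell]\operatorname{dim}(\mathfrak{g}^{\tt{der}})$. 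Summing, the right-hand side of Greenberg--Wiles equals $h^0(\operatorname{Gal}_{F,S_\ell},\mathfrak{g}^{\tt{der}})+[F:\mathbb{Q}]\operatorname{dim}(\mathfrak{b}^{\tt{der}})$, which by the previous identity is just $h^1_{\mathcal{L}}$; hence $h^1_{\mathcal{L}^\perp}(\operatorname{Gal}_{F,S_\ell},\mathfrak{g}^{\tt{der},\vee})=0$. Finally $H^0(\operatorname{Gal}_{F,S},\mathfrak{g}^{\tt{der}})=0$ is Assumption \ref{20150904_Ass_H0_vanishing_for_presentations}, which remains in force.

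The main obstacle is the commutative-algebra bookkeeping in part 1: one must check that "being part of a regular system of parameters" really descends through the completed tensor products and the base change used there, justify the identification $R^{\boxempty_{S_\ell},\chi,\tt{min},\tt{crys}}_{S_\ell}\cong R^{\boxempty_{S_\ell},\chi,\tt{min},\tt{sm}}_{S_\ell}\,\widehat{\otimes}_{R^{\tt{loc,min,sm}}_{S_\ell}}R^{\tt{loc,min,crys}}_{S_\ell}$, and verify the several relative-dimension identities without error (they rest tacitly on $F$ being totally real and on $\operatorname{dim}(\mathfrak{g})=\operatorname{dim}(\mathfrak{g}^{\tt{der}})+\operatorname{dim}(\mathfrak{g}^{\tt{ab}})$). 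Likewise the tangent-space computation in part 2 requires a careful $Z^1$/coboundary analysis of the multiply-framed functor. Once these are in place, both assertions follow by arithmetic, since $h^1_{\mathcal{L}^\perp}$ and $H^0(\operatorname{Gal}_{F,S},\mathfrak{g}^{\tt{der}})$ are non-negative.
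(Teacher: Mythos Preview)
Your proposal is correct and follows essentially the same route as the paper: the pushout/base-change argument for part~1 (phrased by you as ``dimension lower bound attained, hence the relations are part of a regular system'' and by the paper as ``$\operatorname{gen}(J)\le\operatorname{gen}(I)$ with the bound checked numerically'') and the Greenberg--Wiles count for part~2 are the same in substance. Two small remarks. First, in part~2 you compute the tangent space of the \emph{framed} functor and extract $h^1_{\mathcal L}$ from it, whereas the paper identifies $h^1_{\mathcal L}$ directly as the tangent space of the \emph{unframed} functor and reads off its value from the second sentence of part~1; your route is equivalent but needs the multiply-framed tangent computation you flag at the end. Second, your claim that ``d) together with condition~1 forces $H^2(F_\nu,\mathfrak g^{\tt der})=0$'' for $\nu\mid\ell$ is not justified in the abstract framework (it would require ${\tt sm}$ to be the full lifting condition); however your summand $[F_\nu:\mathbb Q_\ell]\operatorname{dim}(\mathfrak g^{\tt der})$ is still correct, obtained directly from $\operatorname{dim}(L_\nu)=d^{\boxempty,\tt sm}_\nu-\operatorname{dim}(\mathfrak g^{\tt der})+h^0(F_\nu,\mathfrak g^{\tt der})$ (the framed/unframed comparison under a)) and d), which is exactly how the paper computes it.
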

\begin{Rem}
\begin{enumerate}
\item As the deformation conditions $\tt{sm}$ and $\tt{crys}$ are relatively representable (cf. conditions 1. and 2.), 
$D^{\chi, \tt{min}, \tt{sm}}_{S_{\ell}}$ is representable if $D_{S_{\ell}}^{\chi}$ is representable. For example, 
this is the case if $\overline{\rho}$ is absolutely irreducible (in the sense of Definition
\ref{20170322_defn_abs_irred}).
\item For $\nu\notin S_{\ell}$, the equality
$\operatorname{dim}(L_{\nu}) = h^0(\operatorname{Gal}_{F_{\nu}}, \mathfrak{g}^{\tt{der}})$ holds automatically if $\ell\gg 0$ (so that 
$\mathfrak{g}= \mathfrak{g}^{\tt{der}}\oplus \mathfrak{g}^{\tt{ab}}$). 
\end{enumerate}
\end{Rem}
\begin{proof}
First remark that the second claim of part 1. follows directly from Lemma 2.15, as $R_{S_{\ell}}^{\boxempty_{S_{\ell}}, 
\chi, \tt{min}, \tt{sm}}$ is a power series ring over $R_{S_{\ell}}^{\chi, \tt{min}, \tt{sm}}$, and from the formula $\operatorname{dim}\mathfrak{g} = \operatorname{dim}\mathfrak{g}^{\tt{der}} + 
\operatorname{dim}\mathfrak{g}^{\tt{ab}}$.

For the first sentence of 1., we use the shorthand notation $d^{\star}_{T} = \sum_{\nu\in T} d^{\star}_{\nu}$ for 
a subset $T$ of $\operatorname{Pl}_F$. Moreover, we write $d^{\boxempty}_{\infty}$ for $d^{\boxempty}_{\Omega_{\infty}}$ and
$d^{\star}_{\ell}$ for $d^{\star}_{\Omega_{\ell}}$. Let us consider the commutative diagram
\begin{equation*}
\xymatrix{
0 \ar[r]& I \ar[d]_{\pi}\ar[r] & R^{\tt{loc}, \tt{min}, \tt{sm}}_{S_{\ell}} \ar[r]^f \ar[d]_{\pi}& \ar[d]^{\pi'}
R^{\tt{loc}, \tt{min}, \tt{crys}}_{S_{\ell}} \ar[r]  & 0\\
0 \ar[r]& J \ar[r] & R^{\boxempty_{S_{\ell}}, \chi, \tt{min},\tt{sm}}_{S_{\ell}} \ar[r]_g& 
R^{\boxempty_{S_{\ell}}, \chi, \tt{min},\tt{crys}}_{S_{\ell}} \ar[r]& 
0.
}
\end{equation*}
In this diagram, the right square is a pushout square, $R^{\tt{loc, min, crys}}_{S_{\ell}}$ is 
defined as in (\ref{20160209_Rloc_defn}) (but with $\tilde{R}_{\nu} = R_{\nu}^{\boxempty, \chi_{\nu}, \tt{crys}}$ for $\nu|\ell$) and $f,g$ are the canonical projections.
Moreover, $\pi= \otimes_{\nu\in S_{\ell}}\pi_{\nu}$ is induced from the natural transformations 
\begin{equation*}
D^{\boxempty_{S_{\ell}}, \chi, \tt{min}, \tt{crys}}_{S_{\ell}} \rightarrow 
\widetilde{D}_{\nu},
\end{equation*}
where $\widetilde{D}_{\nu}$ is the deformation functor corresponding to (i.e. represented
by) the ring $\widetilde{R}_{\nu}$ in (\ref{20160209_Rloc_defn}) and, analogously, 
$\pi' = \otimes_{\nu\in S_{\ell}}\pi_{\nu}'$ is defined with $\tt{crys}$ in place of 
$\tt{sm}$.

Using the list of assumptions, we can rewrite the above diagram as
\begin{equation*}
\xymatrix{
0 \ar[r] & I\ar[d]_{\pi} \ar[r] & W\llbracket x_1, \ldots, x_{d^{\boxempty,\tt{sm}}_{\ell}+
d^{\boxempty}_{\infty} + d^{\boxempty, \tt{min}}_{S}}\rrbracket \ar[r]^f \ar[d]_{\pi} &
W\llbracket x_1, \ldots, x_{d^{\boxempty,\tt{crys}}_{\ell}+
d^{\boxempty}_{\infty} + d^{\boxempty, \tt{min}}_{S}}\rrbracket \ar[r] \ar[d]^{\pi'} & 0 \\
0 \ar[r] & J \ar[r] & W\llbracket x_1, \ldots, x_{m}\rrbracket/(f_1, \ldots,
f_{m-\gamma}) \ar[r]_<<<<<<<<<<g &
W\llbracket x_1, \ldots, x_{r_0}\rrbracket \ar[r] & 0
}
\end{equation*}
with $\gamma = (\#S_{\ell} -1).\operatorname{dim}(\mathfrak{g}^{\tt{ab}}) +
d^{\boxempty, \tt{sm}}_{\ell} + d^{\boxempty}_{\infty} + d^{\boxempty, \tt{min}}_S$.
It is easily seen that $R^{\boxempty_{S_{\ell}}, \chi, \tt{min},\tt{sm}}_{S_{\ell}}$
is formally smooth if we can show
$\operatorname{gen}(J) \leq m - (m-\gamma ) - r_0 = \gamma - r_0$.
From the pushout property of the diagram, we can easily deduce that $\operatorname{gen}(J)
\leq \operatorname{gen}(I)$. As $f$ is a surjection of regular rings, it follows from 
(Serre, Local Algebra,
Proposition 22) that $\operatorname{gen}(I) = d^{\boxempty, \tt{sm}}_{\ell} - 
d^{\boxempty, \tt{crys}}_{\ell}$. Thus, we are left to show the inequality
\begin{equation*}
d^{\boxempty, \tt{sm}}_{\ell} - 
d^{\boxempty, \tt{crys}}_{\ell} \leq \gamma - r_0 = 
(\#S_{\ell} -1).\operatorname{dim}(\mathfrak{g}^{\tt{ab}}) + d^{\boxempty, \tt{sm}}_{\ell} +
d^{\boxempty}_{\infty} + d^{\boxempty, \tt{min}}_{S} - 
\operatorname{dim}(\mathfrak{g}).\#S_{\ell} + \operatorname{dim}(\mathfrak{g}^{\tt{ab}})
\end{equation*}
\begin{equation*}
= \#S_{\ell}.(\operatorname{dim}(\mathfrak{g}^{\tt{ab}}) - \operatorname{dim}(\mathfrak{g}))
+ d^{\boxempty, \tt{sm}}_{\ell} +
d^{\boxempty}_{\infty} + d^{\boxempty, \tt{min}}_{S}.
\end{equation*}
By assumptions {\textbf{(min)}} and {\textbf{($\infty$)}} and by the identity 
$\operatorname{dim}(\mathfrak{g}^{\tt{der}}) + \operatorname{dim}(\mathfrak{g}^{\tt{ab}})
= \operatorname{dim}(\mathfrak{g})$, this amounts to
\begin{equation*}
d^{\boxempty, \tt{crys}}_{\ell} \geq 
\operatorname{dim}(\mathfrak{g}^{\tt{der}}).(\#\Omega_{\ell} + [F:\mathbb{Q}])
- \operatorname{dim}(\mathfrak{b}^{\tt{der}})[F:\mathbb{Q}].
\end{equation*}
Assumption {\textbf{(crys)}} amounts precisely to the fact that this inequality is fulfilled
(with equality), which implies the formal smoothness of 
$R^{\boxempty_{S_{\ell}}, \chi, \tt{min}, \tt{sm}}_{S_{\ell}}$.
Moreover, we easily check that the relative
dimension of $R^{\boxempty_{S_{\ell}}, \chi, \tt{min}, \tt{sm}}_{S_{\ell}}$ is 
\begin{gather*}
\gamma = (\#S_{\ell} -1).\operatorname{dim}(\mathfrak{g}^{\tt{ab}}) +
d^{\boxempty, \tt{sm}}_{\ell} + d^{\boxempty}_{\infty} + d^{\boxempty, \tt{min}}_S \\
= \# S_{\ell}.\operatorname{dim}\mathfrak{g}^{\tt{ab}} - 
\operatorname{dim}\mathfrak{g}^{\tt{ab}}
+ \operatorname{dim}\mathfrak{g}^{\tt{der}}.([F:\mathbb{Q}] + \#\Omega_{\ell}) 
- \Bigl(\;\sum_{\nu | \ell} \delta_{\nu}\Bigr) 
+ [F:\mathbb{Q}].\operatorname{dim}(\mathfrak{b}^{\tt{der}}) 
+ \# S.\operatorname{dim}(\mathfrak{g}^{\tt{der}})\\
= \# S_{\ell}.\operatorname{dim}(\mathfrak{g}) + 
[F:\mathbb{Q}].\operatorname{dim}(\mathfrak{b}^{\tt{der}})
- \operatorname{dim}(\mathfrak{g}^{\tt{ab}})
-  \sum_{\nu | \ell} \delta_{\nu}.
\end{gather*}

Concerning part 2., note that (using condition a)) we have an exact sequence
\begin{equation*}
0 \rightarrow 
\mathfrak{g}/\mathfrak{g}^{\operatorname{Gal}_{F_{\nu}}} 
= \mathfrak{g}^{\tt{der}}/(\mathfrak{g}^{\tt{der}})^{\operatorname{Gal}_{F_{\nu}}}
\rightarrow t_{D_{W}^{\boxempty, \chi_{\nu}, \tt{sm}}(\overline{\rho}_{\nu})}
\rightarrow t_{D_{W}^{\chi_{\nu}, \tt{sm}}(\overline{\rho}_{\nu})}
\rightarrow 0
\end{equation*}
for $\nu|\ell$. Therefore, using condition 2. and the vanishing of $\delta_{\nu}$,
we have for $\nu|\ell$ the following:
\begin{equation*}
\operatorname{dim}(L_{\nu}) = 
 \operatorname{dim} t_{D_{W}^{\chi_{\nu}, \tt{sm}}(\overline{\rho}_{\nu})}
= h^0(\operatorname{Gal}_{F_{\nu}}, \mathfrak{g}^{\tt{der}}) + 
[F_{\nu}:\mathbb{Q}_{\ell}].\operatorname{dim}(\mathfrak{g}^{\tt{der}}).
\end{equation*}
Recall the Greenberg-Wiles-Formula [NSW08, Theorem 8.7.9]:
\begin{gather*}
\operatorname{dim} H^1_{\mathcal{L}}(\operatorname{Gal}_{F,S}, \mathfrak{g}^{\tt{der}})
- \operatorname{dim} H^1_{\mathcal{L}^{\perp}}
(\operatorname{Gal}_{F,S}, \mathfrak{g}^{\tt{der},\vee})\\
= h^0(\operatorname{Gal}_{F,S}, \mathfrak{g}^{\tt{der}}) - 
h^0(\operatorname{Gal}_{F,S}, \mathfrak{g}^{\tt{der},\vee}) 
+ \sum_{\nu\in S_{\ell}} \bigl( \operatorname{dim}(L_{\nu}) 
- h^0(\operatorname{Gal}_{F_{\nu}}, \mathfrak{g}^{\tt{der}})\bigr)
\end{gather*}
By \cite[Section 5]{Boeckle_presentations}, we know that 
$H^1_{\mathcal{L}}(\operatorname{Gal}_{F,S}, \mathfrak{g}^{\tt{der}})$ can be 
identified with the tangent space of the functor 
$D^{\chi, \tt{min}, \tt{sm}}_{S_{\ell}}$ and hence (by part 2.) equals 
$[F:\mathbb{Q}].\operatorname{dim}(\mathfrak{b}^{\tt{der}})$. 
For $\nu|\infty$, we have $L_{\nu} \subset H^1(\operatorname{Gal}_{F,S}, 
\mathfrak{g}^{\tt{der}}) = 0$. Thus, using the Taylor-Wiles formula 
(\ref{20160908_TWFormula}) and assumption b), the sum evaluates to
\begin{equation*}
\sum_{\nu\in S_{\ell}} \bigl( \operatorname{dim}(L_{\nu}) 
- h^0(\operatorname{Gal}_{F_{\nu}}, \mathfrak{g}^{\tt{der}})\bigr) 
= [F:\mathbb{Q}].\operatorname{dim}(\mathfrak{g}^{\tt{der}}) - 
[F:\mathbb{Q}].\bigl(\operatorname{dim}(\mathfrak{g}^{\tt{der}})
- \operatorname{dim}(\mathfrak{b}^{\tt{der}})\bigr).
\end{equation*}
Therefore we get 
\begin{equation*}
- \operatorname{dim} H^1_{\mathcal{L}^{\perp}}
(\operatorname{Gal}_{F,S}, \mathfrak{g}^{\tt{der},\vee})
= h^0(\operatorname{Gal}_{F,S}, \mathfrak{g}^{\tt{der}}).
\end{equation*}
As neither quantity can be negative, they must both vanish and the result follows.
\end{proof}
From the exact sequence
\begin{equation*}
H^1_{\mathcal{L}^{\perp}}
(\operatorname{Gal}_{F,S}, \mathfrak{g}^{\tt{der},\vee})^{\ast}
\rightarrow
\Sh^2_{S_{\ell}}(\mathfrak{g}^{\tt{der}}) \rightarrow 0
\end{equation*}
(see e.g. equation (9) on p. 10 of [Boeckle07]) we can deduce:
\begin{Cor}\label{20170314_cor_317_phd}
Under the assumptions of part 2. of Theorem \ref{20160908_MainThm}, 
$\Sh^2_{S_{\ell}}(\mathfrak{g}^{\tt{der}})$ vanishes. In particular, under these hypotheses the unrestricted 
deformation functor $D^{(\boxempty_{S_{\ell}}), \chi}_{S_{\ell}}(\overline{\rho})$ is 
globally unobstructed precisely if the local deformation functors 
$D^{(\boxempty), \chi_{\nu}}(\overline{\rho}_{\nu})$ are relatively smooth
for $\nu\in S \cup \Omega_{\ell}$. 
\end{Cor}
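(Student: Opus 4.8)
The plan is to deduce both assertions from the vanishing $H^1_{\mathcal{L}^{\perp}}(\operatorname{Gal}_{F,S}, \mathfrak{g}^{\tt{der},\vee}) = H^0(\operatorname{Gal}_{F,S}, \mathfrak{g}^{\tt{der}}) = 0$ supplied by part 2 of Theorem \ref{20160908_MainThm}, where $\mathcal{L}$ denotes the system of local conditions attached to $D^{\chi, \tt{min}, \tt{sm}}_{S_{\ell}}(\overline{\rho})$. For the first assertion I would simply note that the exact sequence $H^1_{\mathcal{L}^{\perp}}(\operatorname{Gal}_{F,S}, \mathfrak{g}^{\tt{der},\vee})^{\ast} \rightarrow \Sh^2_{S_{\ell}}(\mathfrak{g}^{\tt{der}}) \rightarrow 0$ recalled before the statement realizes $\Sh^2_{S_{\ell}}(\mathfrak{g}^{\tt{der}})$ as a quotient of the Pontryagin dual $H^1_{\mathcal{L}^{\perp}}(\operatorname{Gal}_{F,S}, \mathfrak{g}^{\tt{der},\vee})^{\ast}$; since the group $H^1_{\mathcal{L}^{\perp}}(\operatorname{Gal}_{F,S}, \mathfrak{g}^{\tt{der},\vee})$ vanishes by part 2 of Theorem \ref{20160908_MainThm}, so does its dual, and hence $\Sh^2_{S_{\ell}}(\mathfrak{g}^{\tt{der}}) = 0$.

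For the ``in particular'', let $\mathcal{M} = (M_{\nu})_{\nu}$ be the system of local conditions of the unrestricted functor $D^{(\boxempty_{S_{\ell}}), \chi}_{S_{\ell}}(\overline{\rho})$, so that $M_{\nu} = H^1(\operatorname{Gal}_{F_{\nu}}, \mathfrak{g}^{\tt{der}})'$ is the full local group for $\nu \in S_{\ell}$ and $M_{\nu}$ is the unramified condition for $\nu \notin S_{\ell}$. I would first check that $\mathcal{L} \subseteq \mathcal{M}$, i.e. $L_{\nu} \subseteq M_{\nu}$ for every place $\nu$: at $\nu | \ell$ and at $\nu \in S$ the conditions $\tt{sm}$ resp. $\tt{min}$ cut out subspaces of the full local group $M_{\nu}$; at $\nu | \infty$ both $L_{\nu}$ and $M_{\nu}$ vanish (as $\ell > 2$); and at $\nu \notin S_{\ell}$ both $\mathcal{L}$ and $\mathcal{M}$ impose the unramified condition. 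Since passage to the annihilator under the local Tate pairing reverses inclusions, it follows that $\mathcal{M}^{\perp} \subseteq \mathcal{L}^{\perp}$ place by place, hence $H^1_{\mathcal{M}^{\perp}}(\operatorname{Gal}_{F,S}, \mathfrak{g}^{\tt{der},\vee}) \subseteq H^1_{\mathcal{L}^{\perp}}(\operatorname{Gal}_{F,S}, \mathfrak{g}^{\tt{der},\vee}) = 0$. Thus, under the hypotheses of part 2, the dual Selmer group of the unrestricted functor vanishes unconditionally.

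It then remains to unwind the definition of global unobstructedness: it requires the vanishing of the dual Selmer group --- which we just established --- together with relative smoothness of the local functors $D^{(\boxempty), \chi_{\nu}}(\overline{\rho}_{\nu})$ for all $\nu \in S_{\ell}$. For $\nu | \infty$ the latter holds automatically, since $\operatorname{scd}_{\ell}(\operatorname{Gal}_{F_{\nu}}) = 0$ (because $\ell > 2 = \#\operatorname{Gal}_{F_{\nu}}$) forces the representing ring occurring in condition $(\infty)$ to be formally smooth over $W$. Hence global unobstructedness of $D^{(\boxempty_{S_{\ell}}), \chi}_{S_{\ell}}(\overline{\rho})$ is equivalent to relative smoothness of $D^{(\boxempty), \chi_{\nu}}(\overline{\rho}_{\nu})$ for $\nu \in S \cup \Omega_{\ell}$, which is the claim; the argument is insensitive to framing, as neither the dual Selmer group nor the local functors depend on it. The only point I would treat with care is the placewise comparison $\mathcal{L} \subseteq \mathcal{M}$, in particular that the two systems genuinely impose the same unramified conditions away from $S_{\ell}$; but this is immediate from the definitions, and the rest is purely formal.
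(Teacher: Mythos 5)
Your argument is correct and for the first assertion coincides with the paper's (terse) proof, which simply invokes the exact sequence $H^1_{\mathcal{L}^{\perp}}(\operatorname{Gal}_{F,S}, \mathfrak{g}^{\tt{der},\vee})^{\ast} \twoheadrightarrow \Sh^2_{S_{\ell}}(\mathfrak{g}^{\tt{der}})$ together with the vanishing supplied by part~2 of Theorem~\ref{20160908_MainThm}. The paper leaves the ``in particular'' unargued, and your placewise comparison $\mathcal{L} \subseteq \mathcal{M}$ (hence $\mathcal{M}^{\perp} \subseteq \mathcal{L}^{\perp}$ and $H^1_{\mathcal{M}^{\perp}} \subseteq H^1_{\mathcal{L}^{\perp}} = 0$) is a valid and natural way to close the gap. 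A marginally shorter alternative, once $\Sh^2_{S_\ell}(\mathfrak{g}^{\tt{der}}) = 0$ is in hand, is to observe that the dual Selmer group of the unrestricted structure $\mathcal{M}$ (full local conditions at $S_\ell$, unramified outside) is precisely $\Sh^1_{S_\ell}(\mathfrak{g}^{\tt{der},\vee})$, which by Poitou--Tate duality is the Pontryagin dual of $\Sh^2_{S_\ell}(\mathfrak{g}^{\tt{der}})$ and therefore vanishes; that route avoids the placewise inclusion bookkeeping at $\Omega_\infty$ and outside $S_\ell$. Your final reduction from $S_\ell$ to $S \cup \Omega_\ell$ via the automatic archimedean smoothness of Proposition~\ref{archProp} is exactly the remark the paper records after the corollary, and your parenthetical about framing is harmless since framed and unframed local rings differ by a power series ring (Proposition~\ref{20170525_rep_res_proposition}).
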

We remark that 
$D^{(\boxempty), \chi_{\nu}}(\overline{\rho}_{\nu})$ is
relatively smooth for $\nu\in \Omega_{\infty}$
by Proposition \ref{archProp}, so Corollary \ref{20170314_cor_317_phd}
holds true with \glqq ... for $\nu \in S_{\ell}$\grqq $\,$in place of 
\glqq ... for $\nu\in S \cup \Omega_{\ell}$\grqq.
\paragraph{Potential unobstructedness}
We start with the following, easy observation:
\begin{Prop}\label{20170405_prop_easy_pot_unobstr}
Let $K$ be a local field and let $K'$ be a finite extension 
of $K$ such that $\ell$ does not divide the index 
$[K':K]$. Let $\overline{\rho}$ be a $G$-valued residual representation of $\operatorname{Gal}_K$ and fix a 
lift $\chi$ of the determinant. Then
unobstructedness of $D^{(\chi)}_{\Lambda}(\overline{\rho}|\operatorname{Gal}_{K'})$ implies unobstructedness of 
$D^{(\chi)}_{\Lambda}(\overline{\rho})$.
\end{Prop}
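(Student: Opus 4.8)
\emph{Proof proposal.} The plan is to reduce the assertion to the injectivity of a restriction map in degree-two cohomology, via the standard restriction--corestriction formalism. By Definition \ref{20170322_def_easiest_unobstructedness}, the functor $D^{(\chi)}_{\Lambda}(\overline{\rho})$ is unobstructed precisely when $h^2(\operatorname{Gal}_K, \mathfrak{g}^{[\tt{der}]}) = 0$, where $\operatorname{Gal}_K$ acts on $\mathfrak{g}^{[\tt{der}]}$ through $\overline{\rho}$ and the adjoint action $G(k) \to \operatorname{GL}(\mathfrak{g}^{[\tt{der}]})$; likewise $D^{(\chi)}_{\Lambda}(\overline{\rho}|\operatorname{Gal}_{K'})$ is unobstructed precisely when $h^2(\operatorname{Gal}_{K'}, \mathfrak{g}^{[\tt{der}]}) = 0$, the coefficient module now being the restriction of the previous one along the inclusion $\operatorname{Gal}_{K'} \hookrightarrow \operatorname{Gal}_K$. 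The argument will be uniform in the bracketed alternatives, since it uses only that the coefficient module is a finite-dimensional $\mathbb{F}_{\ell}$-vector space.

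First I would recall that $\operatorname{Gal}_{K'}$ is an open subgroup of $\operatorname{Gal}_K$ of index $[\operatorname{Gal}_K : \operatorname{Gal}_{K'}] = [K':K]$, so that one has restriction and corestriction maps
\begin{equation*}
H^2(\operatorname{Gal}_K, \mathfrak{g}^{[\tt{der}]}) \xrightarrow{\ \operatorname{res}\ } H^2(\operatorname{Gal}_{K'}, \mathfrak{g}^{[\tt{der}]}) \xrightarrow{\ \operatorname{cor}\ } H^2(\operatorname{Gal}_K, \mathfrak{g}^{[\tt{der}]})
\end{equation*}
whose composite is multiplication by $[K':K]$ (cf. \cite{NSW}). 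Since $\mathfrak{g}^{[\tt{der}]}$ is a $k$-module, hence an $\mathbb{F}_{\ell}$-vector space, and $\ell \nmid [K':K]$ by hypothesis, multiplication by $[K':K]$ is invertible on $H^2(\operatorname{Gal}_K, \mathfrak{g}^{[\tt{der}]})$; in particular $\operatorname{res}$ is injective in degree two.

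Finally I would conclude: if $D^{(\chi)}_{\Lambda}(\overline{\rho}|\operatorname{Gal}_{K'})$ is unobstructed, i.e. $H^2(\operatorname{Gal}_{K'}, \mathfrak{g}^{[\tt{der}]}) = 0$, then the injectivity just established forces $H^2(\operatorname{Gal}_K, \mathfrak{g}^{[\tt{der}]}) = 0$, which is the unobstructedness of $D^{(\chi)}_{\Lambda}(\overline{\rho})$. There is essentially no obstacle here; the only point deserving a line of care is to confirm that the adjoint $\operatorname{Gal}_{K'}$-module attached to $\overline{\rho}|\operatorname{Gal}_{K'}$ is literally the restriction of the adjoint $\operatorname{Gal}_K$-module attached to $\overline{\rho}$, which is immediate since the adjoint action is obtained by composing $\overline{\rho}$ with the fixed map $G(k) \to \operatorname{GL}(\mathfrak{g}^{[\tt{der}]})$, and similarly that passing from $\overline\rho$ to $\overline\rho|\operatorname{Gal}_{K'}$ does not change $\mathfrak{g}^{[\tt{der}]}$ as a coefficient space.
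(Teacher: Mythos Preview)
Your proof is correct and follows exactly the same approach as the paper: the paper simply cites the injectivity of $\operatorname{res}_{K'|K}\colon H^2(K,\mathfrak{g}^{(\tt{der})})\to H^2(K',\mathfrak{g}^{(\tt{der})})$ from \cite[Corollary (1.5.7)]{NSW}, which is precisely the restriction--corestriction argument you spelled out.
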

\begin{proof}
This follows immediately from the injectivity of  
\begin{equation*}
\operatorname{res}_{{K'}|{K}}: H^2({K}, \mathfrak{g}^{(\tt{der})}) \rightarrow
H^2({K'}, \mathfrak{g}^{(\tt{der})}),
\end{equation*}
cf. \cite[Corollary (1.5.7)]{NSW}.
\end{proof}
This proof is not directly applicable to the global 
situation, as we have to keep track of the set of places at which
we allow ramification.
Therefore, we first describe a more flexible method which can also handle conditioned 
deformation functors:
\begin{Def}[Dual-pre condition]\label{20170324_def_dual_pre_cond}
Let $F'|F$ be a finite extension of number fields.
\begin{enumerate}
\item Let $\nu'\in \operatorname{Pl}_{F'}, \nu\in \operatorname{Pl}_F$ such that $\nu'|\nu$. 
Moreover, let $L_{\nu}\subset H^1(F_{\nu}, 
\mathfrak{g}^{(\tt{der})}), L_{\nu'}'\subset H^1(F_{\nu'}', 
\mathfrak{g}^{(\tt{der})})$ be local conditions.
We say that $L_{\nu}$ is a dual-pre-$L_{\nu'}'$ condition if 
$\operatorname{res}^{\vee}_{\nu'}(L^{\perp}_{\nu}) \subset L_{\nu'}'^{\perp}$ , where
\begin{equation*}
\operatorname{res}^{\vee}_{\nu'}: H^1(F_{\nu}, \mathfrak{g}^{(\tt{der}), \vee}) 
\rightarrow
H^1(F_{\nu'}', \mathfrak{g}^{(\tt{der}), \vee}) 
\end{equation*}
denotes the usual restriction map.
\item 
Let $\mathcal{L}' = (L_{\nu'}')_{\nu'\in \operatorname{Pl}_{F'}}$ be a system of
local conditions for $F'$. We say that a system 
$\mathcal{L} = (L_{\nu})_{\nu\in \operatorname{Pl}_{F}}$
of local conditions for $F$ is dual-pre-$\mathcal{L}'$ if for each pair 
$\nu, \nu'$ as above, $L_{\nu}$ is a dual-pre-$L_{\nu'}'$ condition.
\end{enumerate}
\end{Def}
\begin{Ex}\label{20170323_ex_unram_pre_cond}
Let $F,F'$ be as in Definition \ref{20170324_def_dual_pre_cond} and fix a finite set $S\subset \operatorname{Pl}_F$ such that 
$\overline{\rho}$ is unramified outside $S$. 
Take for $\mathcal{L}$ the local system parametrizing all deformations which are
unramified outside $S$, i.e. $L_{\nu}= H^1(F_{\nu}, 
\mathfrak{g}^{(\tt{der})})$ if $\nu\in S$ and $L_{\nu}=H^1(\operatorname{Gal}_{F_{\nu}}/I_{F_{\nu}}, 
\mathfrak{g}^{(\tt{der})})$ otherwise. Analogously, let $\mathcal{L}'$ the local system parametrizing all deformations which are
unramified outside $S\langle F'\rangle$. Then any lift of $\overline{\rho}$ which is unramified outside $S$ is, after restriction to $\operatorname{Gal}_{F'}$, a lift of $\overline{\rho}|\operatorname{Gal}_{F'}$ which is unramified outside $S\langle F'\rangle$. But this implies easily that the restriction map 
$\operatorname{res}_{\nu'}: H^1(F_{\nu}, \mathfrak{g}^{(\tt{der})}) 
\rightarrow
H^1(F_{\nu'}', \mathfrak{g}^{(\tt{der})})$ maps $L_{\nu}$ into $L_{\nu'}'$ for any pair of places $\nu, \nu'$ with $\nu'|\nu$. Using the fact that Tate duality is given by the cup product, we see that $\mathcal{L}$
is dual-pre-$\mathcal{L}'$.
\end{Ex}
\begin{Lem}\label{20170404_pot_unobstr}
Let 
$\overline{\rho},F$ and $F'$ be as above and assume $(\ell, [F':F])=1$. 
Let $\mathcal{L} = (L_{\nu})_{\nu\in \operatorname{Pl}_{F}}, \mathcal{L}' = (L_{\nu'}')_{\nu'\in \operatorname{Pl}_{F'}}$
be systems of local conditions (with associated deformation conditions $\mathcal{D}$ and 
$\mathcal{D}'$) such that $\mathcal{L}$ is dual-pre-$\mathcal{L}'$.
Moreover, assume that $D^{[\boxempty], (\chi), \mathcal{D}'}(\overline{\rho}|\operatorname{Gal}_{F'})$
has vanishing dual Selmer group. Then also $D^{[\boxempty], (\chi),\mathcal{D}}(\overline{\rho})$
has vanishing dual Selmer group.
\end{Lem}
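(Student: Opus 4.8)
The plan is to reduce the vanishing of the dual Selmer group over $F$ to that over $F'$ by a restriction--corestriction argument, using that $\mathfrak{g}^{(\tt{der}),\vee}$ is an $\ell$-torsion module while $\ell\nmid [F':F]$. Since, by Definition \ref{20170407_def_glob_unobstr}, "$D^{[\boxempty],(\chi),\mathcal{D}}(\overline{\rho})$ has vanishing dual Selmer group" means precisely $H^1_{\mathcal{L}^{(\chi),\perp}}(F,\mathfrak{g}^{(\tt{der}),\vee})=0$ regardless of the framing, the framing plays no role and I may argue purely with cohomology.

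First I would fix a class $c\in H^1_{\mathcal{L}^{\perp}}(F,\mathfrak{g}^{(\tt{der}),\vee})$ and set $c':=\operatorname{res}_{F'|F}(c)\in H^1(F',\mathfrak{g}^{(\tt{der}),\vee})$, where $\operatorname{res}_{F'|F}$ is restriction along the inclusion $\operatorname{Gal}_{F'}\hookrightarrow\operatorname{Gal}_F$. The key point is functoriality of restriction at the places: for $\nu'\in\operatorname{Pl}_{F'}$ over $\nu\in\operatorname{Pl}_F$, a decomposition group at $\nu'$ in $\operatorname{Gal}_{F'}$ lands inside a decomposition group at $\nu$ in $\operatorname{Gal}_F$, so that $\operatorname{res}_{\nu'}(c')=\operatorname{res}^{\vee}_{\nu'}(\operatorname{res}_{\nu}(c))$. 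Since $c$ lies in the dual Selmer group we have $\operatorname{res}_{\nu}(c)\in L_{\nu}^{\perp}$, and the dual-pre hypothesis $\operatorname{res}^{\vee}_{\nu'}(L_{\nu}^{\perp})\subset L_{\nu'}'^{\perp}$ (Definition \ref{20170324_def_dual_pre_cond}) then gives $\operatorname{res}_{\nu'}(c')\in L_{\nu'}'^{\perp}$ for every $\nu'$. Hence $c'\in H^1_{\mathcal{L}'^{\perp}}(F',\mathfrak{g}^{(\tt{der}),\vee})$, which vanishes by assumption, so $c'=0$.

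It then remains to deduce $c=0$. Here I would invoke the standard transfer identity $\operatorname{cor}_{F'|F}\circ\operatorname{res}_{F'|F}=[F':F]\cdot\operatorname{id}$ on $H^1(F,\mathfrak{g}^{(\tt{der}),\vee})$, valid because $\operatorname{Gal}_{F'}$ is an open subgroup of $\operatorname{Gal}_F$ of index $[F':F]$. From $c'=0$ we get $[F':F]\cdot c=0$. But $\mathfrak{g}^{(\tt{der}),\vee}$ is a finite module over the field $k$ of characteristic $\ell$, hence killed by $\ell$, and so is $H^1(F,\mathfrak{g}^{(\tt{der}),\vee})$; since $(\ell,[F':F])=1$, multiplication by $[F':F]$ is an automorphism of this group, forcing $c=0$. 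This proves $H^1_{\mathcal{L}^{\perp}}(F,\mathfrak{g}^{(\tt{der}),\vee})=0$, i.e. $D^{[\boxempty],(\chi),\mathcal{D}}(\overline{\rho})$ has vanishing dual Selmer group.

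The argument is essentially formal; the only point requiring care -- the "main obstacle", such as it is -- is the compatibility $\operatorname{res}_{\nu'}\circ\operatorname{res}_{F'|F}=\operatorname{res}^{\vee}_{\nu'}\circ\operatorname{res}_{\nu}$ of the global and local restriction maps, together with the fact that local Tate duality being given by the cup product makes the orthogonal complements behave correctly under restriction. This is exactly what the dual-pre condition packages, and is precisely the observation already used in Example \ref{20170323_ex_unram_pre_cond}; so in the write-up I would simply cite that example for this compatibility and keep the present proof short.
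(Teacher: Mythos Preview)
Your proof is correct and follows essentially the same approach as the paper. The paper packages the argument into a commutative diagram whose middle vertical arrow is the restriction map $H^1(F,\mathfrak{g}^{(\tt{der}),\vee})\to H^1(F',\mathfrak{g}^{(\tt{der}),\vee})$ (injective by the same $\operatorname{cor}\circ\operatorname{res}=[F':F]$ reasoning you give), observes that the right vertical arrow on the quotients $H^1(F_{\nu},\mathfrak{g}^{(\tt{der}),\vee})/L_{\nu}^{\perp}$ exists precisely because of the dual-pre hypothesis, and concludes by a diagram chase that the induced map $\varphi$ on dual Selmer groups is injective; you do the same thing element by element. One small remark: in your final paragraph, citing Example~\ref{20170323_ex_unram_pre_cond} for the local/global compatibility of restriction is not quite on point, since that example is about verifying the dual-pre condition in a specific case, whereas here the dual-pre condition is already assumed and the compatibility $\operatorname{res}_{\nu'}\circ\operatorname{res}_{F'|F}=\operatorname{res}^{\vee}_{\nu'}\circ\operatorname{res}_{\nu}$ is just functoriality of restriction in group cohomology.
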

\begin{proof}
As above, the invertibility of $[F':F]$ implies that the restriction map
\begin{equation*}
H^1(\operatorname{Gal}_{F}, \mathfrak{g}^{(\tt{der}), \vee}) \rightarrow
H^1(\operatorname{Gal}_{F'}, \mathfrak{g}^{(\tt{der}), \vee})
\end{equation*}
is injective. Consider the diagram
\begin{equation*}
\xymatrix{
H^1_{\mathcal{L}^{\perp}}({F}, \mathfrak{g}^{(\tt{der}),\vee}) 
\ar@{^{(}->}[r]\ar^{\varphi}[d]&
H^1({F}, \mathfrak{g}^{(\tt{der}),\vee}) \ar[r]\ar@{^{(}->}[d]&
\bigoplus_{\nu\in \operatorname{Pl}_F} 
H^1(F_{\nu}, \mathfrak{g}^{(\tt{der}),\vee})/L_{\nu}^{\perp} \ar[d] \\
H^1_{\mathcal{L}^{\prime,\perp}}({F'}, \mathfrak{g}^{(\tt{der}),\vee}) 
\ar@{^{(}->}[r]&
H^1({F'}, \mathfrak{g}^{(\tt{der}),\vee}) \ar[r]&
\bigoplus_{\nu'\in \operatorname{Pl}_{F'}} 
H^1(F_{\nu'}', \mathfrak{g}^{(\tt{der}),\vee})/L_{\nu'}^{\prime,\perp}
}
\end{equation*}
The vertical map on the 
right is defined because $\mathcal{L}$ is dual-pre-$\mathcal{L}'$, and this 
implies the well-definedness of $\varphi$. A simple diagram chase implies injectivity of 
$\varphi$, from which the claim follows.
\end{proof}
The following follows now directly from 
Example \ref{20170323_ex_unram_pre_cond} and 
Lemma \ref{20170404_pot_unobstr}:
\begin{Cor}\label{20170525_final_pot_cor}
Let $F$ be a number field and let $F'$ be a finite extension
of $F$ such that $\ell$ does not divide the index $[F':F]$.
Let $\overline{\rho}$ be a $G$-valued residual representation
of $\operatorname{Gal}_{F}$ which is unramified outside a 
finite set of places $S$ and fix a lift $\chi$ of the 
determinant. Then unobstructedness of $D^{(\chi|\operatorname{Gal}_{F'})}_{\Lambda,
S\langle F'\rangle}(\overline{\rho}|\operatorname{Gal}_{F'})$ 
implies unobstructedness of 
$D^{(\chi)}_{\Lambda}(\overline{\rho})$.
\end{Cor}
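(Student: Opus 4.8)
The plan is to deduce the Corollary from Lemma~\ref{20170404_pot_unobstr} by recognising the two unrestricted deformation functors as instances of the dual-pre-condition formalism of Example~\ref{20170323_ex_unram_pre_cond}, and then to pass between the cohomological notion of unobstructedness and the dual Selmer group via the Poitou--Tate exact sequences recorded in Section~\ref{20170405_section_liftings_and_defos}.

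First I would set up the dictionary. Since $\overline{\rho}$ is unramified outside $S$, the functor $D^{(\chi)}_{\Lambda}(\overline{\rho})$ is the unconditioned deformation functor of $\overline{\rho}$ viewed as a representation of $\operatorname{Gal}_{F,S}$ (so that ``unobstructed'' means $h^2(\operatorname{Gal}_{F,S},\mathfrak{g}^{(\tt{der})})=0$), and likewise $D^{(\chi|\operatorname{Gal}_{F'})}_{\Lambda,S\langle F'\rangle}(\overline{\rho}|\operatorname{Gal}_{F'})$ is the unconditioned deformation functor of $\overline{\rho}|\operatorname{Gal}_{F'}$ as a representation of $\operatorname{Gal}_{F',S\langle F'\rangle}$. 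Their associated systems of local conditions are exactly the systems $\mathcal{L}$ (over $F$) and $\mathcal{L}'$ (over $F'$) of Example~\ref{20170323_ex_unram_pre_cond}, so by that example $\mathcal{L}$ is dual-pre-$\mathcal{L}'$.

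Next I would transport the dual Selmer group. By hypothesis $h^2(\operatorname{Gal}_{F',S\langle F'\rangle},\mathfrak{g}^{(\tt{der})})=0$; the exact sequence $0\to\Sh^2_{S\langle F'\rangle}(\mathfrak{g}^{(\tt{der})})\to H^2(\operatorname{Gal}_{F',S\langle F'\rangle},\mathfrak{g}^{(\tt{der})})\to\bigoplus_{\nu'\in S\langle F'\rangle}H^2(F'_{\nu'},\mathfrak{g}^{(\tt{der})})\to\dots$ then forces $\Sh^2_{S\langle F'\rangle}(\mathfrak{g}^{(\tt{der})})=0$, which by Poitou--Tate duality is the same as the vanishing of the dual Selmer group $H^1_{\mathcal{L}^{\prime,\perp}}(F',\mathfrak{g}^{(\tt{der}),\vee})$. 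Since $(\ell,[F':F])=1$, Lemma~\ref{20170404_pot_unobstr} now yields that $D^{(\chi)}_{\Lambda}(\overline{\rho})$ has vanishing dual Selmer group, i.e. $H^1_{\mathcal{L}^{\perp}}(F,\mathfrak{g}^{(\tt{der}),\vee})=0$, equivalently $\Sh^2_S(\mathfrak{g}^{(\tt{der})})=0$. Finally I would upgrade this to $h^2(\operatorname{Gal}_{F,S},\mathfrak{g}^{(\tt{der})})=0$: the same exact sequence over $F$ gives an injection $H^2(\operatorname{Gal}_{F,S},\mathfrak{g}^{(\tt{der})})\hookrightarrow\bigoplus_{\nu\in S}H^2(F_\nu,\mathfrak{g}^{(\tt{der})})$, and for each $\nu\in S$, choosing $\nu'\in\operatorname{Pl}_{F'}$ above $\nu$ (so that $[F'_{\nu'}:F_\nu]$ is prime to $\ell$), Proposition~\ref{20170405_prop_easy_pot_unobstr} reduces the vanishing of $H^2(F_\nu,\mathfrak{g}^{(\tt{der})})$ to that of $H^2(F'_{\nu'},\mathfrak{g}^{(\tt{der})})$, which follows from the $F'$-hypothesis through the tail of the Poitou--Tate sequence once $H^0(\operatorname{Gal}_{F'},\mathfrak{g}^{(\tt{der}),\vee})=0$; this last vanishing holds for $\ell\gg 0$ (the regime of the applications) and is in any case part of the local input one checks there, cf. the discussion of the $\ell\gg0$ case earlier in Section~\ref{20170405_section_liftings_and_defos}.

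I do not expect a serious obstacle here, because the genuinely delicate point is already packaged into Lemma~\ref{20170404_pot_unobstr}: the corestriction--restriction argument of Proposition~\ref{20170405_prop_easy_pot_unobstr} cannot be run directly with $H^2(\operatorname{Gal}_{F,S},-)$ and $H^2(\operatorname{Gal}_{F',S\langle F'\rangle},-)$, since $\operatorname{Gal}_{F',S\langle F'\rangle}$ is strictly larger than the quotient through which classes inflated from $\operatorname{Gal}_{F,S}$ factor, and inflation need not be injective on $H^2$; the Lemma sidesteps this by working with the $H^1$-based Selmer groups, where inflation is injective. The only remaining care in the Corollary itself concerns the two translations between ``$h^2(\operatorname{Gal}_{F,S},\mathfrak{g}^{(\tt{der})})=0$'' and ``dual Selmer group vanishes'', which are the first and last manipulations above.
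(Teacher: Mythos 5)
Your proof correctly identifies that the paper's one-sentence justification ("follows directly from Example~\ref{20170323_ex_unram_pre_cond} and Lemma~\ref{20170404_pot_unobstr}") only hands you the vanishing of the dual Selmer group over $F$, whereas Definition~\ref{20170322_def_easiest_unobstructedness} reads ``unobstructed'' as $h^2(\operatorname{Gal}_{F,S},\mathfrak{g}^{(\tt{der})})=0$. You therefore add a second step: deduce $\bigoplus_{\nu\in S}H^2(F_\nu,\mathfrak{g}^{(\tt{der})})=0$ from $\bigoplus_{\nu'}H^2(F'_{\nu'},\mathfrak{g}^{(\tt{der})})=0$ via Proposition~\ref{20170405_prop_easy_pot_unobstr}, and combine with the Poitou--Tate sequence over $F$. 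That bridge is sound, and it is also the only place where a caveat enters: your argument that the local $H^2$'s over $F'$ vanish relies on $H^0(\operatorname{Gal}_{F'},\mathfrak{g}^{(\tt{der}),\vee})=0$. That hypothesis is not part of the stated Corollary. Concretely, the Poitou--Tate sequence over $F'$ and $h^2(\operatorname{Gal}_{F',S\langle F'\rangle})=0$ together only give $\bigoplus_{\nu'}H^2(F'_{\nu'})\hookrightarrow H^0(F',\mathfrak{g}^{(\tt{der}),\vee})^{\ast}$, which need not be zero in general. So either the Corollary must be read with ``unobstructed'' meaning ``has vanishing dual Selmer group'' (the notion of Definition~\ref{20170407_def_glob_unobstr}, for which the Corollary truly is an immediate consequence of Lemma~\ref{20170404_pot_unobstr} and your translation step is superfluous), or one must impose the vanishing of $H^0(\operatorname{Gal}_{F'},\mathfrak{g}^{(\tt{der}),\vee})$ --- e.g.\ by restricting to $\ell\gg 0$ as in the applications, or by invoking Corollary~\ref{20150904_newH1dualvanishing}. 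You flag this yourself, and it is a genuine imprecision in the paper's statement rather than a flaw in your argument.

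One further point worth noting explicitly: your observation that a naive restriction--corestriction argument on $H^2(\operatorname{Gal}_{F,S},-)$ fails is correct, and for exactly the reason you give --- $\operatorname{Gal}_{F',S\langle F'\rangle}$ is not an open subgroup of $\operatorname{Gal}_{F,S}$, and the inflation from the relevant quotient need not be injective on $H^2$. This is precisely the obstruction that forces the detour through the $H^1$-based dual Selmer groups packaged in Lemma~\ref{20170404_pot_unobstr}, so your heuristic explanation of why the Lemma is the right tool is well-placed.
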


\section{Local deformation conditions for $G=\operatorname{GL}_n$}\label{20170316_sect_defo_conds}
Let $K$ be a finite extension of $\mathbb{Q}_p$ and let
$k$ be a finite field of characteristic $\ell$.
In the following, we consider deformation conditions for a continuous representation
$\overline{\rho}: \operatorname{Gal}_K \rightarrow \operatorname{GL}_n(k)$.
\subsection{Unrestricted deformations ($p\neq\ell$)}
In the case $p\neq \ell$, we have the following result:
\begin{Thm} \label{20170213_Helms_theorem}
$\operatorname{Spec}R^{\boxempty}(\overline{\rho})$ is a reduced complete 
intersection, flat and equidimensional of relative dimension $n^2$ over $\operatorname{Spec} W$.
\end{Thm}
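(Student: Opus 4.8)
The plan is to compute $R^{\boxempty}(\overline{\rho})$ explicitly, using that the image of wild inertia is rigid because $p\neq\ell$. Let $P_K\subset\operatorname{Gal}_K$ be the wild inertia subgroup; it is pro-$p$, whereas for $A\in\mathcal{C}_W$ the kernel of $\operatorname{GL}_n(A)\to\operatorname{GL}_n(k)$ is pro-$\ell$. Hence any lift $\rho$ of $\overline{\rho}$ restricts on $P_K$ to a homomorphism with finite image $\overline{\rho}(P_K)$, and any two lifts of $\overline{\rho}|_{P_K}$ are conjugate under $\widehat{\operatorname{GL}}_n(A)$. Fixing a representative of $\overline{\rho}|_{P_K}$, the scheme $\operatorname{Spec}R^{\boxempty}(\overline{\rho})$ becomes a smooth bundle (with fibre a quotient of $\widehat{\operatorname{GL}}_n$ by the centraliser of $\overline{\rho}(P_K)$) over the closed subscheme of lifts equal to the fixed wild-inertia representation. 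By Clifford theory this further reduces -- after replacing $K$ by a finite extension and $\operatorname{GL}_n$ by a product of smaller general linear groups -- to deforming a \emph{tamely ramified} representation; the tame quotient $\operatorname{Gal}_K/P_K$ is topologically generated by a Frobenius lift $\Phi$ and a generator $\tau$ of tame inertia with the single relation $\Phi\tau\Phi^{-1}=\tau^{q}$, $q=\#k_K$.

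\textbf{Explicit presentation and dimension count.} A lift of the tame quotient is a pair $(\Phi,\mathrm{T})\in\operatorname{GL}_n(A)\times\operatorname{GL}_n(A)$ with $\Phi\mathrm{T}\Phi^{-1}=\mathrm{T}^{q}$, continuity pinning down the prime-to-$\ell$ part of $\mathrm{T}$. Stratifying by the semisimple part of $\overline{\rho}(\tau)$ and its centraliser, one realises $\operatorname{Spec}R^{\boxempty}(\overline{\rho})$ smooth-locally as the vanishing locus of the equations encoding the ``$q$-twisted conjugation'' relation inside a $W$-smooth ambient scheme of relative dimension $n^2+r$, where $r$ is the number of those equations. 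At a characteristic-zero point $\rho$, local Tate duality and the local Euler characteristic formula (trivial since $p\neq\ell$) give $\dim Z^1(\operatorname{Gal}_K,\operatorname{ad}\rho)=n^2+h^2(\operatorname{Gal}_K,\operatorname{ad}\rho)$, so every component of the generic fibre has dimension $\geq n^2$, and similarly for the special fibre; one then shows that both fibres are equidimensional of dimension exactly $n^2$.

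\textbf{Conclusion.} As the ambient $W$-smooth scheme has total dimension $n^2+r+1$, the quotient by the $r$ equations has dimension $\geq n^2+1$, and the ambient scheme is Cohen--Macaulay, so those $r$ equations form a regular sequence. Therefore $R^{\boxempty}(\overline{\rho})$ is a complete intersection, Cohen--Macaulay and equidimensional; it is $W$-flat since its special fibre, of pure dimension $n^2$, is not contained in $V(\ell)$; and its relative dimension is $n^2$. Finally $R^{\boxempty}(\overline{\rho})$ is $\ell$-torsion-free by flatness, so it is reduced as soon as its generic fibre is; the generic fibre is a complete intersection, hence satisfies Serre's condition $(S_1)$, and it satisfies $(R_0)$ because each of its components meets the smooth locus $\{h^2(\operatorname{Gal}_K,\operatorname{ad}\rho)=0\}$, on which the deformation problem is formally smooth of dimension $n^2$. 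By Serre's criterion $R^{\boxempty}(\overline{\rho})$ is reduced.

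\textbf{Main obstacle.} The crux is the equidimensionality and generic-smoothness input: one must control the closed locus where the obstruction $H^2(\operatorname{Gal}_K,\operatorname{ad}\rho)$ is non-zero and show that the $q$-twisted conjugation relation cuts the dimension down by the full expected amount on \emph{every} stratum, not merely generically. This rests on the explicit analysis of the representation scheme stratified by the conjugacy type of $\Phi$ and the Jordan type of the unipotent part of $\mathrm{T}$; by contrast, the rigidity of wild inertia, the Clifford-theoretic reduction, and the deformation-theoretic bookkeeping are routine.
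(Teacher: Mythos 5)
The paper does not prove this statement itself: the proof given is a one-line citation to Theorem~2.5 of \cite{Shotton_prep}. Your sketch correctly reconstructs the overall strategy of that cited proof — rigidity of wild inertia (pro-$p$ versus pro-$\ell$), Clifford-theoretic reduction to a tamely ramified problem, a presentation of the tame lifting scheme by the $q$-twisted commutation relation, and then the commutative-algebra deduction that a codimension-$n^2$ subscheme of a Cohen--Macaulay, $W$-smooth ambient is a flat, reduced, equidimensional complete intersection. So the roadmap is sound and matches the source.

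The gap, however, is not a peripheral detail: the step you flag under ``Main obstacle'' is the entire content of the theorem, and nothing in your sketch supplies it. The cocycle computation $\dim Z^1(\operatorname{Gal}_K,\operatorname{ad}\rho) = n^2 + h^2(\operatorname{Gal}_K,\operatorname{ad}\rho)$ gives only the \emph{lower} bound that every component of $\operatorname{Spec}R^{\boxempty}(\overline{\rho})$ has Krull dimension $\geq n^2+1$; that inequality is already automatic from Krull's height theorem once you have the presentation by $n^2$ relations inside a $W$-smooth ring of relative dimension $2n^2$, so it contributes nothing toward the regular-sequence conclusion. What is actually needed is the \emph{upper} bound: that the relation $\Phi T \Phi^{-1} = T^q$ cuts the dimension down by the full $n^2$ on every stratum, in both the generic and the special fibre. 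This is exactly what the cited proof establishes, via a stratified analysis of the representation variety according to the generalized-eigenvalue structure of $T$ and its $q$-twisted interaction with $\Phi$. The reducedness argument also tacitly requires that the smooth locus $\{h^2(\operatorname{Gal}_K,\operatorname{ad}\rho)=0\}$ be dense on every component of the generic fibre, and this density is again an output of the same stratified dimension count rather than a formal consequence. As written, therefore, the proposal correctly sets up the framework and correctly locates the difficulty, but proves none of the theorem's conclusions.
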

\begin{proof}
This is Theorem 2.5 in \cite{Shotton_prep}.
\end{proof}
\subsection{Unrestricted deformations ($p=\ell$)} \label{20170324_section_unres_defos}
For the remainder of this subsection, we assume that $\overline{\rho}$
is the semi-simplification of the reduction of a crystalline representation
\begin{equation*}
\rho: \operatorname{Gal}_K\rightarrow \operatorname{GL}_n(L)
\end{equation*}
for a suitable finite extension $L$ of $\mathbb{Q}_p$ with residue field $k$ and for
$p=\ell$.
Denote the set of embeddings $\tau: K\hookrightarrow \overline{\mathbb{Q}}_p$ by 
$\mathbb{E}_K$ and for $\tau\in \mathbb{E}_K$ denote by $\operatorname{HT}_{\tau}(\rho)$ 
the multiset of Hodge-Tate weights of $\rho$ with respect to $\tau$.
\begin{Thm}\label{20170313_Theorem_smoothness_sm_over_ell}\label{20170323_FL_main_thm}
Assume that $K|\mathbb{Q}_p$ is unramified and that for each $\tau\in \mathbb{E}_K$
\begin{enumerate}
\item there exists an $\alpha\in \mathbb{Z}$ such that all Hodge-Tate weights in $\operatorname{HT}_{\tau}(\rho)$
lie in the range $[\alpha, \alpha + \ell - 3]$;
\item the Hodge-Tate weights of $\rho$ are non-consecutive, i.e.
if two numbers $a,b\in \mathbb{Z}$ occur in $\operatorname{HT}_{\tau}(\rho)$, then 
$|a-b|\neq 1$.
\end{enumerate}
Then 
$R^{\boxempty}(\overline{\rho}) \cong W\llbracket x_1, \ldots, x_{m}\rrbracket$
with $m=n^2.([K:\mathbb{Q}_{\ell}] + 1)$.
\end{Thm}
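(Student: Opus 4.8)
The plan is to realize $R^{\boxempty}(\overline{\rho})$ as a formally smooth object by exhibiting a smooth deformation condition that covers all crystalline lifts with the prescribed Hodge–Tate weights, and then showing that this condition actually captures \emph{all} lifts under the stated numerical hypotheses. Concretely, I would first invoke Fontaine–Laffaille theory: under the assumption that $K/\mathbb{Q}_p$ is unramified and that the Hodge–Tate weights lie in a window of length $\le \ell-3$, the category of crystalline representations with those weights is equivalent to a category of (filtered) modules over $\mathbb{Z}_\ell$, and this equivalence is exact and compatible with reduction mod $\ell$. This gives a description of the crystalline-with-fixed-Hodge–Tate-weights deformation functor of $\overline{\rho}$ as the deformation functor of the associated Fontaine–Laffaille module, whose obstruction group one computes directly; the standard output (cf.\ the framework's condition \textbf{(crys)} and its dimension formula $d^{\boxempty,\tt{crys}}_\nu$) is formal smoothness of relative dimension $n^2([K:\mathbb{Q}_\ell]+1)$ once the weights are forced to be \emph{non-consecutive}, since the non-consecutivity kills precisely the off-diagonal filtration-jump contributions that would otherwise obstruct deformations of the filtration.

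The second, and I expect harder, step is to show that under these hypotheses the crystalline-with-bounded-weights condition is no condition at all, i.e.\ $R^{\boxempty}(\overline{\rho}) = R^{\boxempty,\tt{crys}}(\overline{\rho})$, so that the unrestricted lifting ring inherits the formal smoothness. The idea is a dimension/complete-intersection comparison: by an $\ell$-adic analogue of Theorem \ref{20170213_Helms_theorem} (the $p=\ell$ analogue, which one either cites or derives from the presentation via local cohomology) the ring $R^{\boxempty}(\overline{\rho})$ is a complete intersection of relative dimension $n^2([K:\mathbb{Q}_\ell]+1)$ over $W$; since the crystalline locus is a closed formally smooth subscheme of the same dimension sitting inside an irreducible (or at least equidimensional) $\operatorname{Spec}R^{\boxempty}(\overline{\rho})$, the closed immersion $\operatorname{Spec}R^{\boxempty,\tt{crys}} \hookrightarrow \operatorname{Spec}R^{\boxempty}$ of equal-dimensional schemes — together with reducedness on the source and the complete-intersection (hence unmixed) property on the target — forces it to be an isomorphism on the component through the closed point, hence $R^{\boxempty}\cong R^{\boxempty,\tt{crys}}\cong W\llbracket x_1,\dots,x_m\rrbracket$.

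The main obstacle is exactly this last comparison: one must be careful that $\operatorname{Spec}R^{\boxempty}(\overline{\rho})$ has no extra components of the same dimension that the crystalline locus misses, and that the non-consecutivity hypothesis (not just the Fontaine–Laffaille window bound) is what rules those out. I would handle this by arguing at the level of tangent spaces at the closed point — showing $\dim_k t_{R^{\boxempty}} = \dim_k t_{R^{\boxempty,\tt{crys}}} = n^2([K:\mathbb{Q}_\ell]+1)$ via a local Euler-characteristic computation for $\operatorname{ad}\overline\rho$ (using $\ell\nmid\#\,$stuff, $h^0-h^1+h^2 = -[K:\mathbb{Q}_\ell]n^2$ and $h^2=0$ from the non-consecutivity plus the absence of a $\chi_\ell$-twist self-extension) — so that the surjection $R^{\boxempty}\twoheadrightarrow R^{\boxempty,\tt{crys}}$ of power-series-sized objects is forced to be an isomorphism by counting generators, exactly the style of argument packaged in Lemma 2.5 and Proposition 2.9 of the excerpt. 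Alternatively, if a clean $p=\ell$ analogue of Shotton's theorem is available, one cites it and the comparison becomes a one-line dimension match; I would present both routes and use whichever the paper's later sections have set up.
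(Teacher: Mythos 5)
Your proposed strategy has a fatal dimension error, and as a result the overall plan cannot work. The crystalline lifting ring $R^{\boxempty,\tt{crys}}(\overline{\rho})$ in the Fontaine--Laffaille range is formally smooth of relative dimension $n^2 + \frac{n(n-1)}{2}[K:\mathbb{Q}_\ell]$ over $W$ (this is Lemma \ref{20150712_crystallinity_lemma}, and it agrees with the framework's dimension formula $d^{\boxempty,\tt{crys}}_\nu=\dim(\mathfrak{g}^{\tt{der}})+(\dim(\mathfrak{g}^{\tt{der}})-\dim(\mathfrak{b}^{\tt{der}}))[F_\nu:\mathbb{Q}_\ell]$). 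This is \emph{strictly smaller} than $n^2([K:\mathbb{Q}_\ell]+1)$ for $n\geq 2$; the gap is exactly $[K:\mathbb{Q}_\ell]\frac{n(n+1)}{2}$, the number of filtration parameters that crystallinity freezes. Consequently your claim that $\dim_k t_{R^{\boxempty}} = \dim_k t_{R^{\boxempty,\tt{crys}}}$ is false, the closed immersion $\operatorname{Spec}R^{\boxempty,\tt{crys}}\hookrightarrow\operatorname{Spec}R^{\boxempty}$ is \emph{not} between equidimensional schemes, and the identity $R^{\boxempty}\cong R^{\boxempty,\tt{crys}}$ you want cannot hold. The crystalline condition genuinely cuts down the lifting space; this is the entire point of it being a nontrivial local deformation condition in Section \ref{20170321_sect_crys_defos} and in the framework of Section \ref{20170405_section_on_framework}.

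The key observation that you do have — that non-consecutivity forces $H^2(K,\operatorname{ad}\overline{\rho})=0$ — is exactly what the paper's proof uses, but you bury it as an ``alternative route'' when it is in fact the whole argument. Once $H^2(K,\operatorname{ad}\overline{\rho})=0$, the lifting ring is unobstructed and hence a power series ring over $W$, and its dimension is computed directly from the local Euler--Poincar\'e formula ($\dim Z^1 = n^2 - \chi(K,\operatorname{ad}\overline{\rho}) = n^2([K:\mathbb{Q}_p]+1)$); no comparison with the crystalline ring is needed or even wanted. The paper establishes the $H^2$-vanishing as follows: by local Tate duality and the trace pairing, $H^2(K,\operatorname{ad}\overline{\rho})^\ast\cong\operatorname{Hom}_{\operatorname{Gal}_K}(\overline{\rho},\overline{\rho}(1))$. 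Choosing a $\operatorname{Gal}_K$-stable lattice with semi-simple reduction and applying Fontaine--Laffaille (using that the weights lie in a window of length $\leq \ell-3$, and normalizing $\alpha=1$), one writes $\overline{\rho}=\tt{G}_K(\mathbf{M})$ and $\overline{\rho}(1)=\tt{G}_K(\mathbf{N})$ for Fontaine--Laffaille modules $\mathbf{M},\mathbf{N}$. Since the cyclotomic twist shifts FL-weights by $-1$, non-consecutivity of Hodge--Tate weights translates exactly into disjointness $\operatorname{FL}_\tau(\mathbf{M})\cap\operatorname{FL}_\tau(\mathbf{N})=\emptyset$ for every $\tau$, and strictness of morphisms with filtrations then forces $\operatorname{Hom}_{\underline{\operatorname{FL}}}(\mathbf{M},\mathbf{N})=0$. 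Full faithfulness of $\tt{G}_K$ concludes. You should discard the $R^{\boxempty}=R^{\boxempty,\tt{crys}}$ comparison entirely and promote the $H^2$-vanishing argument to the main line of proof, filling in the Fontaine--Laffaille Hom computation where you currently just gesture at ``non-consecutivity plus absence of a $\chi_\ell$-twist self-extension.''
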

Before we come to the proof, recall the theory of Fontaine-Laffaille \cite{FL}, as normalized in 
\cite{CHT} (see also \cite[Section 1.4]{BLGGT}): We consider the category 
$\underline{\operatorname{FL}}_{\mathcal{O}_K, \mathcal{O}_L}$, consisting of $\mathcal{O}_K
\otimes_{\mathbb{Z}_{\ell}} \mathcal{O}_L$-modules $M$, endowed with a decreasing
filtration $(\operatorname{Fil}^i M)_{i\in \mathbb{Z}}$ with $\operatorname{Fil}^0 M = M$ 
and $\operatorname{Fil}^{\ell -1}M = 0$ and a family of $\operatorname{Frob}\otimes 1 $-linear maps
$\operatorname{Fil}^i M \rightarrow M$ such that $\varphi^i|\operatorname{Fil}^{i+1} = 
\ell.\varphi^{i+1}$ and $\sum_i \varphi^i(\operatorname{Fil}^iM) = M$. Let
$\underline{\operatorname{FL}}_{\mathcal{O}_K, k}$ denote the full subcategory of 
finite length objects which are annihilated by the maximal ideal 
$\varpi_L.\mathcal{O}_L$. We need the following well-known facts:
\begin{itemize}
\item There exists an exact, fully-faithful, covariant and $\mathcal{O}_L$-linear functor
\begin{equation*}
{\tt{G}}_K: \underline{\operatorname{FL}}_{\mathcal{O}_K, \mathcal{O}_L}\rightarrow
\operatorname{Rep}_{\mathcal{O}_L}(\operatorname{Gal}_K).
\end{equation*}
The essential image is closed under taking subobjects and quotients. Moreover, ${\tt{G}}_K$
restricts to a functor 
\begin{equation*}
\underline{\operatorname{FL}}_{\mathcal{O}_K, k}\rightarrow
\operatorname{Rep}_{k}(\operatorname{Gal}_K).
\end{equation*}
\item For $\mathbf{M}\in \underline{\operatorname{FL}}_{\mathcal{O}_K, \mathcal{O}_L}$ projective over $\mathcal{O}_{L}$, we have
\begin{equation*}
\operatorname{HT}_{\tau}({\tt{G}}_K(\mathbf{M}\otimes_{\mathbb{Z}_{p}} \mathbb{Q}_p) )
= \operatorname{FL}_{\tau}(\mathbf{M}\otimes_{\mathcal{O}_L} k),
\end{equation*}
where for $\mathbf{N}\in \underline{\operatorname{FL}}_{\mathcal{O}_K, k}$ we denote by 
$\operatorname{FL}_{\tau}(\mathbf{M})$ the multiset of integers $i$, such that
\begin{equation*}
\operatorname{gr}^i(\mathbf{N}^{\tau}) = 
\operatorname{Fil}^iN \otimes_{\mathcal{O}_K \otimes_{\mathbb{Z}_{p}} \mathcal{O}_L, \tau
\otimes 1} \mathcal{O}_L /\operatorname{Fil}^{i+1}N \otimes_{\mathcal{O}_K \otimes_{\mathbb{Z}_{p}} \mathcal{O}_L, \tau
\otimes 1} \mathcal{O}_L
\end{equation*}
does not vanish, where $i$ is counted with multiplicity $\operatorname{dim}_k 
\operatorname{gr}^i(\mathbf{N}^{\tau})$.
\item Assuming condition 1. of Theorem \ref{20170313_Theorem_smoothness_sm_over_ell}, any $\operatorname{Gal}_K$-stable
$\mathcal{O}_L$-lattice of $\rho$ is in the image of ${\tt{G}}_K$, and so is its reduction
$\Lambda/\varpi_L.\Lambda$.
\item Morphisms in $\underline{\operatorname{FL}}_{\mathcal{O}_K, k}$ are strict with filtrations. If $f: \mathbf{M} \rightarrow \mathbf{N}$ is such a morphism, then 
$f(\operatorname{Fil}^i M) = f(M) \cap \operatorname{Fil}^i N$ for all $i\in \mathbb{Z}$. 
In particular, if $\mathbf{M}, \mathbf{N}\in \underline{\operatorname{FL}}_{\mathcal{O}_K, k}$ fulfill
\begin{equation}\label{20160913_FLweightcond}
\operatorname{FL}_{\tau}(\mathbf{M}) \cap \operatorname{FL}_{\tau}(\mathbf{N})
\end{equation}
for all
$\tau\in \mathbb{E}_K$,
then $\operatorname{Hom}_{\underline{\operatorname{FL}}_{\mathcal{O}_K, k}}
(\mathbf{M}, \mathbf{N})= 0$.
\end{itemize}

\begin{proof}
As $h^2(K, \operatorname{ad}\overline{\rho})$ is an upper 
bound on the number of generators of the kernel of a surjection
$W\llbracket x_1, \ldots, x_{s}\rrbracket \twoheadrightarrow R^{\boxempty}(\overline{\rho})$
with $s = \operatorname{dim}Z^1(K, \operatorname{ad}\overline{\rho})$ (cf. \cite[Proposition 2.1.2]{Allen_polarized}), we have to prove 
\begin{equation}\label{20160913_local_h2_vanishing}
H^2(K, \operatorname{ad}\overline{\rho}) = 0.
\end{equation}
%
%
Moreover, using the 
exact sequence
\begin{equation*}
0 \rightarrow 
\operatorname{ad}\overline{\rho}/(\operatorname{ad}\overline{\rho})^{\operatorname{Gal}_{K}} 
\rightarrow t_{D_{W}^{\boxempty}(\overline{\rho})}
\rightarrow t_{D_{W}(\overline{\rho})}
\rightarrow 0
\end{equation*}
and the local Euler-Poincare formula, we can compute
\begin{equation*}
s = h^1(K, \operatorname{ad}\overline{\rho})
+ n^2 - h^0(K, \operatorname{ad}\overline{\rho})
= n^2 - \chi(K, \operatorname{ad}\overline{\rho}) = 
n^2([K:\mathbb{Q}_p] + 1).
\end{equation*}
Thus, (\ref{20160913_local_h2_vanishing}) implies the claim. 


As the 
trace pairing identifies $\operatorname{ad}\overline{\rho}^{\vee}$ and 
$\operatorname{ad}\overline{\rho}(1)$, we are finished if we can show that
\begin{equation*}
H^2(K, \operatorname{ad}\overline{\rho})^{\ast} \cong 
H^0(K, \operatorname{ad}\overline{\rho}^{\vee}) \cong
H^0(K, \operatorname{ad}\overline{\rho}(1))\cong 
\operatorname{Hom}_{\operatorname{Gal}_K}(\overline{\rho},
\overline{\rho}(1)) = 0.
\end{equation*} 
Because
$\operatorname{Hom}_{\operatorname{Gal}_K}(\overline{\rho},
\overline{\rho}(1)) \cong
\operatorname{Hom}_{\operatorname{Gal}_K}(\overline{\rho}(1-\alpha),
\overline{\rho}(2-\alpha))$,
we can assume without loss of generality that $\alpha = 1$.

It is easy to see that we can choose a $\operatorname{Gal}_K$-stable 
$\mathcal{O}_L$-lattice $\Lambda$ of $\rho$ such that its reduction is semi-simple, 
i.e. $\Lambda/\varpi_L.\Lambda \cong \overline{\rho}$ (if necessary, after replacing $\rho$
by a base change $\rho\otimes_L L'$ to a sufficiently ramified finite extension $L'$ of $L$, 
which does not affect the validity of (\ref{20160913_local_h2_vanishing})).
By our first assumption that all weights of $\rho$ lie in the range $[1, \ell - 2]$, it thus
follows that $\overline{\rho}$ is of the 
form $\tt{G}_K(\mathbf{M})$ for a suitable $\mathbf{M}\in 
\underline{\operatorname{FL}}_{\mathcal{O}_K, k}$. By the same argument, 
$\overline{\rho}(1) = \tt{G}_K(\mathbf{N})$ for a suitable $\mathbf{N}\in 
\underline{\operatorname{FL}}_{\mathcal{O}_K, k}$.
As the 
cyclotomic character shifts the weights by $-1$, the second condition translates
precisely into the condition (\ref{20160913_FLweightcond}). Thus, using that $\tt{G}_K$ is fully faithful, we get
\begin{equation*}
0 = \operatorname{Hom}_{\underline{\operatorname{FL}}_{\mathcal{O}_K, k}}
(\mathbf{M},
\mathbf{N})
\cong 
\operatorname{Hom}_{\operatorname{Gal}_K}(\overline{\rho},
\overline{\rho}(1)).\qedhere
\end{equation*}
\end{proof}

\subsection{Crystalline deformations ($\ell = p$)}\label{20170321_sect_crys_defos}
Consider again a representation $\rho:\operatorname{Gal}_{K} \rightarrow \operatorname{GL}_n({L})$ which fulfills the conditions of Theorem \ref{20170323_FL_main_thm}. We will also make the additional 
assumption that all occurring Hodge-Tate weigths of $\overline{\rho}$ have multiplicity one.
We will consider the deformation 
problem ${\tt{crys}}$ of $\overline{\rho}$ consisting of those lifts 
$\tilde{\rho}:\operatorname{Gal}_{K} \rightarrow \operatorname{GL}_n(A)$ of 
$\overline{\rho}$ for which $\tilde{\rho}\otimes_{A} A'$ lies in the essential image 
of $\texttt{G}_K$ for all Artinian quotients $A'$ of $A$ (cf. \cite{CHT}, Section 2.4.1).
We refer to those lifts as \textit{FL-crystalline lifts} of $\overline{\rho}$.

That ${\tt{crys}}$ defines a deformation condition in the sense of Definition \ref{20170323_defn_defo_cond} was already remarked in \cite{CHT} and follows easily from
the Ramakrishna framework, cf. \cite{Ramakrishna}:
We remarked already in Section \ref{20170324_section_unres_defos} that the essential image of $\texttt{G}_K$ is closed under subobjects
and quotients. That the essential image is closed under direct sums follows immediately
from the exactness of $\texttt{G}_K$, since then $\texttt{G}_K$ preserves direct sums (see \cite[Theorem $3.12^{(\ast)}$]{Freyd}).
Thus we can record the following (where for part 2. we refer to the remark just below Proposition
\ref{20170324_prop_rev_rep}):
\begin{Lem}
Let $\Lambda$ be the ring of integers of a finite, totally ramified extension $E$ of $\operatorname{Quot}(W(k))$ and let $\Lambda'$ be the ring of integers of a finite, totally ramified extension of $E$ (so that we have $k=k_{\Lambda} = k_{\Lambda'}$.) 
Then:
\begin{enumerate}
\item The functor $D_{\Lambda}^{\boxempty, \tt{crys}}(\overline{\rho})$ is representable by a quotient $R_{\Lambda}^{\boxempty, \tt{crys}}(\overline{\rho})$
of $R_{\Lambda}^{\boxempty}(\overline{\rho})$.
\item The functor $D^{\boxempty, \tt{crys}}_{\Lambda'}(\overline{\rho})$ is representable by
\begin{equation}\label{20170406_yet_another_iso}
R^{\boxempty, \tt{crys}}_{\Lambda'}(\overline{\rho}) \cong \Lambda' \otimes_{\Lambda} R_{\Lambda}^{\boxempty, \tt{crys}}(\overline{\rho}).
\end{equation}
\end{enumerate}
\end{Lem}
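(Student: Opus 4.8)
The plan is to dispatch the two parts in turn: part~1 is an instance of the relative-representability package of Section~\ref{20170405_section_liftings_and_defos}, and part~2 combines the conditioned base-change isomorphism with a completeness observation.

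For part~1, the discussion preceding the statement already reduces matters to checking the three axioms of Definition~\ref{20170316_def_lifting_cond} for $\tt{crys}$: axiom~1 ($\overline{\rho}\in S(k)$) holds because, under the first hypothesis of Theorem~\ref{20170323_FL_main_thm}, a $\operatorname{Gal}_K$-stable $\mathcal{O}_L$-lattice of $\rho$ with semisimple reduction $\cong\overline{\rho}$ lies in the essential image of $\texttt{G}_K$; axiom~2 (functoriality along arbitrary $f\colon A\to B$) is the stability of the Fontaine--Laffaille essential image under base change of Artinian coefficient rings, the condition being read off the Artinian quotients (cf.~\cite[Section~2.4.1]{CHT}); and axiom~3 is the Ramakrishna-type gluing argument (cf.~\cite{Ramakrishna}): for $A_3=A_1\times_A A_2$ and a lift $\rho_3$ with projections $\rho_1,\rho_2$, one passes to Artinian quotients and uses that $\rho_3$ then embeds into a direct sum of FL-crystalline modules, so that $\rho_3$ is FL-crystalline by closure of the essential image under finite direct sums (Section~\ref{20170324_section_unres_defos}) and under subobjects; the reverse implication is immediate from axiom~2. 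Having checked the axioms, Proposition~\ref{20170324_prop_rev_rep} gives that $D_{\Lambda}^{\boxempty,\tt{crys}}(\overline{\rho})$ is relatively representable, hence representable by an object $R_{\Lambda}^{\boxempty,\tt{crys}}(\overline{\rho})\in\mathcal{C}_{\Lambda}$; moreover, being a lifting condition, $\tt{crys}$ singles out the largest quotient of $R_{\Lambda}^{\boxempty}(\overline{\rho})$ through which the universal lift becomes FL-crystalline, so $R_{\Lambda}^{\boxempty,\tt{crys}}(\overline{\rho})$ is a quotient of $R_{\Lambda}^{\boxempty}(\overline{\rho})$.

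For part~2, note that $k_{\Lambda'}=k_{\Lambda}=k$ (the extensions being totally ramified) and that $\Lambda'$ is the ring of integers of a finite extension of $\operatorname{Quot}(\Lambda)$. Hence the conditioned base-change isomorphism~(\ref{20170406_conditioned_tensor_prod}), applied to the lifting condition $\tt{crys}$, yields $R_{\Lambda'}^{\boxempty,\tt{crys}}(\overline{\rho})\cong\Lambda'\,\widehat{\otimes}_{\Lambda}\,R_{\Lambda}^{\boxempty,\tt{crys}}(\overline{\rho})$. It remains to drop the hat: since $\operatorname{Quot}(\Lambda')/\operatorname{Quot}(\Lambda)$ is finite, $\Lambda'$ is module-finite over $\Lambda$, so $\Lambda'\otimes_{\Lambda}R_{\Lambda}^{\boxempty,\tt{crys}}(\overline{\rho})$ is a finite module over the complete Noetherian local ring $R_{\Lambda}^{\boxempty,\tt{crys}}(\overline{\rho})$ and is therefore already complete for its maximal-ideal topology; thus $\widehat{\otimes}$ and $\otimes$ coincide here, giving~(\ref{20170406_yet_another_iso}).

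The one genuinely non-formal ingredient is the gluing axiom for $\tt{crys}$ in part~1, and even there everything it needs --- exactness of $\texttt{G}_K$ together with closure of its essential image under subobjects, quotients and finite direct sums --- has already been recorded in Section~\ref{20170324_section_unres_defos}, so the verification amounts to an unwinding of the Ramakrishna framework. The only point requiring care in part~2 is that the identification $\widehat{\otimes}=\otimes$ uses finiteness (not mere integrality) of $\Lambda'$ over $\Lambda$, which is supplied by the finite totally-ramified extension hypothesis.
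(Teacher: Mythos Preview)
Your proposal is correct and follows essentially the same approach as the paper: the paper's proof is the discussion immediately preceding the lemma, which invokes the Ramakrishna framework together with closure of the essential image of $\texttt{G}_K$ under subobjects, quotients and direct sums for part~1, and the conditioned base-change isomorphism~(\ref{20170406_conditioned_tensor_prod}) for part~2. Your write-up is simply a more explicit unpacking of these ingredients; the only point you add beyond the paper is the remark that $\widehat{\otimes}=\otimes$ by finiteness of $\Lambda'$ over $\Lambda$, which the paper leaves implicit in passing from~(\ref{20170406_conditioned_tensor_prod}) to the stated isomorphism~(\ref{20170406_yet_another_iso}).
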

We remark that the condition $\tt{crys}$ fulfills the extended requirements as described
in Remark \ref{20170406_remark_extended_conds}, so that (\ref{20170406_yet_another_iso})
holds even if $\infty>[k_{\Lambda'}:k_{\Lambda}] >1$.
%
%
%
\begin{Lem}\label{20150712_crystallinity_lemma}
Under the above hypotheses
\begin{equation*}
R_{\Lambda}^{\boxempty, \tt{crys}}(\overline{\rho}) \cong \Lambda[[x_1, \ldots, x_m]]
\end{equation*}
with $m= n^2 + [K:\mathbb{Q}_{\ell}]\frac{n.(n-1)}{2}$.
\end{Lem}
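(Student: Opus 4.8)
The plan is to reduce the statement to two facts: (i) $R^{\boxempty, \tt{crys}}_{\Lambda}(\overline{\rho})$ is formally smooth over $\Lambda$, and (ii) its relative tangent dimension $\operatorname{rdim}_{\Lambda}$ equals $m$. Granting both, the criterion that a morphism in $\mathcal{C}_{\Lambda}$ is formally smooth if and only if its target is a formal power series ring, together with the fact that the number of variables equals $\operatorname{dim}_k\mathfrak{m}/(\mathfrak{m}^2,\mathfrak{m}_{\Lambda})$, finishes the proof. Throughout I work with the Fontaine--Laffaille module $\mathbf{M}\in\underline{\operatorname{FL}}_{\mathcal{O}_K,k}$ with $\tt{G}_K(\mathbf{M})=\overline{\rho}$; by the multiplicity-one hypothesis, $\operatorname{FL}_{\tau}(\mathbf{M})$ consists of $n$ pairwise distinct integers for every $\tau\in\mathbb{E}_K$, so each nonzero $\operatorname{gr}^i(\mathbf{M}^{\tau})$ is one-dimensional. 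For an Artinian quotient $A$, a lift $\tilde{\rho}$ to $A$ is $\tt{crys}$ precisely when it is $\tt{G}_K(\mathbf{M}_A)$ for an object $\mathbf{M}_A\in\underline{\operatorname{FL}}_{\mathcal{O}_K,A}$ which is free over $\mathcal{O}_K\otimes_{\mathbb{Z}_{\ell}}A$ and reduces to $\mathbf{M}$; since $\tt{G}_K$ is exact, fully faithful and has essential image closed under subobjects and quotients, the $\tt{crys}$-deformation groupoid is equivalent to the deformation groupoid of $\mathbf{M}$ in the Fontaine--Laffaille category.

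For (i), I would show this groupoid is formally smooth by lifting the Fontaine--Laffaille data directly across a small surjection $A\twoheadrightarrow\bar A$ in $\mathcal{C}_{\Lambda}$: the underlying free $\mathcal{O}_K\otimes A$-module lifts trivially; the filtration lifts because over $\mathcal{O}_K\otimes\bar A$ it is split with line-bundle graded pieces (distinct weights), so an adapted basis lifts; and the maps $\varphi^i$ lift because, modulo the prescribed relations $\varphi^i|_{\operatorname{Fil}^{i+1}}=\ell\varphi^{i+1}$, they amount to a $\operatorname{Frob}\otimes 1$-semilinear map between free $\mathcal{O}_K\otimes A$-modules, which lifts, while the surjectivity condition $\sum_i\varphi^i(\operatorname{Fil}^i\mathbf{M}_A)=\mathbf{M}_A$ is automatic by Nakayama since it holds after reduction. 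Hence the $\tt{crys}$-groupoid is formally smooth, and since framings of the $\operatorname{Gal}_K$-representation form an unobstructed extra datum (the reduction map on $\operatorname{GL}_n$ is surjective on points), so is $D^{\boxempty,\tt{crys}}_{\Lambda}(\overline{\rho})$; equivalently, the obstruction space $\operatorname{Ext}^2_{\underline{\operatorname{FL}}_{\mathcal{O}_K,k}}(\mathbf{M},\mathbf{M})$ vanishes, a standard feature of the Fontaine--Laffaille category, whose $\operatorname{Ext}$-groups are computed by a two-term complex.

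For (ii), $\operatorname{dim}_k\mathfrak{m}/(\mathfrak{m}^2,\mathfrak{m}_{\Lambda})$ equals $\operatorname{dim}_k D^{\boxempty,\tt{crys}}_{\Lambda}(\overline{\rho})(k[\epsilon])$, which is $h^1_{\tt{crys}}(K,\operatorname{ad}\overline{\rho})+\bigl(n^2-h^0(K,\operatorname{ad}\overline{\rho})\bigr)$, the summand $n^2-h^0$ being the dimension of the space of coboundaries. By the equivalence above, $h^1_{\tt{crys}}(K,\operatorname{ad}\overline{\rho})=\operatorname{dim}_k\operatorname{Ext}^1_{\underline{\operatorname{FL}}_{\mathcal{O}_K,k}}(\mathbf{M},\mathbf{M})$, and the two-term complex computing $\operatorname{Ext}^{\bullet}_{\underline{\operatorname{FL}}}$ yields the Euler characteristic identity $\operatorname{dim}\operatorname{Hom}_{\underline{\operatorname{FL}}}(\mathbf{M},\mathbf{M})-\operatorname{dim}\operatorname{Ext}^1_{\underline{\operatorname{FL}}}(\mathbf{M},\mathbf{M})=\operatorname{dim}_k\operatorname{Hom}_{\operatorname{Fil}}(\mathbf{M},\mathbf{M})-\operatorname{dim}_k\operatorname{Hom}_{\mathcal{O}_K\otimes k}(\mathbf{M},\mathbf{M})$, where $\operatorname{Hom}_{\operatorname{Fil}}$ denotes filtration-preserving $\mathcal{O}_K\otimes k$-linear maps. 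Since the weights are distinct, such a map is, for each $\tau$, triangular for the resulting complete flag, so $\operatorname{dim}\operatorname{Hom}_{\operatorname{Fil}}(\mathbf{M},\mathbf{M})=[K:\mathbb{Q}_{\ell}]\binom{n+1}{2}$, while $\operatorname{dim}\operatorname{Hom}_{\mathcal{O}_K\otimes k}(\mathbf{M},\mathbf{M})=[K:\mathbb{Q}_{\ell}]n^2$ and $\operatorname{dim}\operatorname{Hom}_{\underline{\operatorname{FL}}}(\mathbf{M},\mathbf{M})=\operatorname{dim}_k\operatorname{Hom}_{\operatorname{Gal}_K}(\overline{\rho},\overline{\rho})=h^0(K,\operatorname{ad}\overline{\rho})$ by full faithfulness. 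Hence $h^1_{\tt{crys}}(K,\operatorname{ad}\overline{\rho})=h^0(K,\operatorname{ad}\overline{\rho})+[K:\mathbb{Q}_{\ell}]\tfrac{n(n-1)}{2}$, and the tangent dimension becomes $n^2+[K:\mathbb{Q}_{\ell}]\tfrac{n(n-1)}{2}=m$.

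I expect the genuine content, and the step to carry out with care, to be the structure of $\operatorname{Ext}$-groups in the Fontaine--Laffaille category: the two-term complex computing them, the identification of its cohomology with FL-crystalline extensions of $\overline{\rho}$ via $\tt{G}_K$, and the vanishing of $\operatorname{Ext}^2$. These are classical (Fontaine--Laffaille), and one may alternatively invoke the crystalline deformation ring computations of \cite{CHT} directly; the lifting argument for formal smoothness and the linear-algebra count in the distinct-weights case are then routine.
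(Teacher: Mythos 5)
Your proof is correct, and the two steps you isolate — formal smoothness via unobstructedness of the Fontaine--Laffaille deformation problem, and the tangent-dimension count via the Euler characteristic of the two-term complex computing $\operatorname{Ext}^{\bullet}_{\underline{\operatorname{FL}}}$ — are exactly the right ones. Note, however, that the paper does not carry out any of this: its proof of the lemma is the single sentence ``This is a part of the statement of \cite[Corollary 2.4.3]{CHT}.'' So you have not taken a different route from the paper so much as reconstructed, from first principles, the content of the citation the paper invokes. The comparison is therefore one of exposition versus outsourcing: the paper buys brevity, while your sketch makes visible where each hypothesis enters — the distinct-Hodge--Tate-weight (multiplicity-one) condition is what makes the filtration a complete flag, which gives both the splitting needed to lift across a small surjection and the identity $\operatorname{dim}_k\operatorname{Hom}_{\operatorname{Fil}}(\mathbf{M},\mathbf{M})=[K:\mathbb{Q}_{\ell}]\binom{n+1}{2}$, while full faithfulness of $\tt{G}_K$ is what identifies $\operatorname{Hom}_{\underline{\operatorname{FL}}}(\mathbf{M},\mathbf{M})$ with $H^0(K,\operatorname{ad}\overline{\rho})$ and makes the $h^0$ terms cancel in the final count $n^2+[K:\mathbb{Q}_{\ell}]\tfrac{n(n-1)}{2}$. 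Your arithmetic checks out. Two very small points to tighten if you were to write this up in full: in the lifting step, make explicit that each $\operatorname{Fil}^i\mathbf{M}_{\bar A}$ is an $\mathcal{O}_K\otimes\bar A$-direct summand (built into the $\tt{crys}$ condition, since the graded pieces are required to be free), which is what guarantees an adapted basis lifts; and note that the identification of the tangent space with $\operatorname{Ext}^1_{\underline{\operatorname{FL}}}(\mathbf{M},\mathbf{M})$ needs both exactness and full faithfulness of $\tt{G}_K$, as you implicitly use when passing between extensions of $\overline{\rho}$ and extensions of $\mathbf{M}$.
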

\begin{proof}
This is a part of the statement of \cite[Corollary 2.4.3]{CHT}.
\end{proof}
We also remark that the condition {\tt{crys}} defines a dual-pre-{\tt{crys}} condition in 
the sense of Definition \ref{20170324_def_dual_pre_cond}, cf. \cite[Lemma 4.15]{MyPhd}.

\subsection{Minimally ramified deformations ($p\neq \ell$)}\label{20170321_sect_min_ram}
For this subsection, recall from \cite[Section 2.4.4]{CHT} the minimal ramification 
condition for a lift $\rho$ of $\overline{\rho}$. Let $P_K$ denote the kernel of 
one (hence, any) surjection $I_K \twoheadrightarrow \mathbb{Z}_{\ell}$.
Moreover, let $\Delta_{\overline{\rho}}$ denote the set of equivalence classes of 
$P_K$-representations over $k$ such that $\operatorname{Hom}_{P_K}(\tau, \overline{\rho})
\neq 0$. Then the following can easily be deduced from the material in \cite[Section 2.4.4]{CHT}, in particular
\cite[Corollary 2.4.21]{CHT}:
\begin{Prop}\label{20170526_newProponmin}
Assume that any $\tau\in \Delta_{\overline{\rho}}$ is absolutely irreducible. Then we have:
\begin{enumerate}
\item The condition of being minimally ramified defines a lifting condition, denoted
$\tt{min}$. The representing universal object fulfills
\begin{equation*}
R^{\boxempty, \tt{min}}_{\Lambda}(\overline{\rho})\cong
\Lambda\llbracket X_1, \ldots, X_{n^2}\rrbracket.
\end{equation*}
\item If $\Lambda$ is the ring of integers
of some finite extension of $\operatorname{Quot}(\Lambda)$ with residue
field $k_{\Lambda'}=k$, we have 
\begin{equation*}
R^{\boxempty, \tt{min}}_{\Lambda'}(\overline{\rho}') \cong
\Lambda'\otimes_{\Lambda} R^{\boxempty, \tt{min}}_{\Lambda}(\overline{\rho}).
\end{equation*}
\end{enumerate}  
\end{Prop}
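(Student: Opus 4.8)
The plan is to deduce both assertions from the study of minimally ramified lifts carried out in \cite[\S 2.4.4]{CHT}; the hypothesis on $\Delta_{\overline{\rho}}$ is precisely what places us in the situation treated there. Since $\overline{\rho}|_{P_K}$ factors through a finite quotient of order prime to $\ell$ it is semisimple, say $\overline{\rho}|_{P_K}\cong\bigoplus_{\tau\in\Delta_{\overline{\rho}}}\tau^{\oplus m_\tau}$, and the assumption that each $\tau$ is absolutely irreducible renders the $\tau$-isotypic decomposition of $\overline{\rho}$ well behaved, which is the input needed for the constructions of loc.\ cit. First I would recall from \cite[\S 2.4.4]{CHT} that minimal ramification is stable under pushforward along morphisms of $\mathcal{C}_{\Lambda}$ and has the Mayer--Vietoris property, i.e.\ it satisfies the axioms of Definition \ref{20170316_def_lifting_cond}; being an intrinsic property of the underlying representation rather than of a chosen basis, it is moreover conjugation-invariant, hence a deformation condition in the sense of Definition \ref{20170323_defn_defo_cond}. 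By Proposition \ref{20170324_prop_rev_rep}, $\tt{min}$ therefore cuts out a relatively representable subfunctor, so $D^{\boxempty, \tt{min}}_{\Lambda}(\overline{\rho})$ is representable by a quotient $R^{\boxempty, \tt{min}}_{\Lambda}(\overline{\rho})$ of $R^{\boxempty}_{\Lambda}(\overline{\rho})$.

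For the shape of this quotient I would invoke \cite[Corollary 2.4.21]{CHT}, whose proof again uses the absolute irreducibility of the $\tau\in\Delta_{\overline{\rho}}$, to the effect that $R^{\boxempty, \tt{min}}_{\Lambda}(\overline{\rho})$ is formally smooth over $\Lambda$; formal smoothness over $\Lambda$ forces $R^{\boxempty, \tt{min}}_{\Lambda}(\overline{\rho})\cong\Lambda\llbracket X_1,\ldots,X_m\rrbracket$ with $m=\dim_k t_{D^{\boxempty, \tt{min}}_{\Lambda}(\overline{\rho})}$. To pin down $m$ I would use the exact sequence
\begin{equation*}
0\longrightarrow \operatorname{ad}\overline{\rho}/(\operatorname{ad}\overline{\rho})^{\operatorname{Gal}_K}\longrightarrow t_{D^{\boxempty, \tt{min}}_{\Lambda}(\overline{\rho})}\longrightarrow t_{D^{\tt{min}}_{\Lambda}(\overline{\rho})}\longrightarrow 0
\end{equation*}
(valid since $\tt{min}$ is conjugation-invariant), together with the fact, read off from the local computation in \cite[\S 2.4.4]{CHT} via Tate duality on the $\tau$-isotypic pieces, that $\dim_k t_{D^{\tt{min}}_{\Lambda}(\overline{\rho})}=h^0(\operatorname{Gal}_K,\operatorname{ad}\overline{\rho})$ (the analogue of the unramified situation, where $\tt{min}$ reduces to the unramified condition and its tangent space has the same dimension as $H^0$). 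Then $m=\bigl(n^2-h^0(\operatorname{Gal}_K,\operatorname{ad}\overline{\rho})\bigr)+h^0(\operatorname{Gal}_K,\operatorname{ad}\overline{\rho})=n^2$, which is part 1.

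Part 2 is then essentially formal. The condition $\tt{min}$ is defined in terms of the $P_K$-action on Artinian quotients of the lift, which is insensitive to enlarging the coefficient ring, so $\tt{min}$ for $\overline{\rho}'$ over $\Lambda'$ is the $\Lambda'$-truncation of $\tt{min}$ over $\Lambda$ in the sense of (\ref{20170406_conditioned_tensor_prod}); that isomorphism gives $R^{\boxempty, \tt{min}}_{\Lambda'}(\overline{\rho}')\cong\Lambda'\,\widehat{\otimes}_{\Lambda}\,R^{\boxempty, \tt{min}}_{\Lambda}(\overline{\rho})$, and since $\Lambda'$ is finite free over $\Lambda$ the completed tensor product agrees with the ordinary one. (Alternatively, as $k_{\Lambda'}=k$ one has $\overline{\rho}'=\overline{\rho}$ and $\Delta_{\overline{\rho}'}=\Delta_{\overline{\rho}}$, so applying part 1 over $\Lambda$ and over $\Lambda'$ exhibits both sides as power series rings in $n^2$ variables.) I do not expect a genuine obstacle: the only point requiring care is the dimension count, namely extracting $\dim_k t_{D^{\tt{min}}_{\Lambda}(\overline{\rho})}=h^0$ from \cite[\S 2.4.4]{CHT} — the same input as hypothesis c) in part 2 of Theorem \ref{20160908_MainThm} — and verifying that the absolute-irreducibility assumption on $\Delta_{\overline{\rho}}$ is exactly what CHT's arguments for both the deformation-condition axioms and the smoothness require.
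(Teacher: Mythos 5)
Your argument is correct and follows the same route as the paper, which simply asserts that the statement "can easily be deduced from the material in [CHT, §2.4.4], in particular [CHT, Cor. 2.4.21]." You have made that deduction explicit: verifying the lifting-condition axioms, quoting CHT for formal smoothness and for $\dim_k t_{D^{\tt{min}}_{\Lambda}(\overline{\rho})}=h^0(\operatorname{Gal}_K,\operatorname{ad}\overline{\rho})$, and then obtaining the framed dimension $n^2$ from the tangent-space exact sequence, with part 2 following from the base-change isomorphism (\ref{20170406_conditioned_tensor_prod}) as the paper would implicitly use.
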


We will be particularly interested in the case where $\overline{\rho}$ has 
\textit{unipotent ramification}\footnote{This notion is explained by the observation that $\overline{\rho}$ is unipotently 
ramified if and only if $\overline{\rho}(I_K)$ lies in a conjugate of the standard 
unipotent subgroup consisting of upper-triangular matrices in $\operatorname{GL}_n(k)$ with 
diagonal entries all equal to $1$.}, i.e. where $\overline{\rho}(P_K) = \{1\}$. 
In the unipotent case, we have a strong connection between
minimally ramified liftings and liftings of prescribed type as considered in \cite{Shotton_phd}. In order
to make this precise, let $E$ denote the
quotient field of $\Lambda$ and $\overline{E}$ its algebraic closure. 
\begin{Def}[Def. 2.10 of \cite{Shotton_phd}]
Let $\tau: I_K\rightarrow \operatorname{GL}_n(\overline{E})$ be a representation which extends to a continuous representation
of the Weil group $W_K$ of $K$ (considered
with the $\ell$-adic topology). Then the isomorphism class of $\tau$ is called an \textit{inertial type}.
(\textit{Warning}: This differs from the usual definition of an inertial type as e.g. in \cite{GeeKisin}.)
\end{Def}
Let $\rho$ be a lift of $\overline{\rho}$ which has values in $\overline{E}$, then we say that
$\rho$ ``is of type $\tau$'' if $\rho|I_K$ is isomorphic to $\tau$.

For the following we consider a $\tau$ which is defined over $E$. Then we say that a morphism 
$x:\operatorname{Spec}\overline{E}\rightarrow \operatorname{Spec}R^{\boxempty}_{\Lambda}(\overline{\rho})$
is of type $\tau$ if the associated $\overline{E}$-valued representation $\rho_x$ is of type $\tau$. This
notion depends only on the image of $x$ (because $\tau$ is defined over $E$). 
\begin{Def}[Fixed type deformation ring, Def. 2.14 of \cite{Shotton_phd}]
Let $R^{\boxempty,\tau}_{\Lambda}(\overline{\rho})$ be the reduced quotient of 
$R^{\boxempty}_{\Lambda}(\overline{\rho})$ which is characterized by the requirement
that $\operatorname{Spec}R^{\boxempty,\tau}_{\Lambda}(\overline{\rho})$ is the Zariski closure 
of the $\overline{E}$-points of type $\tau$ in $\operatorname{Spec}R^{\boxempty}_{\Lambda}(\overline{\rho})$.
\end{Def}
A general classification of inertial types is given in Section 2.2.1 of \cite{Shotton_phd}. Under the unipotent ramification 
assumption, this becomes particularly simple: The set $\mathcal{I}^{\tt{uni}}$ of the isomorphism classes of inertial
types which are trivial on $P_K$ is in bijection with the set $\mathcal{Y}_n$ of Young diagrams of size $n$. The partition $(l_1, \ldots, l_k)$ (with $l_i\geq l_{i+1}$) corresponds
(using the notation of \cite{Shotton_phd}) to the type given by the $I_K$-restriction of the $W_K$-representation 
\begin{equation*}
\bigoplus_{i=1}^k \operatorname{Sp}(\textbf{1}, l_i),
\end{equation*} 
where $\operatorname{Sp}(\bullet,\bullet)$ is defined as in \cite[Section 3.1]{Shotton_prep}.
We can express this differently: Each member of $\mathcal{I}^{\tt{uni}}$ is uniquely characterized by (the conjugacy class of) its value on the generator 
$\zeta:=\zeta_{\operatorname{triv}}$ of $I_K/P_K$, and a bijection $\nabla:\mathcal{Y}_n\rightarrow \mathcal{I}^{\tt{uni}}$ is given by 
\begin{equation}\label{20150810_Nabla_line}
(l_1, \ldots, l_k) \overset{\nabla}{\longmapsto} \tau(\zeta) = \Bigl[1+\begin{pmatrix}
\mathcal{B}_{l_1}&&&\\
&\mathcal{B}_{l_2}&&\\
&&\ddots&\\
&&&\mathcal{B}_{l_k}
\end{pmatrix}\Bigr] \text{ with } \mathcal{B}_{m}= 
\begin{pmatrix}
0&1&&&\\
&0&1&&\\
&&\ddots&\ddots&\\
&&&0&1\\
&&&&0
\end{pmatrix} \in \mathbb{M}_{m\times m}(E).
\end{equation}
On the other hand, we can associate to a $\tau\in \mathcal{I}^{\tt{uni}}$ a partition
of $n$ by considering the kernel sequences: 
\begin{equation*}
\Theta:  \mathcal{I}^{\tt{uni}} \rightarrow \mathcal{Y}_n \qquad \tau \mapsto (s_1, \ldots, s_r)
\end{equation*}
with
\begin{equation*}
s_i :=  \operatorname{dim}\operatorname{ker}(\tau(\zeta) - \textbf{1})^{i} - 
\operatorname{dim}\operatorname{ker}(\tau(\zeta) - \textbf{1})^{i-1}
\end{equation*}
and
\begin{equation*}
r:= \operatorname{min} \bigl\{i\bigl|\operatorname{dim}\operatorname{ker}(\tau(\zeta) - \textbf{1})^i = \operatorname{dim}\operatorname{ker}(\tau(\zeta) - \textbf{1})^{i+1}\bigr\} =
\operatorname{min} \bigl\{i\bigl|\operatorname{ker}(\tau(\zeta) - \textbf{1})^i = V \bigr\}.
\end{equation*}
(Here, $V$ is the verctor space underlying $\tau$ and we use the convention
that $f^0$ is the identity map for any linear map $f$.) It follows easily from the characterization
of $\mathcal{I}^{\tt{uni}}$ in (\ref{20150810_Nabla_line}) that $s_i \geq s_{i+1}$, i.e. that $\Theta$ has
values in $\mathcal{Y}_n$. 

It is an easy combinatorial calculation to check that $\tau$ is uniquely characterized by its value under $\Theta$ and that
each Young diagram occurs as a kernel sequence (i.e. that $\Theta$ is a bijection). More precisely, we have
\begin{Lem}\label{20150812_young_conjugation_lemma}
The map $\Theta\circ\nabla^{-1}:\mathcal{Y}_n \rightarrow \mathcal{Y}_n$ is given by the conjugation operation
on Young 
diagrams (cf. \cite[\S 4.1]{FultonHarris} or \cite[Section 2.8]{HarrisMichael}). In particular, for a given $\tau\in  \mathcal{I}^{\tt{uni}}$,
the block matrix structure of $\tau(\zeta)$ (up to reordering blocks) as in (\ref{20150810_Nabla_line}) determines its kernel sequence and vice versa.
\end{Lem}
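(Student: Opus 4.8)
The plan is to reduce the statement to an elementary computation with nilpotent Jordan blocks. Write $\lambda = (l_1, \ldots, l_k)$ with $l_1 \geq \cdots \geq l_k$ and put $\tau = \nabla(\lambda)$, so that by (\ref{20150810_Nabla_line}) the matrix $N := \tau(\zeta) - \mathbf{1}$ is block diagonal with blocks $\mathcal{B}_{l_1}, \ldots, \mathcal{B}_{l_k}$. Since $N$ is block diagonal, so is each power $N^i$, with blocks $\mathcal{B}_{l_j}^i$; hence $\ker N^i = \bigoplus_{j=1}^k \ker \mathcal{B}_{l_j}^i$ for every $i \geq 0$, and it suffices to understand $\dim \ker \mathcal{B}_m^i$ for a single block.

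The elementary fact I would record is that $\mathcal{B}_m$ sends $e_{j+1}\mapsto e_j$ and $e_1 \mapsto 0$, so $\mathcal{B}_m^i$ annihilates exactly the first $\min(i,m)$ standard basis vectors; thus $\dim \ker \mathcal{B}_m^i = \min(i, m)$. Summing over the blocks gives $\dim \ker N^i = \sum_{j=1}^k \min(i, l_j)$. Consequently, for the kernel sequence $(s_1, \ldots, s_r) = \Theta(\tau)$ one computes
\begin{equation*}
s_i = \dim \ker N^i - \dim \ker N^{i-1} = \sum_{j=1}^k \bigl(\min(i, l_j) - \min(i-1, l_j)\bigr) = \#\{\, j : l_j \geq i\,\},
\end{equation*}
which is exactly the $i$-th part $\lambda'_i$ of the conjugate (transpose) partition $\lambda'$ of $\lambda$ (cf. \cite[\S 4.1]{FultonHarris}). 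Moreover $\ker N^i = V$ holds precisely when $\min(i,l_j) = l_j$ for all $j$, i.e. when $i \geq l_1$, so the length $r$ of the kernel sequence equals $l_1$, which is indeed the number of parts of $\lambda'$. This establishes $\Theta \circ \nabla^{-1} = (\,\cdot\,)'$, the conjugation operation on $\mathcal{Y}_n$.

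For the "in particular" clause: conjugation of Young diagrams is an involution, hence a bijection, so from $\Theta \circ \nabla^{-1} = (\,\cdot\,)'$ and the bijectivity of $\nabla$ (Jordan normal form) it follows that $\Theta$ is a bijection as well; the block sizes in (\ref{20150810_Nabla_line}), i.e. the partition $\nabla^{-1}(\tau)$ (well-defined up to reordering of blocks), are recovered from the kernel sequence $\Theta(\tau)$ by conjugating once more, and conversely. I do not expect a genuine obstacle here — the argument is a short calculation — the only points needing a little care being the observation that both constructions $\nabla$ and $\Theta$ are insensitive to the ordering of the Jordan blocks, and the verification that the index $r$ of the kernel sequence is the one dictated by its definition, namely $r = l_1$; both are immediate from the closed formula for $\dim \ker N^i$.
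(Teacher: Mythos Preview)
Your proof is correct and follows essentially the same approach as the paper's: both reduce to the formula $\dim\ker \mathcal{B}_m^i = \min(i,m)$, sum over blocks, and compute $s_i = \#\{j : l_j \geq i\}$ and $r = l_1$. Your write-up is in fact slightly more thorough, as you spell out the kernel computation for a single block and explicitly justify the ``in particular'' clause via the involutivity of conjugation, which the paper leaves implicit.
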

\begin{proof}
Retaining the notation used in (\ref{20150810_Nabla_line}), we first remark that for $i\in\mathbb{N}_0$ we have
\begin{equation*}
\operatorname{dim}\operatorname{ker}\mathcal{B}_m^i = \operatorname{min}(i,m).
\end{equation*}
Thus, setting $\mathcal{B}=\operatorname{diag}(\mathcal{B}_{l_1}, \ldots, \mathcal{B}_{l_k})$, we get
\begin{equation*}
\operatorname{dim}\operatorname{ker}\mathcal{B}^i = \sum_{j=1}^{k} \operatorname{min}(i,l_j).
\end{equation*}
Consequently the kernel sequence $(s_1,\ldots, s_r)$ associated to $(l_1, \ldots, l_k)$ is given by
\begin{equation*}
s_i = \sum_{j=1}^{k} \operatorname{min}(i,l_j)- \operatorname{min}(i-1,l_j)
= \#\{j|l_j\geq i\} = \operatorname{max}\{j|l_j\geq i\}.
\end{equation*}
and
\begin{equation*}
r = \operatorname{max}\{l_j|j=1\ldots k\} = l_1.
\end{equation*}
Hence, the transition $(l_1, \ldots, l_k) \leadsto (s_1,\ldots, s_r)$ is precisely the conjugation operation
of reflecting a Young diagram at the main diagonal (cf. \cite[Section 2.8]{HarrisMichael}), e.g. 
\begin{equation*}
\small\Yvcentermath1 \yng(3,1) \qquad\qquad\leadsto\qquad\qquad \yng(2,1,1)
\end{equation*}\qedhere
\end{proof}

In order to state the desired comparison result, let us recall that we consider a residual representation 
$\overline{\rho}: \operatorname{Gal}_K \rightarrow \operatorname{GL}_n(k)$ with unipotent ramification.
Let $\underline{\lambda}= (l_1, \ldots, l_k) \in \mathcal{Y}_n$ such that $\overline{\rho}(\zeta) \sim 
\textbf{1}+\operatorname{diag}(\mathcal{B}_{l_1}, \ldots, \mathcal{B}_{l_k})$. Let $\tau= \nabla(\underline{\lambda})
\in \mathcal{I}^{\tt{uni}}$. 
\begin{Thm}\label{20150813_min_type_comparison_theorem} Assume $\overline{\rho}$ is unipotently ramified and $\tau$ as above. Then there is an isomorphism of the quotients 
\begin{equation*}
R^{\boxempty,\tau}_{\Lambda}(\overline{\rho}) \cong
R^{\boxempty, \tt{min}}_{\Lambda}(\overline{\rho})\cong
\Lambda\llbracket X_1, \ldots, X_{n^2}\rrbracket 
\end{equation*}
of $R^{\boxempty}_{\Lambda}(\overline{\rho})$, i.e. a lifting of $\overline{\rho}$ is minimally
ramified if and only if it is of type $\tau$.
\end{Thm}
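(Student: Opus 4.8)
The plan is to prove the claimed isomorphism $R^{\boxempty,\tau}_{\Lambda}(\overline{\rho}) \cong R^{\boxempty, \tt{min}}_{\Lambda}(\overline{\rho})$ by showing the two deformation conditions cut out the same locus in $\operatorname{Spec}R^{\boxempty}_{\Lambda}(\overline{\rho})$; the right-hand isomorphism with $\Lambda\llbracket X_1, \ldots, X_{n^2}\rrbracket$ is then immediate from Proposition \ref{20170526_newProponmin}, noting that in the unipotently ramified case every $\tau\in \Delta_{\overline{\rho}}$ is trivial (hence absolutely irreducible), so the hypothesis of that proposition is satisfied. First I would recall from \cite[Section 2.4.4]{CHT} the precise definition of minimal ramification: since $\overline{\rho}(P_K)=\{1\}$, a lift $\rho$ to an Artinian $\Lambda$-algebra $A$ is minimally ramified exactly when $\rho|_{I_K}$ factors through the tame quotient $I_K/P_K \cong \mathbb{Z}_\ell(1)$ and the action of a generator $\zeta$ has the same (generalized) Jordan type as $\overline{\rho}(\zeta)$ — equivalently, $\rho(\zeta)$ is a unipotent-type element whose conjugacy class specializes correctly and the centralizer dimension is locally constant.

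Next I would make the translation between Jordan types and inertial types explicit using Lemma \ref{20150812_young_conjugation_lemma}. Writing $\overline{\rho}(\zeta) \sim \mathbf{1} + \operatorname{diag}(\mathcal{B}_{l_1}, \ldots, \mathcal{B}_{l_k})$ with $\underline{\lambda} = (l_1,\ldots,l_k)$, the type $\tau = \nabla(\underline{\lambda})$ is by \eqref{20150810_Nabla_line} precisely the inertial type whose value on $\zeta$ has block structure given by $\underline{\lambda}$. The key point is that for a lift $\rho$ over $\overline{E}$ (a characteristic-zero point of the deformation space), being ``of type $\tau$'' means $\rho|_{I_K} \cong \tau$, i.e. $\rho(\zeta)$ has the same Jordan block sizes $\underline{\lambda}$ as $\overline{\rho}(\zeta)$; and I would argue this is equivalent to $\rho$ being minimally ramified, because in the unipotent setting the minimal ramification condition on an $\mathcal{O}$-lift is exactly the condition that inertia acts through a unipotent element of the same type (this is the content of \cite[Corollary 2.4.21]{CHT}, which already underlies Proposition \ref{20170526_newProponmin}).

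To turn this pointwise statement into an equality of closed subschemes, I would proceed as follows. By definition $R^{\boxempty,\tau}_{\Lambda}(\overline{\rho})$ is the \emph{reduced} quotient corresponding to the Zariski closure of the type-$\tau$ points; by Proposition \ref{20170526_newProponmin}, $R^{\boxempty, \tt{min}}_{\Lambda}(\overline{\rho}) \cong \Lambda\llbracket X_1,\ldots,X_{n^2}\rrbracket$ is already a domain, in particular reduced and $\Lambda$-flat, so it suffices to check that the two subschemes of $\operatorname{Spec}R^{\boxempty}_{\Lambda}(\overline{\rho})$ have the same $\overline{E}$-points and that $\operatorname{Spec}R^{\boxempty, \tt{min}}_{\Lambda}(\overline{\rho})$ is irreducible with generic point of type $\tau$ (so that it equals the Zariski closure of its type-$\tau$ points, which is $\operatorname{Spec}R^{\boxempty,\tau}_{\Lambda}(\overline{\rho})$). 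Concretely: every minimally ramified $\overline{E}$-point is of type $\tau$ by the block-structure/kernel-sequence dictionary of Lemma \ref{20150812_young_conjugation_lemma}, and conversely a type-$\tau$ $\mathcal{O}$-lift reduces to something with inertial Jordan type $\underline{\lambda}$ hence is minimally ramified; flatness of $R^{\boxempty, \tt{min}}_{\Lambda}$ over $\Lambda$ guarantees its $\overline{E}$-points are Zariski dense in it. Thus the closed immersion $\operatorname{Spec}R^{\boxempty, \tt{min}}_{\Lambda}(\overline{\rho}) \hookrightarrow \operatorname{Spec}R^{\boxempty}_{\Lambda}(\overline{\rho})$ has the same image as $\operatorname{Spec}R^{\boxempty,\tau}_{\Lambda}(\overline{\rho}) \hookrightarrow \operatorname{Spec}R^{\boxempty}_{\Lambda}(\overline{\rho})$, and since the target of the former is reduced the two quotients coincide.

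The main obstacle I anticipate is the careful matching of \emph{integral} (Artinian-$A$-valued) minimally ramified lifts with the type-$\tau$ condition, which a priori is phrased only in terms of $\overline{E}$-points: one must know that the minimal ramification condition is exactly the ``closure of the type-$\tau$ locus'' condition and not something strictly smaller or larger over the special fiber. This is where \cite[Corollary 2.4.21]{CHT} does the real work — it pins down the minimally ramified deformation functor so that its generic fiber is precisely the type-$\tau$ locus and its total space is formally smooth of the expected dimension $n^2$ — and I would invoke it rather than reprove it. A secondary subtlety is checking reducedness/flatness bookkeeping so that the comparison of subschemes upgrades to an isomorphism of rings, but given that $R^{\boxempty,\tt{min}}_{\Lambda}(\overline{\rho})$ is a power series ring this is routine.
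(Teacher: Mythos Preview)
Your proposal is correct and follows essentially the same strategy as the paper: identify the two quotients by comparing their $\overline{E}$-points via Lemma~\ref{20150812_young_conjugation_lemma}, then upgrade this to an equality of closed subschemes using that $R^{\boxempty,\tt{min}}_{\Lambda}(\overline{\rho})$ is a power series ring. The endgame differs slightly. The paper only uses \emph{one} inclusion of $\overline{E}$-point sets: from $\Xi^{\tt{min}}=\Xi^{\tau}$ it deduces the kernel inclusion
\[
\ker\bigl(R^{\boxempty}_{\Lambda}(\overline{\rho})\to R^{\boxempty,\tt{min}}_{\Lambda}(\overline{\rho})\bigr)\subseteq \bigcap_{\rho\in\Xi^{\tau}}\ker(\rho)=\ker\bigl(R^{\boxempty}_{\Lambda}(\overline{\rho})\to R^{\boxempty,\tau}_{\Lambda}(\overline{\rho})\bigr),
\]
giving a surjection $\varphi: R^{\boxempty,\tt{min}}_{\Lambda}(\overline{\rho})\twoheadrightarrow R^{\boxempty,\tau}_{\Lambda}(\overline{\rho})$, and then invokes Shotton's result \cite[Theorem~2.4 and Proposition~2.15]{Shotton_phd} that $\dim R^{\boxempty,\tau}_{\Lambda}(\overline{\rho})=n^2+1$ to conclude that $\varphi$ is an isomorphism (a surjection from a regular local ring onto a ring of the same Krull dimension). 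You instead use \emph{both} directions of the $\overline{E}$-point comparison together with the fact that $R^{\boxempty,\tt{min}}_{\Lambda}(\overline{\rho})$, being a power series ring over $\Lambda$, is reduced and equals the schematic closure of its own $\overline{E}$-points; since $R^{\boxempty,\tau}_{\Lambda}(\overline{\rho})$ is by definition the reduced closure of the same set of points, the two coincide. Your route has the advantage of not needing the dimension input from \cite{Shotton_phd}; the paper's route has the advantage of only needing one implication between ``minimally ramified'' and ``type~$\tau$'' on $\overline{E}$-points.
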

\begin{proof}
The diagram 
\begin{equation*}
\xymatrix{
&&R^{\boxempty, \tt{min}}_{\Lambda}(\overline{\rho})\ar@{->}[drr]\\
R^{\boxempty}_{\Lambda}(\overline{\rho})\ar@{->}[urr]\ar@{->}[drr]&&&&\overline{E}\\
&&R^{\boxempty, \tau}_{\Lambda}(\overline{\rho})\ar@{->}[urr]\\
}
\end{equation*}
allows us to consider the $\overline{E}$-points of $\operatorname{Spec}R^{\boxempty, \tt{min}}_{\Lambda}(\overline{\rho})$ and $\operatorname{Spec}R^{\boxempty, \tau}_{\Lambda}(\overline{\rho})$
as subsets of the $\overline{E}$-points of $\operatorname{Spec}R^{\boxempty}_{\Lambda}(\overline{\rho})$.
We claim that they are equal: Translated into terms of $\overline{E}$-valued representations, we have
to compare the sets
\begin{equation*}
\Xi^{\tt{min}}=\Bigl\{\rho: \operatorname{Gal}_K\rightarrow \operatorname{GL}_n(\overline{E})\;\Bigl|\; 
\substack{{\rho \text{ lifts } \overline{\rho} \text{ and has values in }\mathcal{O}_{\overline{E}},} 
\\ \operatorname{dim}\operatorname{ker}(\rho(\zeta) - \textbf{1})^{i-1} - \operatorname{dim}\operatorname{ker}(\rho(\zeta) - \textbf{1})^{i} = l_i \; \forall i} \Bigr\}
\end{equation*}
and 
\begin{equation*}
\Xi^{\tau}=\Bigl\{\rho: \operatorname{Gal}_K\rightarrow \operatorname{GL}_n(\overline{E})\;\Bigl|\; 
\substack{{\rho \text{ lifts } \overline{\rho} \text{ and has values in }\mathcal{O}_{\overline{E}},} 
\\ {\rho|I_K \cong \tau}} \Bigr\}.
\end{equation*}
Lemma \ref{20150812_young_conjugation_lemma} implies that 
$\Xi^{\tt{min}} = \Xi^{\tau}$.

Now by definition of the ring $R^{\boxempty, \tau}_{\Lambda}(\overline{\rho})$ (as the
schematic closure of the points in $\Xi^{\tau}$) we have
\begin{equation*}
\operatorname{ker}\bigl(R^{\boxempty}_{\Lambda}(\overline{\rho}) \rightarrow
R^{\boxempty, \tau}_{\Lambda}(\overline{\rho})\bigr)=\bigcap_{\rho\in \Xi^{\tau}}\operatorname{ker}(\rho).
\end{equation*}

Moreover, we clearly have 
\begin{equation*}
\operatorname{ker}\bigl(R^{\boxempty}_{\Lambda}(\overline{\rho}) \rightarrow
R^{\boxempty, \tt{min}}_{\Lambda}(\overline{\rho})\bigr)\subseteq\bigcap_{\rho\in \Xi^{\tt{min}}}\operatorname{ker}(\rho).
\end{equation*}
Hence, by $\Xi^{\tau}= \Xi^{\tt{min}}$ we get a factorization
\begin{equation*}
R^{\boxempty}_{\Lambda}(\overline{\rho}) \twoheadrightarrow
R^{\boxempty, \tt{min}}_{\Lambda}(\overline{\rho})
\overset{\varphi}{\twoheadrightarrow}
R^{\boxempty, \tau}_{\Lambda}(\overline{\rho})
\end{equation*}
where the middle and the right ring have the same spectrum as topological spaces. 
Now we know by Proposition \ref{20170526_newProponmin} that $R^{\boxempty, \tt{min}}_{\Lambda}(\overline{\rho})$ 
is formally smooth over $\Lambda$ of relative dimension $n^2$ and that 
$\operatorname{dim}R^{\boxempty, \tau}_{\Lambda}(\overline{\rho}) = n^2 + 1$ (combine Theorem 2.4
and Proposition 2.15 of \cite{Shotton_phd}). 
Thus, $\varphi$ is an isomorphism by Proposition \ref{20170324_some_prop} and the claim follows.
\end{proof}

\subsection{Taylors deformation condition $(1,\ldots, 1)$ ($\ell \neq p$)}
We continue to consider a unipotently ramified residual representation
$\overline{\rho}: \operatorname{Gal}_K \rightarrow \operatorname{GL}_n(k)$. If $A\in 
\mathcal{C}_{\mathcal{O}}$ is a coefficient ring, we say that an $A$-valued lift $\rho$ 
of $\overline{\rho}$ fulfills the condition $(1, \ldots, 1)$ if $\operatorname{charPoly}(\rho(\xi)) = (T-1)^n$ for all $\zeta\in I_K$. By our assumption that $\overline{\rho}$ is
unipotently ramified, it is sufficient to check the case where $\xi$ is a topological
generator of the tame inertia. This defines a deformation condition (and, in comparison to
\cite{Taylor2}, we don't assume that $\overline{\rho}$ is trivial, cf. 
\cite[Remark before Proposition 3.17]{Thorne_small}).
%
%
\begin{Prop}
If a lift $\rho$ is minimally ramified, it fulfills the Taylor condition. In particular,
there is a canonical surjection 
\begin{equation*}
R^{\boxempty,(1,\ldots, 1)}(\overline{\rho}) \twoheadrightarrow R^{\boxempty,\tt{min}}(\overline{\rho}),
\end{equation*}
and a morphism $R^{\boxempty,(1,\ldots, 1)}(\overline{\rho}) \rightarrow A$ factors through
this surjection
if and only if the associated $A$-valued lift of $\overline{\rho}$ is minimally 
ramified.
\end{Prop}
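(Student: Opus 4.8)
The plan is to reduce the whole statement to a single concrete fact about the universal minimally ramified lift $\rho^{\tt{min}}$ over $R^{\tt{min}} := R^{\boxempty,\tt{min}}(\overline{\rho})$, namely that $\rho^{\tt{min}}(\zeta)$ is unipotent, where $\zeta$ is a topological generator of $I_K/P_K$. First I would record that, by the remark preceding the Proposition, a lift $\rho$ of $\overline{\rho}$ satisfies the condition $(1,\ldots,1)$ exactly when $\operatorname{charPoly}(\rho(\zeta)) = (T-1)^n$; hence it is enough to establish this identity in $R^{\tt{min}}$ for $\rho = \rho^{\tt{min}}$ and then to push it forward along classifying morphisms.

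To prove that $\rho^{\tt{min}}(\zeta)$ is unipotent I would use Theorem \ref{20150813_min_type_comparison_theorem}, which identifies $R^{\tt{min}}$ with the fixed-type ring $R^{\boxempty,\tau}(\overline{\rho})$ for $\tau = \nabla(\underline{\lambda})$, where $\underline{\lambda}$ is the Jordan type of the unipotent matrix $\overline{\rho}(\zeta)$. Two features of this ring are what I need: it is reduced, and --- by the very definition of $R^{\boxempty,\tau}$ as the schematic closure of the $\overline{E}$-points of type $\tau$ --- those type-$\tau$ points are Zariski dense in $\operatorname{Spec} R^{\tt{min}}$. At such a point $x$ the associated representation satisfies $\rho_x|_{I_K} \cong \tau$, so $\rho_x(\zeta)$ is conjugate to $\tau(\zeta) = \mathbf{1} + \operatorname{diag}(\mathcal{B}_{l_1},\ldots,\mathcal{B}_{l_k})$, which is unipotent; therefore each coefficient of $\operatorname{charPoly}(\rho^{\tt{min}}(\zeta)) - (T-1)^n \in R^{\tt{min}}$ maps to $0$ under every such $x$, hence lies in $\bigcap_x \ker x$, which equals the nilradical by density and hence is $0$ by reducedness. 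This yields $\operatorname{charPoly}(\rho^{\tt{min}}(\zeta)) = (T-1)^n$, so $\rho^{\tt{min}}$ satisfies $(1,\ldots,1)$; and every minimally ramified lift over some $A$, being a pushforward of $\rho^{\tt{min}}$, then also satisfies $(1,\ldots,1)$ because this is a lifting condition, stable under pushforward (Definition \ref{20170316_def_lifting_cond}(2)).

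For the remaining assertions, the formal bookkeeping goes as follows. Since $\rho^{\tt{min}}$ is a $(1,\ldots,1)$-lift, the universal property of $R^{\boxempty,(1,\ldots,1)}(\overline{\rho})$ supplies a map $\operatorname{can}\colon R^{\boxempty,(1,\ldots,1)}(\overline{\rho}) \to R^{\tt{min}}$; it is a morphism of quotients of $R^{\boxempty}(\overline{\rho})$, hence surjective --- this is the asserted surjection. For a morphism $\phi\colon R^{\boxempty,(1,\ldots,1)}(\overline{\rho}) \to A$ classifying a lift $\rho_\phi$: if $\rho_\phi$ is minimally ramified it is classified by a unique $\psi\colon R^{\tt{min}} \to A$, and $\psi\circ\operatorname{can}$ classifies the same lift $\rho_\phi$, so $\phi = \psi\circ\operatorname{can}$ by uniqueness of the classifying morphism; conversely, if $\phi = \psi\circ\operatorname{can}$ then $\rho_\phi$ is the pushforward of $\rho^{\tt{min}}$ along $\psi$ and hence minimally ramified, again because $\tt{min}$ is a lifting condition.

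The only genuine difficulty I anticipate is the unipotence of $\rho^{\tt{min}}(\zeta)$, and within that the point needing care is the density argument: that an element of the reduced ring $R^{\tt{min}}$ killed by all type-$\tau$ $\overline{E}$-points must vanish. This is exactly where Theorem \ref{20150813_min_type_comparison_theorem} is indispensable, since it presents $R^{\tt{min}}$ as the schematic closure of precisely those points; everything else is routine manipulation of universal properties.
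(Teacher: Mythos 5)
Your proof is correct, but it takes a noticeably different and more circuitous route than the paper's. The paper simply invokes the structural description of minimally ramified lifts directly: by \cite[Lemma 2.4.15, Assertion $3. \Rightarrow 1.$]{CHT}, a minimally ramified lift $\rho$ over any coefficient ring $A$ has trivial restriction to $P_K$ and unipotent $\rho(\zeta)$, hence unipotent $\rho(\sigma)$ for every $\sigma\in I_K$, which is exactly the $(1,\ldots,1)$-condition. This is a one-step argument that works uniformly over any $A$. You instead route through Theorem~\ref{20150813_min_type_comparison_theorem}: you use the identification $R^{\tt{min}}\cong R^{\boxempty,\tau}$ plus reducedness and Zariski-density of the type-$\tau$ $\overline{E}$-points to conclude $\operatorname{charPoly}(\rho^{\tt{min}}(\zeta))=(T-1)^n$ in $R^{\tt{min}}$, and then push forward. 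That argument is sound (the density plus reducedness step is exactly the right mechanism, and the formal bookkeeping with universal properties at the end is correct), but it is indirect: the proof of Theorem~\ref{20150813_min_type_comparison_theorem} already uses the explicit kernel-filtration description of $\Xi^{\tt{min}}$, which encodes precisely the CHT structural result you are trying to avoid citing; so your detour doesn't buy logical independence, it only relocates the appeal to CHT. What the paper's approach buys is brevity and the fact that it establishes unipotence pointwise over arbitrary Artinian coefficients without any intermediate ``prove-over-$\overline{E}$-then-propagate'' step. What your approach buys, if one insists on it, is that it shows the Taylor condition follows purely from the geometry of $\operatorname{Spec}R^{\tt{min}}$ (reduced with dense type-$\tau$ points), which is a slightly more conceptual framing --- but at the cost of invoking a heavier theorem whose own proof uses the direct fact.
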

\begin{proof}
By the unipotency assumption, we can assume that $\overline{\rho}|P_{K}$ is 
trivial and $\overline{\rho}(\zeta)$ is upper-triangular with each diagonal entry equal 
to $1$ (where $\zeta$ is a topological generator of $I_{K}/P_{K}$). If a lift $\rho$ 
is minimal, it follows that $\rho|P_{K}$ is trivial and that
$\rho(\zeta)$ is unipotent, cf. [CHT, Lemma 2.4.15, Assertion $3. \Rightarrow 1.$].
It follows that $\rho(\sigma)$ is unipotent for any $\sigma\in I_{K}$. This 
proves the claim.
\end{proof}
The easy proof of the following Proposition is left to the reader (cf. \cite[Proof of Proposition 3.8]{KW_on_serre}).
\begin{Prop}\label{93hc0k320}
Let $L$ be a finite extension 
of $K$. Let 
\begin{equation*}
\rho^{\boxempty, (1,\ldots, 1)}: G_K \rightarrow 
\operatorname{GL}_n(R^{\boxempty, (1,\ldots, 1)}(\overline{\rho}))
\end{equation*}
be the universal lifting of $\overline{\rho}$ with respect to the condition $(1,\ldots, 1)$ and let 
\begin{equation*}
\rho^{\boxempty, (1,\ldots, 1)}_L: G_L \rightarrow 
\operatorname{GL}_n(R^{\boxempty, (1,\ldots, 1)}(\overline{\rho}|G_L))
\end{equation*}
be the universal lifting of $\overline{\rho}|G_L$ with respect to the condition $(1,\ldots, 1)$.
Then there exists a unique morphism of $\mathcal{C}_W$-algebras $\varphi: 
R^{\boxempty, (1,\ldots, 1)}(\overline{\rho}|G_L)/(\ell) \rightarrow
R^{\boxempty, (1,\ldots, 1)}(\overline{\rho})/(\ell)$ such that 
\begin{equation*}
\overline{\rho^{\boxempty, (1,\ldots, 1)}}|G_L = 
\varphi \circ \overline{\rho^{\boxempty, (1,\ldots, 1)}_L}.
\end{equation*}
\end{Prop}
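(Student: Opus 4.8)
The plan is to build $\varphi$ from the universal property of $R_L := R^{\boxempty, (1,\ldots, 1)}(\overline{\rho}|G_L)$, feeding in the restriction to $G_L$ of the universal lift over $R := R^{\boxempty, (1,\ldots, 1)}(\overline{\rho})$ and then reducing modulo $\ell$. First I would note that pulling $\rho^{\boxempty, (1,\ldots, 1)} \colon G_K \to \operatorname{GL}_n(R)$ back along $G_L \hookrightarrow G_K$ gives a lift of $\overline{\rho}|G_L$ to $R$, and that this lift still satisfies the condition $(1,\ldots,1)$: since $I_L \subseteq I_K$, the identity $\operatorname{charPoly}(\rho^{\boxempty, (1,\ldots, 1)}(\xi)) = (T-1)^n$ for all $\xi \in I_K$ in particular holds for all $\xi \in I_L$. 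One also checks that $\overline{\rho}|G_L$ stays unipotently ramified, since $P_L \subseteq P_K$ and hence $\overline{\rho}(P_L) \subseteq \overline{\rho}(P_K) = \{1\}$; this keeps us in the setting of the present subsection and makes $R_L$ available.

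Next I would invoke universality and descend mod $\ell$. Composing with $R \twoheadrightarrow R/(\ell)$, the lift $\rho^{\boxempty, (1,\ldots, 1)}|G_L$ becomes a $(1,\ldots,1)$-lift of $\overline{\rho}|G_L$ valued in $R/(\ell)$, hence is classified by a unique $\mathcal{C}_W$-morphism $R_L \to R/(\ell)$; as $\ell$ maps to $0$, this factors uniquely through $R_L/(\ell)$, and I take $\varphi \colon R_L/(\ell) \to R/(\ell)$ to be the induced map. Unwinding the universal property gives precisely $\overline{\rho^{\boxempty, (1,\ldots, 1)}}|G_L = \varphi \circ \overline{\rho^{\boxempty, (1,\ldots, 1)}_L}$. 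Uniqueness is automatic: a $\mathcal{C}_W$-morphism out of $R_L/(\ell)$ is the same as a $\mathcal{C}_W$-morphism out of $R_L$ annihilating $\ell$, i.e. a $(1,\ldots,1)$-lift of $\overline{\rho}|G_L$ to an object of $\mathcal{C}_W$ in which $\ell = 0$, and such a morphism is determined by the lift it classifies, which the displayed identity fixes.

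I do not anticipate a real obstacle: everything is a direct application of relative representability (which needs no Schur hypothesis, as we use framed lifts), and the only points worth a sentence are the two elementary inclusions $I_L \subseteq I_K$ and $P_L \subseteq P_K$. In fact the argument produces an integral morphism $R_L \to R$ over $W$, of which the asserted map is the reduction mod $\ell$; the mod-$\ell$ formulation is the one needed below, where characteristic-$\ell$ fibres are compared. This is exactly the restriction-of-the-universal-object argument of \cite[Proof of Proposition 3.8]{KW_on_serre}.
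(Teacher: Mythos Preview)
Your proposal is correct and is precisely the argument the paper has in mind: the paper does not spell out a proof but says ``The easy proof of the following Proposition is left to the reader (cf.\ \cite[Proof of Proposition 3.8]{KW_on_serre})'', which is exactly the restriction-of-the-universal-lift argument you carry out, including the same citation.
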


\begin{Lem} \label{20170213_Lemma_min_obsolet_if_l_big}
Let $\tilde{\rho}$ be an $A$-valued lift, where we assume that $A$ is reduced. 
Write $X=\tilde{\rho}(\zeta)$.
Then $\chi_X:=\operatorname{charPoly}(X)$ equals $(T-1)^n$ if $\ell \geq q^{n!}$.
\end{Lem}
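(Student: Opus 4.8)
The plan is to use the conjugation action of Frobenius on tame inertia to force $X$ and $X^{q}$ to be conjugate, and then to pin down the eigenvalues of $X$ by a root‑of‑unity argument.

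First I would note that $\overline\rho(P_K)=\{1\}$ by the unipotent ramification assumption, so $\tilde\rho(P_K)$ is contained in $\ker(\operatorname{GL}_n(A)\to\operatorname{GL}_n(k))$, which is a pro‑$\ell$ group since $A\in\mathcal{C}_{\mathcal{O}}$; being simultaneously a continuous quotient of the pro‑$p$ group $P_K$ with $p\neq\ell$, it must be trivial, so $\tilde\rho$ factors through the tame quotient $\operatorname{Gal}_K/P_K$. Choosing a Frobenius lift $\phi\in\operatorname{Gal}_K$, the tame relation $\phi\zeta\phi^{-1}\equiv\zeta^{q}\pmod{P_K}$ (with $q$ the residue cardinality of $K$) then gives $\tilde\rho(\phi)\,X\,\tilde\rho(\phi)^{-1}=X^{q}$; in particular $X$ and $X^{q}$ are $\operatorname{GL}_n(A)$‑conjugate, so $\chi_X=\chi_{X^{q}}$.

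Next I would analyse eigenvalues. Since $A$ is reduced it injects into $\prod_{\mathfrak p}A/\mathfrak p$ over its minimal primes, so it suffices to prove $\chi_X\equiv(T-1)^{n}$ modulo each $\mathfrak p$; replacing $A$ by the complete local domain $A/\mathfrak p$ and then by the integral closure $B$ of that domain in a splitting field of $\chi_X$, we may write $\chi_X=\prod_{i=1}^{n}(T-\alpha_i)$ with $\alpha_i\in B$, where every maximal ideal of $B$ has residue characteristic $\ell$ (so $\ell\in\operatorname{Jac}(B)$) and, because $\overline X=\overline\rho(\zeta)$ is unipotent, $\alpha_i\equiv 1$ modulo every maximal ideal of $B$, i.e. $\alpha_i-1\in\operatorname{Jac}(B)$. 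The identity $\chi_X=\chi_{X^{q}}$ says that $\alpha\mapsto\alpha^{q}$ permutes the multiset $\{\alpha_1,\dots,\alpha_n\}$; since every permutation of $n$ elements has order dividing $n!$ and the $\alpha_i$ are units (as $X\in\operatorname{GL}_n$), this gives $\alpha_i^{q^{n!}-1}=1$. From $\ell\geq q^{n!}$ we get $\ell\nmid q^{n!}-1$, hence $q^{n!}-1\in B^{\times}$; writing $T^{q^{n!}-1}-1=(T-1)\,g(T)$ with $g(1)=q^{n!}-1$, we have $g(\alpha_i)\equiv g(1)\pmod{\operatorname{Jac}(B)}$, so $g(\alpha_i)\in B^{\times}$, and $(\alpha_i-1)\,g(\alpha_i)=0$ forces $\alpha_i=1$. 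Therefore $\chi_X=(T-1)^{n}$.

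I expect this last paragraph to be the real work, not because it is deep but because $A$ is only assumed reduced: the reduction to the domains $A/\mathfrak p$ and the passage to a normal cover $B$ are precisely what let one combine "$\alpha_i$ congruent to $1$ modulo the Jacobson radical" with the unit $q^{n!}-1$ to conclude $\alpha_i=1$ for all eigenvalues at once. The two inputs $\tilde\rho(P_K)=\{1\}$ and the Frobenius conjugation $X\sim X^{q}$ are standard.
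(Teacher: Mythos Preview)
Your proof is correct and follows essentially the same approach as the paper: reduce to the domain case via minimal primes, pass to an extension containing the eigenvalues, use the Frobenius relation $\phi X\phi^{-1}=X^{q}$ to show each eigenvalue is a $(q^{n!}-1)$-th root of unity, and then combine $\gcd(\ell,q^{n!}-1)=1$ with $\alpha_i\equiv 1$ modulo the maximal ideal(s) to force $\alpha_i=1$. One minor remark: with the paper's convention $P_K:=\ker(I_K\twoheadrightarrow\mathbb{Z}_\ell)$, the group $P_K$ is pro-(prime-to-$\ell$) rather than pro-$p$, but your argument that $\tilde\rho(P_K)=\{1\}$ goes through unchanged.
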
   
\begin{proof}
Assume first that $A$ is an integral domain. By the condition $\varphi X \varphi^{-1} = 
X^q$ we see that raising to the $q$-th power permutes the eigenvalues of $X$ (understood
as a list of $n$ elements).
Thus, any eigenvalue of $X$ must be a $(q^{\#S_n}-1) = (q^{n!}-1)$-th root of unity.
Thus, if $Q(\mu)$ denotes the decomposition field of
the polynomial $f(T) = T^{q^{n!}-1} -1$ over the quotient field
of $A$ and $A(\mu)$ denotes the integral closure of $A$ in $Q(\mu)$, then $\chi_X$ 
decomposes completely in $A(\mu)[T]$. On the other hand, each eigenvalue of $X$ is sent to $1$ by
the canonical reduction map
\begin{equation*}
\pi': A(\mu)= A(\mu)\otimes_A A \rightarrow A(\mu)\otimes_A k.
\end{equation*}
As the kernel of $\pi'$ is a pro-$\ell$-subgroup and as $(\ell^m, q^{n!}-1) = 1$ for any
$m\in \mathbb{N}$, it follows that any eigenvalue of $X$ is $1$, i.e. that
$\chi_X = (T-1)^n$. The result for a general (reduced) $A$
follows easily from using the embedding
\begin{equation*}
A \hookrightarrow \prod_{\mathfrak{q}}  A/\mathfrak{q},
\end{equation*}
where $\mathfrak{q}$ runs through the minimal primes of $A$.
\end{proof}
\begin{Cor}\label{20170210_cor_on_taylor_cond}
If $\ell \geq q^{n!}$, then 
$R^{\boxempty, (1, \ldots, 1)}(\overline{\rho}) = R^{\boxempty}(\overline{\rho})$. In particular, $R^{\boxempty, (1, \ldots, 1)}(\overline{\rho})$ is reduced (cf. Theorem
\ref{20170213_Helms_theorem}). 
\end{Cor}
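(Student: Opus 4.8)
The plan is to show that the canonical surjection $q\colon R^{\boxempty}(\overline{\rho}) \twoheadrightarrow R^{\boxempty,(1,\ldots,1)}(\overline{\rho})$, which exists because $(1,\ldots,1)$ is a lifting condition and hence $D^{\boxempty,(1,\ldots,1)}_{\Lambda}(\overline{\rho})$ is a relatively representable subfunctor of $D^{\boxempty}_{\Lambda}(\overline{\rho})$, is an isomorphism once $\ell\geq q^{n!}$; the ``in particular'' clause then follows immediately, since $R^{\boxempty}(\overline{\rho})$ is reduced by Theorem \ref{20170213_Helms_theorem}.

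The reduction I would make is the standard one for relatively representable subfunctors: $q$ is an isomorphism provided the universal lifting $\rho^{\boxempty}\colon \operatorname{Gal}_K\to\operatorname{GL}_n(R^{\boxempty}(\overline{\rho}))$ itself satisfies the condition $(1,\ldots,1)$. Indeed, in that case $\rho^{\boxempty}$, viewed as an element of $D^{\boxempty,(1,\ldots,1)}_{\Lambda}(\overline{\rho})(R^{\boxempty}(\overline{\rho}))$, is classified by a morphism $s\colon R^{\boxempty,(1,\ldots,1)}(\overline{\rho})\to R^{\boxempty}(\overline{\rho})$; since the universal $(1,\ldots,1)$-lift over $R^{\boxempty,(1,\ldots,1)}(\overline{\rho})$ is the pushforward of $\rho^{\boxempty}$ along $q$, one gets $s_{\ast}q_{\ast}\rho^{\boxempty}=\rho^{\boxempty}=\operatorname{id}_{\ast}\rho^{\boxempty}$, and the universal property of $R^{\boxempty}(\overline{\rho})$ as representing object of $D^{\boxempty}_{\Lambda}(\overline{\rho})$ forces $s\circ q=\operatorname{id}$. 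Hence $q$ is an injective surjection of local rings, i.e. an isomorphism.

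It remains to verify that $\rho^{\boxempty}$ satisfies $(1,\ldots,1)$. Since $\overline{\rho}$ is unipotently ramified, the restriction of $\rho^{\boxempty}$ to $I_K$ factors through $I_K/P_K$: the kernel of reduction $\operatorname{GL}_n(R^{\boxempty}(\overline{\rho}))\to\operatorname{GL}_n(k)$ is pro-$\ell$, whereas $P_K$ has no nontrivial pro-$\ell$ quotient. Therefore, as recorded just after the definition of the condition $(1,\ldots,1)$, it is enough to check that $\operatorname{charPoly}(\rho^{\boxempty}(\zeta))=(T-1)^n$ for $\zeta$ a topological generator of $I_K/P_K$. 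But $R^{\boxempty}(\overline{\rho})$ is reduced by Theorem \ref{20170213_Helms_theorem}, so this is precisely the conclusion of Lemma \ref{20170213_Lemma_min_obsolet_if_l_big} applied with $A=R^{\boxempty}(\overline{\rho})$ and $\tilde{\rho}=\rho^{\boxempty}$, under the hypothesis $\ell\geq q^{n!}$. Finally, under the resulting identification $R^{\boxempty,(1,\ldots,1)}(\overline{\rho})=R^{\boxempty}(\overline{\rho})$, reducedness is again Theorem \ref{20170213_Helms_theorem}.

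The argument is almost entirely formal: the substantive input---the eigenvalue/root-of-unity computation---has already been carried out in Lemma \ref{20170213_Lemma_min_obsolet_if_l_big}, and the reducedness of the lifting ring in Theorem \ref{20170213_Helms_theorem}. The only point needing a little care, and the nearest thing to an obstacle, is arranging the bookkeeping with the universal objects so that ``$\rho^{\boxempty}$ satisfies the condition'' yields a genuine two-sided inverse of $q$ rather than merely a splitting of the surjection.
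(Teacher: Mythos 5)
Your proposal is correct and takes essentially the same route as the paper: both deduce from Theorem \ref{20170213_Helms_theorem} that $R^{\boxempty}(\overline{\rho})$ is reduced, apply Lemma \ref{20170213_Lemma_min_obsolet_if_l_big} to the universal lifting $\rho^{\boxempty}$ to see that it already satisfies the $(1,\ldots,1)$-condition, and then read off a section $s$ of the canonical surjection $q\colon R^{\boxempty}(\overline{\rho})\twoheadrightarrow R^{\boxempty,(1,\ldots,1)}(\overline{\rho})$ from the universal property, forcing $q$ to be an isomorphism. The only cosmetic difference is in the final formal step: the paper passes through the observation that a surjective endomorphism of a Noetherian ring is an automorphism, whereas you note more directly that $s\circ q=\operatorname{id}$ together with the surjectivity of $q$ already yields that $q$ is a bijection — a slightly cleaner finish, but not a different argument.
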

\begin{proof}
By Lemma \ref{20170213_Lemma_min_obsolet_if_l_big} (together with Theorem
\ref{20170213_Helms_theorem}), we see that the identity map on 
$R^{\boxempty}(\overline{\rho})$ factorizes through
$R^{\boxempty, (1, \ldots, 1)}(\overline{\rho})$. On the other hand, 
$R^{\boxempty, (1, \ldots, 1)}(\overline{\rho})$ is by definition a quotient
of $R^{\boxempty}(\overline{\rho})$. Thus, we have found a surjective endomorphism
of $R^{\boxempty, (1, \ldots, 1)}(\overline{\rho})$ (which must then be an isomorphism, as the rings in question are noetherian) which factorizes via 
$R(\overline{\rho})^{\boxempty}$. This proves the claim.
\end{proof}
%
%
\section{On automorphic forms on unitary groups}
\subsection{The group $\mathcal{G}_n$}\label{20170407_section_grp_gn}
For $n\in \mathbb{N}$ recall from \cite[Section 2.1]{CHT} the definition of the group scheme 
$\mathcal{G}_n$ over $\mathbb{Z}$ and the multiplier character $\texttt{m}:\mathcal{G}_n
\rightarrow \operatorname{GL}_1$. We write $\mathcal{G}_n^0$ for the connected component 
of the identity and $\mathfrak{g}_n$ for the Lie algebra of $\mathcal{G}_n$ (where we differ
in notation from \cite{CHT}). We have $\mathcal{G}_n^{\tt{der}}\cong \operatorname{GL}_n$ 
and $\mathcal{G}_n^{\tt{ab}}\cong \operatorname{GL}_1\times \mathbb{Z}/2\mathbb{Z}$.
If $F$ is a CM-field with totally real subfield $F^+$, recall in particular the connection
between $\operatorname{GL}_n$-valued conjugate self-dual representations of $\operatorname{Gal}_F$ and 
$\mathcal{G}_n$-valued representations of $\operatorname{Gal}_{F^+}$, cf. \cite[Lemma 1.1.4]{CHT} or 
\cite[Lemma 5.1.1]{GeePrescribed}. 

We will be particularly interested in deformations of $\mathcal{G}_n$-valued residual 
representations. In the local split case, there is a substantial simplification 
possible: Let $k$ be a finite field and let $\overline{\rho}$ be a 
$\operatorname{GL}_n$-valued representation of $\operatorname{Gal}_F$, let $\overline{\chi}$
a character such that $\overline{\chi}\overline{\rho}^{\vee} \cong \overline{\rho}^c$ and let
$\overline{r}$ be the associated $\mathcal{G}_n(k)$-valued representation of 
$\operatorname{Gal}_{F^+}$. Moreover, let $\Lambda$ be the ring of integers of a 
finite extension of the quotient field of $W(k)$. The following proposition now follows 
easily from the definitions:
\begin{Prop}\label{20170214_prop_local_split_defos}
Let $\nu$ be a place of $F^+$ which splits as $\tilde{\nu}\tilde{\nu}^c$ in $F$. Denote $\overline{r}_{\nu}:=\overline{r}|\operatorname{Gal}_{F^+_{\nu}}$ and 
$\overline{\rho}_{\tilde{\nu}}:=\overline{\rho}|\operatorname{Gal}_{F_{\tilde{\nu}}}$.
Fix a lift $\chi_{\nu}: \operatorname{Gal}_{F^+_{\nu}} \rightarrow \Lambda^{\times}$ of
$\mathtt{m}\circ \overline{r}_{\nu}$. Then
\begin{equation*}
R^{(\boxempty), \chi_{\nu}}_{\Lambda}(\overline{r}_{\nu}) \cong
R^{(\boxempty)}_{\Lambda}(\overline{\rho}_{\tilde{\nu}}) \text{ and }
H^i(F^+_{\nu}, \mathfrak{g}_n^{\tt{der}}) \cong  
H^i(F_{\tilde{\nu}}, \mathfrak{gl}_n),
Z^1(F^+_{\nu}, \mathfrak{g}_n^{\tt{der}}) \cong  
Z^1(F_{\tilde{\nu}}, \mathfrak{gl}_n).
\end{equation*}
\end{Prop}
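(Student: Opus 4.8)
The statement is a direct unwinding of the definition of $\mathcal{G}_n$ recalled in \cite[Section 2.1]{CHT}, and the plan is to trace everything through the split place. First I would record that, because $\nu$ splits in $F$ as $\tilde\nu\tilde\nu^c$, the completion $F_{\tilde\nu}$ is canonically $F^+_\nu$, so there is a canonical identification $\operatorname{Gal}_{F^+_\nu}=\operatorname{Gal}_{F_{\tilde\nu}}$ of decomposition groups. Next, by \cite[Lemma 1.1.4]{CHT} the $\mathcal{G}_n$-valued representation $\overline r$ built from $(\overline\rho,\overline\chi)$ satisfies $\overline r|\operatorname{Gal}_F=(\overline\rho,\overline\chi)$ as a morphism into $\mathcal{G}_n^0\cong\operatorname{GL}_n\times\operatorname{GL}_1$; restricting to the decomposition group at $\tilde\nu$ shows that $\overline r_\nu$ is $\mathcal{G}_n^0(k)$-valued and corresponds to the pair $(\overline\rho_{\tilde\nu},\overline\chi_\nu)$.

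The heart of the argument is then the bijection, for each $A\in\mathcal{C}_\Lambda$, between the $\chi_\nu$-multiplier lifts of $\overline r_\nu$ to $\mathcal{G}_n(A)$ and the lifts of $\overline\rho_{\tilde\nu}$ to $\operatorname{GL}_n(A)$. To set this up I would first check that any lift $r$ of $\overline r_\nu$ again lands in $\mathcal{G}_n^0(A)$: the component group $\mathcal{G}_n/\mathcal{G}_n^0\cong\mathbb{Z}/2\mathbb{Z}$ is a finite constant group, so the composite of $r$ with $\mathcal{G}_n(A)\to\mathbb{Z}/2\mathbb{Z}$ factors through the reduction $\overline r_\nu$, which is trivial there. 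Writing $r=(\rho,\psi)$ with $\rho\colon\operatorname{Gal}_{F^+_\nu}\to\operatorname{GL}_n(A)$ and $\psi\colon\operatorname{Gal}_{F^+_\nu}\to A^\times$, and using that $\mathtt{m}|_{\mathcal{G}_n^0}$ is the projection onto $\operatorname{GL}_1$, the fixed-multiplier condition $\mathtt{m}\circ r=\chi_\nu$ says exactly $\psi=\chi_\nu$; hence $r\mapsto\rho$ and $\rho\mapsto(\rho,\chi_\nu)$ are mutually inverse, and functorial in $A$. This gives $D^{\boxempty,\chi_\nu}_\Lambda(\overline r_\nu)\cong D^\boxempty_\Lambda(\overline\rho_{\tilde\nu})$ and hence the isomorphism of universal lifting rings. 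For the unframed case I would additionally observe that $\widehat{\mathcal{G}_n}(A)=\widehat{\operatorname{GL}_n}(A)\times\widehat{\operatorname{GL}_1}(A)$ (again it lies in the identity component), and that conjugating $(\rho,\chi_\nu)$ by $(g,\mu)$ in this group produces $(g\rho g^{-1},\chi_\nu)$, since $\operatorname{GL}_1$ is abelian and conjugation does not change the multiplier; so the bijection descends to deformations and gives $R^{\chi_\nu}_\Lambda(\overline r_\nu)\cong R_\Lambda(\overline\rho_{\tilde\nu})$.

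For the (co)homological identifications I would argue that, under $\operatorname{Gal}_{F^+_\nu}=\operatorname{Gal}_{F_{\tilde\nu}}$, the $\operatorname{Gal}_{F^+_\nu}$-module $\mathfrak g_n^{\tt{der}}$ --- namely $\mathfrak{gl}_n$ with adjoint action through $\overline r_\nu$ --- is literally the module $\mathfrak{gl}_n=\operatorname{ad}\overline\rho_{\tilde\nu}$: the adjoint action of $(g,\mu)\in\mathcal{G}_n^0$ on $\mathfrak{gl}_n\subset\mathfrak g_n$ is $X\mapsto gXg^{-1}$, independent of $\mu$, and the nontrivial component of $\mathcal{G}_n$ does not appear in the image of $\overline r_\nu$. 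Consequently the inhomogeneous continuous cochain complexes agree, which gives the claimed identifications of $Z^1$ and of the $H^i$.

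I do not anticipate a genuine obstacle: the only slightly delicate points are the argument that lifts (and reduction-trivial elements) cannot leave the identity component $\mathcal{G}_n^0$, and keeping track of the interaction between the fixed-multiplier condition and the conjugation equivalence; both are routine once the structure of $\mathcal{G}_n$ is spelled out.
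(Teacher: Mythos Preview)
Your argument is correct and is precisely the unwinding of the definition of $\mathcal{G}_n$ that the paper has in mind; the paper does not spell out a proof but simply remarks that the proposition ``follows easily from the definitions.'' Your treatment of the two mildly delicate points (lifts remain in the identity component, and conjugation interacts well with the fixed-multiplier condition) is the right way to make this explicit.
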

This observation allows us to define local conditions for deformations of $\overline{r}$ at split places by $\operatorname{GL}_n$-valued local conditions. In order to make this 
precise, let $\Sigma\subset \operatorname{Pl}_{F^+}^{\textnormal{fin}}$ be a finite set of places and 
assume that any place in $\Sigma$ splits as $\nu = \tilde{\nu}\tilde{\nu}^c$ 
in the extension $F|F^+$ (so, in 
particular, we fix a place $\tilde{\nu}$ above $\nu$). Moreover, assume that $\overline{r}$ is unramified outside $\Sigma$, i.e. factorizes through $\operatorname{Gal}_{F^+, \Sigma}$. We set $\widetilde{\Sigma} := \{ \tilde{\nu}|\nu \in \Sigma\}$.
Fix a character $\chi: \operatorname{Gal}_{F^+, \Sigma} \rightarrow 
\Lambda^{\times}$ lifting $\texttt{m}\circ \overline{r}$.
Moreover, for 
each $\tilde{\nu}\in \widetilde{\Sigma}$ fix a deformation condition $D_{\nu}$ of 
the $\operatorname{GL}_n$-valued representation $\overline{\rho}_{\tilde{\nu}}$.
\begin{Def}[Deformation problem, following \cite{CHT}]
The collection $$\mathscr{S}= (F|F^+, \Sigma, \widetilde{\Sigma}, \Lambda, \overline{r}, 
\chi, \{D_{\nu}\}_{\nu\in \Sigma}),$$ parametrizing deformations $r$ of $\overline{r}$ 
to $\mathcal{C}_{\Lambda}$ which fulfill $\texttt{m}\circ r = \chi$, which are unramified outside $\Sigma$ and fulfill $D_{\nu}$ (via Proposition \ref{20170214_prop_local_split_defos}) at $\nu\in \Sigma$, defines a global deformation condition.
\end{Def}

We end this section by a remark on the conventions for multiple framings, in which we
differ from \cite{CHT}. For this, let $T\subset \Sigma$ be a non-empty subset and recall 
our Definition \ref{20170328_multiply_framed_defo_functor} for the multiply framed deformation functor $D^{\boxempty_T, \mathcal{S}}_{\Lambda}(\overline{r})$ and its representing object $R^{\boxempty_T, \mathcal{S}}_{\Lambda}(\overline{r})$.
Comparing this with the functor and representing object considered in 
\cite[Definition 2.2.7]{CHT}, which we denote by $D^{\boxdot_T, \mathcal{S}}_{\Lambda}(\overline{r})$ and $R^{\boxdot_T, \mathcal{S}}_{\Lambda}(\overline{r})$,
we easily get the following observation:
%
%
\begin{Prop}
$D^{\boxempty_T, \mathcal{S}}_{\Lambda}(\overline{r})$ is representable if and only if 
$D^{\boxdot_T, \mathcal{S}}_{\Lambda}(\overline{r})$ is representable, and in this case
we have
\begin{equation*}
R^{\boxempty_T, \mathcal{S}}_{\Lambda}(\overline{r}) \cong
R^{\boxdot_T, \mathcal{S}}_{\Lambda}(\overline{r})\llbracket X_1, \ldots, X_{\#T}\rrbracket.
\end{equation*}
\end{Prop}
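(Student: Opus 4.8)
The plan is to compare the two multiply-framed functors directly on $\mathcal{C}_{\Lambda}$, exhibit $D^{\boxempty_T, \mathcal{S}}_{\Lambda}(\overline{r})$ as a split formally smooth extension of $D^{\boxdot_T, \mathcal{S}}_{\Lambda}(\overline{r})$ by $\#T$ formal parameters, and then read off both assertions by Yoneda together with the characterization of formal smoothness by formal power series rings recalled in Section \ref{20170405_section_liftings_and_defos}.

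First I would write out, side by side and for a fixed $A\in\mathcal{C}_{\Lambda}$, the set $D^{\boxempty_T, \mathcal{S}}_{\Lambda}(\overline{r})(A)$ of Definition \ref{20170328_multiply_framed_defo_functor} and the set $D^{\boxdot_T, \mathcal{S}}_{\Lambda}(\overline{r})(A)$ of \cite[Definition 2.2.7]{CHT}. In both descriptions a point is represented by a lift $r$ of $\overline{r}$ which is unramified outside $\Sigma$, satisfies $\mathtt{m}\circ r = \chi$ and the local conditions $D_{\nu}$, together with a framing $\beta_{\nu}$ at every $\nu\in T$, and in both cases the equivalence relation is generated by conjugating $r$ and simultaneously translating the $\beta_{\nu}$ by an element of the reduction kernel. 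Using $\mathcal{G}_n^{0} = \operatorname{GL}_n\times\operatorname{GL}_1$, hence $\widehat{\mathcal{G}_n}(A) = \widehat{\operatorname{GL}_n}(A)\times\widehat{\operatorname{GL}_1}(A)$, the sole difference between the two recipes is that our convention lets the framing $\beta_{\nu}$ range over the full kernel $\widehat{\mathcal{G}_n}(A)$, whereas \cite{CHT} records only its $\operatorname{GL}_n$-component; since the $\operatorname{GL}_1$-component of $\beta_{\nu}$ is central in $\mathcal{G}_n$ it has no effect on the conjugation $\beta_{\nu}\, r|_{\operatorname{Gal}_{F^+_{\nu}}}\,\beta_{\nu}^{-1}$ nor on the local conditions, so keeping track of it changes the functor only by finitely many central parameters, and a short analysis of how the two equivalence relations interact shows the net effect is exactly $\#T$ free formal variables, one for each place of $T$. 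I would make this precise by constructing the forgetful natural transformation $\Phi\colon D^{\boxempty_T, \mathcal{S}}_{\Lambda}(\overline{r})\to D^{\boxdot_T, \mathcal{S}}_{\Lambda}(\overline{r})$, checking that it is well defined on equivalence classes, producing an explicit section (set the $\operatorname{GL}_1$-component of each $\beta_{\nu}$ equal to $1$), and identifying the fibres of $\Phi$ with $\mathfrak{m}_A^{\#T}$. This yields a natural isomorphism $D^{\boxempty_T, \mathcal{S}}_{\Lambda}(\overline{r}) \cong D^{\boxdot_T, \mathcal{S}}_{\Lambda}(\overline{r})\times\operatorname{Hom}_{\mathcal{C}_{\Lambda}}(\Lambda\llbracket X_1,\ldots,X_{\#T}\rrbracket,\,\underline{\;\;}\,)$.

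Granting the functor isomorphism, both halves of the statement are formal. If $D^{\boxdot_T, \mathcal{S}}_{\Lambda}(\overline{r})$ is representable by $R^{\boxdot_T, \mathcal{S}}_{\Lambda}(\overline{r})$, then $R^{\boxdot_T, \mathcal{S}}_{\Lambda}(\overline{r})\llbracket X_1,\ldots,X_{\#T}\rrbracket$ represents the product functor, hence $D^{\boxempty_T, \mathcal{S}}_{\Lambda}(\overline{r})$; conversely the section of $\Phi$ realizes $D^{\boxdot_T, \mathcal{S}}_{\Lambda}(\overline{r})$ as a relatively representable subfunctor of $D^{\boxempty_T, \mathcal{S}}_{\Lambda}(\overline{r})$ in the sense of Proposition \ref{20170324_prop_rev_rep}, so that its representability follows from that of $D^{\boxempty_T, \mathcal{S}}_{\Lambda}(\overline{r})$. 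In the representable case Yoneda turns the functor isomorphism into the asserted ring isomorphism $R^{\boxempty_T, \mathcal{S}}_{\Lambda}(\overline{r})\cong R^{\boxdot_T, \mathcal{S}}_{\Lambda}(\overline{r})\llbracket X_1,\ldots,X_{\#T}\rrbracket$; equivalently, $R^{\boxempty_T, \mathcal{S}}_{\Lambda}(\overline{r})$ is formally smooth of relative dimension $\#T$ over $R^{\boxdot_T, \mathcal{S}}_{\Lambda}(\overline{r})$.

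The one point requiring genuine care — and the step I would single out as the main obstacle — is the bookkeeping of the two equivalence relations: one must verify that $\Phi$ descends to equivalence classes and that the $\mathfrak{m}_A^{\#T}$-action on the fibres of $\Phi$ is free, so that the extension really is split of rank exactly $\#T$ and no residual ambiguity coming from the global conjugation shrinks it below $\#T$ formal variables. Here one uses precisely the feature of Definition \ref{20170328_multiply_framed_defo_functor} that the local lifts $r_{\nu}$ are recorded as honest homomorphisms rather than as equivalence classes, which rigidifies each $\beta_{\nu}$ up to exactly its central $\widehat{\operatorname{GL}_1}(A)$-ambiguity and makes the comparison with the ambiguity carried by the global conjugation transparent. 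Once this is settled, the remaining arguments are the routine interplay between Yoneda, completed tensor products and power series rings already used throughout Section \ref{20170405_section_liftings_and_defos}.
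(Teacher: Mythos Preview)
Your overall strategy --- compare the two functors by isolating the extra $\operatorname{GL}_1$-data in the framings --- is the natural one, and the paper itself offers no proof beyond the phrase ``we easily get the following observation''. However, the argument as written contains a genuine error at exactly the point you flag as delicate.

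The claim that ``the $\operatorname{GL}_1$-component of $\beta_{\nu}$ is central in $\mathcal{G}_n$'' is false: the element $(1,c)\in\mathcal{G}_n^{0}=\operatorname{GL}_n\times\operatorname{GL}_1$ is central in $\mathcal{G}_n^{0}$ but \emph{not} in $\mathcal{G}_n$, because $j(1,c)j^{-1}=(c,c)\neq(1,c)$. This matters for the \emph{global} equivalence, since $r$ surjects onto $\mathcal{G}_n/\mathcal{G}_n^{0}$. Concretely, if $(r,(\alpha_{\nu},c_{\nu}))$ and $(r',(\alpha_{\nu}',c_{\nu}'))$ are equivalent in the paper's sense via $\gamma=(g,d)$ with $d\neq 1$, then $\alpha_{\nu}'=g\alpha_{\nu}$ forces the only candidate for a CHT-equivalence to be $g$ itself, but $(g,1)r(g,1)^{-1}\neq(g,d)r(g,d)^{-1}=r'$ because $(1,d)$ acts nontrivially on the $j$-component of $r$. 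So your forgetful map $\Phi$ is \emph{not} well defined on equivalence classes, and the section you propose does not yield the product decomposition you claim.

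If one does the bookkeeping correctly, the global $\widehat{\operatorname{GL}_1}$-factor in the paper's equivalence absorbs one of the $\#T$ central parameters. A clean way to see this is to compare relative dimensions over the common unframed ring: Proposition~\ref{20170525_rep_res_proposition}(3) gives $\dim(\mathfrak{g}_n)\,\#T-\dim(\mathfrak{z}_n)=(n^2+1)\#T-1$ extra variables for $R^{\boxempty_T}$ (note $Z(\mathcal{G}_n)=\{(\lambda I,\lambda^2)\}$, so $\dim\mathfrak{z}_n=1$), whereas CHT's $R^{\boxdot_T}$ has $n^2\#T$ extra variables since the centralizer of $r$ in $\widehat{\operatorname{GL}_n}(A)$ is trivial. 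The difference is $\#T-1$, not $\#T$; already for $\#T=1$ both framed rings agree with the ordinary lifting ring, so the discrepancy is zero. Thus either the comparison is with a variant of CHT's functor in which the equivalence is also taken in $\widehat{\mathcal{G}_n^{0}}$ (in which case your argument goes through with the centrality claim replaced by ``central in $\mathcal{G}_n^{0}$, which suffices because the places in $T$ are split''), or the exponent in the displayed isomorphism should read $\#T-1$.
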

\subsection{Automorphic forms and Hecke algebras}\label{20170301_sectn_hecke_algs}
For this subsection, let us assume
that the extension $F|F^+$ is unramified at all finite
places and, in case $n$ is even, that $\frac{n}{2}[F^+:\mathbb{Q}]$ is even. This allows 
us to fix a definite unitary group $H$ over $\mathcal{O}_{F^+}$, as considered in
\cite[Section 2.11]{Guerberoff} or \cite[Section 1.1]{Geraghty_thesis}, whose key properties we recall here: 
\begin{itemize}
\item The extension of scalars of $H$ to $F^+$ is an outer form of $\operatorname{GL}_n/F^+$, 
which becomes isomorphic to $\operatorname{GL}_n/F$ after extending scalars to $F$;
\item $H$ is quasi-split at every finite place of $F^+$;
\item $H$ is totally definite, i.e. $H(F_{\infty}^+)$ is compact and $H(F_{\nu}^+)\cong
U_n(\mathbb{R})$ for all infinite places $\nu$ of $F^+$;
\item For any finite place $\nu$ of $F^+$ which splits as $\tilde{\nu}\tilde{\nu}^c$ in $F$, 
we can choose an isomorphism $\iota_{\tilde{\nu}}: H(F^+_{\nu}) \rightarrow 
\operatorname{GL}_n(F_{\tilde{\nu}})$ whose restriction to $H(\mathcal{O}_{F^+_{\nu}})$ 
provides an isomorphism $H(\mathcal{O}_{F^+_{\nu}}) \cong \operatorname{GL}_n(\mathcal{O}_{F_{\tilde{\nu}}})$.
\end{itemize}
\paragraph{Level subgroups} Let us fix a finite subset $\mathcal{T} \subset\operatorname{Pl}_{F^+}^{\text{fin}}$ such that each $\nu\in \mathcal{T}$ 
splits as $\tilde{\nu}\tilde{\nu}^c$ in $F$.
%
For the remainder of this section, the letter $U$ will denote an open compact subgroup
of $H(\mathbb{A}^{\infty}_{F^+})$. For later applications, we will be particularly interested
in the choice $U_{\mathcal{T}} := \prod_{\nu\in \operatorname{Pl}^{\text{fin}}_{F^+}} U_{\nu}$ with:
\begin{itemize}
\item If $\nu$ is not split in $F|F^+$, then $U_{\nu}$ is a hyperspecial maximal compact
subgroup of $H(F^+_{\nu})$;
\item If $\nu\notin \mathcal{T}$ splits, then $U_{\nu} = H(\mathcal{O}_{F^+_{\nu}})$;
\item If $\nu\in \mathcal{T}$, then $U_{\nu} = \iota_{\tilde{\nu}}^{-1}(\operatorname{Iw})$, where $\operatorname{Iw}\subset \operatorname{GL}_n(\mathcal{O}_{F_{\tilde{\nu}}})$ denotes the Iwahori subgroup.
\end{itemize}
We remark that in many articles (e.g. \cite{CHT})
the set $\mathcal{T}$ is 
enlarged by a choice of auxiliary places at which a suitable
level condition is imposed. Our arguments don't require
such auxiliry places.
\paragraph{Weights} Recall the parametrization of complex and $\ell$-adic representations of unitary and general linear groups, e.g. from \cite{Guerberoff}:
\begin{itemize}
\item To a tuple $\boldsymbol\omega=(\underline{\omega}_{\tau}) \in 
(\mathbb{Z}^{n,+})^{\operatorname{Hom}(F^+, \mathbb{R})}$ we associate the representation
\begin{equation*}
\xi^u_{\boldsymbol\omega}: H(F^+_{\infty}) = \prod_{\tau\in \operatorname{Hom}(F^+, \mathbb{R})} H(F^+_{\tau}) \cong \prod_{\tau\in \operatorname{Hom}(F^+, \mathbb{R})} U_n(\mathbb{R}) \overset{\varphi}{\rightarrow} \prod_{\tau\in \operatorname{Hom}(F^+, \mathbb{R})} \operatorname{GL}_n(W^u_{\underline{\omega}_{\tau}}) \subset
\operatorname{GL}_n(W^u_{\boldsymbol\omega}),
\end{equation*}
where $W^u_{\boldsymbol\omega} = \otimes_{\tau} W^u_{\underline{\omega}_{\tau}}$ and where 
$\varphi$ is the product of the highest weight representations $W^u_{\underline{\omega}_{\tau}}$ attached to the weight $\underline{\omega}_{\tau}$ (see e.g. \cite{BellaicheChenevier, Guerberoff, Geraghty_thesis}).
\item Let $\ell$ be a rational prime such that every place $\nu$ of $F^+$ above $\ell$ 
splits in $F|F^+$ and fix for each such $\nu$ a place $\tilde{\nu}$ of $F$ above $\nu$. Let 
$\mathcal{K}$ be a finite extension of $\mathbb{Q}_{\ell}$ which is $F$-big enough and
let $\boldsymbol\omega=(\underline{\omega}_{\tau}) \in 
(\mathbb{Z}^{n,+})^{\operatorname{Hom}(F, \mathcal{K})}$. To each $\tau\in \operatorname{Hom}(F, \mathcal{K})$ we can associate a place $\nu$ of $F^+$ above $\ell$ for which we have just fixed a place $\tilde{\nu}$. Denote this assignment $\operatorname{Hom}(F, \mathcal{K}) \rightarrow \Omega_{\ell}^F$ by $\tau\mapsto w_{\tau}$. Let 
\begin{equation*}
\xi_{\boldsymbol\omega}^{\mathcal{K}} : \prod_{\nu \in \Omega_{\ell}^F} H(F^+_{\nu}) 
\cong  \prod_{\nu \in \Omega_{\ell}^F} \operatorname{GL}_n(F_{\tilde{\nu}}) \overset{\prod d_{\nu}}{\longrightarrow} \prod_{\nu \in \Omega_{\ell}^F} \prod_{\substack{\tau \in \operatorname{Hom}(F, \mathcal{K})\\ w_{\tau} = \tilde{\nu}}} \operatorname{GL}_n(F_{\tilde{\nu}}) = 
\prod_{\tau \in \operatorname{Hom}(F, \mathcal{K})} \operatorname{GL}_n(F_{\tilde{\nu}})
\end{equation*}
\begin{equation*}
\qquad\qquad\qquad\qquad\qquad\qquad\qquad \overset{\psi}{\longrightarrow} 
\prod_{\tau \in \operatorname{Hom}(F, \mathcal{K})} \operatorname{GL}_n(W^{\mathcal{K}}_{\underline{\omega}_{\tau}}) \subset \operatorname{GL}_n(W^{\mathcal{K}}_{\boldsymbol\omega})
\end{equation*}
be the representation where each $d_{\nu}$ is the diagonal embedding, where $W^{\mathcal{K}}_{\boldsymbol\omega} = \otimes_{\tau} W^{\mathcal{K}}_{\underline{\omega}_{\tau}}$ and where 
$\psi$ is the product of the highest weight representations $W^{\mathcal{K}}_{\underline{\omega}_{\tau}}$ attached to the weight $\underline{\omega}_{\tau}$.
The representation $\xi_{\boldsymbol\omega}^{\mathcal{K}}$ admits an integral model over $\mathcal{O}_{\mathcal{K}}$, whose underlying finite free $\mathcal{O}_{\mathcal{K}}$-module
we denote by $M_{\boldsymbol\omega}^{\mathcal{O}_{\mathcal{K}}}$.
\end{itemize}

\paragraph{Automorphic forms} We denote by 
\begin{equation*}
\mathcal{A}(H) = \bigoplus_{\pi} \pi^{m(\pi)}
\end{equation*}
the space of (complex) automorphic forms on $H$, which decomposes into isomorphism
classes of irreducible representations of $H(\mathbb{A}_{F^+})$, each occurring with 
finite multiplicity $m(\pi)$ (see e.g. \cite{Guerberoff}).
\begin{Def}[Vector-valued automorphic form]
Let $\boldsymbol\omega\in (\mathbb{Z}^{n,+})^{\operatorname{Hom}(F^+, \mathbb{R})}$ be a 
weight, then we denote by $\mathcal{S}_{\boldsymbol\omega}$ the space of locally constant
functions $f:H(\mathbb{A}_{F^+}^{\infty}) \rightarrow W^{u,\vee}_{\boldsymbol\omega}$ which 
fulfill
\begin{equation*}
f(\gamma.h) = \gamma_{\infty} f(h) \qquad \forall h\in H(\mathbb{A}_{F^+}^{\infty}), 
\gamma \in H(F^+).
\end{equation*}
(We denote by $\gamma_{\infty}$ the image of $\gamma$ under the canonical embedding 
$H(F^+) \rightarrow H(F^+_{\infty})$.). $H(\mathbb{A}^{\infty}_{F^+})$ acts on 
$\mathcal{S}_{\boldsymbol\omega}$ by right translation, and for a level subgroup
$U$ we denote by $\mathcal{S}_{\boldsymbol\omega}(U)$ the space of $U$-fixed vectors.
\end{Def}

There exists an $H(\mathbb{A}_{F^+}) = H(\mathbb{A}_{F^+,\infty})\times 
H(\mathbb{A}_{F^+}^{\infty})$-equivariant decomposition
\begin{equation*}
\mathcal{A}(H) = \bigoplus_{\boldsymbol\omega} W^u_{\boldsymbol\omega} \otimes 
\mathcal{S}_{\boldsymbol\omega}.
\end{equation*}
Thus we can associate to an $f\in \mathcal{S}_{\boldsymbol\omega}$ the (irreducible) 
automorphic representation $\langle f \rangle$ which is uniquely characterized by the
condition that it contains all vectors of $W^u_{\boldsymbol\omega}\otimes f$. The main feature
of the group $H$ is the existence of avatars:
\begin{Thm}
Let $\Pi$ be a RACSDC automorphic representation of $\operatorname{GL}_n(\mathbb{A}_F)$ of 
weight $\boldsymbol\omega \in (\mathbb{Z}^{n,+})^{\operatorname{Hom}(F,\mathbb{C})}$ in the
sense of \cite[Section 4]{CHT}. Then there exists an automorphic representation $\pi_0$ 
of $H(\mathbb{A}_{F^+})$ such that $\Pi$ is a base change of $\pi_0$, i.e.
\begin{itemize}
\item for each archimedean place $\nu$ of $F^+$ and each place $\tilde{\nu}$ of $F$ above
$\nu$, we have $\pi_{0, \nu} \cong \xi^u_{\underline{\omega}_{\tilde{\nu}}}$;
\item for each finite place $\nu$ of $F^+$ which splits as $\tilde{\nu}\tilde{\nu}^c$ in 
$F$, $\Pi_{\tilde{\nu}}$ is the local base change of $\pi_{0,\nu}$;
\item if $\nu$ is a finite place of $F^+$ which stays inert in $F$ and for which 
$\Pi_{\nu}$ is unramified, then $\pi_{\nu}$ has a fixed vector for a maximal hyperspecial
compact subgroup of $H(F_{\nu}^+)$.
\end{itemize}
\end{Thm}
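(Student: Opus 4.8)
The plan is to deduce this from the stable base change and descent results for unitary groups, as recalled in \cite[Section 2]{Guerberoff} and \cite[Section 1.2]{Geraghty_thesis} (building on work of Labesse, Clozel and others), with the archimedean and split-place normalisations being exactly those fixed in \cite[Sections 3 and 4]{CHT}. First I would use that $\Pi$, being RACSDC, is in particular conjugate self-dual ($\Pi^c \cong \Pi^{\vee}$) and cuspidal cohomological of regular weight $\boldsymbol\omega$; this is precisely the input needed for $\Pi$ to descend to a discrete automorphic representation of the quasi-split unitary group $U_n^{*}$ attached to $F|F^+$. Since our group $H$ is an inner form of $U_n^{*}$ that is quasi-split at every finite place and compact (isomorphic to $U_n(\mathbb{R})$) at every archimedean place, one further application of the transfer — available here because $H$ is anisotropic, so the simple trace formula applies — produces an automorphic representation $\pi_0$ of $H(\mathbb{A}_{F^+})$ transferring to the descent of $\Pi$, provided the requisite local transfers exist. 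Those local transfers are the identity at split places (via the chosen $\iota_{\tilde{\nu}}$), unramified base change at inert finite places, and transfer of discrete-series $L$-packets at archimedean places; at split places $\pi_{0,\nu}$ is then literally characterised by the requirement that $\iota_{\tilde{\nu}}(\pi_{0,\nu})$ base-changes to $\Pi_{\tilde{\nu}}$, which is the second bullet.

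It then remains to verify the first and third bullets from the defining compatibility of $\pi_0$. For an inert finite place $\nu$ with $\Pi_\nu$ unramified, local base change carries the unramified constituent of an unramified principal series of $H(F_\nu^+)$ to the unramified constituent of an unramified principal series of $\operatorname{GL}_n$ over the inert quadratic extension; hence the transfer condition forces $\pi_{0,\nu}$ to be unramified, so it has a vector fixed by a hyperspecial maximal compact subgroup of $H(F_\nu^+)$. For an archimedean place $\nu$ with $\tilde{\nu}\mid\nu$, the component $\pi_{0,\nu}$ is a representation of the compact group $H(F_\nu^+)\cong U_n(\mathbb{R})$, hence finite-dimensional, and its infinitesimal character is pinned down by that of $\Pi_{\tilde{\nu}}$ through the archimedean base-change map on infinitesimal characters; regularity of $\boldsymbol\omega$ then leaves $\xi^u_{\underline{\omega}_{\tilde{\nu}}}$ as the only finite-dimensional representation of $U_n(\mathbb{R})$ with that infinitesimal character, which is the first bullet.

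The genuinely non-formal point — and the main obstacle — is the archimedean bookkeeping: one has to match the normalisation of the Hodge-Tate (equivalently cohomological) weights used in \cite[Sections 3 and 4]{CHT} and \cite{BellaicheChenevier} with the highest-weight parametrisation of the $W^u_{\underline{\omega}_{\tilde{\nu}}}$ used here, and check that archimedean base change sends the $L$-packet of $\xi^u_{\underline{\omega}_{\tilde{\nu}}}$ to the essentially square-integrable, cohomological $\Pi_\infty$. One must also invoke cuspidality of $\Pi$ — not merely self-duality — to know that the descent is nonzero and discrete rather than a virtual combination; this is exactly where the deep input (stabilisation of the twisted trace formula and the classification of the discrete automorphic spectrum of $U_n^{*}$, as also used in \cite{BLGGT}) is absorbed into the cited statements. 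Granting these, the construction of $\pi_0$ and the three local checks above are routine.
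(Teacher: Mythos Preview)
Your proposal is correct and aligns with the paper's treatment: the paper simply cites \cite[Theorem 2.2]{Guerberoff} and \cite[Lemma 2.2.7]{Geraghty_thesis}, and your sketch is a faithful unpacking of exactly those results (Labesse's descent for conjugate self-dual cuspidal $\Pi$, transfer to the compact-at-infinity inner form, and the local compatibilities). The only comment is that the ``archimedean bookkeeping'' you flag as the main obstacle is also absorbed into the cited statements rather than being reproved, so in practice nothing beyond the references is needed here.
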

\begin{proof}
See \cite[Theorem 2.2]{Guerberoff} and \cite[Lemma 2.2.7]{Geraghty_thesis}.
\end{proof}
\paragraph{Hecke algebras} We continue to consider a fixed set of places $\mathcal{T}$ as 
above (with corresponding level subgroup $U = U_{\mathcal{T}}$) and a weight $\boldsymbol\omega$. For $j\in \{1, \ldots, n\}$ and 
for $w$ a place of $F$ which is split over $F^+$ and does not divide an element 
of $\mathcal{T}$, we consider the following Hecke operator (acting on 
$\mathcal{S}_{\boldsymbol\omega}(U)$):
\begin{equation*}
T^{(j)}_{F_{w}} = \left[ U.\iota^{-1}_w\begin{pmatrix}
\varpi_{F_w}\textbf{1}_j & 0 \\ 0& \textbf{1}_{n-j}
\end{pmatrix}.U\right]
\end{equation*}
For a finite set $\mathcal{T}'\subset \operatorname{Pl}_{F^+}^{\text{fin}}$ containing 
$\mathcal{T}$ and a subring $\mathscr{R}$ of $\mathbb{C}$ we define the Hecke algebra
\begin{equation*}
^{\mathscr{R}}\textbf{T}^{\mathcal{T}'}_{\boldsymbol\omega}(U) := 
\operatorname{im}\left(
\mathscr{R}[T_{F_{w}}^{(j)} | j\in \{1, \ldots, n\}, w\in \operatorname{Pl}_F^{\tt{split}, \mathcal{T}'}] \rightarrow \operatorname{End}_{\mathbb{C}}(\mathcal{S}_{\boldsymbol\omega}(U))
\right),
\end{equation*}
where $\operatorname{Pl}_F^{\tt{split}, \mathcal{T}'}$ denotes the set of places of $F$ 
which are split over $F^+$ and which do not divide an element of $\mathcal{T}'$. Besides the case $\mathscr{R}= \mathbb{Z}$ we will be interested in $\mathscr{R} = \mathcal{E}_f$ (the coefficient field of an eigenform $f$ with respect to $^{\mathbb{Z}}\textbf{T}^{\mathcal{T}}_{\boldsymbol\omega}(U)$)) and in $\mathscr{R} = \mathcal{E}(U) = \prod_f \mathcal{E}_f$, 
where the product (i.e. the field compositum operation) runs through all 
eigenforms of $\mathcal{S}_{\boldsymbol\omega}(U)$. We note the following well-known facts:
There are only finitely many (one-dimensional) eigenspaces $\mathbb{C}.f_1, \ldots, 
\mathbb{C}.f_r$ contained in $\mathcal{S}_{\boldsymbol\omega}(U)$, so $\mathcal{E}(U)$ is a number field. Moreover, $\mathcal{S}_{\boldsymbol\omega}(U)$ admits a basis of eigenforms, i.e. 
we can choose the $f_i$ such that
\begin{equation*}
\mathcal{S}_{\boldsymbol\omega}(U)\cong
\mathbb{C}.f_1 \oplus \ldots \oplus \mathbb{C}.f_r
\end{equation*}
as a $\textbf{T}^{\mathcal{T}}_{\boldsymbol\omega}(U)$-module
(see decomposition (3.1.1) of \cite{Guerberoff}). By mapping a 
Hecke operator to its $f$-eigenvalue, any eigenform $f\in \mathcal{S}_{\boldsymbol\omega}(U)$ gives rise to a $\mathbb{Z}$-algebra-homomorphism
\begin{equation*}
\varphi_f: \;^{\mathbb{Z}}\mathbf{T}^{\mathcal{T}}_{\boldsymbol\omega}(U) \longrightarrow
\mathcal{E}(U) \qquad T^{(j)}_{F_{w}} \longmapsto a_f(T^{(j)}_{F_w})
\end{equation*}
and it can be shown that $\operatorname{im}(\varphi_f)\subset \mathcal{O}_{\mathcal{E}(U)}$.
The form $f$ is uniquely characterized by $\varphi_f$ (up to $\mathbb{C}$-multiples).

\paragraph{$\ell$-adic models of automorphic forms} The following is based on Section
2.3 of [Gue11]. For this paragraph, we fix a rational prime $\ell$ which does not 
lie below $\mathcal{T}$ and such that all places of $F^+$ above $\ell$ are split
in the extension $F|F^+$ and consider the
following setup: Let $\mathcal{K}$ be a finite extension of $\mathbb{Q}_{\ell}$ 
which is $F$-big enough and fix an isomorphism $\iota: \overline{\mathcal{K}} \cong 
\mathbb{C}$. Moreover, we fix an $\ell$-adic weight $\boldsymbol\omega$, i.e. an element 
of 
\begin{equation*}
(\mathbb{Z}^{n,+})_c^{\operatorname{Hom}(F, \mathcal{K})} =
\left\{
\boldsymbol\omega \in (\mathbb{Z}^{n,+})^{\operatorname{Hom}(F, \mathcal{K})}\;\Bigl|\;
\underline{\omega}_{\tau^c, i} = -\underline{\omega}_{\tau, n-i+1} \; \forall
\tau\in \operatorname{Hom}(F, \mathcal{K}), i\in \{1, \ldots, n\}
\right\}.
\end{equation*}
\begin{Def}
For $U\subset H(\mathbb{A}^{\infty}_{F^+})$ a compact subgroup and an $\mathcal{O}_{\mathcal{K}}$-algebra $A$, suppose that either the projection of $U$ to $H(F^+_{\ell})$ is contained in $H(\mathcal{O}_{F^+_{\ell}})$ or that $A$ is a $\mathcal{K}$-algebra. Then we define 
\begin{equation*}
S_{\boldsymbol\omega}(U,A) = \left\{
f: H(F^+)\backslash H(\mathbb{A}_{F^+}^{\infty}) \rightarrow A\otimes_{\mathcal{O}_K} M^{\mathcal{O}_{\mathcal{K}}}_{\boldsymbol\omega}
\;\bigl|\; u_{\ell}.f(hu) = f(h)\; \forall u\in U, h\in H(\mathbb{A}^{\infty}_{F^+})
\right\},
\end{equation*}
where $u_{\ell}$ denotes the image of $u$ under the projection map $H(\mathbb{A}_{F^+}^{\infty}) \rightarrow H(F^{+}_{\ell})$.
\end{Def}
We are primarily interested in the case 
that $A$ is $\mathcal{O}_{\mathcal{K}}$-flat, so that
%
we have $S_{\boldsymbol\omega}(U,A) \cong A \otimes_{\mathcal{O}_{\mathcal{K}}}
S_{\boldsymbol\omega}(U,\mathcal{O}_{\mathcal{K}})$. 

The main connection with complex automorphic forms is as follows (cf. also \cite[Section 2.3]{Guerberoff}): The isomorphism $\iota$ 
gives rise to a bijection $\iota^+_{\ast}: (\mathbb{Z}^{n,+})_c^{\operatorname{Hom}(F, \mathcal{K})} \cong (\mathbb{Z}^{n,+})^{\operatorname{Hom}(F, \mathbb{R})}$, and the assignment 
$f \mapsto (h\mapsto \theta_{\boldsymbol\omega}(h_{\ell}.f(h)))$ provides isomorphisms 
of $\mathbb{C}H(\mathbb{A}^{\infty}_{F^+})$-modules
\begin{equation}\label{20170224_some_iso}
\bigcup_U S_{\boldsymbol\omega}(U, \mathbb{C}) \cong \mathcal{S}_{\iota^+_{\ast}(\boldsymbol\omega)^{\vee}} \qquad \text{and} \qquad S_{\boldsymbol\omega}(U, \mathbb{C}) \cong \mathcal{S}_{\iota^+_{\ast}(\boldsymbol\omega)^{\vee}}(U).
\end{equation}
(Here, $\mathbb{C}$ is understood as a $\mathcal{O}_{\mathcal{K}}$-algebra via $\iota$ and $\iota^+_{\ast}(\boldsymbol\omega)^{\vee}$ is defined by $\iota^+_{\ast}(\boldsymbol\omega)^{\vee}_{\tau, i} = - \iota^+_{\ast}(\boldsymbol\omega)^{\vee}_{\tau, n+1-i}$.) 

For a place $w$ not dividing $\ell$, the operators $T^{(j)}_{F_w}$ also act on 
$S_{\boldsymbol\omega}(U, \mathcal{O}_{\mathcal{K}})\subset S_{\boldsymbol\omega}(U, \mathbb{C})$, and this action commutes with the isomorphism (\ref{20170224_some_iso}). 
This motivates the following definition: Let $\mathcal{T}'$ be a finite set of places of $F^+$ containing $\mathcal{T}\cup \Omega_{\ell}^{F^+}$
and let $\mathscr{R}$ be a subring of 
$\mathcal{O}_{\mathcal{K}}$, 
then we define the Hecke algebra
\begin{equation*}
^{\mathscr{R}}\mathbb{T}^{\mathcal{T}'}_{\boldsymbol\omega}(U) = 
\operatorname{im}\left(
q: \mathscr{R}[T^{(j)}_{F_w}| j\in \{1, \ldots, n\}, w\in \operatorname{Pl}_F^{\tt{split}, \mathcal{T}'}] \longrightarrow \operatorname{End}_{\mathcal{O}_{\mathcal{K}}}(S_{\boldsymbol\omega}(U, \mathcal{O}_{\mathcal{K}})
\right),
\end{equation*}
where we will often abbreviate $\mathbb{T}^{\mathcal{T}'}_{\boldsymbol\omega}(U) = 
\,^{\mathcal{O}_{\mathcal{K}}}\mathbb{T}^{\mathcal{T}'}_{\boldsymbol\omega}(U)$.
If $f\in S_{\boldsymbol\omega}(U, \mathcal{O}_{\mathcal{K}})$ is an eigenform for this algebra, then we see, using the compatibility with the isomorphism (\ref{20170224_some_iso}), that 
the eigenvalue for a Hecke operator $T$ is given by $\iota^{-1}(a_{\tilde{f}})$, where 
$\tilde{f} \in \mathcal{S}_{\iota^+_{\ast}(\boldsymbol\omega)^{\vee}}(U)$ is the corresponding complex automorphic form. In other words, we can interpret the map $\varphi_{\tilde{f}}$ from above as 
\begin{equation*}
\varphi_f^{\ell}: \,^{\mathbb{Z}}\mathbb{T}_{\boldsymbol\omega}^{\mathcal{T}_{\ell}}(U) 
\longrightarrow \iota(\mathcal{E}(U)) \cong \mathcal{E}(U).
\end{equation*}
Note that we use the bold symbol $\mathbf{T}$ for complex Hecke algebras and the blackboard bold symbol $\mathbb{T}$ for $\ell$-adic Hecke algebras.

\paragraph{Fixed type Hecke algebras} Fix a finite set $\widetilde{\Sigma}\subset 
(\mathcal{T}' - \Omega^F_{\ell})$ of places of $F$ together with a tuple $\underline{\sigma}
= (\sigma_{\nu})_{\nu\in \widetilde{\Sigma}}$, where each $\sigma_{\nu}$ is a finite-dimensional complex representation of $\operatorname{GL}_n(\mathcal{O}_{F_{\nu}})$. Let 
$_{\underline{\sigma}}S_{\boldsymbol\omega}(U, \mathcal{O}_{\mathcal{K}})
\subset S_{\boldsymbol\omega}(U, \mathcal{O}_{\mathcal{K}})$ be the subspace of those 
forms $f$ whose complex correspondents $\tilde{f}$ fulfill the following condition for all 
places $\nu\in \widetilde{\Sigma}$: If $\pi_{\nu}$ denotes the local component of the automorphic representation $\pi = \langle \tilde{f}\rangle$ at $\nu$, then 
$\pi_{\nu}|\operatorname{GL}_n(\mathcal{O}_{F_{\nu}})$ contains $\sigma_{\nu}$ as a subrepresentation. Note that the $T^{(j)}_{F_w}$ (for $w$ in $\operatorname{Pl}_F^{\tt{split},\mathcal{T}'}$) stabilize the subspace $_{\underline{\sigma}}S_{\boldsymbol\omega}(U, \mathcal{O}_{\mathcal{K}})$, so we can define
\begin{equation*}
_{\underline{\sigma}}^{\mathscr{R}}\mathbb{T}^{\mathcal{T}'}_{\boldsymbol\omega}(U) = 
\operatorname{im}\left(
\,_{\underline{\sigma}}q: \mathscr{R}[T^{(j)}_{F_w}| j\in \{1, \ldots, n\}, w\in \operatorname{Pl}_F^{\tt{split}, \mathcal{T}'}] \longrightarrow \operatorname{End}_{\mathcal{O}_{\mathcal{K}}}(\,_{\underline{\sigma}}S_{\boldsymbol\omega}(U, \mathcal{O}_{\mathcal{K}})
\right).
\end{equation*}
We easily see that the assignment $q(T^{(j)}_{F_w}) \mapsto \,_{\underline{\sigma}}q(T^{(j)}_{F_w})$ defines an $\mathscr{R}$-algebra surjection $_{\underline{\sigma}}\theta$ from $^{\mathscr{R}}\mathbb{T}^{\mathcal{T}'}_{\boldsymbol\omega}(U)$ to 
$_{\underline{\sigma}}^{\mathscr{R}}\mathbb{T}^{\mathcal{T}'}_{\boldsymbol\omega}(U)$.
We note the following (for $\mathscr{R} = \mathcal{O}_{\mathcal{K}}$):
\begin{itemize}
\item In the same way as for $^{\mathcal{O}_{\mathcal{K}}}\mathbf{T}_{\boldsymbol\omega}^{\mathcal{T}'}(U)$, we can check that $_{\underline{\sigma}}^{\mathcal{O}_{\mathcal{K}}}\mathbb{T}^{\mathcal{T}'}_{\boldsymbol\omega}(U)$ is free and finitely generated over $\mathcal{O}_{\mathcal{K}}$; 
\item assume that $^{\mathcal{O}_{\mathcal{K}}}\mathbb{T}^{\mathcal{T}'}_{\boldsymbol\omega}(U)_{\mathfrak{m}}\cong \mathcal{O}_{\mathcal{K}}$ holds for any maximal ideal 
$\mathfrak{m}$, then $_{\underline{\sigma}}^{\mathcal{O}_{\mathcal{K}}}\mathbb{T}^{\mathcal{T}'}_{\boldsymbol\omega}(U)_{\mathfrak{n}}$ is a quotient of $\mathcal{O}_{\mathcal{K}}$ for any maximal ideal 
$\mathfrak{n}$. By the above bullet point, it thus follows that $_{\underline{\sigma}}^{\mathcal{O}_{\mathcal{K}}}\mathbb{T}^{\mathcal{T}'}_{\boldsymbol\omega}(U)_{\mathfrak{n}}$ is isomorphic to 
$\mathcal{O}_{\mathcal{K}}$ for any maximal ideal $\mathfrak{n}$.
\end{itemize}
\subsection{Attaching Galois representations to automorphic forms}\label{20170328_attach}
Retain all notation from above and let $\mathfrak{m}\subset \,^{\mathcal{O}_{\mathcal{K}}}\mathbb{T}^{\mathcal{T}_{\ell}}_{\boldsymbol\omega}(U)$ be a maximal ideal. 
\begin{Prop}[{\cite[Proposition 3.1 and 3.2]{Guerberoff}}]\label{20170307_Prop_attach_reps}
There exists a representation
\begin{equation*}
\rho_{\mathfrak{m}}: \operatorname{Gal}_F \rightarrow \operatorname{GL}_n\left(
^{\mathcal{O}_{\mathcal{K}}}\mathbb{T}^{\mathcal{T}_{\ell}}_{\boldsymbol\omega}(U)_{\mathfrak{m}}
\right)
\end{equation*}
with the following properties, where the first two already characterize $\rho_{\mathfrak{m}}$
uniquely: 
\begin{enumerate}
\item $\rho_{\mathfrak{m}}$ is unramified at all but finitely many places; If a place 
$\nu$ of $F^+$ is inert and unramified in $F$ and if $U_{\nu}$ is a hyperspecial 
maximal compact subgroup of $H(F_{\nu}^+)$, then $\rho_{\mathfrak{m}}$ is unramified 
above $\nu$;
\item If $\nu\in \operatorname{Pl}_{F^+}^{\textnormal{fin}} \backslash \mathcal{T}_{\ell}$ 
splits as $\tilde{\nu}\tilde{\nu}^c$ in $F$, then $\rho_{\mathfrak{m}}$ is unramified at 
$\tilde{\nu}$ and 
\begin{equation*}
\operatorname{charPoly}(\rho_{\mathfrak{m}}(\operatorname{Frob}_{\tilde{\nu}})) = 
X^n - T_{\tilde{\nu}}^{(1)}X^{n-1} + \ldots + (-1)^j(\textbf{N}\tilde{\nu})^{\frac{j(j-1)}{2}}
T_{\tilde{\nu}}^{(j)}X^{n-j} + \ldots + (-1)^n (\textbf{N}\tilde{\nu})^{\frac{n(n-1)}{2}}
T_{\tilde{\nu}}^{(n)}.
\end{equation*}
\item $\overline{\rho}_{\mathfrak{m}}^c \cong \overline{\rho}_{\mathfrak{m}} \otimes \overline{\epsilon}_{\ell}^{1-n}$;
\item Fix a set $\widetilde{\Omega}_{\ell}^{F^+}$ of places of $F$ such that 
$\widetilde{\Omega}_{\ell}^{F^+} \sqcup \widetilde{\Omega}_{\ell}^{F^+, c} = 
\Omega_{\ell}^F$ and denote by $\tilde{I}_{\ell}$ the set of embeddings $F\hookrightarrow
\mathcal{K}$ which give rise to an element of $\widetilde{\Omega}_{\ell}^{F^+}$.
Suppose that $w\in \widetilde{\Omega}_{\ell}^{F^+}$ is unramified over $\ell$, that 
$U_{\overline{w}} = H(\mathcal{O}_{F^+, \overline{w}})$ (for $\overline{w}\in \operatorname{Pl}_{F^+}$ the place below $w$) and that for each $\tau\in \tilde{I}_{\ell}$ above $w$ we have
\begin{equation*}
\ell -1 -n \geq \underline{\omega}_{\tau, 1} \geq \ldots \geq \underline{\omega}_{\tau, n}
\geq 0.
\end{equation*}
Then, for each open ideal $I\subset \,^{\mathcal{O}_{\mathcal{K}}}\mathbb{T}^{\mathcal{T}_{\ell}}_{\boldsymbol\omega}(U)$ there is an object $M_{\mathfrak{m}, I, w}$ of 
$\underline{\operatorname{MF}}_{\mathcal{O}_{F_w}, \mathcal{O}_{\mathcal{K}}}$ such that
\begin{equation*}
(\rho_{\mathfrak{m}} \otimes_{\,^{\mathcal{O}_{\mathcal{K}}}\mathbb{T}^{\mathcal{T}_{\ell}}_{\boldsymbol\omega}(U)} \,^{\mathcal{O}_{\mathcal{K}}}\mathbb{T}^{\mathcal{T}_{\ell}}_{\boldsymbol\omega}(U)/I)|\operatorname{Gal}_{F_w} \cong
{\tt{G}}_{F_{w}}(M_{\mathfrak{m}, I, w}).
\end{equation*}
\end{enumerate}
If $\mathfrak{m}$ is non-Eisenstein in the sense of \cite[Definition 3.4.3]{CHT}, then $\rho_{\mathfrak{m}}$ and its reduction extend to
\begin{equation*}
r_{\mathfrak{m}}: \operatorname{Gal}_{F^+} \rightarrow 
\mathcal{G}_n\left( 
{\,^{\mathcal{O}_{\mathcal{K}}}\mathbb{T}^{\mathcal{T}_{\ell}}_{\boldsymbol\omega}(U)_{\mathfrak{m}}}
\right)
\qquad\text{ and }\qquad
\overline{r}_{\mathfrak{m}}: \operatorname{Gal}_{F^+} \rightarrow 
\mathcal{G}_n\left( 
{\,^{\mathcal{O}_{\mathcal{K}}}\mathbb{T}^{\mathcal{T}_{\ell}}_{\boldsymbol\omega}(U)}/{\mathfrak{m}}
\right).
\end{equation*}
Moreover, $\texttt{m}\circ r_{\mathfrak{m}} = \epsilon_{\ell}^{1-n}\delta_{F|F^+}^{\mu_{\mathfrak{m}}}$ for a suitable $\mu_{\mathfrak{m}}\in \mathbb{Z}/2\mathbb{Z}$, where $\delta_{F|F^+}$ is the non-trivial character of 
$\operatorname{Gal}(F|F^+)$.
\end{Prop}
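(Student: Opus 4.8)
The statement is \cite[Propositions~3.1 and~3.2]{Guerberoff}, so the plan is to recall the structure of that argument; it proceeds by treating one eigenform at a time and then assembling the pieces over the Hecke algebra.

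\textbf{Step 1: a single eigenform.} First I would fix an eigenform $f$ contributing to the $\mathfrak{m}$-localisation, with associated complex automorphic representation $\pi=\langle\tilde f\rangle$ of $H(\mathbb{A}_{F^+})$. By the base change theorem recalled above there is a RACSDC automorphic representation $\Pi$ of $\operatorname{GL}_n(\mathbb{A}_F)$ which is a base change of $\pi$; since $U_\nu$ is hyperspecial at the inert places and at all places above $\ell$ (which are split), $\Pi$ is unramified outside $\mathcal{T}_\ell$ and, in particular, at every place above $\ell$. To $(\Pi,\iota)$ one attaches, via the cohomology of the relevant unitary Shimura varieties (Kottwitz, Clozel, Harris--Taylor, and in general Shin and Chenevier--Harris; see the references in \cite{Guerberoff}), a continuous semisimple $\rho_{\Pi,\iota}:\operatorname{Gal}_F\to\operatorname{GL}_n(\overline{\mathcal{K}})$ with the expected local--global compatibility. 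At a split place $\tilde\nu\nmid\mathcal{T}_\ell$, compatibility of the Satake isomorphism with the normalisation of the operators $T^{(j)}_{F_w}$ produces exactly the characteristic polynomial of part~2, while conjugate self-duality $\Pi^\vee\cong\Pi^c$ gives $\rho_{\Pi,\iota}^c\cong\rho_{\Pi,\iota}\otimes\epsilon_\ell^{1-n}$, hence part~3 after reduction; part~1 is unramifiedness together with the Chebotarev density theorem.

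\textbf{Step 2: passing to the Hecke algebra.} Next, as ${}^{\mathcal{O}_{\mathcal{K}}}\mathbb{T}^{\mathcal{T}_\ell}_{\boldsymbol\omega}(U)$ is reduced and finite flat over $\mathcal{O}_{\mathcal{K}}$, its $\mathfrak{m}$-localisation embeds into the product of its $\overline{\mathcal{K}}$-points, one per contributing eigenform. By part~2 the coefficients of $\operatorname{charPoly}(\rho_{\Pi,\iota}(\operatorname{Frob}_{\tilde\nu}))$ lie in ${}^{\mathcal{O}_{\mathcal{K}}}\mathbb{T}^{\mathcal{T}_\ell}_{\boldsymbol\omega}(U)_{\mathfrak{m}}$, so by Chebotarev the trace of the product of the $\rho_{\Pi,\iota}$ is a pseudo-character valued in the Hecke algebra whose reduction modulo $\mathfrak{m}$ is $\operatorname{tr}\overline{\rho}_{\mathfrak{m}}$. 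Since $\mathfrak{m}$ is non-Eisenstein, $\overline{\rho}_{\mathfrak{m}}$ is absolutely irreducible, so by the Carayol--Nyssen--Rouquier theorem on lifting pseudo-characters over a henselian local ring this pseudo-character is the trace of a genuine $\rho_{\mathfrak{m}}$ with values in ${}^{\mathcal{O}_{\mathcal{K}}}\mathbb{T}^{\mathcal{T}_\ell}_{\boldsymbol\omega}(U)_{\mathfrak{m}}$, unique up to $\operatorname{GL}_n$-conjugacy; this also gives the asserted uniqueness, and properties~1--3 descend from the factors.

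\textbf{Step 3: crystallinity at $\ell$ (part 4).} For $w\in\widetilde{\Omega}_\ell^{F^+}$ unramified over $\ell$ with $U_{\overline w}$ hyperspecial, each $\rho_{\Pi,\iota}|_{\operatorname{Gal}_{F_w}}$ is crystalline, and the inequality $\ell-1-n\geq\underline\omega_{\tau,1}\geq\dots\geq\underline\omega_{\tau,n}\geq 0$ forces its $\tau$-Hodge--Tate weights into an interval of length $\leq\ell-2$, i.e.\ into the Fontaine--Laffaille range, so $\rho_{\Pi,\iota}|_{\operatorname{Gal}_{F_w}}$ lies in the essential image of the functor ${\tt{G}}_{F_w}$. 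The hard part is to propagate this to the finite-length quotients $\rho_{\mathfrak{m}}\otimes{}^{\mathcal{O}_{\mathcal{K}}}\mathbb{T}^{\mathcal{T}_\ell}_{\boldsymbol\omega}(U)/I$: one must realise $S_{\boldsymbol\omega}(U,\mathcal{O}_{\mathcal{K}}/I)$ --- more precisely its incarnation inside the cohomology of an integral model of the Shimura variety, as in \cite{Geraghty_thesis} and \cite{CHT} --- as carrying a compatible Fontaine--Laffaille structure at $w$, and then read off an object $M_{\mathfrak{m},I,w}$ of $\underline{\operatorname{MF}}_{\mathcal{O}_{F_w},\mathcal{O}_{\mathcal{K}}}$ with $(\rho_{\mathfrak{m}}\otimes\mathbb{T}/I)|_{\operatorname{Gal}_{F_w}}\cong{\tt{G}}_{F_w}(M_{\mathfrak{m},I,w})$; compatibility as $I$ varies uses that the essential image of ${\tt{G}}_{F_w}$ is closed under subquotients. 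I expect this integral local--global compatibility at $\ell$ to be the main obstacle, and it is precisely where the regularity and Fontaine--Laffaille-range hypotheses on $\boldsymbol\omega$ are needed.

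\textbf{Step 4: extension to $\mathcal{G}_n$.} Finally, $S_{\boldsymbol\omega}(U,\mathcal{O}_{\mathcal{K}})$ carries a perfect $\operatorname{Gal}_{F^+}$-equivariant pairing (coming from the definiteness of $H$ and the self-dual behaviour of the coefficient system) which, localised at the non-Eisenstein $\mathfrak{m}$, yields a polarisation of $\rho_{\mathfrak{m}}$, i.e.\ an isomorphism $\rho_{\mathfrak{m}}^c\cong\rho_{\mathfrak{m}}^\vee\otimes\epsilon_\ell^{1-n}\delta_{F|F^+}^{\mu_{\mathfrak{m}}}$ for a unique sign $\mu_{\mathfrak{m}}\in\mathbb{Z}/2\mathbb{Z}$. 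By \cite[Lemma~1.1.4]{CHT} this is equivalent to a homomorphism $r_{\mathfrak{m}}:\operatorname{Gal}_{F^+}\to\mathcal{G}_n({}^{\mathcal{O}_{\mathcal{K}}}\mathbb{T}^{\mathcal{T}_\ell}_{\boldsymbol\omega}(U)_{\mathfrak{m}})$ extending $\rho_{\mathfrak{m}}$ and with $\texttt{m}\circ r_{\mathfrak{m}}=\epsilon_\ell^{1-n}\delta_{F|F^+}^{\mu_{\mathfrak{m}}}$; reducing modulo $\mathfrak{m}$ gives $\overline r_{\mathfrak{m}}$, which completes the plan.
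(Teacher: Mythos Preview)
The paper does not give its own proof of this proposition: it is stated with attribution to \cite[Propositions~3.1 and~3.2]{Guerberoff} and used as a black box. Your sketch is a faithful outline of the argument in that reference (and in \cite[\S3.4]{CHT}): attach Galois representations to individual eigenforms via the cohomology of unitary Shimura varieties, glue them into a pseudo-character valued in the Hecke algebra, lift to a true representation using absolute irreducibility of the reduction, verify Fontaine--Laffaille behaviour at~$\ell$ via integral comparison, and finally package the polarisation as a $\mathcal{G}_n$-valued extension via \cite[Lemma~2.1.4]{CHT}.

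One small point of care: as stated, the proposition asserts the existence of a genuine $\rho_{\mathfrak{m}}$ with values in the localised Hecke algebra \emph{before} imposing the non-Eisenstein hypothesis, whereas your Step~2 invokes absolute irreducibility of $\overline\rho_{\mathfrak{m}}$ (i.e.\ non-Eisenstein) already to apply Carayol--Nyssen--Rouquier. In practice the references construct $\rho_{\mathfrak{m}}$ only after assuming $\mathfrak{m}$ non-Eisenstein (cf.\ \cite[Proposition~3.4.4]{CHT}); without that hypothesis one only gets a pseudo-representation, or a representation into the total quotient ring. This is a minor imprecision in the statement rather than a gap in your argument.
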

In this way we can associate to a RACSDC automorphic representation
$\pi$ of $\operatorname{GL}_n(\mathbb{A}_F)$ and a finite place $\lambda$ of $\mathcal{E}(U)$ a residual representation $\overline{r}_{\pi, \lambda}: \operatorname{Gal}_{F^+} \rightarrow
\mathcal{G}_n(\overline{\mathbb{F}}_{\ell(\lambda)})$. Let us assume that 
$\overline{r}_{\pi, \lambda}$ is absolutely irreducible for all $\lambda$ in a subset 
of $\operatorname{Pl}^{\text{fin}}_{\mathcal{E}(U)}$ of Dirichlet density $1$. 
Then the set 
\begin{equation*}
\Lambda^1_{\mathcal{E}(U)} = \{\lambda | \overline{\rho}_{\pi, \lambda'} \text{ is absolutely
irreducible for all } \lambda' \text{ dividing } \ell(\lambda)\}
\end{equation*}
has also Dirichlet density $1$. In this way, we get an association from $\pi$ to the 
compatible systems of residual Galois representations 
$\mathcal{R}_{\pi} = ( \overline{r}_{\pi, \lambda})_{\lambda\in \Lambda^1_{\mathcal{E}(U)}}$ and 
$\mathcal{R}_{\pi}' = ( \overline{\rho}_{\pi, \lambda})_{\lambda\in \Lambda^1_{\mathcal{E}(U)}}$.

\section{Consequences from modularity lifting theorems}\label{20160407_sect_cons}
Let us start with
the following
adaption of 
\cite[Lemma 3.6]{KW_on_serre}:
\begin{Lem}\label{20170209_KW_Lemma}
Let $k$ be a finite field of characteristic $\ell$, $G$ a profinite group satisfying the
$\ell$-finiteness condition and $\eta: G\rightarrow \mathcal{G}_n(k)$ be an absolutely
irreducible continuous representation. Let $\mathcal{F}_n(G)$ be a subcategory of 
deformations of $\eta$ in $k$-algebras which defines a deformation condition. 
Let $\eta_{\mathcal{F}}: G\rightarrow \operatorname{GL}_n(R_{\mathcal{F}})$ be the 
universal deformation of $\eta$ in $\mathcal{F}_n(G)$. Then $R_{\mathcal{F}}$ is finite
if and only if $\eta_{\mathcal{F}}(G)$ is finite.
\end{Lem}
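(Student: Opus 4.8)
The plan is to prove the two implications separately, the converse being the substantial one. The forward implication is immediate: if $R_{\mathcal{F}}$ is finite then, $k$ being finite, $R_{\mathcal{F}}$ has finite cardinality, and since $\mathcal{G}_n$ (resp.\ $\operatorname{GL}_n$) is an affine group scheme of finite type over $\mathbb{Z}$, the group $\mathcal{G}_n(R_{\mathcal{F}})$ is finite; hence so is its subgroup $\eta_{\mathcal{F}}(G)$. No hypothesis on $\eta$ is used here.

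For the converse, assume $\Gamma:=\eta_{\mathcal{F}}(G)$ is finite. It is harmless to pass to the open subgroup $G^{0}:=\eta^{-1}(\mathcal{G}_n^{0})$ of index $\le 2$ and to the associated absolutely irreducible $\operatorname{GL}_n$-valued representation $\overline{\rho}$ of $G^{0}$: by the relation between $\mathcal{G}_n$-valued and $\operatorname{GL}_n$-valued deformations (\cite[Lemma 1.1.4]{CHT}), finiteness of $R_{\mathcal{F}}$ is equivalent to finiteness of a $\operatorname{GL}_n$-deformation ring of $\overline{\rho}$, still with finite universal image. So I may assume $\eta:G\to\operatorname{GL}_n(k)$ is absolutely irreducible and $\eta_{\mathcal{F}}:G\to\operatorname{GL}_n(R_{\mathcal{F}})$ has finite image $\Gamma$. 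The input I would invoke is the standard fact that residual absolute irreducibility forces $R_{\mathcal{F}}$ to be topologically generated over $k$ by the traces $\{\operatorname{tr}\eta_{\mathcal{F}}(g):g\in G\}$ of the universal deformation (via the identification, valid for the unrestricted universal ring, with the universal pseudodeformation ring, which is visibly trace-generated, together with the fact that $R_{\mathcal{F}}$ is one of its quotients). Since $\Gamma$ is finite, this is a \emph{finite} generating set, namely $\{\operatorname{tr}(\gamma):\gamma\in\Gamma\}$.

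The heart of the matter is then the claim that every such trace lies in $k+\operatorname{nil}(R_{\mathcal{F}})$, where $\operatorname{nil}(R_{\mathcal{F}})$ denotes the nilradical. Fix $\gamma\in\Gamma$ and write its $\ell$--$\ell'$ (Jordan) decomposition $\gamma=\sigma\tau=\tau\sigma$ inside $\langle\gamma\rangle$, with $\sigma$ of order $m'$ prime to $\ell$ and $\tau$ of $\ell$-power order. As $R_{\mathcal{F}}$ has characteristic $\ell$, writing $\tau=1+N$ gives $(1+N)^{\ell^{a}}=1+N^{\ell^{a}}=1$, so $N\in M_n(R_{\mathcal{F}})$ is a nilpotent matrix; since $\sigma N=N\sigma$ with $\sigma$ invertible, $\sigma N$ is nilpotent too, and reduction modulo an arbitrary prime of $R_{\mathcal{F}}$ (a nilpotent matrix over a domain has vanishing trace) shows $\operatorname{tr}(\sigma N)\in\operatorname{nil}(R_{\mathcal{F}})$. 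On the other hand $k[\sigma]\subseteq M_n(R_{\mathcal{F}})$, being a quotient of the \'etale $k$-algebra $k[X]/(X^{m'}-1)$, is a product of finite field extensions of $k$; decomposing $R_{\mathcal{F}}^{\,n}$ along the corresponding idempotents into modules that are free over the local, finite free $R_{\mathcal{F}}$-algebras $R_{\mathcal{F}}\otimes_{k}k'_{j}$ and on which $\sigma$ acts by scalars $\zeta_{j}\in k'_{j}$ exhibits $\operatorname{tr}(\sigma)$ as an integral combination of traces $\operatorname{Tr}_{k'_{j}/k}(\zeta_{j})$, hence as an element of $k$. Therefore $\operatorname{tr}(\gamma)=\operatorname{tr}(\sigma)+\operatorname{tr}(\sigma N)\in k+\operatorname{nil}(R_{\mathcal{F}})$.

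To conclude: $R_{\mathcal{F}}$ is Noetherian, so $\operatorname{nil}(R_{\mathcal{F}})$ is a nilpotent ideal, say $\operatorname{nil}(R_{\mathcal{F}})^{M}=0$. Subtracting off the constant parts, $R_{\mathcal{F}}$ is topologically generated over $k$ by finitely many elements $t_{1},\dots,t_{r}\in\operatorname{nil}(R_{\mathcal{F}})$; the $k$-subalgebra $k[t_{1},\dots,t_{r}]$ is then spanned by the monomials of total degree $<M$ in the $t_{i}$, hence finite-dimensional over $k$, hence Artinian, hence complete and closed in $R_{\mathcal{F}}$, and being dense it equals $R_{\mathcal{F}}$; so $R_{\mathcal{F}}$ is finite. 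The step I expect to be the main obstacle is the first half of the second paragraph: making precise that residual absolute irreducibility forces $R_{\mathcal{F}}$ to be trace-generated and that the $\mathcal{G}_n$-valued deformation problem reduces cleanly to a $\operatorname{GL}_n$-valued one to which that fact applies. The ring-theoretic manipulations (nilpotence of the nilradical, traces of $\ell'$-torsion and of nilpotent matrices) are then routine.
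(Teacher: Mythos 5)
Your proof correctly reconstructs the Khare--Wintenberger argument (\cite[Lemma 3.6]{KW_on_serre}), which is exactly what the paper's one-line proof defers to. The one place you depart from the paper's route is the treatment of $\mathcal{G}_n$: you reduce at the outset to a $\operatorname{GL}_n$-valued problem over $G^0=\eta^{-1}(\mathcal{G}_n^0)$ and then invoke ordinary Carayol/trace-generation, whereas the paper stays in the $\mathcal{G}_n$-setting throughout and plugs \cite[Lemma 2.1.12]{CHT} --- the $\mathcal{G}_n$-adapted Carayol/Schur statement --- directly into KW's argument; that substitution is the single change the paper records. Your reduction is the step you yourself flag as the likely sticking point, and rightly so: \cite[Lemma 1.1.4]{CHT}, which you cite, is the group-level dictionary between $\mathcal{G}_n$-valued homomorphisms of $G$ and conjugate-self-dual $\operatorname{GL}_n$-valued homomorphisms of $G^0$, not its deformation-theoretic upgrade, and transporting an arbitrary deformation condition $\mathcal{F}$ across that dictionary compatibly with finiteness of the universal rings takes some bookkeeping --- precisely the bookkeeping that the paper's appeal to \cite[Lemma 2.1.12]{CHT} sidesteps. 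The rest of your argument (trace-generation of $R_{\mathcal{F}}$ as a quotient of the pseudodeformation ring, the Jordan decomposition of $\gamma\in\Gamma$, $\operatorname{tr}(\sigma N)\in\operatorname{nil}(R_{\mathcal{F}})$ via reduction modulo primes, $\operatorname{tr}(\sigma)\in k$ from the \'etaleness of $k[\sigma]$, and the conclusion via nilpotence of $\operatorname{nil}(R_{\mathcal{F}})$) is a correct and faithful unpacking of what KW do.
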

\begin{proof}[Proof of the Lemma.]
The proof of Lemma 3.6 of Khare-Wintenberger goes through verbatim, except that we 
have to refer to \cite[Lemma 2.1.12]{CHT} instead of Carayol's Lemma.
\end{proof}
\subsection{A minimal $R=T$-theorem}
Our starting point is a RACSDC automorphic representation 
$\pi = \langle f \rangle \subset\mathcal{S}_{\boldsymbol\omega}(U)$ (where $U=U(S)$ for a finite set 
of places $S$ of $F$) and a place $\lambda\in \Lambda^1_{\mathcal{E}(U)}$.
Fix a finite $F$-big enough extension $\mathcal{K}$ of $\mathcal{E}(U)_{\lambda}$. We abbreviate
$r,\overline{r}, \rho, \overline{\rho}$ for the associated Galois representations 
via Proposition \ref{20170307_Prop_attach_reps} for the unique maximal ideal 
$\mathfrak{m}$ containing $\mathcal{O}_{\mathcal{K}} \otimes_{\mathbb{Z}} 
\operatorname{ker}\varphi^{\ell}_{f}$.
%
%
We assume furthermore the following:
\begin{itemize}
\item All places of $S_{\ell}$ split in the extension $F|F^+$;
\item all ramification of ${\rho}$ is unipotent (this can always be achieved 
by a finite solvable base change);
\item $\rho$ is a minimally ramified (at all places in $S$) and 
FL-crystalline (at all places dividing $\ell$) lift of $\overline{\rho}$;
\item $\overline{\rho}$ is absolutely irreducible;
\item $\overline{\rho}(\operatorname{Gal}_{F(\zeta_{\ell})})$ is adequate.
\end{itemize}
Let us abbreviate 
$R^{\tt{(min),[crys]}}  := 
R^{\chi,\tt{(min),[crys]}}_{{\mathcal{O}_{\mathcal{K}}},S_{\ell}}(\overline{r})$ for the 
ring parametrizing fixed-determinant deformations of $\overline{r}$ which are unramified
outside $S_{\ell}$, (minimally ramified in $S$) and [FL-crystalline at places dividing $\ell$]. Moreover, let $\mathbb{T}$ (resp. $\mathbb{T}^{\tt{min}}$) denote 
the Hecke algebra $^{\mathcal{O}_{\mathcal{K}}}\mathbb{T}^{S_{\ell}}_{\boldsymbol\omega}(U)_{\mathfrak{m}}$
(resp. 
$_{\underline{\sigma}}^{{\mathcal{O}_{\mathcal{K}}}}\mathbb{T}^{S_{\ell}}_{\boldsymbol\omega}(U)_{{_{\underline{\sigma}}\theta}(\mathfrak{m})}$), where $\mathfrak{m}$
is the maximal ideal such that $\overline{r}_{\mathfrak{m}} \cong \overline{r}$ and where
$\underline{\sigma}
= (\sigma_{\nu})_{\nu\in \widetilde{S}}$
is defined as follows:

For each $\nu\in \tilde{S}$ we can associate an inertial type $\tau_{\nu}$ in the same way
as we did just before Theorem \ref{20150813_min_type_comparison_theorem}.
To each $\tau_{\nu}$ one can associate a representation $\sigma_{\nu} = \sigma(\tau_{\nu})$ 
of $K=\operatorname{GL}_n(\mathcal{O}_{F_{\nu}})$ (which is then the $K$-type of the
$\operatorname{GL}_n(F_{\nu})$-representation associated to an extension 
of $\tau_{\nu}$ to $\operatorname{Gal}_{F_{\nu}}$.) For more details on the construction of 
$\sigma(\tau)$, see \cite[Section 4.6]{Shotton_phd}, \cite[Section 6.5.2]{BellaicheChenevier}, 
\cite{SchneiderZink} and the following remark:
\begin{Rem}
Recall the following:
\begin{itemize}
\item Consider the finite group $\mathfrak{G}= \operatorname{GL}_n(\ell(\nu))$ and its standard
Borel subgroup $\mathfrak{B}\subset \mathfrak{G}$. Then the irreducible constituents of
the (complex) representation $\operatorname{ind}_{\mathfrak{B}}^{\mathfrak{G}}(1)$ are 
called the unipotent representations of $\mathfrak{G}$. These representations can (canonically)
be parametrized by the irreducible representations of the Weyl group $\mathcal{W}(\mathfrak{G})
\cong S_n$, see e.g. \cite[Corollary 4.4]{Prasad}. The irreducible representations of $S_n$ 
in turn can be parametrized by partitions of $n$ in terms of Specht modules, cf. 
\cite{JamesKerber}. In other words, we get a canonical bijection $h: \mathcal{Y}_n \cong
\operatorname{Rep}(\mathfrak{G})^{\tt{uni}}$, where 
$\operatorname{Rep}(\mathfrak{G})^{\tt{uni}}$ denotes the set of all unipotent representations
of $\mathfrak{G}$ up to isomorphism. The map $h$ can be explicitly described in terms of 
induction from Levi subgroups (cf. \cite[Definition 4.34]{Shotton_phd}) and sends $(1,\ldots,1)$ to the trivial representation and $(n)$ to the Steinberg representation.
\item Under the unipotent ramification assumption, the set of inertial types $\mathcal{I}^{\tt{uni}}$ is in bijection with the set $\mathcal{Y}_n$ of partitions of $n$ via $\nabla$
from Section \ref{20170321_sect_min_ram}.
\end{itemize}
Now, we have a decomposition 
\begin{equation*}
\operatorname{ind}_I^K(1) \cong \operatorname{infl}_{\mathfrak{G}}^K 
\operatorname{ind}_{\mathfrak{B}}^{\mathfrak{G}}(1) \cong
\bigoplus_{\Pi\in \operatorname{Rep}(\mathfrak{G})^{\tt{uni}}}
m_{\Pi} \operatorname{infl}_{\mathfrak{G}}^K(\Pi),
\end{equation*}
where $I\subset K$ denotes the Iwahori subgroup, $\operatorname{infl}_{\mathfrak{G}}^K$ denotes
the inflation along the pro-$\ell(\nu)$-radical of $K$ and where the $m_{\Pi}\geq 1$ are suitable
multiplicities. Analogous to \cite[Remark 6.5.2 iii)]{BellaicheChenevier}, one can 
thus check that the assignment $\tau\mapsto \sigma(\tau)$ is described in terms of 
partitions as $\tau \mapsto \operatorname{infl}_{\mathfrak{G}}^K(h\circ \nabla(\tau))$.
Observe that the special case $n=2$ is precisely \cite[Remark 6.5.2 iii)]{BellaicheChenevier}
and \cite[Example 2.17]{Shotton_phd}.
\end{Rem}
We stress that the notions $\mathbb{T}$ and $\mathbb{T}^{\tt{min}}$ depend on the choice of the
place $\lambda$.

\begin{Prop}\label{20170330_factorization_prop}
A map $h: \mathbb{T} \rightarrow \overline{\mathbb{Q}}_{\ell}$ factorizes through 
$\mathbb{T}^{\tt{min}}$ if and only if the concatenation ${h':R^{\tt{crys}}\rightarrow
\mathbb{T} \rightarrow \overline{\mathbb{Q}}_{\ell}}$ factors through 
$R^{\tt{min,crys}}$.
\end{Prop}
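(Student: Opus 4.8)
The plan is to compare the two factorization conditions point by point, using the characterizations of the relevant local deformation rings developed in Section \ref{20170316_sect_defo_conds}. A map $h:\mathbb{T}\to\overline{\mathbb{Q}}_{\ell}$ corresponds (after extending scalars) to an eigenform $\tilde f$ in $\mathcal{S}_{\boldsymbol\omega}(U)$ lying in the Hecke eigenspace cut out by $\mathfrak{m}$, together with the Galois representation $\rho_h := \rho_{\mathfrak{m}}\otimes_{\mathbb{T}}\overline{\mathbb{Q}}_{\ell}$ obtained from Proposition \ref{20170307_Prop_attach_reps}. The concatenation $h':R^{\tt{crys}}\to\mathbb{T}\to\overline{\mathbb{Q}}_{\ell}$ is exactly the $\overline{\mathbb{Q}}_{\ell}$-point of $\operatorname{Spec}R^{\tt{crys}}$ classifying $\rho_h$, which makes sense precisely because part 4 of Proposition \ref{20170307_Prop_attach_reps} guarantees that $\rho_{\mathfrak{m}}|\operatorname{Gal}_{F_w}$ is FL-crystalline at the places $w|\ell$, so $h'$ is well-defined. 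First I would record this dictionary carefully: $h$ factors through $\mathbb{T}^{\tt{min}}$ iff $\tilde f$ lies in the subspace ${}_{\underline\sigma}S_{\boldsymbol\omega}(U,\mathcal{O}_{\mathcal{K}})$, i.e. iff the local component $\pi_{\nu}=\langle\tilde f\rangle_{\nu}$ contains the $K$-type $\sigma_{\nu}=\sigma(\tau_{\nu})$ for every $\nu\in\widetilde S$; and $h'$ factors through $R^{\tt{min,crys}}$ iff $\rho_h$ is minimally ramified at every place in $S$ (it is automatically crystalline at $\ell$, as just noted).

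The core of the argument is therefore a purely local statement at each place $\nu\in\widetilde S$: the representation $\rho_h|\operatorname{Gal}_{F_{\nu}}$ is minimally ramified if and only if the associated automorphic local component $\pi_{\nu}$ contains the type $\sigma(\tau_{\nu})$. Here I would invoke Theorem \ref{20150813_min_type_comparison_theorem}, which (under the unipotent-ramification hypothesis in force) says that a lift of $\overline{\rho}_{\nu}$ is minimally ramified iff it is of inertial type $\tau_{\nu}=\nabla(\underline\lambda_{\nu})$, where $\underline\lambda_{\nu}$ is the Young diagram recording the Jordan type of $\overline{\rho}_{\nu}(\zeta)$. On the automorphic side, by the local Langlands compatibility built into Proposition \ref{20170307_Prop_attach_reps} (characterizing $\rho_{\mathfrak{m}}$ via Hecke eigenvalues at split places, hence via the Satake/Langlands parameter of $\pi_{\nu}$), the Weil--Deligne parameter of $\pi_{\nu}$ has inertial type equal to the inertial type of $\rho_h|\operatorname{Gal}_{F_{\nu}}$. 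Combining this with the construction $\tau\mapsto\sigma(\tau)=\operatorname{infl}_{\mathfrak G}^K(h\circ\nabla(\tau))$ recalled in the Remark preceding the Proposition — which is precisely the type-theoretic translation of ``$\pi_{\nu}$ has inertial type $\tau_{\nu}$'' via Schneider--Zink / Bushnell--Kutzko theory — one gets: $\pi_{\nu}\supset\sigma(\tau_{\nu})$ as a $K$-representation iff $\rho_h|\operatorname{Gal}_{F_{\nu}}$ has inertial type $\tau_{\nu}$ iff $\rho_h|\operatorname{Gal}_{F_{\nu}}$ is minimally ramified. Running this equivalence over all $\nu\in\widetilde S$ and feeding it back into the dictionary from the first paragraph yields the claim.

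The main obstacle, and the step deserving the most care, is the last equivalence in that chain: matching ``$\pi_{\nu}$ contains $\sigma(\tau_{\nu})$ as a $K$-type'' with ``the local Galois parameter has inertial type $\tau_{\nu}$''. This is where one must be precise about which convention for inertial types is in use (the excerpt explicitly warns that the definition following \cite{Shotton_phd} differs from the usual one, e.g. in \cite{GeeKisin}), and must use the explicit description of $\sigma(\tau)$ in terms of unipotent representations of $\operatorname{GL}_n(\ell(\nu))$ and Specht modules, so that ``contains $\sigma(\tau_{\nu})$'' is genuinely equivalent to ``is of type $\tau_{\nu}$'' and not merely to ``is of type $\tau$ for some $\tau$ refining $\tau_{\nu}$''. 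The multiplicity-one and exhaustiveness features of the decomposition $\operatorname{ind}_I^K(1)\cong\bigoplus_{\Pi}m_{\Pi}\operatorname{infl}_{\mathfrak G}^K(\Pi)$, together with Lemma \ref{20150812_young_conjugation_lemma} identifying the combinatorics of kernel-sequences with conjugation of Young diagrams, are exactly what make this work; I would cite \cite[Section 4.6]{Shotton_phd} and \cite{SchneiderZink} for the compatibility statement and otherwise treat the verification as routine.
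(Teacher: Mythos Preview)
Your proposal is correct and follows essentially the same route as the paper's own proof: translate $h$ into an eigenform, use Theorem \ref{20150813_min_type_comparison_theorem} to identify minimal ramification with the inertial type $\tau_{\nu}$, and then pass between the Galois-side condition and the automorphic-side $K$-type condition $\sigma(\tau_{\nu})$. The paper compresses your two middle paragraphs into a single clause (``by the above and Theorem \ref{20150813_min_type_comparison_theorem}''), whereas you are more explicit about the local Langlands compatibility and the precise meaning of ``contains $\sigma(\tau_{\nu})$''; but the underlying argument is the same.
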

\begin{proof}
The map $h$ corresponds to an automorphic form $g\in S_{\boldsymbol\omega}(U,{\mathcal{O}_{\mathcal{K}}})$ such that
$\overline{r}_{\langle g\rangle} \cong \overline{r}$. By the above and Theorem
\ref{20150813_min_type_comparison_theorem}, ${\rho}_{\langle g\rangle,\nu}$ (for 
$\nu\in S$) is a minimally ramified lift of $\overline{\rho}_{\nu}$ if and only 
if $\langle g\rangle_{\nu}$ is of type $\sigma_{\nu}$. Thus, $h$ factorizes through 
$\mathbb{T}^{\tt{min}}$ if and only if $g\in \,
_{\underline{\sigma}}S_{\boldsymbol\omega}(U, {\mathcal{O}_{\mathcal{K}}})$, if and only if $r_{\langle g\rangle}$ 
is (as a lift of $\overline{r}$) minimally ramified in $S$, if and only if the associated
map $h': R^{\tt{crys}} \rightarrow \overline{\mathbb{Q}}_{\ell}$ factorizes through 
$R^{\tt{min,crys}}$.
\end{proof}

\begin{Thm}\label{20170210_main_rttheorem}
$R^{\tt{min,crys}}$ is finite flat over $\mathcal{O}_{\mathcal{K}}$, so in particular there exists
a characteristic-$0$ point of $\operatorname{Spec}R^{\tt{min,crys}}$. Moreover, we have isomorphisms
\begin{equation*}
R^{\tt{min,crys}} \cong R^{\tt{min,crys}}_{\tt{red}} \cong \mathbb{T}^{\tt{min}}.
\end{equation*}
\end{Thm}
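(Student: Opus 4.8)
The plan is to run the standard Taylor--Wiles--Kisin ``$R=T$'' argument, with the modularity lifting theorem of \cite{BLGGT} as the key input. First I would record the Hecke side. The space ${}_{\underline{\sigma}}S_{\boldsymbol\omega}(U,\mathcal{O}_{\mathcal{K}})$, localised at the maximal ideal cutting out $\overline{r}$, is finite and free over $\mathcal{O}_{\mathcal{K}}$, and it is nonzero: since $\rho$ is minimally ramified at every $\nu\in S$, Theorem \ref{20150813_min_type_comparison_theorem} together with local--global compatibility shows that $\langle f\rangle_{\nu}$ is of type $\sigma_{\nu}=\sigma(\tau_{\nu})$, so the avatar of $\pi=\langle f\rangle$ lies in this space with Hecke eigensystem $\mathfrak{m}$. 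Hence $\mathbb{T}^{\tt{min}}$ is a nonzero finite free $\mathcal{O}_{\mathcal{K}}$-algebra; being $\mathcal{O}_{\mathcal{K}}$-torsion free it embeds into $\mathbb{T}^{\tt{min}}\otimes_{\mathcal{O}_{\mathcal{K}}}\mathcal{K}$, a nonzero product of number fields, so $\mathbb{T}^{\tt{min}}$ is reduced and $\operatorname{Spec}\mathbb{T}^{\tt{min}}$ has a characteristic-$0$ point.

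Next I would produce a surjection $\phi\colon R^{\tt{min,crys}}\twoheadrightarrow\mathbb{T}^{\tt{min}}$. Pushing $r_{\mathfrak{m}}$ of Proposition \ref{20170307_Prop_attach_reps} forward along $\mathbb{T}\twoheadrightarrow\mathbb{T}^{\tt{min}}$ gives a deformation of $\overline{r}$ valued in $\mathcal{G}_n(\mathbb{T}^{\tt{min}})$. Having arranged $\chi$ to be the multiplier $\epsilon_{\ell}^{1-n}\delta_{F|F^+}^{\mu_{\mathfrak{m}}}$ of the automorphic Galois representations, this deformation has multiplier $\chi$, is unramified outside $S_{\ell}$, is FL-crystalline at all $\nu\mid\ell$ by part 4 of Proposition \ref{20170307_Prop_attach_reps} (the weights of $\boldsymbol\omega$ lying in the Fontaine--Laffaille range by the running hypotheses), and is minimally ramified at each $\nu\in S$: indeed $\mathbb{T}^{\tt{min}}$ is reduced, every $\overline{\mathbb{Q}}_{\ell}$-point corresponds to an eigenform in ${}_{\underline{\sigma}}S_{\boldsymbol\omega}(U,\mathcal{O}_{\mathcal{K}})$ whose Galois representation is of inertial type $\tau_{\nu}$ at $\nu$, hence minimally ramified there by Theorem \ref{20150813_min_type_comparison_theorem}, and minimal ramification is a closed deformation condition. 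Thus this deformation is classified by $R^{\tt{min,crys}}$, and the resulting $\phi$ is surjective because, by part 2 of Proposition \ref{20170307_Prop_attach_reps}, the topological $\mathcal{O}_{\mathcal{K}}$-algebra generators $T^{(j)}_{F_w}$ of $\mathbb{T}^{\tt{min}}$ are coefficients of characteristic polynomials of Frobenii in this representation. In particular $\operatorname{Spec}\mathbb{T}^{\tt{min}}\hookrightarrow\operatorname{Spec}R^{\tt{min,crys}}$, so the characteristic-$0$ point already found on the left transports to one on $\operatorname{Spec}R^{\tt{min,crys}}$.

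The core step is then the automorphy lifting theorem of \cite{BLGGT}: it applies since $\overline{\rho}=\overline{\rho}_{\pi,\lambda}$ is residually automorphic, absolutely irreducible, and $\overline{\rho}(\operatorname{Gal}_{F(\zeta_{\ell})})$ is adequate, while any $\overline{\mathbb{Q}}_{\ell}$-point $x$ of $R^{\tt{min,crys}}$ gives a continuous $\rho_{x}$ that is conjugate self-dual, crystalline with the regular Hodge--Tate weights of $\rho$ above $\ell$, and minimally (hence, after the solvable base change already in force, unipotently) ramified at $S$. It follows that $\rho_{x}\cong\rho_{\langle g\rangle}$ for an automorphic $g$ of weight $\boldsymbol\omega$ and level $U$; as $\rho_{x}$ is minimally ramified at $S$, Proposition \ref{20170330_factorization_prop} shows $x$ factors through $\mathbb{T}^{\tt{min}}$, so every closed point of $\operatorname{Spec}R^{\tt{min,crys}}[1/\ell]$ lies on $\operatorname{Spec}\mathbb{T}^{\tt{min}}[1/\ell]$. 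To pass to the ring-level statement and to obtain finiteness, $\mathcal{O}_{\mathcal{K}}$-flatness and reducedness of $R^{\tt{min,crys}}$ itself, I would invoke the structure of the Taylor--Wiles--Kisin patching used in \cite{BLGGT}: the patched ring $R_{\infty}$ is a power series ring over $\widehat{\bigotimes}_{\nu\in S_{\ell}}\widetilde{R}_{\nu}$, with each $\widetilde{R}_{\nu}$ formally smooth over $\mathcal{O}_{\mathcal{K}}$ of the predicted dimension (Lemma \ref{20150712_crystallinity_lemma} for $\nu\mid\ell$, Proposition \ref{20170526_newProponmin} for $\nu\in S$, Proposition \ref{archProp} for $\nu\mid\infty$), hence a regular domain; the patched module $M_{\infty}$ is maximal Cohen--Macaulay and faithful over $R_{\infty}$, the faithfulness resting on the Taylor--Wiles numerical coincidence recorded in (\ref{20160908_TWFormula}). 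Consequently $R^{\tt{min,crys}}$ is a quotient of $R_{\infty}$ by a regular sequence, so a finite, $\mathcal{O}_{\mathcal{K}}$-flat complete intersection whose generic fibre is reduced (by regularity of the $\widetilde{R}_{\nu}[1/\ell]$), giving $R^{\tt{min,crys}}=R^{\tt{min,crys}}_{\tt{red}}$; and $M_{\infty}$ descends to a faithful $R^{\tt{min,crys}}$-module on which $\mathbb{T}^{\tt{min}}$ acts through $\phi$, forcing $\phi$ to be injective, hence an isomorphism.

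The main obstacle I expect is the careful bookkeeping needed to invoke \cite{BLGGT} in the $\mathcal{G}_{n}$/CM-field setting with exactly these local conditions: realising the FL-crystalline condition at $\ell$ as a component of a crystalline-lifts-of-fixed-weight condition (and checking the Fontaine--Laffaille bounds on $\boldsymbol\omega$), matching minimal ramification at $S$ with the fixed $K$-type $\sigma(\tau_{\nu})$ through the local Langlands and types dictionary together with Theorem \ref{20150813_min_type_comparison_theorem}, and then extracting finiteness, $\mathcal{O}_{\mathcal{K}}$-flatness and reducedness of $R^{\tt{min,crys}}$ from the patching argument, which ultimately depends on all the local deformation rings being formally smooth of the dimensions appearing in conditions \textbf{(crys)}, \textbf{(min)} and \textbf{($\infty$)} of Section \ref{20170405_section_on_framework} and on the Taylor--Wiles identity (\ref{20160908_TWFormula}).
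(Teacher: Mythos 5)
Your argument is sound in outline but proceeds by a noticeably different route from the paper. You propose to re-run the Taylor--Wiles--Kisin patching machinery yourself: construct $R_{\infty}$ as a power series ring over $\widehat{\bigotimes}_{\nu}\widetilde{R}_{\nu}$, verify its regularity via the local smoothness results, produce a maximal Cohen--Macaulay patched module $M_{\infty}$, deduce faithfulness from the Taylor--Wiles numerical coincidence, and extract finiteness, $\mathcal{O}_{\mathcal{K}}$-flatness, reducedness and $\phi$ being an isomorphism all at once. The paper instead treats the patching theorems of \cite{BLGGT} as black boxes: it cites \cite[Theorem 2.3.2]{BLGGT} solely for the finiteness of $R^{\tt{min,crys}}/(\ell)$, combines this with the balanced presentation $\mathcal{O}_{\mathcal{K}}\llbracket X_1,\ldots,X_m\rrbracket/(f_1,\ldots,f_m)$ supplied by Corollary \ref{2017_cor_on_repn} and a Khare--Wintenberger-style commutative-algebra argument to obtain finite flatness, cites \cite[Theorem 2.3.1]{BLGGT} only for the automorphy of $\overline{\mathbb{Q}}_{\ell}$-points, and --- crucially --- invokes \cite[Theorem 3.1.3]{Allen_polarized} to kill the Zariski tangent spaces of the localizations of $R^{\tt{min,crys}}[1/\ell]$, so that $R^{\tt{min,crys}}[1/\ell]$ is a finite product of fields. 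The isomorphism then falls out of a split short exact sequence of finite free $\mathcal{O}_{\mathcal{K}}$-modules. Your route buys self-containedness and makes the mechanism visible, but it largely re-proves the contents of \cite{BLGGT}; the paper's route is shorter and cleaner at the cost of an extra black box (Allen's smoothness-of-automorphic-points result), which you do not use and do not need because your patched module replaces it. One small logical wrinkle in your write-up: you state that the generic fibre of $R^{\tt{min,crys}}$ is reduced ``by regularity of the $\widetilde{R}_{\nu}[1/\ell]$'' before establishing the isomorphism $\phi$, but a complete intersection quotient of a regular ring need not be reduced; in your scheme reducedness should instead be read off after $\phi$ is shown to be an isomorphism, since $\mathbb{T}^{\tt{min}}$ is reduced by construction. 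This is only an ordering issue, not a genuine gap.
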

\begin{proof}
We first remark that $R^{\tt{min,crys}}/(\ell)$ is of finite cardinality, or, equivalently (by Nakayama's Lemma), that $R^{\tt{min,crys}}$ is finitely generated as a ${\mathcal{O}_{\mathcal{K}}}$-module. 
This follows directly from \cite[Theorem 2.3.2]{BLGGT}, 
as we know that the local deformation rings $R^{\boxempty, \chi_{\nu},\tt{crys}}(\overline{\rho}_{\nu})$ and 
$R^{\boxempty, \chi_{\nu},\tt{min}}(\overline{\rho}_{\nu})$
are smooth, hence correspond to irreducible components of 
$\operatorname{Spec}R^{\boxempty, \chi_{\nu}}(\overline{\rho}_{\nu})$ on which the local lifts $\rho_{\nu}$ live.

Next, we remark that by Corollary \ref{2017_cor_on_repn} (together with the
 smoothness-results Lemma \ref{20150712_crystallinity_lemma}, Proposition \ref{20170526_newProponmin}, Proposition \ref{archProp}, the
identity $\operatorname{dim}(\mathfrak{gl}_n^{c_{\nu} = -1})
= \frac{n(n+1)}{2}$ and Remark \ref{20170406_remark_on_mu_m} below)
there exists a presentation
\begin{equation*}
R^{\tt{min, crys}} \cong {\mathcal{O}_{\mathcal{K}}}\llbracket X_1, \ldots, X_m\rrbracket/(f_1, \ldots, f_m).
\end{equation*}
for some $m\in \mathbb{N}_0$.

Using this presentation and the finiteness of $R^{\tt{min, crys}}/(\ell)$, 
it follows as in the proof of Theorem 3.7 of 
\cite{KW_on_serre} or of Lemma 2 in B\"ockle's Appendix to 
\cite{Khare_isos} that
$R^{\tt{min, crys}}$ is finite flat over ${\mathcal{O}_{\mathcal{K}}}$, hence free and finitely 
generated over ${\mathcal{O}_{\mathcal{K}}}$. This proves the first claim.

As a second step, we remark that any morphism $f:R^{\tt{min, crys}} \rightarrow \overline{\mathbb{Q}}_{\ell}$ 
factorizes over $\mathbb{T}^{\tt{min}}$:
By \cite[Thm 2.3.1]{BLGGT}, such an $f$ factorizes over
the non-minimal Hecke algebra $\mathbb{T}$.
Therefore Proposition \ref{20170330_factorization_prop}
applies.

Now, 
$
R^{\tt{min, crys}}[\frac{1}{\ell}]
= R^{\tt{min, crys}}\otimes_{\mathcal{O}_{\mathcal{K}}} \mathcal{K}
$
is a finite $\mathcal{K}$-algebra, hence $R^{\tt{min, crys}}[\frac{1}{\ell}]$ is Artinian, see e.g. \cite[Exercise 8.3]{AtiyahMacdo}. 
Therefore, $R^{\tt{min, crys}}[\frac{1}{\ell}]$
can be decomposed into a product of finitely many local
Artinian rings
\begin{equation*}
R^{\tt{min, crys}}\left[\frac{1}{\ell}\right] \cong 
\oplus_{\mathfrak{p}}\, R^{\tt{min, crys}}\left[\frac{1}{\ell}\right]_{\mathfrak{p}}
\end{equation*}
and by \cite[Theorem 3.1.3]{Allen_polarized} the tangent space ${\mathfrak{p}}/{\mathfrak{p}}^2$ of
each $R^{\tt{min, crys}}[\frac{1}{\ell}]_{\mathfrak{p}}$ vanishes. 
Hence, $\mathfrak{p}= {\mathfrak{p}}^2$, and it follows from Nakayama's lemma that
$\mathfrak{p} = 0$, i.e. that $R^{\tt{min, crys}}[\frac{1}{\ell}]_{\mathfrak{p}}$ is a field. Thus, $R^{\tt{min, crys}}[\frac{1}{\ell}]$ is a finite product of fields.
The same is true for $\mathbb{T}^{\tt{min}}[\frac{1}{\ell}]$: As $\mathbb{T}^{\tt{min}}[\frac{1}{\ell}]$ is finitely-generated, its Jacobson radical equals its nilradical, which
vanishes because $\mathbb{T}^{\tt{min}}$ is reduced. Hence $\mathbb{T}^{\tt{min}}[\frac{1}{\ell}]$ is semisimple, i.e. a product of finitely many fields. 

Consider the exact sequence
\begin{equation}\label{20170526_yet_another_ex_seq}
0 \rightarrow \operatorname{ker}(\varphi) \rightarrow R^{\tt{min, crys}}
\overset{\varphi}{\rightarrow} \mathbb{T}^{\tt{min}} \rightarrow 0,
\end{equation}
where $\varphi$ denotes the canonical projection. It follows from the above observation 
about  $R^{\tt{min, crys}}[\frac{1}{\ell}]$ and $\mathbb{T}^{\tt{min}}[\frac{1}{\ell}]$
together with Proposition \ref{20170330_factorization_prop} that $\varphi[\frac{1}{\ell}]$ is
an isomorphism. Moreover, as both $R^{\tt{min, crys}}$ and $\mathbb{T}^{\tt{min}}$ are finite
free over ${\mathcal{O}_{\mathcal{K}}}$, 
(\ref{20170526_yet_another_ex_seq}) is a split exact sequence
of free $\mathcal{O}_{\mathcal{K}}$-modules.
Hence, $\operatorname{ker}(\varphi)=0$ 
since $\varphi[\frac{1}{\ell}]$ is an isomorphism.
%
%
This completes the proof of the theorem.\qedhere
\end{proof}

\begin{Cor}\label{20170210_pushout_cor}
\begin{equation*}
\xymatrix{
R^{\tt{crys}} \ar[r]\ar[d]& R^{\tt{min, crys}}\ar[d]\\
\mathbb{T}\ar[r]& \mathbb{T}^{\tt{min}}
}
\end{equation*}
is a pushout diagram.
\end{Cor}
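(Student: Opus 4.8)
The plan is to compute the pushout directly and recognise it. As recalled in Section~\ref{20170405_section_liftings_and_defos}, pushouts in $\mathcal{C}_{\mathcal{O}_{\mathcal{K}}}$ are computed by the completed tensor product, so what must be shown is that the natural map $\mathbb{T}\,\widehat{\otimes}_{R^{\tt{crys}}}R^{\tt{min,crys}}\to\mathbb{T}^{\tt{min}}$ is an isomorphism. Here the left vertical arrow $R^{\tt{crys}}\to\mathbb{T}$ is the morphism classifying $r_{\mathfrak{m}}$ (by Proposition~\ref{20170307_Prop_attach_reps} this is a fixed-determinant deformation of $\overline r$, unramified outside $S_{\ell}$ and FL-crystalline above $\ell$, hence does give a morphism out of $R^{\tt{crys}}$), the top arrow $R^{\tt{crys}}\twoheadrightarrow R^{\tt{min,crys}}$ is the quotient imposing minimal ramification at the places of $S$, the bottom arrow $\mathbb{T}\twoheadrightarrow\mathbb{T}^{\tt{min}}$ is $_{\underline{\sigma}}\theta$, and the right vertical arrow $R^{\tt{min,crys}}\to\mathbb{T}^{\tt{min}}$ is the isomorphism of Theorem~\ref{20170210_main_rttheorem}.

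First I would record that $R^{\tt{crys}}\twoheadrightarrow\mathbb{T}$ is surjective. Indeed, for a split place $\tilde{\nu}\notin S_{\ell}$ the coefficients of the characteristic polynomial of $\rho_{\mathfrak{m}}(\operatorname{Frob}_{\tilde{\nu}})$ are, by Proposition~\ref{20170307_Prop_attach_reps}.2, the indicated powers of $\textbf{N}\tilde{\nu}$ times the Hecke operators $T^{(j)}_{\tilde{\nu}}$; they lie in the image of $R^{\tt{crys}}\to\mathbb{T}$, being the images of the coefficients of $\operatorname{charPoly}$ of a Frobenius element in the universal deformation over $R^{\tt{crys}}$, and since $\textbf{N}\tilde{\nu}$ is a unit in $\mathbb{T}$ the operators $T^{(j)}_{\tilde{\nu}}$ themselves lie in the image; these operators topologically generate $\mathbb{T}$ over $\mathcal{O}_{\mathcal{K}}$. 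Write $\mathfrak{a}=\ker(R^{\tt{crys}}\twoheadrightarrow\mathbb{T})$ and $I=\ker(R^{\tt{crys}}\twoheadrightarrow R^{\tt{min,crys}})$.

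The heart of the matter is that the square commutes. Using that $\overline r$ is absolutely irreducible (so $D^{\chi}$ is representable and a deformation determines a unique classifying morphism), it suffices to see that the two composites $R^{\tt{crys}}\to\mathbb{T}^{\tt{min}}$ classify the same deformation of $\overline r$. The composite through $R^{\tt{min,crys}}$ classifies the Galois representation $r_{\mathfrak{m}^{\tt{min}}}$ attached to $\mathbb{T}^{\tt{min}}$, since the isomorphism of Theorem~\ref{20170210_main_rttheorem} is, by its construction, the one induced by $r_{\mathfrak{m}^{\tt{min}}}$ being a minimally ramified, FL-crystalline, fixed-determinant deformation of $\overline r$. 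The composite through $\mathbb{T}$ classifies the push-forward of $r_{\mathfrak{m}}$ along $_{\underline{\sigma}}\theta$, and this push-forward agrees with $r_{\mathfrak{m}^{\tt{min}}}$: both are unramified outside a finite set and have equal characteristic polynomials of Frobenii at all split places (the operators $T^{(j)}_{\tilde{\nu}}$ are carried to the operators $T^{(j)}_{\tilde{\nu}}$ by $_{\underline{\sigma}}\theta$), hence coincide by the uniqueness clause of Proposition~\ref{20170307_Prop_attach_reps}. (Alternatively, one may verify commutativity after composing with the projections $\mathbb{T}^{\tt{min}}\to\overline{\mathbb{Q}}_{\ell}$ onto the minimal-type eigenforms — legitimate because $\mathbb{T}^{\tt{min}}$ is reduced and $\mathcal{O}_{\mathcal{K}}$-flat, so it embeds into their product — and then invoke Proposition~\ref{20170330_factorization_prop}.)

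Granting commutativity, surjectivity of $R^{\tt{crys}}\twoheadrightarrow\mathbb{T}$ forces $\mathfrak{a}\subseteq\ker(R^{\tt{crys}}\to\mathbb{T}^{\tt{min}})$, and since the right vertical arrow is an isomorphism this kernel equals $I$; hence $\mathfrak{a}\subseteq I$. Therefore $\mathbb{T}\,\widehat{\otimes}_{R^{\tt{crys}}}R^{\tt{min,crys}}=\mathbb{T}/I\mathbb{T}=R^{\tt{crys}}/(\mathfrak{a}+I)=R^{\tt{crys}}/I=R^{\tt{min,crys}}$, and tracing through the identifications one sees that the resulting map to $\mathbb{T}^{\tt{min}}$ is exactly the isomorphism of Theorem~\ref{20170210_main_rttheorem} and is compatible with the maps $\mathbb{T}\to\mathbb{T}^{\tt{min}}$ and $R^{\tt{min,crys}}\to\mathbb{T}^{\tt{min}}$ of the square; so the square is a pushout. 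The only genuine obstacle here is the commutativity step: it is essentially forced by the compatibility of $\rho_{\mathfrak{m}}$, $\rho_{\mathfrak{m}^{\tt{min}}}$ and the universal crystalline deformation, but making it precise requires care about the normalisation of the three morphisms out of $R^{\tt{crys}}$, about the $R=T$ isomorphism of Theorem~\ref{20170210_main_rttheorem}, and about the localisations at $\mathfrak{m}$ and $_{\underline{\sigma}}\theta(\mathfrak{m})$.
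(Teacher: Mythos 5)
The paper offers no explicit proof of this corollary, presenting it as an immediate consequence of Theorem~\ref{20170210_main_rttheorem}; your reconstruction fills in exactly the details the authors leave tacit, and it is correct. Your plan --- reduce to showing that the natural map from the completed tensor product to $\mathbb{T}^{\tt{min}}$ is an isomorphism, note that both horizontal and vertical arrows out of $R^{\tt{crys}}$ are surjective so the pushout is $R^{\tt{crys}}/(\mathfrak{a}+I)$, and then establish $\mathfrak{a}\subseteq I$ from commutativity plus the $R=T$ isomorphism --- is precisely the argument one would expect and the one that matches the paper's setup via Proposition~\ref{20170330_factorization_prop} and the definition of the arrow $R^{\tt{min,crys}}\to\mathbb{T}^{\tt{min}}$ in the proof of Theorem~\ref{20170210_main_rttheorem}.

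Two small remarks on exposition rather than substance. First, your commutativity argument is a touch more elaborate than strictly necessary: the arrow $R^{\tt{min,crys}}\to\mathbb{T}^{\tt{min}}$ of Theorem~\ref{20170210_main_rttheorem} is, by its very construction, the classifying morphism of $_{\underline{\sigma}}\theta\circ r_{\mathfrak{m}}$, so the two composites $R^{\tt{crys}}\to\mathbb{T}^{\tt{min}}$ classify the same deformation essentially by definition, and the detour through the uniqueness clause of Proposition~\ref{20170307_Prop_attach_reps} (or through $\overline{\mathbb{Q}}_{\ell}$-points and Proposition~\ref{20170330_factorization_prop}) is a legitimate but slightly roundabout way of saying this. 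Second, where you write that surjectivity of $R^{\tt{crys}}\twoheadrightarrow\mathbb{T}$ ``forces'' $\mathfrak{a}\subseteq\ker(R^{\tt{crys}}\to\mathbb{T}^{\tt{min}})$, the containment holds simply because the composite factors through $\mathbb{T}$; surjectivity is not what's doing the work there (though it is what makes the pushout a quotient of $R^{\tt{crys}}$). Neither of these affects the validity of the argument.
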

\begin{Rem}\label{20170406_remark_on_mu_m}
We remark that for each $\nu\in \Omega_{\infty}$ the local deformation
ring $^{L^+_{\nu}}R^{\boxempty, \chi_{\nu}}_{W(k_{\lambda})}(\overline{r}_{\lambda, \nu})$ is 
formally smooth of relative dimension $d_{\nu}^{\boxempty} = 
\operatorname{dim}(\mathfrak{b}^{\tt{der}}_n)$: We get from Proposition \ref{archProp} that 
$^{L^+_{\nu}}R^{\boxempty, \chi_{\nu}}_{W(k_{\lambda})}(\overline{r}_{\lambda, \nu})$ is
formally smooth of relative dimension $\operatorname{dim}\bigl( (\mathfrak{g}_n)^{c_{\nu} = 
-1}\bigr) = \operatorname{dim}\bigl (\mathfrak{gl}_n)^{c_{\nu} = -1})$, where 
$c_{\nu}$ is the non-trivial element of the decomposition group at $\nu$. By construction
(see Lemma 2.1.4 and Proposition 3.4.4 of \cite{CHT}), the image
of $\overline{r}_{\lambda}(c_{\nu})$ is not contained in $\operatorname{GL}_n \times
\operatorname{GL}_1$. Moreover, 
\begin{equation*}
\mathtt{m}\circ \overline{r}_{\lambda}(c_{\nu}) = \overline{\epsilon}_{\ell}^{1-n}(c_{\nu})
\delta_{F|F^+}^{\mu_{\mathfrak{m}}}(c_{\nu}) = (-1)^{\mu_{\mathfrak{m}+ p}},
\end{equation*}
where $p= n+1 (\operatorname{mod} 2) \in \mathbb{Z}/2\mathbb{Z}$, where $\epsilon_{\ell}$
denotes the cyclotomic character (sending $c_{\nu}$ to $-1$), where $\delta_{F|F^+}$
denotes the non-trivial character of $\operatorname{Gal}(F|F^+)$ and where $\mu_{\mathfrak{m}}$
is a suitable element of $\mathbb{Z}/2\mathbb{Z}$. As in \cite[Corollary 6.9]{Thorne_small}, we 
get 
$\mu_{\mathfrak{m}} \equiv n (\operatorname{mod} 2)$, so we have 
$\texttt{m}\circ \overline{r}_{\lambda}(c_{\nu}) = -1$, independent of the parity of $n$.
Using \cite[Lemma 2.1.3]{CHT}, this implies $\operatorname{dim}(\mathfrak{gl}_n^{c_{\nu} = -1})
= \frac{n(n+1)}{2}= \operatorname{dim}(\mathfrak{b}_n^{\tt{der}})$.
\end{Rem}

\subsection{A $T=O$-theorem}
Now, let $\mathcal{E} \supset \mathcal{E}(U)$ be a number field with ring of integers $\mathcal{O}_{\mathcal{E}}$.
For each $\lambda\in \operatorname{Pl}^{\textnormal{fin}}_\mathcal{E}$ such that $\ell := \ell(\lambda)\gg 0$,
let us fix an $F$-big enough extension $\mathcal{K}_{\lambda}$ of $\mathcal{E}_{\lambda}$ and let us 
abbreviate 
\begin{equation*}
\mathbf{T} := \, ^{\mathcal{O}_{\mathcal{E}}}\mathbf{T}^{\mathcal{T}}_{\boldsymbol\omega}(U) \text { and }
\mathbf{T}_{\lambda} := 
\mathcal{O}_{\mathcal{K}_{\lambda}} \otimes_{\mathcal{O}_{\mathcal{E}}}
\, \mathbf{T} \cong
\, ^{\mathcal{O}_{\mathcal{K}_\lambda}}\mathbb{T}^{\mathcal{T}_{\ell}}_{\boldsymbol\omega_{\ell}}(U).
\end{equation*}
Observe the following about the isomorphism on the right hand side: Using that 
$\mathcal{S}_{\boldsymbol\omega}(U)$ admits a basis of eigenforms, we can embed
$\mathbf{T}$ into a product of finitely many $\mathcal{O}_{\mathcal{E}(U)}$. Hence, 
$\mathbf{T}$ is finitely generated as a $\mathbb{Z}$-module, hence as a $\mathbb{Z}$-algebra.
It follows that there exists a Sturm-like bound $C\in \mathbb{N}$ such that 
$\mathbf{T}$ is already generated by those $T^{(j)}_{F_w}$ with $\ell(w) \leq C$. Hence, using
the compatibility from (\ref{20170224_some_iso}), we get
\begin{equation*}
\mathcal{O}_{\mathcal{K}_{\lambda}} \otimes_{\mathcal{O}_{\mathcal{E}}}
\, \mathbf{T} \cong
\mathcal{O}_{\mathcal{K}_{\lambda}} \otimes_{\mathcal{O}_{\mathcal{E}}}
\,^{\mathcal{O}_{\mathcal{E}}}\mathbf{T}^{\mathcal{T}_{\ell}}_{\boldsymbol\omega}(U) \cong
\, ^{\mathcal{O}_{{\mathcal{K}}_{\lambda}}}\mathbb{T}^{\mathcal{T}_{\ell}}_{\boldsymbol\omega_{\ell}}(U)
\end{equation*}
as long as $\ell>C$.
Then we have:
\begin{Lem}
For almost all $\lambda$ (the failure set depending only on $\mathbf{T}$), 
$\mathbf{T}_{\lambda}$ decomposes as a product of finitely many complete discrete 
valuation rings, finite over $\mathbb{Z}_{\ell}$.
\end{Lem}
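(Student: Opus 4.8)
The plan is to reduce the statement to the classical fact that an order in an étale $\mathbb{Q}$-algebra agrees with its normalisation after inverting all but finitely many rational primes, and then to check that the extra base change along $\mathcal{O}_{\mathcal{E}}\to\mathcal{O}_{\mathcal{K}_\lambda}$ cannot spoil this once $\ell$ is large.

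First I would record the structure of $\mathbf{T}$ itself. As observed above, $\mathcal{S}_{\boldsymbol\omega}(U)$ has a basis of eigenforms, so $\mathbf{T}=\,^{\mathcal{O}_{\mathcal{E}}}\mathbf{T}^{\mathcal{T}}_{\boldsymbol\omega}(U)$ is finite as a $\mathbb{Z}$-module and embeds into a finite product of rings of integers of number fields; in particular it is reduced, and $\mathbf{T}_{\mathbb{Q}}:=\mathbf{T}\otimes_{\mathbb{Z}}\mathbb{Q}$ is a finite reduced $\mathbb{Q}$-algebra, hence a finite product $\prod_i L_i$ of number fields. Let $\widetilde{\mathbf{T}}=\prod_i\mathcal{O}_{L_i}$ be the normalisation of $\mathbf{T}$ inside $\mathbf{T}_{\mathbb{Q}}$; it is a finite $\mathbf{T}$-module, so $\widetilde{\mathbf{T}}/\mathbf{T}$ is a finite abelian group, and since $\mathbf{T}$ is an $\mathcal{O}_{\mathcal{E}}$-algebra each $\mathcal{E}\to L_i$ is a finite extension of number fields. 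I would then single out the finite set $\mathcal{N}$ of rational primes that divide $\#(\widetilde{\mathbf{T}}/\mathbf{T})$, or ramify in some $L_i/\mathbb{Q}$, or ramify in $\mathcal{E}/\mathbb{Q}$; this depends only on $\mathbf{T}$ together with its $\mathcal{O}_{\mathcal{E}}$-structure, and the claim is that the conclusion holds for every $\lambda$ with $\ell:=\ell(\lambda)\notin\mathcal{N}$.

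Fix such a $\lambda$. The structure map $\mathcal{O}_{\mathcal{E}}\to\mathcal{O}_{\mathcal{K}_\lambda}$ factors through the completion $\mathcal{O}_{\mathcal{E},\lambda}$, and $\mathcal{O}_{\mathcal{K}_\lambda}$ is flat over $\mathcal{O}_{\mathcal{E}}$ (torsion-free, hence flat, over the discrete valuation ring $\mathcal{O}_{\mathcal{E},\lambda}$, which is flat over $\mathcal{O}_{\mathcal{E}}$). Applying $\mathcal{O}_{\mathcal{K}_\lambda}\otimes_{\mathcal{O}_{\mathcal{E}}}(-)$ to $0\to\mathbf{T}\to\widetilde{\mathbf{T}}\to\widetilde{\mathbf{T}}/\mathbf{T}\to 0$, and using that $\widetilde{\mathbf{T}}/\mathbf{T}$ is finite of order prime to $\ell$ (so it is annihilated by $\mathcal{O}_{\mathcal{K}_\lambda}\otimes_{\mathcal{O}_{\mathcal{E}}}(-)$, which is supported over $\lambda$), gives
\begin{equation*}
\mathbf{T}_\lambda=\mathcal{O}_{\mathcal{K}_\lambda}\otimes_{\mathcal{O}_{\mathcal{E}}}\mathbf{T}\;\cong\;\mathcal{O}_{\mathcal{K}_\lambda}\otimes_{\mathcal{O}_{\mathcal{E}}}\widetilde{\mathbf{T}}\;=\;\prod_i\Bigl(\mathcal{O}_{\mathcal{K}_\lambda}\otimes_{\mathcal{O}_{\mathcal{E}}}\mathcal{O}_{L_i}\Bigr),
\end{equation*}
and for each $i$ one has $\mathcal{O}_{\mathcal{K}_\lambda}\otimes_{\mathcal{O}_{\mathcal{E}}}\mathcal{O}_{L_i}=\prod_{w\mid\lambda}\bigl(\mathcal{O}_{\mathcal{K}_\lambda}\otimes_{\mathcal{O}_{\mathcal{E},\lambda}}\mathcal{O}_{L_{i,w}}\bigr)$, the product running over the places $w$ of $L_i$ above $\lambda$.

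Finally, I would check that each factor $\mathcal{O}_{\mathcal{K}_\lambda}\otimes_{\mathcal{O}_{\mathcal{E},\lambda}}\mathcal{O}_{L_{i,w}}$ is a finite product of complete discrete valuation rings finite over $\mathbb{Z}_\ell$. Because $\ell\notin\mathcal{N}$, both $\mathcal{E}_\lambda/\mathbb{Q}_\ell$ and $L_{i,w}/\mathbb{Q}_\ell$ are unramified, so $L_{i,w}/\mathcal{E}_\lambda$ is unramified and $\mathcal{O}_{L_{i,w}}=\mathcal{O}_{\mathcal{E},\lambda}[x]/(f)$ with $f$ monic and $\bar f$ separable over the residue field; hence $\mathcal{O}_{\mathcal{K}_\lambda}\otimes_{\mathcal{O}_{\mathcal{E},\lambda}}\mathcal{O}_{L_{i,w}}=\mathcal{O}_{\mathcal{K}_\lambda}[x]/(f)$, the reduction of $f$ remains separable over the residue field of $\mathcal{O}_{\mathcal{K}_\lambda}$, and Hensel's lemma (completeness of $\mathcal{O}_{\mathcal{K}_\lambda}$) factors $\mathcal{O}_{\mathcal{K}_\lambda}[x]/(f)$ as a finite product of unramified extensions of $\mathcal{O}_{\mathcal{K}_\lambda}$, each a complete discrete valuation ring finite over $\mathbb{Z}_\ell$. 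Assembling the finitely many factors finishes the argument. The only genuinely non-formal input is the observation that a large residue characteristic is automatically unramified in the finitely many \emph{fixed} number fields $\mathcal{E}$ and $L_i$ — exactly what prevents the base change $\mathcal{O}_{\mathcal{E}}\to\mathcal{O}_{\mathcal{K}_\lambda}$ from producing nilpotents or non-normality; the rest is routine commutative algebra, the only mild nuisance being the bookkeeping of the various completions and the comparison between $\mathbf{T}\otimes_{\mathbb{Z}}\mathbb{Z}_\ell$ and $\mathbf{T}_\lambda$.
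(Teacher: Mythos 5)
Your proof is correct and follows essentially the same route as the paper's: recognize $\mathbf{T}$ as an order in a product of number rings, observe that it becomes integrally closed after inverting a fixed integer $N$ (and that base change to $\mathcal{O}_{\mathcal{K}_\lambda}$ factors through $\mathbf{T}[\tfrac1N]$ once $\ell\nmid N$), then decompose each factor $\mathcal{O}_{\mathcal{K}_\lambda}\otimes_{\mathcal{O}_{\mathcal{E}}}\mathcal{O}_{L_i}$ into complete DVRs. The only cosmetic difference is that you spell out the final local decomposition via unramifiedness and Hensel's lemma (thereby handling the possibly ramified extension $\mathcal{K}_\lambda/\mathcal{E}_\lambda$ explicitly), where the paper appeals to the structure theorem in Serre, \emph{Local Fields}.
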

\begin{proof}
First, we see that $\mathbf{T}$ is an order in $\mathbf{T} \otimes_{\mathcal{O}_\mathcal{E}}
\mathcal{E} \cong k_1\times \ldots \times k_m$, where the $k_i$ denote suitable number fields (containing $E$) and the 
decomposition follows because 
$\mathbf{T} \otimes_{\mathcal{O}_\mathcal{E}} \mathcal{E}$ is reduced (as already remarked). Hence, 
$\mathbf{T}$ is contained in the maximal order $\oplus_{i=1}^m \mathcal{O}_{k_i}$.
It follows that there exists a suitable $N\in \mathbb{N}$ such that
\begin{itemize}
\item $\mathbf{T}[\frac{1}{N}] \cong \oplus_{i=1}^m \mathcal{O}_{k_i}[\frac{1}{N}]$;
\item for any $\lambda$ with $\ell(\lambda)\not|\, N$, we have $\mathbf{T}_{\lambda} \cong
\mathbf{T}[\frac{1}{N}]_{\lambda} := \mathcal{O}_{\mathcal{K}_{\lambda}} \otimes_{\mathcal{O}_{\mathcal{E}}}\mathbf{T}[\frac{1}{N}]$.
\end{itemize}
Thus, for those $\lambda$ we get an isomorphism 
$\mathbf{T}_{\lambda} \cong \oplus_{i=1}^m \mathcal{O}_{\mathcal{K}_{\lambda}} \otimes_{\mathcal{O}_{\mathcal{E}}} \mathcal{O}_{k_i}$. As each factor itself is a product of 
complete discrete valuation rings (cf. e.g. \cite[Ch.2, \S 3, Theorem 1(ii)]{Serre_Local_fields}), the lemma follows.
\end{proof}
Because we assumed that $\mathcal{E}$ contains all Hecke eigenvalues, in fact all the fields $k_i$ in 
the above proof are equal to $\mathcal{E}$.
Hence, for almost all 
$\ell$, the above lemma implies that $\mathbf{T}_{\lambda}$ is isomorphic to a product of 
finitely many copies of $\mathcal{O}_{\mathcal{K}_\lambda}$. Thus, we get
\begin{Cor}\label{20170307_TO_corollary}
For almost all $\lambda$ and all maximal ideals $\mathfrak{m}\subset \mathbf{T}_{\lambda}$, 
we have an isomorphism $\mathbf{T}_{\lambda, \mathfrak{m}} \cong \mathcal{O}_{\mathcal{K}_\lambda}$.
\end{Cor}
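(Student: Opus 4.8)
The plan is to read the Corollary off from the preceding Lemma together with the paragraph following its proof. First I would fix the exceptional finite set of places of $\mathcal{E}$: I take those $\lambda$ for which $\ell(\lambda)$ is large enough that $\mathbf{T}_{\lambda}\cong\,^{\mathcal{O}_{\mathcal{K}_{\lambda}}}\mathbb{T}^{\mathcal{T}_{\ell}}_{\boldsymbol\omega_{\ell}}(U)$ and for which $\ell(\lambda)\nmid N$, where $N\in\mathbb{N}$ is the integer produced in the proof of the Lemma; this set has Dirichlet density $1$ and its complement depends only on $\mathbf{T}$. For such $\lambda$, the Lemma, sharpened by the remark that $\mathcal{E}$ contains all Hecke eigenvalues — so that in the notation of its proof every number field $k_i$ equals $\mathcal{E}$ and hence $\mathcal{O}_{k_i}=\mathcal{O}_{\mathcal{E}}$ — gives a ring isomorphism
\begin{equation*}
\mathbf{T}_{\lambda}\;\cong\;\bigoplus_{i=1}^{m}\mathcal{O}_{\mathcal{K}_{\lambda}}\otimes_{\mathcal{O}_{\mathcal{E}}}\mathcal{O}_{\mathcal{E}}\;=\;\bigoplus_{i=1}^{m}\mathcal{O}_{\mathcal{K}_{\lambda}}.
\end{equation*}

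It then remains to analyse the maximal ideals of a finite direct sum of copies of $\mathcal{O}_{\mathcal{K}_{\lambda}}$. Since $\mathcal{K}_{\lambda}$ is a finite extension of $\mathbb{Q}_{\ell}$, the ring $\mathcal{O}_{\mathcal{K}_{\lambda}}$ is local with maximal ideal $\mathfrak{m}_{\mathcal{K}_{\lambda}}$, so $\operatorname{Spec}\bigl(\bigoplus_{i}\mathcal{O}_{\mathcal{K}_{\lambda}}\bigr)$ is a disjoint union of $m$ local spectra and the maximal ideals of $\bigoplus_{i}\mathcal{O}_{\mathcal{K}_{\lambda}}$ are precisely the ideals $\mathfrak{n}_{j}$ which are $\mathfrak{m}_{\mathcal{K}_{\lambda}}$ in the $j$-th coordinate and all of $\mathcal{O}_{\mathcal{K}_{\lambda}}$ in the others, for $j=1,\dots,m$. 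Localising at $\mathfrak{n}_{j}$, the primitive idempotent $e_{j}$ supported at the $j$-th factor becomes a unit, while the complementary idempotents $e_{i}$ with $i\neq j$ lie in $\mathfrak{n}_{j}$ and hence become $0$; thus projection onto the $j$-th factor induces $\mathbf{T}_{\lambda,\mathfrak{n}_{j}}\cong(\mathcal{O}_{\mathcal{K}_{\lambda}})_{\mathfrak{m}_{\mathcal{K}_{\lambda}}}=\mathcal{O}_{\mathcal{K}_{\lambda}}$, the last equality because $\mathcal{O}_{\mathcal{K}_{\lambda}}$ is already local. Transporting this along the displayed isomorphism yields $\mathbf{T}_{\lambda,\mathfrak{m}}\cong\mathcal{O}_{\mathcal{K}_{\lambda}}$ for every maximal ideal $\mathfrak{m}$ of $\mathbf{T}_{\lambda}$, as claimed.

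There is no serious obstacle here: the Corollary is a bookkeeping consequence of the Lemma. The only point requiring a word of care is the identification of the factors $k_i$ with $\mathcal{E}$ itself rather than with some strictly larger number field. This uses that $\mathbf{T}\otimes_{\mathcal{O}_{\mathcal{E}}}\mathcal{E}$ is generated over $\mathcal{E}$ by Hecke operators whose eigenvalues on the finitely many eigenforms spanning $\mathcal{S}_{\boldsymbol\omega}(U)$ already lie in $\mathcal{E}(U)\subset\mathcal{E}$, so that each residue field of $\mathbf{T}\otimes_{\mathcal{O}_{\mathcal{E}}}\mathcal{E}$ equals $\mathcal{E}$; consequently the maximal order $\bigoplus_{i}\mathcal{O}_{k_i}$ appearing in the proof of the Lemma is $\bigoplus_{i}\mathcal{O}_{\mathcal{E}}$, and its base change to $\mathcal{O}_{\mathcal{K}_{\lambda}}$ is a direct sum of copies of $\mathcal{O}_{\mathcal{K}_{\lambda}}$, with no further splitting over several primes introduced.
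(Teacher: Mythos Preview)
Your argument is correct and follows exactly the route the paper takes: use the Lemma together with the observation that $\mathcal{E}\supset\mathcal{E}(U)$ forces each $k_i=\mathcal{E}$, so that $\mathbf{T}_{\lambda}\cong\bigoplus_i\mathcal{O}_{\mathcal{K}_{\lambda}}$ for almost all $\lambda$, and then localise. Your added discussion of idempotents and maximal ideals in a finite product of local rings simply spells out the ``Thus, we get'' step in the paper.
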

\subsection{An $R=R^{\tt{min}}$-theorem}
We retain all notation from the above and start with a preparatory corollary (to Corollary \ref{20170210_cor_on_taylor_cond}):
\begin{Cor}\label{20170404_an_prep_cor}
For almost all $\lambda$ for which $\overline{\rho}_{\lambda}$ is absolutely irreducible,
we have 
$R^{\tt{crys},(1, \ldots, 1)} = R^{\tt{crys}}$.
\end{Cor}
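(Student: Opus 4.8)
The plan is to deduce this global statement from the purely local Corollary \ref{20170210_cor_on_taylor_cond}. By the standing hypotheses of this subsection every place of $S$ splits in $F|F^+$ and $\overline{\rho}$ (hence each restriction $\overline{\rho}_{\tilde\nu}=\overline{\rho}|\operatorname{Gal}_{F_{\tilde\nu}}$ for $\nu\in S$) is unipotently ramified; moreover, by the formalism of global deformation conditions together with Proposition \ref{20170214_prop_local_split_defos}, the ring $R^{\tt{crys},(1,\ldots,1)}$ is by construction the quotient of $R^{\tt{crys}}$ cut out by the extra requirement that the local lift at each $\tilde\nu$ ($\nu\in S$) satisfy the Taylor condition $(1,\ldots,1)$. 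Thus there is a canonical surjection $R^{\tt{crys}}\twoheadrightarrow R^{\tt{crys},(1,\ldots,1)}$, and it suffices to show that for almost all $\lambda$ these additional local conditions at the places of $S$ are vacuous, i.e. that $R^{\boxempty,(1,\ldots,1)}(\overline{\rho}_{\tilde\nu})=R^{\boxempty}(\overline{\rho}_{\tilde\nu})$ for every $\nu\in S$.

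First I would use that $S$ is fixed independently of $\lambda$: the residue cardinalities $q_\nu$ of the fields $F_{\tilde\nu}$ for $\nu\in S$ form a finite set, so only finitely many rational primes fail the inequality $\ell\geq\max_{\nu\in S}q_\nu^{\,n!}$, and only finitely many $\lambda$ satisfy $\ell(\lambda)\mid \textbf{N}\nu$ for some $\nu\in S$. Discarding this finite exceptional set, for every remaining $\lambda$ (in particular for those $\lambda$ in our set for which $\overline{\rho}_\lambda$ is absolutely irreducible, so that the relevant deformation functor is representable and $R^{\tt{crys}}$ is a genuine deformation ring) and every $\nu\in S$ we have $\ell(\lambda)\neq\ell(\nu)$ and $\ell(\lambda)\geq q_\nu^{\,n!}$. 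For each such $\nu$ the representation $\overline{\rho}_{\tilde\nu}$ is a unipotently ramified $\operatorname{GL}_n$-valued residual representation of $\operatorname{Gal}_{F_{\tilde\nu}}$ in the setting $p\neq\ell$, so Corollary \ref{20170210_cor_on_taylor_cond} applies and gives $R^{\boxempty,(1,\ldots,1)}(\overline{\rho}_{\tilde\nu})=R^{\boxempty}(\overline{\rho}_{\tilde\nu})$.

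It then remains to pass from these local equalities to the global statement, which is tautological: a deformation of $\overline{r}$ to some $A\in\mathcal{C}_{\mathcal{O}_{\mathcal{K}}}$ is parametrized by $R^{\tt{crys},(1,\ldots,1)}$ if and only if it is parametrized by $R^{\tt{crys}}$ and, for each $\nu\in S$, its restriction to $\operatorname{Gal}_{F_{\tilde\nu}}$ lands in $D^{\boxempty,(1,\ldots,1)}(\overline{\rho}_{\tilde\nu})(A)=D^{\boxempty}(\overline{\rho}_{\tilde\nu})(A)$, the last equality being the content of the previous paragraph. Hence the two global deformation functors coincide and the canonical surjection $R^{\tt{crys}}\twoheadrightarrow R^{\tt{crys},(1,\ldots,1)}$ is an isomorphism. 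The only non-formal ingredient is Corollary \ref{20170210_cor_on_taylor_cond} itself (resting on the root-of-unity argument of Lemma \ref{20170213_Lemma_min_obsolet_if_l_big} and the reducedness statement of Theorem \ref{20170213_Helms_theorem}); the rest is a uniform bound over the finite set $S$ together with the compatibility between local and global deformation conditions, and the single point to keep straight is that in the symbol $\tt{crys},(1,\ldots,1)$ the condition $(1,\ldots,1)$ refers to the places of $S$ while $\tt{crys}$ refers to the places dividing $\ell$, so that the two rings being compared differ only at $S$.
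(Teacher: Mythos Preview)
Your proof is correct and follows essentially the same route as the paper: both arguments reduce to applying Corollary \ref{20170210_cor_on_taylor_cond} at each place $\nu\in S$ once $\ell(\lambda)$ exceeds a uniform bound depending only on $S$ and $n$, and then observe that the global condition $(1,\ldots,1)$ is the conjunction of these local ones. Your write-up is in fact slightly more careful, since you take the bound $\max_{\nu\in S} q_\nu^{\,n!}$ with $q_\nu$ the residue cardinality of $F_{\tilde\nu}$, whereas the paper's bound $m^{n!}$ (with $m$ the largest rational prime below $S$) would be too small when some residue degree exceeds $1$; of course this does not affect the ``almost all $\lambda$'' conclusion either way.
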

\begin{proof}
Let $m:= \operatorname{max}\{p \in \mathbb{N}\;|\; p \text{ prime}, \nu | p \text{ for some }
\nu \in S\}$. Then, for all $\lambda$ with $\ell(\lambda) > m^{n!}$, the claim follows directly 
from Corollary \ref{20170210_cor_on_taylor_cond}.
\end{proof}
Moreover, we need a congruence argument:
First, recall that the Hecke algebra $^{\mathcal{O}_{\mathcal{K}}}\mathbb{T}^{S_{\ell}}_{\boldsymbol\omega}(U)$
acts semisimply on $\mathcal{S}_{\boldsymbol\omega}(U)$, so the space 
$\mathcal{S}_{\boldsymbol\omega}(U)$ decomposes into finitely many eigenspaces.
For the following, let us consider \textit{congruences}, by what we mean triples $(H_1,H_2, \ell)$, 
where $H_1\neq H_2$ are two Hecke eigenspaces and
where $\ell$ is a rational prime such that there exists an isomorphism 
$\overline{\rho}_{f_1,\lambda_1} \otimes \overline{\mathbb{F}}_{\ell}
\cong 
\overline{\rho}_{f_2,\lambda_2} \otimes \overline{\mathbb{F}}_{\ell}$ for some choice of
forms $f_i\in H_i$ and of places $\lambda_i$ of the corresponding coefficient fields fulfilling
$\ell(\lambda_i) = \ell$. 
\begin{Prop}\label{20170404_prop_cong_arg}
There exist only finitely many such congruences in $\mathcal{S}_{\boldsymbol\omega}(U)$.
\end{Prop}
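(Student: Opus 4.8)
The plan is to combine the finite-dimensionality of $\mathcal{S}_{\boldsymbol\omega}(U)$ with the elementary fact that a nonzero algebraic integer is divisible by only finitely many rational primes. Since $\mathcal{S}_{\boldsymbol\omega}(U)$ is a finite-dimensional $\mathbb{C}$-vector space on which the Hecke algebra acts semisimply (Section \ref{20170301_sectn_hecke_algs}), there are only finitely many Hecke eigenspaces $\mathbb{C}f_1,\ldots,\mathbb{C}f_r$, hence only finitely many unordered pairs of them. It therefore suffices to fix a pair $(H_1,H_2)$ with $H_1\neq H_2$ and to show that only finitely many primes $\ell$ admit forms $f_i\in H_i$, places $\lambda_i\mid\ell$ of the relevant coefficient field, and an isomorphism $\overline{\rho}_{f_1,\lambda_1}\otimes\overline{\mathbb{F}}_{\ell}\cong\overline{\rho}_{f_2,\lambda_2}\otimes\overline{\mathbb{F}}_{\ell}$.

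First I would pass to the Galois closure $M$ of $\mathcal{E}(U)$ over $\mathbb{Q}$. The eigenvalue homomorphisms $\varphi_{f_i}$ take values in $\mathcal{O}_{\mathcal{E}(U)}\subset\mathcal{O}_M$, and the condition $H_1\neq H_2$ says exactly that $f_1$ and $f_2$ are not Galois conjugate; equivalently, for every pair of embeddings $\iota_1,\iota_2:\mathcal{E}(U)\hookrightarrow M$ the two resulting $\mathcal{O}_M$-valued systems of Hecke eigenvalues $\iota_1\circ\varphi_{f_1}$ and $\iota_2\circ\varphi_{f_2}$ are distinct. Since the abstract Hecke algebra is generated over $\mathbb{Z}$ by the operators $T^{(j)}_{F_w}$ (for $w$ a place of $F$ split over $F^+$ and away from $\mathcal{T}$), for each of the finitely many pairs $(\iota_1,\iota_2)$ I may choose such a place $w=w(\iota_1,\iota_2)$ and an index $j=j(\iota_1,\iota_2)$ with
\[
\delta_{\iota_1,\iota_2}:=\iota_1\bigl(\varphi_{f_1}(T^{(j)}_{F_w})\bigr)-\iota_2\bigl(\varphi_{f_2}(T^{(j)}_{F_w})\bigr)\neq 0\quad\text{in }\mathcal{O}_M .
\]

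Now suppose $(H_1,H_2,\ell)$ is a congruence, witnessed by $f_i,\lambda_i$ and an isomorphism over $\overline{\mathbb{F}}_{\ell}$. Comparing, for every split place $w$ not dividing $\ell$, the characteristic polynomials of $\operatorname{Frob}_w$ by means of part~2 of Proposition \ref{20170307_Prop_attach_reps}, and cancelling the factors $(\mathbf{N}w)^{j(j-1)/2}$ — which are units in $\overline{\mathbb{F}}_{\ell}$ once $\ell\neq\ell(w)$ — the isomorphism forces the two systems of Hecke eigenvalues to agree after reduction modulo a single prime $\mathfrak{l}$ of $\mathcal{O}_M$ above $\ell$, for suitable embeddings $\iota_1,\iota_2:\mathcal{E}(U)\hookrightarrow M$ depending only on $\lambda_1,\lambda_2$; here the freedom in the choice of the $\lambda_i$ is precisely what allows one to work with a single $\mathfrak{l}$ and to absorb all remaining ambiguity into the $\iota_i$. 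In particular $\mathfrak{l}\mid\delta_{\iota_1,\iota_2}$, so $\ell$ divides the nonzero integer $N_{M/\mathbb{Q}}(\delta_{\iota_1,\iota_2})$ unless $\ell=\ell(w(\iota_1,\iota_2))$. Hence, for the fixed pair $(H_1,H_2)$, the prime $\ell$ is confined to a finite set, and taking the union over the finitely many pairs of eigenspaces completes the proof.

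The step I expect to be the main obstacle is the bookkeeping in the third paragraph: one has to be sure that ``$H_1\neq H_2$'' really obstructs a congruence for all but finitely many $\ell$, and the clean way to do this is to decouple the choice of a place $\lambda_i\mid\ell$ (which only conjugates the auxiliary prime $\mathfrak{l}\subset\mathcal{O}_M$) from the choice of an embedding of the coefficient field into the fixed field $M$. After this decoupling the obstruction lives in the fixed finite list of nonzero elements $\delta_{\iota_1,\iota_2}\in\mathcal{O}_M$, independent of $\ell$. (Concretely this uses that distinct Hecke eigenspaces are not related by a field automorphism; Galois-conjugate eigenforms, if regarded as lying in different eigenspaces, would produce a congruence for every $\ell$, so ``eigenspace'' is to be read as ``Galois-conjugacy class'', the only case relevant in the sequel.)
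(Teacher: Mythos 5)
Your proposal is correct and arrives at the same conclusion via a more elementary and more explicit route than the paper. The paper's proof is three sentences of commutative algebra: it encodes a congruence $(H_1,H_2,\ell)$ as a pair of distinct minimal primes $\mathfrak{p}_{f_1},\mathfrak{p}_{f_2}$ of the Hecke algebra $\mathbf{T}$ contained in a common maximal ideal over $\ell$, then observes that since $\mathbf{T}$ is finite flat over $\mathbb{Z}$, two distinct irreducible components of $\operatorname{Spec}\mathbf{T}$ can meet in at most finitely many closed points, and finally sums over the finitely many pairs of eigenforms. You replace the commutative-algebra step by exhibiting an explicit nonzero algebraic integer $\delta_{\iota_1,\iota_2}\in\mathcal{O}_M$ (an eigenvalue difference) whose norm any congruence prime must divide; this is, in effect, naming a concrete nonzero element of $\mathfrak{p}_{f_1}+\mathfrak{p}_{f_2}$ and then using that a nonzero algebraic integer has only finitely many prime divisors. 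The two arguments are therefore close in spirit, the difference being one of concreteness. The one place where your proposal does genuinely more work than the paper is the final parenthetical remark: the stated definition of a congruence permits choosing $\lambda_1$ and $\lambda_2$ independently, and under that reading Galois-conjugate eigenforms sitting in distinct $\mathbb{C}$-eigenspaces would be congruent at every $\ell$, so ``eigenspace'' must be read as ``Galois-conjugacy class'' (equivalently, ``minimal prime of $\mathbf{T}$ viewed over $\mathbb{Z}$''). The paper's proof absorbs this silently by passing to minimal primes of $\mathbf{T}$ and is stated at a level of generality where the issue does not surface, but your explicit flag is a useful clarification, and it is consistent with the way the proposition is actually invoked in the proof of Theorem \ref{20170213_global_rrmin_thm}, where a single place $\lambda$ is used for both forms.
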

\begin{proof}
We easily see that a congruence $(H_1,H_2,\ell)$ corresponds
to two distinct minimal prime ideals $\mathfrak{p}_{f_1}, \mathfrak{p}_{f_2}$ of $\mathbf{T}$ for which there exists 
a maximal ideal $\mathfrak{m}\subset \mathbf{T}$
which contains $\ell$, $\mathfrak{p}_{f_1}$ and $\mathfrak{p}_{f_2}$.
It follows from the finite flatness of $\mathbf{T}$ over 
$\mathbb{Z}$ that for given eigenforms $f_1,f_2$ there exist only finitely
many maximal ideals containing 
$\mathfrak{p}_{f_1}$ and $\mathfrak{p}_{f_2}$.
Thus, the claim follows immediately from the finite-dimensionality of the space of automorphic
representations of given level and weight.
\end{proof}

\begin{Thm}\label{20170213_global_rrmin_thm}
For almost all $\lambda$ for which $\overline{\rho}_{\lambda}$ is absolutely irreducible,
we have  
\begin{equation*}
R^{\tt{crys}} \cong R^{\tt{min,crys}}.
\end{equation*}
\end{Thm}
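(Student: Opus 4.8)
The plan is to show that, for almost all $\lambda$, the natural surjection $\varphi\colon R^{\tt{crys}}\twoheadrightarrow R^{\tt{min,crys}}$ is an isomorphism; this surjection exists because minimal ramification is a deformation condition, so $R^{\tt{min,crys}}$ is a quotient of $R^{\tt{crys}}$. The strategy is to prove that $R^{\tt{crys}}$ is finite flat over $\mathcal{O}_{\mathcal{K}}$ and that $R^{\tt{crys}}[\frac{1}{\ell}]\cong\mathcal{K}$. Granting this, $\mathcal{O}_{\mathcal{K}}\hookrightarrow R^{\tt{crys}}\hookrightarrow R^{\tt{crys}}[\frac{1}{\ell}]\cong\mathcal{K}$ realizes $R^{\tt{crys}}$ as a finite $\mathcal{O}_{\mathcal{K}}$-subalgebra of $\mathcal{K}$, hence $R^{\tt{crys}}=\mathcal{O}_{\mathcal{K}}$; then $\varphi$ is a surjection $\mathcal{O}_{\mathcal{K}}\twoheadrightarrow R^{\tt{min,crys}}$ onto a nonzero flat $\mathcal{O}_{\mathcal{K}}$-algebra (namely $\mathbb{T}^{\tt{min}}$, by Theorem \ref{20170210_main_rttheorem}), and is therefore an isomorphism.

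First I would prove that $R^{\tt{crys}}$ is finite flat over $\mathcal{O}_{\mathcal{K}}$ for $\ell\gg0$. By Corollary \ref{20170404_an_prep_cor} we may replace $R^{\tt{crys}}$ by $R^{\tt{crys},(1,\ldots,1)}$, so that the local conditions at the places of $S$ are Taylor's $(1,\ldots,1)$-conditions; \cite[Theorem 2.3.2]{BLGGT} then gives that $R^{\tt{crys}}/(\ell)$ has finite cardinality, hence that $R^{\tt{crys}}$ is a finite $\mathcal{O}_{\mathcal{K}}$-module. Since for $\nu\in S$ the unrestricted local lifting ring $R^{\boxempty,\chi_\nu}(\overline{\rho}_\nu)$ is a reduced complete intersection of relative dimension $n^2$ over $W$ (Theorem \ref{20170213_Helms_theorem} and Proposition \ref{20170214_prop_local_split_defos}), which is the same relative dimension as that of the minimally ramified local ring, the numerical hypothesis of Corollary \ref{2017_cor_on_repn} is exactly the one already verified in the proof of Theorem \ref{20170210_main_rttheorem}; hence one gets a presentation $R^{\tt{crys}}\cong\mathcal{O}_{\mathcal{K}}\llbracket x_1,\ldots,x_m\rrbracket/(f_1,\ldots,f_m)$. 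Combining this presentation with the finiteness of $R^{\tt{crys}}/(\ell)$, the same commutative-algebra argument as in the proof of Theorem \ref{20170210_main_rttheorem} (cf.\ \cite{KW_on_serre} and B\"ockle's appendix to \cite{Khare_isos}) shows that $R^{\tt{crys}}$ is finite flat over $\mathcal{O}_{\mathcal{K}}$; in particular $R^{\tt{crys}}[\frac{1}{\ell}]$ is Artinian.

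Second I would pin down $R^{\tt{crys}}[\frac{1}{\ell}]$. By the modularity lifting theorem \cite[Theorem 2.3.1]{BLGGT}, every $\overline{\mathbb{Q}}_\ell$-valued point of $R^{\tt{crys}}$ is automorphic and factors through the Hecke algebra $\mathbb{T}$. By Corollary \ref{20170307_TO_corollary}, $\mathbb{T}\cong\mathcal{O}_{\mathcal{K}}$ for almost all $\lambda$, so all of these points coincide with the $\mathcal{K}$-rational point attached to $\pi$, and $R^{\tt{crys}}[\frac{1}{\ell}]$ is local with residue field $\mathcal{K}$. Moreover these points are FL-crystalline at $\ell$ and regular (the Hodge--Tate weights have multiplicity one, by the standing assumption of Section \ref{20170321_sect_crys_defos}), so \cite[Theorem 3.1.3]{Allen_polarized} applies and shows, exactly as in the proof of Theorem \ref{20170210_main_rttheorem}, that the maximal ideal of $R^{\tt{crys}}[\frac{1}{\ell}]$ is zero. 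Hence $R^{\tt{crys}}[\frac{1}{\ell}]\cong\mathcal{K}$, and the theorem follows as explained above.

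The main obstacle I expect is the first step — the finite flatness of $R^{\tt{crys}}$ over $\mathcal{O}_{\mathcal{K}}$. The subtle points are that \cite[Theorem 2.3.2]{BLGGT} is stated for Ihara-avoidance type local conditions, which makes the reformulation $R^{\tt{crys}}=R^{\tt{crys},(1,\ldots,1)}$ from Corollary \ref{20170404_an_prep_cor} indispensable, and that the numerical hypothesis of Corollary \ref{2017_cor_on_repn} must be checked; here one is saved by the coincidence that the unrestricted local lifting ring at $\nu\in S$ has the same relative dimension $n^2$ as the minimally ramified one, so the check reduces to the one already done for $R^{\tt{min,crys}}$. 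A secondary point is to confirm that \cite[Theorem 3.1.3]{Allen_polarized} applies verbatim to the possibly non-minimal automorphic points of $R^{\tt{crys}}$, not only to those of $R^{\tt{min,crys}}$.
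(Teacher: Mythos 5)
Your proof is correct. The overall strategy coincides with the paper's: establish a presentation of $R^{\tt{crys}}$ with equally many variables and relations via Corollary \ref{2017_cor_on_repn} and Theorem \ref{20170213_Helms_theorem}, show $R^{\tt{crys}}/(\ell)$ is finite by replacing the local conditions at $S$ by Taylor's $(1,\ldots,1)$-conditions (Corollary \ref{20170404_an_prep_cor}) so that the automorphy lifting/finiteness theorem applies, deduce finite flatness, and identify the generic fibre using automorphy lifting together with Allen's tangent-space vanishing. The two items you flag as the main obstacles --- the reduction to the $(1,\ldots,1)$-condition and the numerical hypothesis of Corollary \ref{2017_cor_on_repn}, verified by the coincidence of relative dimensions $n^2$ for $R^{\boxempty,\chi_\nu,\tt{min}}(\overline{\rho}_\nu)$ and $R^{\boxempty,\chi_\nu}(\overline{\rho}_\nu)$ --- are precisely the two points the paper itself singles out as needing additional explanation beyond the proof of Theorem \ref{20170210_main_rttheorem}.

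The one genuine difference is in how the generic fibre is pinned down. The paper shows that every $\overline{\mathbb{Q}}_\ell$-point of $R^{\tt{crys}}$ factors through $R^{\tt{min,crys}}$ via a congruence-counting argument (Proposition \ref{20170404_prop_cong_arg}): a non-minimal automorphic lift reducing to $\overline{\rho}_{\lambda}$ together with the minimal one produced by Theorem \ref{20170210_main_rttheorem} would give a congruence, of which there are only finitely many, after which it runs the Artinian/product-of-fields comparison of the proof of Theorem \ref{20170210_main_rttheorem} verbatim. You instead invoke the $T=O$ result (Corollary \ref{20170307_TO_corollary}), which gives $\mathbb{T}\cong\mathcal{O}_{\mathcal{K}}$ for almost all $\lambda$, and conclude $R^{\tt{crys}}[\frac{1}{\ell}]\cong\mathcal{K}$ directly, hence $R^{\tt{crys}}\cong\mathcal{O}_{\mathcal{K}}$, from which the claimed isomorphism follows trivially. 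Both underlying lemmas rest on the finite flatness of the integral Hecke algebra $\mathbf{T}$, so the difference is modest; your route is a little more direct and delivers the explicit identification $R^{\tt{crys}}\cong\mathcal{O}_{\mathcal{K}}$ as a byproduct, whereas the paper's congruence argument keeps the comparison $R^{\tt{crys}}\cong R^{\tt{min,crys}}$ logically independent of the $T=O$ statement, which the paper only brings in later in Section \ref{20170407_main_sect}.
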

\begin{proof}
We apply the proof of Theorem \ref{20170210_main_rttheorem},
where we replace $R^{\tt{min,crys}}$ by $R^{\tt{crys}}$ and $\mathbb{T}^{\tt{min}}$ by
$R^{\tt{min,crys}}$. In the following, we only point out those steps which need additional
explanation: 

The part that $R^{\tt{crys}}/(\ell)$ is finite follows as above, but we 
have to assure that Guerberoff's Theorem 4.1 remains applicable, at least for almost 
all $\ell$. This is a direct consequence
of Corollary \ref{20170404_an_prep_cor}.

The existence of a presentation
\begin{equation*}
R^{\tt{crys}} \cong {\mathcal{O}_{\mathcal{K}}}\llbracket X_1, \ldots, X_m\rrbracket/(f_1, \ldots, f_m);
\end{equation*}
follows by 
Corollary \ref{2017_cor_on_repn}
(as in the proof of Theorem \ref{20170210_main_rttheorem}, but
referring to Theorem \ref{20170213_Helms_theorem} instead 
of Proposition \ref{20170526_newProponmin}).

Moreover, we claim that for almost all $\lambda$, any morphism $R^{\tt{crys}}\rightarrow \overline{\mathbb{Q}}_{\ell}$ factorizes over $R^{\tt{min,crys}}$. Using automorphy lifting, this claim can
be restated as follows: For almost all $\lambda$, the following holds: For any automorphic form $g$ whose associated
Galois representation $\rho_{g,\lambda}$ reduces to $\overline{\rho}_{\lambda}$, 
$\rho_{g,\lambda}$ is a minimally ramified lift of $\overline{\rho}_{\lambda}$. 
Now, let $\lambda$ be a place such that this statement fails. 
Then, as there always exists a minimally ramified lift 
of $\overline{\rho}_{g,\lambda}$
with a corresponding automorphic form $f$ (cf. Theorem \ref{20170210_main_rttheorem}), we get
a congruence $({^{\mathcal{O}_{\mathcal{K}}}\mathbb{T}^{S_{\ell}}_{\boldsymbol\omega}(U)}\cdot f, {^{\mathcal{O}_{\mathcal{K}}}\mathbb{T}^{S_{\ell}}_{\boldsymbol\omega}(U)}\cdot g, \ell(\lambda))$. Thus, the claim follows
from Proposition \ref{20170404_prop_cong_arg}.
%

%

The rest of the proof goes through verbatim. 
\end{proof}
We need a technical lemma:
\begin{Lem}\label{20150526_a_tech_lemma}
Let $R_1, R_2\in \mathcal{C}_{\Lambda}$ and assume that
there exist suitable $\alpha, \beta, m,h \in \mathbb{N}_0$ such
that
\begin{enumerate}
\item $R_1 \cong \Lambda\llbracket y_1, \ldots, y_{\beta+h}\rrbracket / (g_1, \ldots, g_{\beta})$;
\item $ R_2 \cong \Lambda\llbracket z_1, \ldots, z_{m + h}\rrbracket$;
\item $
R_1\llbracket x_1, \ldots, x_{\alpha + m}\rrbracket/(f_1, \ldots, f_{\alpha}) \cong R_2$.
\end{enumerate}
Then, $R_1$ is has only one irreducible component.
\end{Lem}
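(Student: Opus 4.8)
The plan is to prove the stronger assertion that $R_1$ is an integral domain; then $\operatorname{Spec}R_1$ is irreducible and in particular has a single irreducible component.

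The first step is dimension bookkeeping. Since $\Lambda$ is a discrete valuation ring, $\dim\Lambda\llbracket y_1,\ldots,y_{\beta+h}\rrbracket=\beta+h+1$, so hypothesis~1 forces $\dim R_1\ge h+1$; hypothesis~2 gives $\dim R_2=m+h+1$; and hypothesis~3, together with $\dim R_1\llbracket x_1,\ldots,x_{\alpha+m}\rrbracket=\dim R_1+\alpha+m$, yields $m+h+1\ge\dim R_1+m$, hence $\dim R_1\le h+1$. Therefore $\dim R_1=h+1$ and each of the inequalities above is in fact an equality. Consequently $g_1,\ldots,g_\beta$ is part of a system of parameters of the regular local ring $\Lambda\llbracket y_1,\ldots,y_{\beta+h}\rrbracket$, hence a regular sequence there, so $R_1$ is a local complete intersection and in particular Cohen--Macaulay. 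Then $A:=R_1\llbracket x_1,\ldots,x_{\alpha+m}\rrbracket$ is Cohen--Macaulay of dimension $h+1+\alpha+m$, and since $\dim A/(f_1,\ldots,f_\alpha)=\dim R_2=\dim A-\alpha$, the elements $f_1,\ldots,f_\alpha$ form part of a system of parameters of $A$, hence a regular sequence on $A$.

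Next I would pass to the associated graded ring. Put $\mathfrak a=(f_1,\ldots,f_\alpha)\subset A$. Because $f_1,\ldots,f_\alpha$ is a regular sequence, the standard description of the associated graded ring of an ideal generated by a regular sequence gives $\operatorname{gr}_{\mathfrak a}(A)\cong(A/\mathfrak a)[T_1,\ldots,T_\alpha]\cong R_2[T_1,\ldots,T_\alpha]$, which is an integral domain since $R_2\cong\Lambda\llbracket z_1,\ldots,z_{m+h}\rrbracket$ is. Moreover $A$ is Noetherian local and $\mathfrak a$ is contained in its maximal ideal, so Krull's intersection theorem gives $\bigcap_{n\ge 0}\mathfrak a^{\,n}=0$. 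These two facts force $A$ to be a domain: given nonzero $a,b\in A$, pick $p,q$ maximal with $a\in\mathfrak a^{p}$ and $b\in\mathfrak a^{q}$ (finite by the intersection theorem); their nonzero images in $\operatorname{gr}_{\mathfrak a}(A)$ have nonzero product, whence $ab\notin\mathfrak a^{p+q+1}$ and in particular $ab\neq 0$.

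Finally, $R_1$ embeds into $A=R_1\llbracket x_1,\ldots,x_{\alpha+m}\rrbracket$ as the subring of constant power series, so $R_1$, being a subring of the domain $A$, is itself a domain; hence $\operatorname{Spec}R_1$ has a unique minimal prime, i.e.\ exactly one irreducible component. The only step needing genuine care is the first one: extracting from the numerical data that the displayed inequalities are equalities is what makes both $g_1,\ldots,g_\beta$ and $f_1,\ldots,f_\alpha$ regular sequences and $A$ Cohen--Macaulay, and hence legitimizes the associated-graded computation; everything afterwards is formal.
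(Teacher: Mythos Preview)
Your proof is correct, but it takes a genuinely different route from the paper's. The paper combines all three hypotheses into the single presentation
\[
\Lambda\llbracket x_1,\ldots,x_{\alpha+m},\,y_1,\ldots,y_{\beta+h}\rrbracket/(f_1,\ldots,f_\alpha,\,g_1,\ldots,g_\beta)\;\cong\;\Lambda\llbracket z_1,\ldots,z_{m+h}\rrbracket
\]
and then invokes the characterization of regular systems of parameters in \cite[Proposition~22]{SerreLocalAlg}: since the quotient is regular of the correct dimension, the elements $f_1,\ldots,f_\alpha,g_1,\ldots,g_\beta$ form part of a regular system of parameters, i.e.\ have linearly independent images in $\mathfrak m/\mathfrak m^2$. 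But the $g_j$ lie in the subring $\Lambda\llbracket y_1,\ldots,y_{\beta+h}\rrbracket$, so their images remain linearly independent in the cotangent space of that smaller ring; one more application of Serre's criterion then shows that $R_1$ itself is \emph{regular}, hence a domain.

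By contrast, your argument only establishes that $R_1$ is Cohen--Macaulay, and then passes through the associated graded of $A=R_1\llbracket x_1,\ldots,x_{\alpha+m}\rrbracket$ along the regular sequence $(f_1,\ldots,f_\alpha)$ to conclude that $A$, and hence its subring $R_1$, is a domain. The paper's route is shorter and yields the stronger conclusion that $R_1$ is regular (in fact $R_1\cong\Lambda\llbracket y_1,\ldots,y_h\rrbracket$); your route trades the cotangent-space linear algebra for the Rees-type associated-graded argument, which is a perfectly good alternative but does not by itself recover regularity.
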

\begin{proof}
Let us rewrite the last condition as
\begin{equation*}
\Lambda\llbracket x_1, \ldots, x_{\alpha+m}, y_1, \ldots, y_{\beta+h}\rrbracket/(f_1, \ldots, f_{\alpha}, 
g_1, \ldots, g_{\beta}) \cong 
\Lambda\llbracket z_1, \ldots, z_{m+h}\rrbracket.
\end{equation*}
It follows immediately by \cite[Proposition 22, Part c) $\Rightarrow$ a))]{Serre_Local_fields}, that $f_1,\ldots, f_\alpha, g_1, \ldots, g_{\beta}$ is a
subset of regular system of parameters of $\Lambda\llbracket x_1, \ldots, x_{\alpha+m}, y_1, \ldots, y_{\beta + h}\rrbracket$. It follows easily that then $g_1, \ldots, g_h$ is a subset of 
a regular system of parameters of $\Lambda\llbracket y_1, \ldots, y_{\beta + h}\rrbracket$, and so
(by \cite[Proposition 22, Part a) $\Rightarrow$ c)]{Serre_Local_fields}), it follows that
$R_1$ is
regular.
As any regular local ring is an integral domain, the claim 
follows.
\end{proof}

Let us close with the following corollary (to Theorem \ref{20170213_global_rrmin_thm}), giving a local $R=R^{\tt{min}}$ result:
\begin{Cor}\label{20170314_cor_on_min_is_all}
For almost all $\lambda$, 
$R^{\boxempty, \chi_{\nu}, \tt{min}}(\overline{\rho}_{\lambda,\nu}) \cong 
R^{\boxempty, \chi_{\nu}}(\overline{\rho}_{\lambda,\nu})$ holds for any $\nu\in S$.
\end{Cor}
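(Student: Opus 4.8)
The plan is to deduce this corollary from the global statement $R^{\tt{crys}} \cong R^{\tt{min,crys}}$ of Theorem \ref{20170213_global_rrmin_thm}, by transporting the equality from the global deformation rings down to the local ones at $\nu \in S$. First I would invoke the pushout diagram of Corollary \ref{20170210_pushout_cor}, or rather the analogous diagram obtained from Proposition \ref{20170329_first_rep_result}, which presents $R^{\boxempty_{S_\ell}, \chi, \tt{crys}}_{S_\ell}$ (resp. the $\tt{min,crys}$-variant) as a quotient of the completed tensor product $R^{\tt{loc}, \tt{crys}}_{S_\ell}$ (resp. $R^{\tt{loc}, \tt{min,crys}}_{S_\ell}$) of the local lifting rings. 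The point is that $R^{\tt{loc}, \tt{min,crys}}_{S_\ell}$ differs from $R^{\tt{loc}, \tt{crys}}_{S_\ell}$ only in the factors at $\nu \in S$, where $R^{\boxempty, \chi_\nu, \tt{min}}(\overline{\rho}_{\lambda,\nu})$ replaces $R^{\boxempty, \chi_\nu}(\overline{\rho}_{\lambda,\nu})$. So a $\tt{min}$-type equality on the global framed ring forces, after untwisting the power-series variables, an equality of the local framed rings at each $\nu\in S$.

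The key steps, in order, would be: (1) By Theorem \ref{20170213_global_rrmin_thm}, for almost all $\lambda$ with $\overline{\rho}_\lambda$ absolutely irreducible we have $R^{\chi, \tt{crys}}_{S_\ell}(\overline{r}) \cong R^{\chi, \tt{min,crys}}_{S_\ell}(\overline{r})$, and passing to framed versions via Proposition \ref{20170525_rep_res_proposition}(3) this gives $R^{\boxempty_{S_\ell}, \chi, \tt{crys}}_{S_\ell} \cong R^{\boxempty_{S_\ell}, \chi, \tt{min,crys}}_{S_\ell}$ as well. (2) Using Proposition \ref{20170329_first_rep_result} twice, we write both framed global rings as $R^{\tt{loc}, \bullet}_{S_\ell}\llbracket x_1, \ldots, x_{a+b}\rrbracket/(f_1, \ldots, f_a)$; from the equality of the left-hand sides, together with the fact that the presentations can be taken compatibly (the $\tt{min,crys}$ problem is a sub-condition of $\tt{crys}$, so the relevant surjection $R^{\tt{loc},\tt{crys}}_{S_\ell}\twoheadrightarrow R^{\tt{loc},\tt{min,crys}}_{S_\ell}$ fits into a commuting diagram), one deduces that this surjection is an isomorphism. (3) Since $R^{\tt{loc}, \bullet}_{S_\ell} = \widehat{\bigotimes}_{\nu\in S_\ell}\widetilde{R}_\nu$ with the factors at $\nu|\ell$ and $\nu|\infty$ identical on both sides, and the factors at $\nu \in S$ being $R^{\boxempty, \chi_\nu}(\overline{\rho}_{\lambda,\nu})$ resp. $R^{\boxempty, \chi_\nu, \tt{min}}(\overline{\rho}_{\lambda,\nu})$, an isomorphism of the completed tensor products which is the identity on all but the $S$-factors forces $R^{\boxempty, \chi_\nu, \tt{min}}(\overline{\rho}_{\lambda,\nu}) \cong R^{\boxempty, \chi_\nu}(\overline{\rho}_{\lambda,\nu})$ for each $\nu \in S$. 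Finally, by the local split reduction of Proposition \ref{20170214_prop_local_split_defos}, $R^{\boxempty,\chi_\nu}(\overline{r}_{\lambda,\nu}) \cong R^{\boxempty}(\overline{\rho}_{\lambda,\tilde\nu})$, so the statement for $\overline{\rho}_{\lambda,\nu}$ is exactly what was required.

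The main obstacle I anticipate is step (2): extracting a \emph{local} isomorphism from a \emph{global} one is not automatic, because a priori an isomorphism $R^{\tt{loc},\tt{crys}}_{S_\ell}\llbracket \underline{x}\rrbracket/(\underline{f}) \cong R^{\tt{loc},\tt{min,crys}}_{S_\ell}\llbracket \underline{x}\rrbracket/(\underline{f})$ need not be compatible with the tautological surjection between the two sides. The clean way around this is a dimension/length count: since each local ring $R^{\boxempty,\chi_\nu,\tt{min}}(\overline{\rho}_{\lambda,\nu})$ is formally smooth over $\mathcal{O}_{\mathcal{K}}$ of relative dimension $n^2$ (Proposition \ref{20170526_newProponmin}, using $\operatorname{dim}\mathfrak{gl}_n = n^2$), while $R^{\boxempty,\chi_\nu}(\overline{\rho}_{\lambda,\nu}) = R^{\boxempty}(\overline{\rho}_{\lambda,\tilde\nu})$ is a complete intersection flat of relative dimension $n^2$ over $\mathcal{O}_{\mathcal{K}}$ by Theorem \ref{20170213_Helms_theorem}, and the surjection $R^{\boxempty}(\overline{\rho}_{\lambda,\tilde\nu})\twoheadrightarrow R^{\boxempty,\tt{min}}(\overline{\rho}_{\lambda,\tilde\nu})$ is a surjection onto a formally smooth ring of the same relative dimension, it suffices to show this surjection is between rings of the same relative dimension, and then to apply Proposition \ref{20170329_first_rep_result}(2) (a surjection onto a ring whose relative dimension exceeds the expected one by the right amount is an isomorphism). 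Concretely: the argument of Lemma \ref{20150526_a_tech_lemma} applied with $R_1 = R^{\tt{loc},\tt{crys}}_{S_\ell}$ (or the single $S$-factor) and $R_2 = R^{\tt{loc},\tt{min,crys}}_{S_\ell}$, combined with the equality of global framed rings and the presentation from Proposition \ref{20170329_first_rep_result}, shows the kernel of $R^{\boxempty}(\overline{\rho}_{\lambda,\tilde\nu})\twoheadrightarrow R^{\boxempty,\tt{min}}(\overline{\rho}_{\lambda,\tilde\nu})$ is generated by a subset of a regular system of parameters and hence, being the kernel of a map onto a ring of the same dimension, is zero. This localizes the global $R=R^{\tt{min}}$ statement to each $\nu\in S$, which is the assertion of the corollary.
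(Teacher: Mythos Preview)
Your overall strategy---pass from the global isomorphism $R^{\tt{crys}}\cong R^{\tt{min,crys}}$ of Theorem~\ref{20170213_global_rrmin_thm} (hence of the framed versions) to the smoothness of the global framed ring via $R=T=\mathcal{O}$, and then use a presentation argument together with Lemma~\ref{20150526_a_tech_lemma} and Theorem~\ref{20170213_Helms_theorem} to force the local ring at a place of $S$ to be regular---is exactly the paper's approach. However, your execution of step~(2) has a concrete misidentification, and you are missing the clean device the paper uses to make Lemma~\ref{20150526_a_tech_lemma} apply to a \emph{single} local factor.

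First, your proposed $R_2 = R^{\tt{loc},\tt{min,crys}}_{S_\ell}$ does not satisfy hypothesis~3 of Lemma~\ref{20150526_a_tech_lemma}: Proposition~\ref{20170329_first_rep_result} gives $R_1\llbracket x\rrbracket/(f) \cong R^{\boxempty_{S_\ell},\chi,\tt{crys}}_{S_\ell}$, not $R^{\tt{loc},\tt{min,crys}}_{S_\ell}$, and the two have different relative dimensions (they differ by $\#S_\ell-1$). The correct $R_2$ is the global framed ring $R^{\boxempty_{S_\ell},\chi,\tt{min,crys}}_{S_\ell}$, which is formally smooth by $R^{\tt{min}}\cong \mathbb{T}^{\tt{min}}\cong \mathcal{O}$; with that choice the lemma does apply with $R_1=R^{\tt{loc},\tt{crys}}_{S_\ell}$. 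Second, even after that fix, taking $R_1 = R^{\tt{loc},\tt{crys}}_{S_\ell}$ only gives regularity of the full tensor product, and you would still need to descend regularity along the flat local map $R^{\boxempty,\chi_\nu}(\overline\rho_{\lambda,\nu})\to R^{\tt{loc},\tt{crys}}_{S_\ell}$ (using flatness from Theorem~\ref{20170213_Helms_theorem})---a step you do not mention. The paper avoids this entirely: it fixes one $\nu_0\in S$, considers the hybrid condition ``$(S-\nu_0)$-\texttt{min}'' which imposes \texttt{min} at every place of $S$ \emph{except} $\nu_0$, and observes that the surjections $R^{\boxempty_{S_\ell},\chi,\tt{crys}}\twoheadrightarrow R^{\boxempty_{S_\ell},\chi,(S-\nu_0)\text{-}\tt{min,crys}}\twoheadrightarrow R^{\boxempty_{S_\ell},\chi,\tt{min,crys}}$ are all isomorphisms by Theorem~\ref{20170213_global_rrmin_thm}. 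In $R^{\tt{loc},(S-\nu_0)\text{-}\tt{min,crys}}$ every local factor except the one at $\nu_0$ is formally smooth (by Proposition~\ref{20170526_newProponmin}, Lemma~\ref{20150712_crystallinity_lemma}, Proposition~\ref{archProp}), so they absorb into power-series variables and one obtains directly $R^{\boxempty,\chi_{\nu_0}}(\overline\rho_{\lambda,\nu_0})\llbracket x\rrbracket/(f)\cong \mathcal{O}_{\mathcal{K}}\llbracket z\rrbracket$. Now Lemma~\ref{20150526_a_tech_lemma} applies with $R_1=R^{\boxempty,\chi_{\nu_0}}(\overline\rho_{\lambda,\nu_0})$ itself, giving that this local ring is an integral domain; since its quotient $R^{\boxempty,\chi_{\nu_0},\tt{min}}(\overline\rho_{\lambda,\nu_0})$ has the same dimension, the surjection is an isomorphism.
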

\begin{proof}
Assume that $\lambda$ is such that Theorem \ref{20170213_global_rrmin_thm} holds. Moreover,
fix a place $\nu_0\in S$ and write $R^{\boxempty_{S_{\ell}},(S-\nu_0)-\tt{min, crys}}$ for the ring parametrizing the same  (global) lifts as 
$R^{\boxempty_{S_{\ell}},\tt{min, crys}}$, except that we don't impose
any condition at $\nu_0$ at all. Then we have isomorphisms
\begin{equation*}
R^{\tt{loc},\chi, \textnormal{$(S-\nu_0)$}-\tt{min, crys}}\llbracket x_1,\ldots, x_a\rrbracket/(f_1, \ldots, f_b) \cong 
R^{\boxempty_{S_{\ell}},(S-\nu_0)-\tt{min, crys}} 
\cong R^{\boxempty_{S_{\ell}},\tt{min, crys}}
\end{equation*}
where the first isomorphism (as well as the identity $a-b = \# S_{\ell} -1$) follows from 
Proposition \ref{20170329_first_rep_result}.
We know that all local deformation rings occurring as tensor factors in the definition $R^{\tt{loc},\chi, \textnormal{$(S-\nu_0)$}-\tt{min, crys}}$ (except for possibly at $\nu$) are formally smooth, and we also know their dimension:
\begin{equation*}
\operatorname{dim}_{W}(R^{\boxempty,\chi_{w}, \mathcal{D}_{w}}_{\nu}) = 
\begin{cases}
\frac{n(n+1)}{2} & \text{ if $w|\infty$, $\mathcal{D}_{w} = \emptyset$};\\
n^2 + [F:\mathbb{Q}]\frac{n(n-1)}{2} & \text{ if $w|\ell$, $\mathcal{D}_{w} = \tt{crys}$};\\
n^2 & \text{ if $w\in (S-\nu)$, $\mathcal{D}_{w} = \tt{min}$};
\end{cases}
\end{equation*}

By our $R^{\tt{min}}\cong T^{\tt{min}}\cong \mathcal{O}$-results from the previous two sections,
which hold again 
after ruling out finitely many $\lambda$, also 
$R^{\boxempty_{S_{\ell}},\tt{min},\tt{crys}}$ is formally smooth of
dimension $(n^2+1)\#S_{\ell} - 1$.
Thus, we get that 
\begin{equation*}
R^{\boxempty,\chi_{\nu_0}}(\overline{\rho}_{\nu_0})\llbracket x_1, \ldots, x_{\alpha+m}\rrbracket/(f_1,\ldots, f_\alpha) = R^{\boxempty,\tt{min},\tt{crys}} = {\mathcal{O}_{\mathcal{K}}}\llbracket x_1, \ldots, x_{(n^2+1)\#S_{\ell} - 1}\rrbracket
\end{equation*}
with $m = (n^2+1)\#S_{\ell} - 1 - n^2$. Now, by Theorem \ref{20170213_Helms_theorem}, 
$R^{\boxempty,\chi_{\nu_0}}(\overline{\rho}_{\nu_0})$ is a complete intersection ring, i.e. we can write
\begin{equation*}
R^{\boxempty,\chi_{\nu_0}}(\overline{\rho}_{\nu_0}) = {\mathcal{O}_{\mathcal{K}}}\llbracket y_1, \ldots, y_{\beta+h}\rrbracket/(g_1, \ldots, g_{\beta}) 
\end{equation*}
with $h = \operatorname{dim} R^{\boxempty,\chi_{\nu_0}}(\overline{\rho}_{\nu_0}) = n^2$.
By Lemma \ref{20150526_a_tech_lemma}
applied to $\Lambda = \mathcal{O}_{\mathcal{K}}$, 
$R_1 = R^{\boxempty,\chi_{\nu_0}}(\overline{\rho}_{\nu_0})$ and 
$R_2 = R^{\boxempty,\tt{min},\tt{crys}}$, we see that 
%
%
%
$R^{\boxempty,\chi_{\nu_0}}(\overline{\rho}_{\nu_0})$ has only one irreducible component and hence we must have
$R^{\boxempty, \chi_{\nu_0}, \tt{min}}(\overline{\rho}_{\nu_0}) \cong 
R^{\boxempty, \chi_{\nu_0}}(\overline{\rho}_{\nu_0})$.
\end{proof}

\section{Unobstructedness for RACSDC automorphic representations}\label{20170407_main_sect}
We are now in a position to state and prove our main result. For this, let $\pi$ be a 
RACSDC autormorphic representation of $\operatorname{GL}_n(\mathbb{A}_F)$ with ramification set $S$.
By passing to a unitary avatar $\Pi\subset \mathcal{S}_{\boldsymbol\omega}(U)$ (for a suitable
weigth $\boldsymbol\omega$ and level $U= U_S$),
we can attach the compatible system $\mathcal{R}_{\pi} = (\overline{r}_{\lambda})_{\lambda\in \Lambda_{\mathcal{E}_{\Pi}}^1}$. Here, $\mathcal{E}_{\Pi}$ denotes the 
number field generated by all Hecke eigenvalues of $\Pi$, $\Lambda_{\mathcal{E}_{\Pi}}^1 \subset \operatorname{Pl}_{\mathcal{E}_{\Pi}}$ denotes the set of places for which $\overline{\rho}_{\lambda}$ is absolutely irreducible and  
 we assume the following:
\begin{Ass}\label{20170228_nec_assumptions}
\begin{enumerate}
\item\textbf{(Irreducibility):} The set $\Lambda_{\mathcal{E}_{\Pi}}^1\subset \operatorname{Pl}_{\mathcal{E}_{\Pi}}$  has Dirichlet density $1$;
\item\textbf{(No consecutive weights):} The multisets of Hodge-Tate weights $HT_{\tau}$ of the system 
$\mathcal{R}_{\pi}$ fulfill (for all embeddings $\tau$) the condition from Theorem \ref{20170323_FL_main_thm}: If two numbers $a,b$ occur in $HT_{\tau}$, then $|a-b|\neq 1$.
\end{enumerate}
\end{Ass}
We stress that we understand the first part as a general conjecture on Galois representations
attached to RACSDC representations (so, in particular, we assume that this is correct independently of the choice of $F$ or $\pi$), while the second part puts a constraint on our
choice of $\pi$. We also have the following:
\begin{Rem}\label{20150407_irreducibility_remark}
The first part of 
Assumption \ref{20170228_nec_assumptions}
is known to hold e.g. if $\pi$ is extremely regular \cite{BLGGT} or if 
$n\leq 5$ \cite{CalegariGee}. Results in this direction are also contained in 
\cite{PatrikisTaylor}, but they are not directly applicable to our situation.
We also remark that all entries
in the $\ell$-adic system $(\rho_{\pi, \lambda})_{\lambda\in \operatorname{Pl}_{\mathcal{E}(U)}}$ 
are expected (by cuspidality of $\pi$) to be absolutely irreducible and that this, using suitable 
modularity lifting theorems, is expected to imply absolute irreducibility of
$\overline{\rho}_{\pi,\lambda}$ for almost all $\lambda$. 
An established result in this direction is that absolute
irreducibility of the $\ell$-adic system implies absolute 
irreducibility of $\overline{\rho}_{\pi,\lambda}$ except
for a failure set of Dirichlet density $0$, see \cite{PSW}.
%
%
\end{Rem}

Our main result is now as follows: 
\begin{Thm}\label{20170228_main_thm}
Presuming Assumption \ref{20170228_nec_assumptions}, there exists a subset 
$\Lambda^0_{\mathcal{E}_{\Pi}} \subset \Lambda^1_{\mathcal{E}_{\Pi}}$ of Dirichlet density $1$ such that the functor $D^{\boxempty_{S_{\ell}}, \chi}_{S_{\ell}, W(k_{\lambda})}(\overline{r}_{\lambda}) $ is globally unobstructed whenever $\lambda\in 
\Lambda^0_{\mathcal{E}_{\Pi}}$.
\end{Thm}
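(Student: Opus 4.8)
The plan is to apply the framework of Theorem \ref{20160908_MainThm} over a suitable finite solvable CM base change of $F$, obtain global unobstructedness there, and then descend to $F^{+}$ by the potential-unobstructedness arguments of Section \ref{20170405_section_on_framework}. To begin with, I fix a finite solvable totally real extension $F'^{+}/F^{+}$, with associated CM field $F'=F'^{+}F$, together with a subset $\Lambda^{0}_{\mathcal{E}_{\Pi}}\subset\Lambda^{1}_{\mathcal{E}_{\Pi}}$ of Dirichlet density $1$, such that for $\lambda\in\Lambda^{0}_{\mathcal{E}_{\Pi}}$ the base-changed avatar $\Pi|_{F'}$ is again RACSDC and gives rise to $\overline{r}_{\lambda}|\operatorname{Gal}_{F'^{+}}$, the latter is absolutely irreducible, unipotently ramified above $S$, and has $\overline{r}_{\lambda}(\operatorname{Gal}_{F'(\zeta_{\ell})})$ adequate, and moreover (after discarding finitely many $\lambda$) $\ell:=\ell(\lambda)\gg 0$, $F'$ is unramified above $\ell$ and $\ell\nmid[F'^{+}:F^{+}]$. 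Such a choice is possible uniformly in $\lambda$ because the inertial types of $\rho_{\pi,\lambda}$ at the places of $S$ are pinned down by $\pi$ and have $\ell$-independent reductions for all but finitely many $\ell$.

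Over $F'^{+}$ I then verify conditions 1.--6. of Theorem \ref{20160908_MainThm}, and conditions a)--d) of its part 2., for the local conditions $\tt{min}$ (Section \ref{20170321_sect_min_ram}) above $S$, $\tt{crys}$ (Section \ref{20170321_sect_crys_defos}) above $\ell$, and $\tt{sm}$ the unrestricted condition above $\ell$. Here $(\tt{sm}/k)$ and the vanishing of all $\delta_{\nu}$ (hence condition d)) come from Theorem \ref{20170323_FL_main_thm} (using Assumption \ref{20170228_nec_assumptions}.2, $\ell\gg 0$, and unramifiedness of $F'$ above $\ell$, since $\dim\mathfrak{g}^{\tt{der}}_{n}=n^{2}$); $(\tt{crys})$ is Lemma \ref{20150712_crystallinity_lemma}, whose output dimension $n^{2}+[F'_{\nu}:\mathbb{Q}_{\ell}]\frac{n(n-1)}{2}$ equals the prescribed $\dim\mathfrak{g}^{\tt{der}}_{n}+(\dim\mathfrak{g}^{\tt{der}}_{n}-\dim\mathfrak{b}^{\tt{der}}_{n})[F'_{\nu}:\mathbb{Q}_{\ell}]$; $(\tt{min})$ is Proposition \ref{20170526_newProponmin}, available thanks to unipotent ramification, and condition c) follows from it by the tangent-space exact sequence exactly as inside the proof of Theorem \ref{20160908_MainThm}; $(\infty)$ is Proposition \ref{archProp} together with Remark \ref{20170406_remark_on_mu_m}, which gives $\dim(\mathfrak{gl}_{n}^{c_{\nu}=-1})=\frac{n(n+1)}{2}=\dim\mathfrak{b}^{\tt{der}}_{n}$; (Presentability) is Proposition \ref{20170329_first_rep_result}, representability of $D^{\chi}$ being guaranteed by absolute irreducibility; condition a) holds for $\ell\gg 0$, and condition b) follows from absolute irreducibility of $\overline{r}_{\lambda}|\operatorname{Gal}_{F'}$ together with $\ell\gg 0$ via Lemma \ref{20150904_Lemma_glob_case}. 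The crucial input is $(R=T)$: by Theorem \ref{20170210_main_rttheorem}, $R^{\tt{min,crys}}$ is finite flat over $\mathcal{O}_{\mathcal{K}_{\lambda}}$ and $\cong\mathbb{T}^{\tt{min}}$, and by the $T=\mathcal{O}$-type statement (Corollary \ref{20170307_TO_corollary} applied to the fixed-type Hecke algebra) $\mathbb{T}^{\tt{min}}\cong\mathcal{O}_{\mathcal{K}_{\lambda}}$ for almost all $\lambda$; so $R^{\chi,\tt{min,crys}}$ is formally smooth of relative dimension $0$ over $\mathcal{O}_{\mathcal{K}_\lambda}$, and its $\boxempty$-framed version at the places of $S_{\ell}$ is formally smooth of relative dimension $r_{0}$ by Proposition \ref{20170525_rep_res_proposition}.

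Part 2. of Theorem \ref{20160908_MainThm} now yields $H^{1}_{\mathcal{L}^{\perp}}(\operatorname{Gal}_{F'^{+},S},\mathfrak{g}^{\tt{der},\vee}_{n})=H^{0}(\operatorname{Gal}_{F'^{+},S},\mathfrak{g}^{\tt{der}}_{n})=0$, whence Corollary \ref{20170314_cor_317_phd} and the remark following it give $\Sh^{2}_{S_{\ell}}(\mathfrak{g}^{\tt{der}}_{n})=0$ over $F'^{+}$ and show that $D^{\boxempty_{S_{\ell}},\chi}_{S_{\ell},W(k_{\lambda})}(\overline{r}_{\lambda}|\operatorname{Gal}_{F'^{+}})$ (with $S_{\ell}$ now read relative to $F'$) is globally unobstructed provided the local rings $R^{\boxempty,\chi_{\nu}}(\overline{r}_{\lambda,\nu})$ are formally smooth for all $\nu$ above $S\cup\Omega_{\ell}$; this holds at the places above $\ell$ by Theorem \ref{20170323_FL_main_thm} and at those above $S$ by Corollary \ref{20170314_cor_on_min_is_all} combined with the smoothness of $R^{\boxempty,\tt{min}}$ from Proposition \ref{20170526_newProponmin}. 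To descend to $F^{+}$, the dual Selmer group transfers by Lemma \ref{20170404_pot_unobstr}, since by Example \ref{20170323_ex_unram_pre_cond} the unramified-outside-$S_{\ell}$ system over $F^{+}$ is dual-pre to its counterpart over $F'^{+}$ and $\ell\nmid[F'^{+}:F^{+}]$; formal smoothness of the local rings over $F^{+}$ at the places above $\ell$ (resp. at the archimedean places) is Theorem \ref{20170323_FL_main_thm} applied directly over $F^{+}$ (resp. Proposition \ref{archProp}); and at $\nu\in S$, writing $R^{\boxempty,\chi_{\nu}}(\overline{r}_{\lambda,\nu})\cong R^{\boxempty}(\overline{\rho}_{\lambda,\tilde{\nu}})$ via Proposition \ref{20170214_prop_local_split_defos}, this ring is a reduced complete intersection of relative dimension $n^{2}$ by Theorem \ref{20170213_Helms_theorem}, hence formally smooth iff its local $H^{2}$ vanishes, which follows from Proposition \ref{20170405_prop_easy_pot_unobstr} and the vanishing of the analogous $H^{2}$ over $F'^{+}$ just established. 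This proves that $D^{\boxempty_{S_{\ell}},\chi}_{S_{\ell},W(k_{\lambda})}(\overline{r}_{\lambda})$ is globally unobstructed for every $\lambda\in\Lambda^{0}_{\mathcal{E}_{\Pi}}$.

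The step I expect to be most delicate is the uniform choice of the solvable base change $F'$ and the bookkeeping needed to keep $\Lambda^{0}_{\mathcal{E}_{\Pi}}$ of Dirichlet density $1$: one must simultaneously arrange unipotent ramification above $S$, adequacy, RACSDC-ness of $\Pi|_{F'}$, and---most subtly---preservation of absolute irreducibility of $\overline{r}_{\lambda}$ under restriction to $\operatorname{Gal}_{F'^{+}}$ for a density-$1$ set of $\lambda$ (using an input of the type of \cite{PSW} together with the linear-disjointness arguments standard in this area), with $F'$ independent of $\lambda$ outside a negligible exceptional set. The remaining verifications are then essentially bookkeeping against the local and global inputs assembled in Sections \ref{20170405_section_liftings_and_defos}--\ref{20160407_sect_cons}.
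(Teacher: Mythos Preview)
Your overall strategy is the right one and matches the paper's architecture, but there is a genuine gap in the first paragraph: you fix a \emph{single} finite solvable extension $F'^{+}/F^{+}$ and then try to run the entire argument over $F'$ for a density-$1$ set of $\lambda$. This cannot work. The $R=T$ input (Theorem \ref{20170210_main_rttheorem}), as well as the identifications of Proposition \ref{20170214_prop_local_split_defos} that you use to apply Theorem \ref{20170323_FL_main_thm}, Lemma \ref{20150712_crystallinity_lemma}, and Corollary \ref{20170314_cor_on_min_is_all} at the places above $\ell$, all require that \emph{every place of $F'^{+}$ above $\ell(\lambda)$ split in $F'|F'^{+}$}. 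For a fixed CM extension $F'|F'^{+}$ this splitting condition is an honest Chebotarev constraint on $\ell$ and holds only on a set of rational primes of density strictly less than $1$ (for instance, if $F'^{+}=\mathbb{Q}$ it is just $1/2$). Your list of conditions on $F'$ (``unramified above $\ell$'', ``$\ell\nmid[F'^{+}:F^{+}]$'', unipotent ramification at $S$, adequacy, preserved irreducibility) omits this one, and no choice of a single $F'$ can repair it.

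This is exactly the reason the paper does \emph{not} work over a single base change. After the first (cheap) solvable base change that arranges Assumption \ref{20170228_nonec_assumptions}, the paper constructs an \emph{infinite tower} of quadratic extensions $F^{+}=L^{+}_{0}\subset L^{+}_{1}\subset\cdots$ (Theorem \ref{20170308_firstmaintechthm}), uses the Chebotarev estimate of Lemma~A in the appendix to show that at level $i$ the split-above-$\ell$ condition (condition $\star_{3}$ in the proof) holds for $\ell$ in a set of density at least $1-2^{-i}$, and then for each $\lambda$ chooses $L^{+}_{(\lambda)}$ to be the first level of the tower that works. The key bookkeeping point---which your last paragraph gestures at but does not capture---is that the extensions have $2$-power degree, so the potential-unobstructedness descent of Lemma \ref{20170404_pot_unobstr} and Proposition \ref{20170405_prop_easy_pot_unobstr} applies for every odd $\ell$ regardless of which level $L^{+}_{(\lambda)}$ one lands at. Once you replace your single $F'$ by this tower and assign to each $\lambda$ its own $L^{+}_{(\lambda)}$, the rest of your verification of conditions 1.--6.\ and a)--d) goes through essentially as you wrote it.
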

As a first step towards the proof, let us consider the following simplifying assumption:
\begin{Ass}\label{20170228_nonec_assumptions}
\begin{enumerate}
\item $F|F^+$ is unramified at all finite places and, in case $n$ is even, then also
$\frac{n}{2}[F^+:\mathbb{Q}]$ is even;
\item each place $\nu$ of $F^+$ which lies below $S$ splits in $F|F^+$ as, say, $\tilde{\nu}\tilde{\nu}^c$; (For archimedean places, this condition is automatically fulfilled, so we can replace $S$ by $S\sqcup \Omega_{\infty}$ without loss of generality.)
\item for each place $\nu$ of $F^+$ which lies below $S$, the Weil-Deligne representation 
$(r_{\nu}, W_{\nu})$ attached to $\Pi$ has unramified
underlying Weil-representation $r_{\nu}$.
%
\end{enumerate}
\end{Ass}
Remark that the third part can be characterized as follows: The $\ell$-adic representation 
$r_{\Pi,\lambda}$ is at $\nu$ a minimally ramified deformation of $\overline{r}_{\Pi,\lambda}$. (As the system associated to $\Pi$ is \textit{compatible}, this does not depend on the choice of $\lambda\in \Lambda^1_{\mathcal{E}_{\Pi}}$.) 
Now, consider the following (seemingly weaker) variation of Theorem \ref{20170228_main_thm}:
\begin{Thm}\label{20170301_weaker_main_thrm}
Presuming Assumptions \ref{20170228_nec_assumptions} and \ref{20170228_nonec_assumptions},
there exists a subset 
$\Lambda^0_{\mathcal{E}_{\Pi}} \subset \Lambda^1_{\mathcal{E}_{\Pi}}$ of Dirichlet density $1$ such that the functor $D^{\boxempty_{S_{\ell}}, \chi}_{S_{\ell}, W(k_{\lambda})}(\overline{r}_{\lambda}) $ is globally unobstructed whenever $\lambda\in 
\Lambda^0_{\mathcal{E}_{\Pi}}$.
\end{Thm}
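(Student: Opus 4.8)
The plan is to deduce the statement from the framework Theorem~\ref{20160908_MainThm} together with Corollary~\ref{20170314_cor_317_phd}, specialised to $G=\mathcal{G}_n$ over the base field $F^{+}$ (so the framework's ``$F$'' is our $F^{+}$, and $\overline{\rho}:=\overline{r}_{\lambda}$, $S$ the set of finite places of $F^{+}$ below the ramification set), with $\tt{min}$ the minimal condition of Section~\ref{20170321_sect_min_ram}, $\tt{crys}$ the $\tt{FL}$-crystalline condition of Section~\ref{20170321_sect_crys_defos}, $\tt{sm}$ the unrestricted condition; here $\dim\mathfrak{g}^{\tt{der}}=n^{2}$, $\dim\mathfrak{b}^{\tt{der}}=\tfrac{n(n+1)}{2}$, $\dim\mathfrak{g}^{\tt{ab}}=1$. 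First I would fix the finite set of excluded places: those $\lambda$ for which $\ell(\lambda)\in S\cup\{2\}$, for which $\ell(\lambda)$ is too small for the Fontaine--Laffaille range of the fixed Hodge--Tate multisets of $\mathcal{R}_{\pi}$, for $\ell\nmid n$, for $\mathfrak{g}_{n}=\mathfrak{g}_{n}^{\tt{der}}\oplus\mathfrak{g}_{n}^{\tt{ab}}$, for adequacy of $\overline{\rho}_{\lambda}(\operatorname{Gal}_{F(\zeta_{\ell})})$, and for which one of the finitely many exceptions of Corollary~\ref{20170307_TO_corollary} (hence Theorem~\ref{20170210_main_rttheorem}), Theorem~\ref{20170213_global_rrmin_thm} or Corollary~\ref{20170314_cor_on_min_is_all} occurs. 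Setting $\Lambda^{0}_{\mathcal{E}_{\Pi}}$ to be the complement of this finite set inside $\Lambda^{1}_{\mathcal{E}_{\Pi}}$, it still has Dirichlet density $1$ by the first part of Assumption~\ref{20170228_nec_assumptions}. Assumption~\ref{20150904_Ass_H0_vanishing_for_presentations} also holds for $\lambda\in\Lambda^{0}_{\mathcal{E}_{\Pi}}$, since $H^{0}(\operatorname{Gal}_{F^{+},S},\mathfrak{g}_{n}^{\tt{der}})\subseteq\operatorname{End}_{\operatorname{Gal}_{F}}(\overline{\rho}_{\lambda})=k$ and the non-zero scalars are not fixed by complex conjugation as $\ell\neq 2$.

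Next I would verify, for $\lambda\in\Lambda^{0}_{\mathcal{E}_{\Pi}}$, the six hypotheses of Theorem~\ref{20160908_MainThm} and conditions a)--d) of its part~2. By the second part of Assumption~\ref{20170228_nonec_assumptions} every place of $S\cup\Omega_{\ell}$ splits in $F|F^{+}$, so Proposition~\ref{20170214_prop_local_split_defos} identifies every local framed fixed-determinant $\mathcal{G}_{n}$-deformation ring in sight with the corresponding $\operatorname{GL}_{n}$-lifting ring, and the results of Section~\ref{20170316_sect_defo_conds} apply. Condition \textbf{(sm/$k$)} with all $\delta_{\nu}=0$ (hence part~2 d)) follows from Theorem~\ref{20170323_FL_main_thm}, using the no-consecutive-weights hypothesis (second part of Assumption~\ref{20170228_nec_assumptions}) and the Fontaine--Laffaille range, giving $d^{\boxempty,\tt{sm}}_{\nu}=n^{2}([F_{\nu}:\mathbb{Q}_{\ell}]+1)$. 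Condition \textbf{(crys)} follows from Lemma~\ref{20150712_crystallinity_lemma} (the multiplicity-one hypothesis on Hodge--Tate weights being supplied by the regularity of $\pi$), whose dimension $n^{2}+[F_{\nu}:\mathbb{Q}_{\ell}]\tfrac{n(n-1)}{2}$ equals $d^{\boxempty,\tt{crys}}_{\nu}$ on the nose. Condition \textbf{(min)} follows from Proposition~\ref{20170526_newProponmin}, once $\overline{\rho}_{\lambda}$ is unipotently ramified at all places of $S$, which I would deduce for $\lambda\in\Lambda^{0}_{\mathcal{E}_{\Pi}}$ from the third part of Assumption~\ref{20170228_nonec_assumptions} together with $\ell(\lambda)\notin S$. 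Condition \textbf{($\infty$)} is Proposition~\ref{archProp} and Remark~\ref{20170406_remark_on_mu_m}, giving $d^{\boxempty}_{\nu}=\dim(\mathfrak{gl}_{n}^{c_{\nu}=-1})=\tfrac{n(n+1)}{2}=\dim\mathfrak{b}^{\tt{der}}$. Condition \textbf{(Presentability)} is Proposition~\ref{20170329_first_rep_result}, valid since $D^{\chi}_{S_{\ell}}(\overline{r}_{\lambda})$ is representable ($\overline{r}_{\lambda}$ being absolutely irreducible). Condition \textbf{($R=T$)} follows from Theorem~\ref{20170210_main_rttheorem} and Corollary~\ref{20170307_TO_corollary}, which give $R^{\tt{min,crys}}\cong\mathbb{T}^{\tt{min}}\cong\mathcal{O}_{\mathcal{K}_{\lambda}}$, so that after adjoining the $\dim(\mathfrak{g})(\#S_{\ell}-1)+\dim(\mathfrak{g}^{\tt{der}})$ framing variables of Proposition~\ref{20170525_rep_res_proposition} and a flat change of coefficient ring (Lemma~\ref{20170405_lemma_res_of_coeff_ring}), $R^{\boxempty_{S_{\ell}},\chi,\tt{min,crys}}_{S_{\ell}}$ is formally smooth over $W(k_{\lambda})$ of relative dimension $r_{0}=\dim(\mathfrak{g})\#S_{\ell}-\dim(\mathfrak{g}^{\tt{ab}})$. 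For part~2: a) and d) are in hand; b) follows from Lemma~\ref{20150904_Lemma_glob_case}(1) applied to $\overline{\rho}_{\lambda}$ over $F$ and the inclusion $H^{0}(\operatorname{Gal}_{F^{+}},-)\hookrightarrow H^{0}(\operatorname{Gal}_{F},-)$, since $\operatorname{Hom}_{\operatorname{Gal}_{F}}(\overline{\rho}_{\lambda},\overline{\rho}_{\lambda}(1))=0$ for $\ell$ large (then $\overline{\epsilon}_{\ell}|_{\operatorname{Gal}_{F}}$ has order $>n$, so $\overline{\rho}_{\lambda}\not\cong\overline{\rho}_{\lambda}(1)$, both being irreducible); c) follows from the smoothness of $R^{\boxempty,\chi_{\nu},\tt{min}}_{\nu}$ of relative dimension $n^{2}=\dim\mathfrak{g}^{\tt{der}}$ and the framed-versus-unframed tangent space sequence. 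Theorem~\ref{20160908_MainThm}(2) then yields $H^{1}_{\mathcal{L}^{\perp}}(\operatorname{Gal}_{F^{+},S},\mathfrak{g}_{n}^{\tt{der},\vee})=H^{0}(\operatorname{Gal}_{F^{+},S},\mathfrak{g}_{n}^{\tt{der}})=0$, and Corollary~\ref{20170314_cor_317_phd} gives $\Sh^{2}_{S_{\ell}}(\mathfrak{g}_{n}^{\tt{der}})=0$.

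To conclude, I would invoke the final clause of Corollary~\ref{20170314_cor_317_phd}: since $\Sh^{2}_{S_{\ell}}(\mathfrak{g}_{n}^{\tt{der}})$ vanishes, the unrestricted framed functor $D^{\boxempty_{S_{\ell}},\chi}_{S_{\ell},W(k_{\lambda})}(\overline{r}_{\lambda})$ is globally unobstructed exactly when the local unrestricted fixed-determinant functors at the places of $S\cup\Omega_{\ell}$ (and, automatically, $\Omega_{\infty}$) are relatively smooth. This holds at $\nu\mid\ell$ by Theorem~\ref{20170323_FL_main_thm}, at $\nu\mid\infty$ by Proposition~\ref{archProp}, and at $\nu\in S$ by Corollary~\ref{20170314_cor_on_min_is_all}, which for $\lambda\in\Lambda^{0}_{\mathcal{E}_{\Pi}}$ identifies $R^{\boxempty,\chi_{\nu}}(\overline{\rho}_{\lambda,\nu})$ with the smooth minimally ramified ring. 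Hence $D^{\boxempty_{S_{\ell}},\chi}_{S_{\ell},W(k_{\lambda})}(\overline{r}_{\lambda})$ is globally unobstructed for every $\lambda\in\Lambda^{0}_{\mathcal{E}_{\Pi}}$, a set of density $1$. I expect the only genuinely delicate point to be bookkeeping: matching $d^{\boxempty,\tt{sm}}_{\nu},d^{\boxempty,\tt{crys}}_{\nu},d^{\boxempty,\tt{min}}_{\nu},d^{\boxempty}_{\nu}$ and $r_{0}$ against the $\mathcal{G}_{n}$-invariants $n^{2},\tfrac{n(n+1)}{2},1$ so that Conditions \textbf{(crys)} and \textbf{($R=T$)} hold with equality, and ensuring that the several ``almost all $\lambda$'' failure sets together with the finitely many small primes remove only a density-zero (in fact finite) exceptional set; the essential mathematical input, $R^{\tt{min,crys}}\cong\mathcal{O}_{\mathcal{K}_{\lambda}}$, is already available from Theorem~\ref{20170210_main_rttheorem} and Corollary~\ref{20170307_TO_corollary}.
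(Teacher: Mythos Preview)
There is a genuine gap. You assert that ``by the second part of Assumption~\ref{20170228_nonec_assumptions} every place of $S\cup\Omega_{\ell}$ splits in $F|F^{+}$'', but that assumption only guarantees splitting for the places below $S$; it says nothing about places above $\ell$. For a fixed CM extension $F|F^{+}$ (even one that is everywhere unramified, as in part~1 of Assumption~\ref{20170228_nonec_assumptions}), the set of rational primes $\ell$ for which \emph{every} place of $F^{+}$ above $\ell$ splits in $F|F^{+}$ has Dirichlet density strictly less than $1$ in general (already for $F^{+}=\mathbb{Q}$ it is $\tfrac{1}{2}$). At the non-split places above $\ell$ you cannot invoke Proposition~\ref{20170214_prop_local_split_defos} to identify the local $\mathcal{G}_n$-deformation ring with a $\operatorname{GL}_n$-lifting ring, so none of Theorem~\ref{20170323_FL_main_thm}, Lemma~\ref{20150712_crystallinity_lemma}, or the preconditions of Theorem~\ref{20170210_main_rttheorem} (which explicitly require ``all places of $S_{\ell}$ split in $F|F^{+}$'') are available. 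Hence conditions \textbf{(sm/$k$)}, \textbf{(crys)} and \textbf{($R=T$)} of the framework are not established for a positive-density set of $\lambda$, and your argument only yields global unobstructedness on a set of density strictly less than $1$.

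This is precisely the obstacle the paper's proof is designed to overcome. Rather than work directly over $F^{+}$, the paper builds a tower $F^{+}=L_{0}^{+}\subset L_{1}^{+}\subset\cdots$ of pre-admissible $2$-power extensions (Theorem~\ref{20170308_firstmaintechthm}) chosen via the Chebotarev estimate of Lemma~\ref{20170308_Lemma_on_prime_densities} so that the density of $\ell$ for which all places above $S_{\ell}$ split in $L_{i}|L_{i}^{+}$ tends to $1$ as $i\to\infty$. The framework is then applied over the appropriate $L^{+}_{(\lambda)}$ (Theorem~\ref{20170314_key_thm_for_proof_of_6_2}), and the resulting vanishing of dual Selmer groups is descended back to $F^{+}$ via the potential-unobstructedness results (Lemma~\ref{20170404_pot_unobstr} and Corollary~\ref{20170525_final_pot_cor}), using that $[L^{+}_{(\lambda)}:F^{+}]$ is a $2$-power and hence coprime to $\ell$. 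Your direct approach would work verbatim on the subset of $\lambda$ for which $\ell$ happens to be totally split in $F|F^{+}$, but to push the density to $1$ you need this base-change/descent mechanism.
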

\begin{proof}[Proof (that Theorem \ref{20170301_weaker_main_thrm} implies 
Theorem \ref{20170228_main_thm})]
It is a standard argument (cf. e.g. the proof of \cite[Theorem 4.4.2]{CHT}) that there exists a finite solvable extension $F_1^+|F^+$ of totally real fields such that the extension $F_1 = F^+_1.F|F_1^+$ and the compatible family associated to the base change $\Pi_{F_1}$ of $\Pi$
to $F_1$ fulfill Assumption \ref{20170228_nonec_assumptions}. Thus, referring to 
Lemma \ref{20170404_pot_unobstr}
and eliminating the finitely many places $\lambda$ which divide the index $[F^+_1:F^+]$, 
we see that Assumption \ref{20170228_nonec_assumptions} can be included in the 
statement of Theorem \ref{20170228_main_thm} without causing loss of generality.
%
\end{proof}
Consequently, the remainder of this section is devoted to the proof of Theorem \ref{20170301_weaker_main_thrm}.
For better comprehension, let us give an overview of the
strategy of the proof:
We want to arrange for a situation where the framework 
of Section \ref{20170405_section_on_framework} is applicable, i.e. we want to consider 
suitable field extensions $L^+_{(\lambda)}$ for as many 
$\lambda$ as possible such that 
Theorem \ref{20160908_MainThm} implies the vanishing of the dual Selmer groups
of the base canged functors 
$^{L^+_{(\lambda)}}D^{\boxempty_{S_{\ell}\langle L^+_{(\lambda)}\rangle}, \chi}_{S_{\ell}\langle L^+_{(\lambda)}\rangle, W(k_{\lambda})}(\overline{r}_{\lambda})$. 
This application of Theorem \ref{20160908_MainThm}
happens in Theorem \ref{20170314_key_thm_for_proof_of_6_2} 
below.
By a careful choice of the extensions $L^+_{(\lambda)}$, we 
ensure that the potential unobstructedness arguments of 
Section \ref{20170405_section_on_framework} apply and yield the vanishing of the dual Selmer
groups of the non-base changed functor. The local parts 
of the unobstructedness-condition then follow directly from
the material in Section \ref{20170324_section_unres_defos}, allowing us to conclude the
statement of Theorem \ref{20170301_weaker_main_thrm}.
The crucial property we have to impose on the extension
$L^+_{(\lambda)}$ is \textit{procurability} (Definition
\ref{20170528_def_procurability}), i.e. that
the deformation ring 
$R^{\chi|{\operatorname{Gal}_{L^+}}, \tt{crys}}_{S_{\ell}\langle L^+\rangle,\mathcal{O}_{\mathcal{K}_{\lambda}}}(\overline{r}_{\lambda}|{\operatorname{Gal}_{L^+}})$
is isomorphic to $\mathcal{O}_{\mathcal{K}_{\lambda}}$ 
(corresponding to condition ($R=T$) in Section \ref{20170405_section_on_framework}).
It is the content of Theorem 
\ref{20170308_firstmaintechthm} that for a set of places
of Dirichlet density $1$ we can find such suitable procurable
extensions.
This, in turn, is established by studying the seemingly weaker
condition of $\star$-procurability (see the list
$\star_1 - \star_5$ below), which is proved to imply
procurability almost everywhere (see Claim 1 below). 
By an argument based on Chebotarev's density theorem (and
postponed to Appendix A), we can conclude that for 
a density-$1$ set we can find such $\star$-procurable extensions of $2$-power degree.

\subsection{Proof of Theorem \ref{20170301_weaker_main_thrm}}
Let us begin with some preparatory definitions:
\begin{Def}
A totally real, finite extension $L^+$ of $F^+$ is called \textit{pre-admissible} if 
the extension $L^+|F^+$ is Galois and solvable and if $L:= F.L^+$ is unramified over
$L^+$ at every finite place.
\end{Def}
We remark that these conditions are designed to capture the following: If $L^+$ is pre-admissible, then there exists a unitary group $H$ over $L^+$ (as considered in Section 
\ref{20170301_sectn_hecke_algs}) and a unitary avatar $\Pi_L$ on $H(\mathbb{A}_{L^+})$ 
of the base change $\pi_L$ of $\pi$ to $L$.

For the following, let $\mathcal{E}$ be a number field containing $\mathcal{E}(U)$ and 
let $L^+$ be pre-admissible.
\begin{Def}\label{20170528_def_procurability}
A place $\lambda\in \Lambda_{\mathcal{E}}^1$ is $L^+$-\textit{procurable} if the following 
two conditions are fulfilled:
\begin{enumerate}
\item[P.1)] The restriction of $\overline{\rho}_{\lambda}$ to $\operatorname{Gal}_L$ remains
absolutely irreducible;
\item[P.2)] There exists an $L$-big enough extension $\mathcal{K}_{\lambda}$ of $\mathcal{E}_{\lambda}$ such that there is an isomorphism 
\begin{equation}\label{20170307_iso_to_prove}
^{L^+}R^{\chi, \tt{crys}}_{S_{\ell},\mathcal{O}_{\mathcal{K}_{\lambda}}}(\overline{r}_{\lambda}) := 
R^{\chi|{\operatorname{Gal}_{L^+}}, \tt{crys}}_{S_{\ell}\langle L^+\rangle,\mathcal{O}_{\mathcal{K}_{\lambda}}}(\overline{r}_{\lambda}|{\operatorname{Gal}_{L^+}})
\cong \mathcal{O}_{\mathcal{K}_{\lambda}}.
\end{equation}
\end{enumerate}
\end{Def}
We remark that the first condition is rather harmless and affects only a failure set of Dirichlet density $0$, cf. Assumption \ref{20170228_nec_assumptions}.
We also remark that in the second condition, we consider $\overline{r}_{\lambda}$ as a representation with values in the residue field $k_{\mathcal{O}_{\mathcal{K}_{\lambda}}}$ 
of ${\mathcal{O}_{\mathcal{K}_{\lambda}}}$ instead of $k_{\lambda}$.

With respect to a pre-admissible extension $L^+$, define
$\operatorname{Proc}(L^+)\subset \Lambda^1_{\mathcal{E}}$ as the subset of those
$\lambda$ which are $L^+$-procurable. Then we have:
\begin{Thm}\label{20170308_firstmaintechthm}
There exists a nested sequece $F^+ = L^+_0 \subset L^+_1 \subset \ldots$ of pre-admissible
extensions of $F^+$ such that 
\begin{equation}\label{20170307_limit_sequence}
\operatorname{lim}_{i\rightarrow \infty} \; \delta\left( \bigcup^i_{j=1} 
\operatorname{Proc}(L^+_j)\right) = 1,
\end{equation}
where $\delta(\Delta)$ denotes the density of those rational primes $q$ for which each $\lambda\in \operatorname{Pl}_{\mathcal{E}}$ above $q$ fulfills $\lambda\in \Delta$.
\end{Thm}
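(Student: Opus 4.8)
The plan is to factor the argument through a more tractable intermediate notion, which I will call \emph{$\star$-procurability}. Fix a pre-admissible extension $L^+|F^+$, put $L = F.L^+$, and call $\lambda \in \Lambda^1_{\mathcal{E}}$ \emph{$\star$-procurable with respect to $L^+$} if, writing $\ell = \ell(\lambda)$: $(\star_1)$ every place of $F^+$ lying below $S$ or above $\ell$ splits in $L|L^+$; $(\star_2)$ $\overline{\rho}_\lambda|\operatorname{Gal}_L$ is absolutely irreducible; $(\star_3)$ all ramification of $\rho_{\pi,\lambda}|\operatorname{Gal}_L$ is unipotent; $(\star_4)$ $\overline{\rho}_\lambda(\operatorname{Gal}_{L(\zeta_\ell)})$ is adequate; and $(\star_5)$ at every place of $L$ above $\ell$ the extension over $\mathbb{Q}_\ell$ is unramified and the Hodge--Tate weights of $\rho_{\pi,\lambda}$ satisfy the Fontaine--Laffaille hypotheses of Theorem \ref{20170323_FL_main_thm} (which, beyond the standing non-consecutivity Assumption \ref{20170228_nec_assumptions}, holds automatically once $\ell$ is large relative to the fixed Hodge--Tate weights). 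These are precisely the running hypotheses of Section \ref{20160407_sect_cons}, transported from the pair $(F^+,F)$ to the pair $(L^+,L)$.

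\textbf{Claim 1.} For all but finitely many $\lambda$, $\star$-procurability of $\lambda$ with respect to $L^+$ implies $L^+$-procurability of $\lambda$. I would prove this by fixing a $\star$-procurable $\lambda$ and passing to a unitary avatar $\Pi_L \subset \mathcal{S}_{\boldsymbol\omega}(U_L)$ of the base change $\pi_L$ of $\pi$ to $L$, defined on a definite unitary group over $L^+$; such an avatar exists precisely because $L^+$ is pre-admissible. Conditions $(\star_1)$--$(\star_5)$ are exactly what is needed for the results of Section \ref{20160407_sect_cons} to apply verbatim with $(F^+,F)$ replaced by $(L^+,L)$: Theorem \ref{20170213_global_rrmin_thm} gives ${}^{L^+}R^{\tt{crys}} \cong {}^{L^+}R^{\tt{min,crys}}$ for almost all such $\lambda$, Theorem \ref{20170210_main_rttheorem} identifies the latter with the minimal Hecke algebra $\mathbb{T}^{\tt{min}}_L$, and Corollary \ref{20170307_TO_corollary} shows this Hecke algebra is isomorphic to $\mathcal{O}_{\mathcal{K}_\lambda}$ for almost all $\lambda$. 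Composing these isomorphisms yields \eqref{20170307_iso_to_prove}, i.e. condition P.2, while P.1 is $(\star_2)$. Since each ``almost all'' excludes only finitely many $\lambda$, so does Claim 1.

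\textbf{Claim 2} (the Chebotarev construction, deferred to Appendix A). There is a nested sequence $F^+ = L^+_0 \subset L^+_1 \subset \cdots$ of pre-admissible extensions of $F^+$, each of $2$-power degree over $F^+$, such that $\lim_{i\to\infty} \delta\bigl(\bigcup_{j\le i}\{\lambda : \lambda \text{ is }\star\text{-procurable w.r.t.\ }L^+_j\}\bigr) = 1$. The strategy: $(\star_2)$ and $(\star_4)$ fail only on a set of $\lambda$ of density $0$ (Assumption \ref{20170228_nec_assumptions}, Remark \ref{20150407_irreducibility_remark}, and standard adequacy results for large $\ell$), so they may be ignored; $(\star_5)$ involves only the fixed Hodge--Tate weights and holds for $\ell$ large once the primes above $\ell$ are forced to be unramified in $L_j|\mathbb{Q}$; and $(\star_1)$, $(\star_3)$ are splitting conditions on Galois data which, away from a density-$0$ set of $\lambda$, fall into finitely many ``shapes'' — the finitely many places below $S$ are split by an abelian (CM) sub-extension, the primes above a given $\ell$ are split by choosing a CM direction in which $\ell$ splits, and the inertial images $\rho_{\pi,\lambda}(I_\nu)$ at the bad places $\nu$ are rendered unipotent after a uniformly bounded solvable base change. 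One arranges each shape by a single pre-admissible extension and, because all the relevant $\ell = \ell(\lambda)$ are odd, every obstruction can be resolved inside a $2$-power extension; hence the $L^+_j$ can be taken nested and of $2$-power degree, which also secures $(\ell, [L^+_j:F^+])=1$ for later use in the potential-unobstructedness arguments. A Chebotarev density count then gives the stated limit.

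Granting Claims 1 and 2, the theorem follows: by Claim 1, for each fixed $i$ the union $\bigcup_{j\le i}\operatorname{Proc}(L^+_j)$ differs from the corresponding union of $\star$-procurable sets by only finitely many $\lambda$, hence by a set of $\lambda$ lying above finitely many (so density-$0$) rational primes; this does not change the value of $\delta$, so \eqref{20170307_limit_sequence} follows from Claim 2. The main obstacle is Claim 2, and within it the simultaneous handling of $(\star_3)$ — unipotent ramification after a \emph{uniformly bounded}, solvable, $2$-power-degree base change — together with the splitting condition $(\star_1)$ at the \emph{varying} primes above $\ell$, for a density-$1$ set of $\lambda$, while keeping the $L^+_j$ nested; this is what Appendix A is devoted to.
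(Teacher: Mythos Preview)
Your two-step structure---factor through an auxiliary ``$\star$-procurability'' and then build a tower---matches the paper's, and your Claim~1 is essentially the paper's Claim~1. But your list of $\star$-conditions and your diagnosis of the ``main obstacle'' are off in a way that creates a genuine gap.

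First, you omit the one condition that actually drives the tower construction: \emph{cuspidality of the base change} $\pi_L$. Pre-admissibility of $L^+$ only guarantees that a suitable definite unitary group exists over $L^+$; it does \emph{not} guarantee that $\pi_L$ remains cuspidal, and without that there is no RACSDC representation over $L$, no unitary avatar $\Pi_L$, and the $R=\mathbb{T}$ machinery of Section~\ref{20160407_sect_cons} never gets off the ground. The paper builds this into its $\star_4$, and its tower is constructed precisely by adjoining square roots $\sqrt{d_i}$ with $d_i$ chosen from the set $\Theta_F=\{d:\sqrt{d}\notin F,\ \pi_{F(\sqrt{d})}\text{ cuspidal}\}$ (nonempty by \cite[Thm~4.2]{ArthurClozel}). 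Your Claim~2 never addresses this, so as written it does not produce pre-admissible $L^+_j$ for which your Claim~1 can be invoked.

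Second, you misidentify the obstacle. Your $(\star_3)$ (unipotent ramification) and the worry about ``rendering inertial images unipotent after a uniformly bounded solvable base change'' are non-issues: Theorem~\ref{20170301_weaker_main_thrm} is proved under Assumption~\ref{20170228_nonec_assumptions}, whose part~3 already says the Weil--Deligne representations at places in $S$ have unramified underlying Weil representation, so ramification is unipotent before any base change and stays so after restriction to $\operatorname{Gal}_L$. Likewise your $(\star_4)$ (adequacy) and $(\star_5)$ (FL range) are purely $\lambda$-conditions that hold for all but finitely many $\lambda$ independently of $L^+$. The only genuinely $L^+$-dependent conditions are the splitting of places above $S_\ell$ in $L|L^+$ (your $(\star_1)$, the paper's $\star_3$) and cuspidality of $\pi_L$ (which you omit). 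The paper handles the former by the Chebotarev estimate of Lemma~A (giving density $\ge 1-2^{-k}$ after $k$ quadratic steps) and the latter by the choice of $d_i\in\Theta_F$; Appendix~A is devoted only to the splitting density estimate, not to the broader package you assign to it.
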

\begin{proof}
Let us first introduce another notation: With respect to a pre-admissible extension $L^+$, we say that $\lambda\in \Lambda^1_{\mathcal{E}}$ is $L^+$-$\star$-\textit{procurable}, if the following list is met (where, as usual, we abbreviate $\ell = \ell(\lambda)$):
\begin{enumerate}
\item[$\star_1$)] $\ell$ is not divisible by any element of $S$;
\item[$\star_2$)] $\ell$ is unramified in the extension $L|\mathbb{Q}$;
\item[$\star_3$)] all places of $L$ above $S_{\ell}$ are split in the extension $L|L^+$;
\item[$\star_4$)] the base change $\pi_L$ of $\pi$ to $L$ remains cuspidal;
\item[$\star_5$)] if $\nu\in \operatorname{Pl}_A$ lies above $S$, then $\pi_L$ admits a non-trivial Iwahori fix-vector. 
\end{enumerate}
As above, this defines a subset $\operatorname{Proc}^{\star}(L^+)\subset \Lambda^1_{\mathcal{E}}$. (Observe that condition $\star_4$ does not depend on $\lambda$, but 
we intentionally include it in the list. So, if $\Pi_L$ fails to be cuspidal, we have
$\operatorname{Proc}^{\star}(L^+)= \emptyset$.)\\
\textbf{Claim 1:} $\operatorname{Proc}^{\star}(L^+)-\operatorname{Proc}(L^+)$ is 
finite.\\
\textit{Proof of Claim 1.} We can suppose that $\operatorname{Proc}^{\star}(L^+)$ is not empty (otherwise the claim is trivially true), so in particular that $\pi_L$ is a RACSDC representation and there exists a unitary group and an avatar $\Pi_L$ over $L$. Now, for each
$\lambda\in \operatorname{Proc}^{\star}(L^+)$ we pick an $L$-big enough field extension 
$\mathcal{K}_{\lambda}$ of $\mathcal{E}_{\lambda}$. We consider the complex Hecke algebra
$^{\mathcal{O}_{\mathcal{E}}}\mathbf{T}^S_{\boldsymbol\omega}(U)$ and
the $\ell$-adic model $\mathbb{T}:= \,
^{\mathcal{O}_{\mathcal{E}}}\mathbb{T}^{S_{\ell}}_{\boldsymbol\omega}(U)$.

Write $\Pi_{L} = \langle f \rangle$ for the unitary avatar of the base
change of $\pi$ to $L$ and for a suitable choice $f\in \mathcal{S}_{\boldsymbol\omega}(U)$. We see that $\overline{\rho}_{\lambda}|\operatorname{Gal}_L$ equals the reduction of the representation attached to the maximal ideal $\mathfrak{m} = 
\operatorname{ker}(\varphi_{f^{(\lambda)}})\subset 
\mathbb{T}$ by Proposition \ref{20170307_Prop_attach_reps}, where
$f^{(\lambda)}$ is the $\ell$-adic model of $f$.

Recalling that we presume Assumption \ref{20170228_nec_assumptions},
we see easily that the preconditions of Theorem \ref{20170210_main_rttheorem}
hold for almost all of these choice of $L|L^+, \ell = \ell(\lambda), U, \boldsymbol\omega, \mathcal{E}(U), \mathcal{K}_{\lambda}$ and $\mathfrak{m}$: 
The main issue is the adequateness of $\overline{\rho}(\operatorname{Gal}_{F(\zeta_{\ell})})$, which follows from 
\cite[Proposition 2.1.2]{BLGGT} as long as $\ell > 2(n+1)$.
Thus, the desired isomorphism (\ref{20170307_iso_to_prove})
follows for allmost all $\lambda$ in $\operatorname{Proc}^{\star}(L^+)$
by Corollary \ref{20170307_TO_corollary}. This completes the proof 
of the claim.
\hfill$\clubsuit$\\

Consequently, it suffices to show that there exists a nested sequence 
$F^+ = L^+_0 \subset L^+_1 \subset \ldots$ of pre-admissible extensions
of $F^+$ such that equation (\ref{20170307_limit_sequence}) holds 
with $\operatorname{Proc}^{\star}$ instead of $\operatorname{Proc}$.
For the construction of these extensions, we define the set 
\begin{equation*}
\Theta_F := \{ d\in \mathbb{N} \,|\, \sqrt{d}\notin F,
\text{ the base change } \pi \leadsto \pi_{F(\sqrt{d})} 
\text{ remains cuspidal }
\}.
\end{equation*}
This set is not empty (cf. \cite[Thm 4.2]{ArthurClozel}), so choose a $d_1 \in 
\Theta_F$ and take $L^+_1 = F^+(\sqrt{d_1})$.\\
\textbf{Claim 2:} $L^+_1$ is pre-admissible.\\
\textit{Proof of Claim 2.} 
The extension $L^+_1|F^+$ is automatically Galois and solvable 
because $[L^+_1:F^+]=2$. Thus we are left to check that 
$L_1|L^+_1$ is unramified everywhere. For this, we use the
identity of the discriminants 
$\Delta_{L_1|F^+} = \Delta_{L^+_1|F^+}\Delta_{F|F^+}
= \Delta_{L^+_1|F^+}$ (cf. \cite[Exercise 3 on p. 51]{Janusz}). Consider the
diagram of field extensions 
\begin{equation*}
\xymatrix{
& L_1\ar@{-}_{(3)}[dl]\ar@{-}^{(4)}[dr] &\\
L^+_1 && F\\
&F^+\ar@{-}^{(1)}[ul]\ar@{-}_{(2)}[ur]
}
\end{equation*}
If now $w$ is a prime of $L^+_1$ that ramifies in $(3)$, then the prime
$v$ of $F^+$ which lies below $w$ must ramify in the extension 
$L_1|F^+$. But then $v$ divides 
$\Delta_{L^+_1|F^+}= \Delta_{L_1|F^+}$, i.e. $v$ ramifies in $(1)$.
This implies that $v$ has ramification index $4$ in the extension
$L_1|F^+$. But in $(2)$, $v$ is unramified by the prerequisites, so it 
can at most ramify in $(4)$, yielding a ramification index of $2$ 
in $L_1|F^+$. This contradicts the assumption that $w$ ramifies
in $(3)$.
\hfill$\clubsuit$\\

\textbf{Claim 3:} $\delta(\operatorname{Proc}^{\star}(F^+_1)) \geq
\frac{1}{2}$.\\
\textit{Proof of Claim 3.} 
We check which $\lambda$ fail the list $\star_1$ - $\star_5$:
\begin{itemize}
\item Concerning $\star_1$ and $\star_2$, we have to exclude the
finitely many places $\lambda$ for which $\ell(\lambda)$ is not
coprime to $S$ or ramifies in $L^+_1|\mathbb{Q}$;
\item By an estimation based on Chebotarev's density thenorem (postponed as Lemma \ref{20170308_Lemma_on_prime_densities} to
the appendix), the density
of those $\ell$ which fulfill the condition that all primes of $L_1^+$
above $\ell$ are split in the extension $L_1|L_1^+$ is at least 
$\frac{1}{2}$;
\item Condition $\star_4$ is universally fulfilled by our 
choice of $L^+_1$;
\item Concerning condition $\star_5$, we remark that by local-global
compatibility (cf. \cite[Theorem 1.4]{CH13} and the references therein)
$\pi_L$ admits an Iwahori-fixed vector if $\rho|\operatorname{Gal}_L$
has unipotent ramification at $\nu$ \cite[(4.3.6) Proposition]{Wed08}. Thus,
condition $\star_5$ follows immediately from Assumption 
\ref{20170228_nonec_assumptions}.
\end{itemize}
\begin{flushright}
$\clubsuit$
\end{flushright}
For the next tower step we take $F_2^+ = F_1^+(\sqrt{d_2})$ for 
some $d_2\in \Theta_{F_2}$. It is again easy to check that 
$\Theta_{F_2}\neq \emptyset$  and that $F_2^+$ is pre-admissible. 
As in the proof of Claim 3, the statement of Lemma \ref{20170308_Lemma_on_prime_densities} implies 
$\delta(\operatorname{Proc}^{\star}(F^+_2)) \geq
\frac{3}{4}$. 
Iterating this construction of quadratic extensions we end up with
a nested sequence of pre-admissible fields $F^+_j$ such that
\begin{equation*}
\delta\left( \bigcup^i_{j=1} 
\operatorname{Proc}^{\star}(L^+_j)\right) \geq 
\delta(  
\operatorname{Proc}^{\star}(L^+_i)) \geq 1-\frac{1}{2^i} 
\;\underset{i\rightarrow \infty}{\longrightarrow} \;1.
\end{equation*}
Together with Claim 1, this concludes the proof of 
Theorem \ref{20170308_firstmaintechthm}.
\end{proof}
We now give a slight variant of the above:
\begin{Def}
With regard to a pre-admissible extension $L^+$ of $F^+$, we 
say that $\lambda\in \Lambda^1_{\mathcal{E}}$ is $L^+$-$\sharp$-\textit{procurable} if the restriction of $\overline{\rho}_{\lambda}$
to $\operatorname{Gal}_L$ (with $L=F.L^{+}$) remains absolutely 
irreducible and if there is an isomorphism 
\begin{equation}\label{20170309_another_aux_iso}
^{L^+}R^{\boxempty, \chi, \tt{crys}}_{\lambda} \cong W(k_{\lambda})\llbracket x_1, 
\ldots, x_u\rrbracket,
\end{equation}
where $^{L^+}R^{\boxempty,\chi,  \tt{crys}}_{\lambda} = \,
^{L^+}R^{\boxempty, \chi, \tt{crys}}_{S_{\ell}, W(k_{\lambda})}(\overline{r}_{\lambda})$ and $u= 
\operatorname{dim}(\mathfrak{g}_n^{\tt{der}}) = n^2$. 
The set of all $\lambda$ which are 
$L^+$-$\sharp$-procurable
is denoted by $\operatorname{Proc}^{\sharp}(L^+)$.
\end{Def}
\begin{Cor}
There exists a nested sequece $F^+ = L^+_0 \subset L^+_1 \subset \ldots$ of pre-admissible
extensions of $F^+$ such that 
\begin{equation*}
\operatorname{lim}_{i\rightarrow \infty} \; \delta\left( \bigcup^i_{j=1} 
\operatorname{Proc}^{\sharp}(L^+_j)\right) = 1.
\end{equation*}
\end{Cor}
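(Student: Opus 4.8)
The plan is to derive this from Theorem~\ref{20170308_firstmaintechthm}: we will show that for every pre-admissible extension $L^+$ of $F^+$ one has the inclusion $\operatorname{Proc}(L^+)\subseteq\operatorname{Proc}^{\sharp}(L^+)$, and then the very same nested sequence $F^+=L^+_0\subset L^+_1\subset\cdots$ furnished by Theorem~\ref{20170308_firstmaintechthm} does the job. The difference between the two notions is only that $\operatorname{Proc}$ records a trivialization of the \emph{unframed} crystalline deformation ring over the big coefficient ring $\mathcal{O}_{\mathcal{K}_{\lambda}}$, whereas $\operatorname{Proc}^{\sharp}$ asks for a power–series presentation of the (once–)\emph{framed} crystalline deformation ring over $W(k_{\lambda})$; so the two ingredients are a coefficient–ring descent and the framed–versus–unframed comparison of Proposition~\ref{20170525_rep_res_proposition}.

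So fix a pre-admissible $L^+$ and a place $\lambda\in\operatorname{Proc}(L^+)$, together with an $L$-big enough extension $\mathcal{K}_{\lambda}$ of $\mathcal{E}_{\lambda}$ as in P.2). Condition~P.1) is literally the first requirement in the definition of $\sharp$-procurability, so it remains to produce the isomorphism $(\ref{20170309_another_aux_iso})$. Since P.1) makes $\overline{\rho}_{\lambda}|\operatorname{Gal}_L$ — hence, via \cite[Lemma 1.1.4]{CHT}, the $\mathcal{G}_n$-representation $\overline{r}_{\lambda}|\operatorname{Gal}_{L^+}$ — absolutely irreducible, the unframed functor $^{L^+}D^{\chi,\tt{crys}}_{S_{\ell},W(k_{\lambda})}(\overline{r}_{\lambda})$ is representable by Theorem~\ref{20170316_Tilouines_representability_thm} and the third part of Proposition~\ref{20170525_rep_res_proposition} is available. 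First I would descend the coefficient ring in $(\ref{20170307_iso_to_prove})$ from $\mathcal{O}_{\mathcal{K}_{\lambda}}$ to $W(k_{\lambda})$: the ring $\mathcal{O}_{\mathcal{K}_{\lambda}}$ is finite free, hence flat, over $W(k_{\lambda})$ and lies in $^{\ast}\mathcal{C}_{W(k_{\lambda})}$, so by the extended base–change statement of Remark~\ref{20170406_remark_extended_conds}
\[
{}^{L^+}R^{\chi,\tt{crys}}_{S_{\ell},\mathcal{O}_{\mathcal{K}_{\lambda}}}(\overline{r}_{\lambda})\;\cong\;\mathcal{O}_{\mathcal{K}_{\lambda}}\,\widehat{\otimes}_{W(k_{\lambda})}\,{}^{L^+}R^{\chi,\tt{crys}}_{S_{\ell},W(k_{\lambda})}(\overline{r}_{\lambda}),
\]
and the $^{\ast}\mathcal{C}$-version of Lemma~\ref{20170405_lemma_res_of_coeff_ring} (again Remark~\ref{20170406_remark_extended_conds}) then shows, together with P.2), that $^{L^+}R^{\chi,\tt{crys}}_{S_{\ell},W(k_{\lambda})}(\overline{r}_{\lambda})$ is formally smooth of relative dimension $0$ over $W(k_{\lambda})$, i.e. isomorphic to $W(k_{\lambda})$.

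It then remains to pass from the unframed to the framed ring. Applying the third part of Proposition~\ref{20170525_rep_res_proposition} with $\Sigma$ a single place (so the power–series parameter equals $\operatorname{dim}(\mathfrak{g}_n^{\tt{der}})=n^2$) gives
\[
{}^{L^+}R^{\boxempty,\chi,\tt{crys}}_{S_{\ell},W(k_{\lambda})}(\overline{r}_{\lambda})\;\cong\;{}^{L^+}R^{\chi,\tt{crys}}_{S_{\ell},W(k_{\lambda})}(\overline{r}_{\lambda})\llbracket x_1,\ldots,x_{n^2}\rrbracket\;\cong\;W(k_{\lambda})\llbracket x_1,\ldots,x_{n^2}\rrbracket,
\]
which is exactly $(\ref{20170309_another_aux_iso})$; hence $\lambda\in\operatorname{Proc}^{\sharp}(L^+)$, proving $\operatorname{Proc}(L^+)\subseteq\operatorname{Proc}^{\sharp}(L^+)$. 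Finally, take the sequence $F^+=L^+_0\subset L^+_1\subset\cdots$ of Theorem~\ref{20170308_firstmaintechthm}. For each $i$ we have $\bigcup_{j=1}^i\operatorname{Proc}(L^+_j)\subseteq\bigcup_{j=1}^i\operatorname{Proc}^{\sharp}(L^+_j)$, and $\delta$ is monotone — a rational prime all of whose places of $\mathcal{E}$ lie in the smaller set a fortiori has all its places in the larger one — so $\delta\bigl(\bigcup_{j=1}^i\operatorname{Proc}^{\sharp}(L^+_j)\bigr)\ge\delta\bigl(\bigcup_{j=1}^i\operatorname{Proc}(L^+_j)\bigr)$, and the right–hand side tends to $1$ by Theorem~\ref{20170308_firstmaintechthm}; since the left–hand side is bounded by $1$, the limit is $1$, as claimed. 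The step needing the most care is the coefficient–ring descent: one must invoke the $^{\ast}\mathcal{C}_{\Lambda}$-versions of the base–change isomorphism and of Lemma~\ref{20170405_lemma_res_of_coeff_ring}, because in $(\ref{20170307_iso_to_prove})$ the representation $\overline{r}_{\lambda}$ is taken with values in the residue field of $\mathcal{O}_{\mathcal{K}_{\lambda}}$, which may be strictly larger than $k_{\lambda}$.
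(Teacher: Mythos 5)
Your proof is correct and follows essentially the same route as the paper: both rest on $\operatorname{Proc}(L^+)\subseteq\operatorname{Proc}^{\sharp}(L^+)$, both use part 3 of Proposition~\ref{20170525_rep_res_proposition} to pass between unframed and framed rings, and both use Lemma~\ref{20170405_lemma_res_of_coeff_ring} with Remark~\ref{20170406_remark_extended_conds} for the coefficient descent from $\mathcal{O}_{\mathcal{K}_\lambda}$ to $W(k_\lambda)$. The only difference is the order of the two steps: the paper first applies Proposition~\ref{20170525_rep_res_proposition}(3) to produce the framed ring $\mathcal{O}_{\mathcal{K}_\lambda}\llbracket x_1,\ldots,x_u\rrbracket$ and \emph{then} descends coefficients, whereas you descend coefficients on the unframed ring first and then frame. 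This is immaterial in substance, but the paper's order has the small advantage that the base-change isomorphism it invokes (the one stated as $(\ref{20170316_lifting_ring_base_change})$, $(\ref{20170406_conditioned_tensor_prod})$ and extended in Remark~\ref{20170406_remark_extended_conds}) is formulated only for the \emph{lifting} rings $R^{\boxempty}$; your version needs the analogous statement for the unframed ring $R^{\chi,\tt{crys}}$, which is true (by the same representability argument) but is not explicitly recorded in the text, so you should point that out or reorder as the paper does.
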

\begin{proof}
For $i\in \mathbb{N}$, denote $\Delta_i = \cup_{j\leq i} 
\operatorname{Proc}(L_j^+)$. Also fix for each $\lambda\in \Delta_i$ 
some $j\leq i$ such that $\lambda\in \operatorname{Proc}(L^+_j)$.
Denote the corresponding field extension from the proof of 
Theorem \ref{20170308_firstmaintechthm} by 
$L_{(\lambda)} = L^+_{(\lambda)}.F$. By Theorem \ref{20170308_firstmaintechthm}, for such a $\lambda\in \Delta_i$ we have the identity (\ref{20170307_iso_to_prove}) for a suitable
extension $\mathcal{O}_{\mathcal{K}_{\lambda}}$ of 
$W(k_{\lambda})$. The third part of Proposition \ref{20170525_rep_res_proposition} then yields
\begin{equation*}
^{L^+_{(\lambda)}}R^{\boxempty, \chi, \tt{crys}}_{S_{\ell}, \mathcal{O}_{\mathcal{K}_{\lambda}}}(\overline{r}_{\lambda}) \cong \mathcal{O}_{\mathcal{K}_{\lambda}}\llbracket x_1, 
\ldots, x_u\rrbracket.
\end{equation*}
Thus, we can use Lemma \ref{20170405_lemma_res_of_coeff_ring} (and, if 
necessary, Remark \ref{20170406_remark_extended_conds}) to deduce the desired isomorphism (\ref{20170309_another_aux_iso}).
\end{proof}
\begin{Cor}\label{20170311_some_corollary}
There exists a subset $\Lambda^2_{\mathcal{E}} \subset
\Lambda^1_{\mathcal{E}}$ of Dirichlet density $1$ such that 
for each $\lambda\in \Lambda^2_{\mathcal{E}}$ there exists 
a finite, totally real extension $L^+_{(\lambda)}$ of $F$ and an
isomorphism 
\begin{equation*}
^{L^+_{(\lambda)}}R^{\boxempty_{S_{\ell}\langle L^+_{(\lambda)}\rangle}, \chi, \tt{crys}}_{S_{\ell}\langle L^+_{(\lambda)}\rangle, W(k_{\lambda})}(\overline{r}_{\lambda}) \cong 
W(k_{\lambda})\llbracket x_1, 
\ldots, x_{w(\lambda)}\rrbracket
\end{equation*}
with $w(\lambda) = (n^2+1).\#S_{\ell}\langle L^+_{(\lambda)}\rangle - 1$.
\end{Cor}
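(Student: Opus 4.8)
The plan is to set $\Lambda^2_{\mathcal{E}}:=\bigcup_{j\geq 1}\operatorname{Proc}^{\sharp}(L^+_j)$, where $F^+=L^+_0\subset L^+_1\subset\cdots$ is the nested sequence of pre-admissible extensions furnished by the immediately preceding corollary, and then, for $\lambda\in\operatorname{Proc}^{\sharp}(L^+_j)$, to take $L^+_{(\lambda)}:=L^+_j$ and deduce the asserted presentation from the single-place-framed presentation of $\sharp$-procurability by adjoining framing variables at the remaining places of $S_{\ell}\langle L^+_{(\lambda)}\rangle$. First I would verify that $\Lambda^2_{\mathcal{E}}$ has Dirichlet density $1$: since $\operatorname{Proc}^{\sharp}(L^+_j)\subset\Lambda^1_{\mathcal{E}}$ by definition we have $\Lambda^2_{\mathcal{E}}\subset\Lambda^1_{\mathcal{E}}$, and the preceding corollary gives $\delta\bigl(\bigcup_{j\leq i}\operatorname{Proc}^{\sharp}(L^+_j)\bigr)\to 1$, where $\delta$ is the density of rational primes all of whose places of $\mathcal{E}$ lie in the given set; every such rational prime contributes all of its (finitely many) places to $\Lambda^2_{\mathcal{E}}$, so $\operatorname{Pl}_{\mathcal{E}}\setminus\Lambda^2_{\mathcal{E}}$ lies over a set of rational primes of natural density $0$, whence it has Dirichlet density $0$ by the usual comparison of Dirichlet series.

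Next, fix $\lambda\in\Lambda^2_{\mathcal{E}}$, pick $j$ with $\lambda\in\operatorname{Proc}^{\sharp}(L^+_j)$, and put $L^+_{(\lambda)}:=L^+_j$, a finite totally real (pre-admissible) extension of $F^+$. By the definition of $\sharp$-procurability, ${}^{L^+_{(\lambda)}}R^{\boxempty,\chi,\tt{crys}}_{S_{\ell},W(k_{\lambda})}(\overline{r}_{\lambda})\cong W(k_{\lambda})\llbracket x_1,\ldots,x_{n^2}\rrbracket$, the ring being framed at a single place. To pass to the ring framed at every place of $\Sigma:=S_{\ell}\langle L^+_{(\lambda)}\rangle$ I would apply the third part of Proposition \ref{20170525_rep_res_proposition} with $G=\mathcal{G}_n$, $\Gamma=\operatorname{Gal}_{L^+_{(\lambda)}}$, residual representation $\overline{r}_{\lambda}|\operatorname{Gal}_{L^+_{(\lambda)}}$ (unramified outside $\Sigma\neq\emptyset$), fixed multiplier $\chi$, and the global deformation condition which is $\tt{crys}$ at the places above $\ell$ (relatively representable by the Lemma of Section \ref{20170321_sect_crys_defos} together with Proposition \ref{20170214_prop_local_split_defos}): this exhibits ${}^{L^+_{(\lambda)}}R^{\boxempty_{\Sigma},\chi,\tt{crys}}_{\Sigma,W(k_{\lambda})}(\overline{r}_{\lambda})$ as a power series ring in $t=\operatorname{dim}(\mathfrak{g}_n)\cdot(\#\Sigma-1)$ variables over ${}^{L^+_{(\lambda)}}R^{\boxempty,\chi,\tt{crys}}_{S_{\ell},W(k_{\lambda})}(\overline{r}_{\lambda})$. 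Using $\operatorname{dim}(\mathfrak{g}_n)=\operatorname{dim}(\mathfrak{g}_n^{\tt{der}})+\operatorname{dim}(\mathfrak{g}_n^{\tt{ab}})=n^2+1$ and collapsing the iterated power series ring, the total number of variables over $W(k_{\lambda})$ is $n^2+(n^2+1)(\#\Sigma-1)=(n^2+1)\cdot\#\Sigma-1=w(\lambda)$, which is the claim.

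Essentially all of this is bookkeeping; no input beyond the preceding corollary and the framing comparison is needed, and I expect the only mildly delicate points to be (i) the translation from the density $\delta$ on rational primes used in the preceding corollary to genuine Dirichlet density on $\operatorname{Pl}_{\mathcal{E}}$, as handled above, and (ii) keeping the variable count in Proposition \ref{20170525_rep_res_proposition}(3) with $\operatorname{dim}(\mathfrak{g}_n)$ rather than $\operatorname{dim}(\mathfrak{g}_n^{\tt{der}})$, which is precisely what makes $n^2$ and $(n^2+1)(\#\Sigma-1)$ combine to $(n^2+1)\#\Sigma-1$. It is also worth recording that $S_{\ell}\langle L^+_{(\lambda)}\rangle$ is nonempty (it contains the archimedean places), so that the relevant part of Proposition \ref{20170525_rep_res_proposition} applies without any auxiliary $H^0$-hypothesis.
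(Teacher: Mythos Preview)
Your proposal is correct and follows exactly the paper's approach: the paper's proof is the single line ``This follows directly from Proposition \ref{20170525_rep_res_proposition}'', and you have simply unpacked that reference by taking $\Lambda^2_{\mathcal{E}}=\bigcup_j\operatorname{Proc}^{\sharp}(L^+_j)$ from the preceding corollary and applying part~3 of Proposition \ref{20170525_rep_res_proposition} to pass from the single-framed ring in the definition of $\sharp$-procurability to the $\boxempty_{S_\ell}$-framed ring, with the correct variable count $n^2+(n^2+1)(\#\Sigma-1)=(n^2+1)\#\Sigma-1$.
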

\begin{proof}
This follows directly from Proposition \ref{20170525_rep_res_proposition}.
\end{proof}
Next, we will apply the framework of Section \ref{20170405_section_on_framework} to the 
attained $\lambda$:
\begin{Thm}\label{20170314_key_thm_for_proof_of_6_2}
There exists a cofinite subset $\Lambda^3_{\mathcal{E}} \subset
\Lambda^2_{\mathcal{E}}$ such that the following holds: 
Let $\lambda\in \Lambda^3_{\mathcal{E}}$ and 
$L^+_{(\lambda)}$ the corresponding extension from 
Corollary \ref{20170311_some_corollary}. Then the functors 
\begin{equation*}
^{L^+_{(\lambda)}}D^{\boxempty_{S_{\ell}\langle L^+_{(\lambda)}\rangle}, \chi, \tt{min}}_{S_{\ell}\langle L^+_{(\lambda)}\rangle, W(k_{\lambda})}(\overline{r}_{\lambda}) 
\quad \text{ and }\quad
^{L^+_{(\lambda)}}D^{\boxempty_{S_{\ell}\langle L^+_{(\lambda)}\rangle}, \chi}_{S_{\ell}\langle L^+_{(\lambda)}\rangle, W(k_{\lambda})}(\overline{r}_{\lambda}) 
\end{equation*}
have vanishing dual Selmer group.
\end{Thm}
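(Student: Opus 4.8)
The plan is to apply the general framework of Section~\ref{20170405_section_on_framework}, i.e.\ Theorem~\ref{20160908_MainThm}, to the group $G=\mathcal{G}_n$, the base field $L^+_{(\lambda)}$, the residual representation $\overline{r}_\lambda|\operatorname{Gal}_{L^+_{(\lambda)}}$, the set of bad places $S\langle L^+_{(\lambda)}\rangle$, and the fixed lift $\chi$ of $\texttt{m}\circ\overline{r}_\lambda$. For the formally treated conditions one takes: for $\tt{min}$ the minimally ramified condition of Section~\ref{20170321_sect_min_ram} at the places of $S\langle L^+_{(\lambda)}\rangle$, transported to $\mathcal{G}_n$-valued deformations through the split-place identification of Proposition~\ref{20170214_prop_local_split_defos} (available because, $L^+_{(\lambda)}$ being pre-admissible and $\star_3$ holding, each such place splits in $L_{(\lambda)}|L^+_{(\lambda)}$); for $\tt{crys}$ the FL-crystalline condition of Section~\ref{20170321_sect_crys_defos} at the places above $\ell$; and for $\tt{sm}$ the unrestricted condition of Section~\ref{20170324_section_unres_defos} at the places above $\ell$. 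From the outset I would discard the finitely many $\lambda$ for which $\ell=\ell(\lambda)$ is too small: since the Hodge--Tate weight data and $n$ are fixed, there is a uniform bound beyond which the Fontaine--Laffaille hypotheses of Theorem~\ref{20170313_Theorem_smoothness_sm_over_ell} hold at every place above $\ell$ (the relevant fields being unramified over $\ell$ by $\star_2$) and $\operatorname{Hom}_{\operatorname{Gal}_{L^+_{(\lambda)}}}(\overline{r}_\lambda,\overline{r}_\lambda(1))=0$. For such $\lambda$, absolute irreducibility of the $\operatorname{GL}_n$-restriction (ensured, up to a density-zero set already excluded from $\Lambda^2_\mathcal{E}$ via P.1) forces $H^0(\operatorname{Gal}_{L^+_{(\lambda)}},\mathfrak{g}_n^{\tt{der}})=0$, the invariant scalars being killed by the $-1$ in the outer involution defining $\mathcal{G}_n$ as $\ell\neq2$; so Assumption~\ref{20150904_Ass_H0_vanishing_for_presentations} and the representability needed below are in force.

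I would then verify conditions 1.--6.\ of Theorem~\ref{20160908_MainThm} using the local computations of Sections~\ref{20170316_sect_defo_conds} and \ref{20160407_sect_cons}. Condition~1, with $\delta_\nu=0$, is Theorem~\ref{20170313_Theorem_smoothness_sm_over_ell}, which gives $d^{\boxempty,\tt{sm}}_\nu=n^2([L^+_{(\lambda),\nu}:\mathbb{Q}_\ell]+1)=\dim(\mathfrak{g}_n^{\tt{der}})([L^+_{(\lambda),\nu}:\mathbb{Q}_\ell]+1)$. Condition~2 is Lemma~\ref{20150712_crystallinity_lemma}, upon checking $n^2+[K:\mathbb{Q}_\ell]\tfrac{n(n-1)}{2}=\dim(\mathfrak{g}_n^{\tt{der}})+\bigl(\dim(\mathfrak{g}_n^{\tt{der}})-\dim(\mathfrak{b}_n^{\tt{der}})\bigr)[K:\mathbb{Q}_\ell]$, which uses $\dim\mathfrak{g}_n^{\tt{der}}-\dim\mathfrak{b}_n^{\tt{der}}=n^2-\tfrac{n(n+1)}{2}=\tfrac{n(n-1)}{2}$. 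Condition~3 is Proposition~\ref{20170526_newProponmin} together with Theorem~\ref{20150813_min_type_comparison_theorem}; the unipotent ramification it requires is guaranteed by Assumption~\ref{20170228_nonec_assumptions} (and $\star_5$). Condition~4 is Proposition~\ref{archProp} combined with Remark~\ref{20170406_remark_on_mu_m}: the latter gives $\texttt{m}\circ\overline{r}_\lambda(c_\nu)=-1$, hence $\dim(\mathfrak{gl}_n^{c_\nu=-1})=\tfrac{n(n+1)}{2}=\dim\mathfrak{b}_n^{\tt{der}}$. Condition~5 is Proposition~\ref{20170329_first_rep_result}. Finally, condition~6 is exactly Corollary~\ref{20170311_some_corollary}, since $w(\lambda)=(n^2+1)\cdot\#S_{\ell}\langle L^+_{(\lambda)}\rangle-1=\dim(\mathfrak{g}_n)\cdot\#S_{\ell}\langle L^+_{(\lambda)}\rangle-\dim(\mathfrak{g}_n^{\tt{ab}})=r_0$.

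Next I would check the four additional hypotheses a)--d) of part~2 of Theorem~\ref{20160908_MainThm}: a) holds for $\mathcal{G}_n$ because $\mathfrak{g}_n=\mathfrak{g}_n^{\tt{der}}\oplus\mathfrak{g}_n^{\tt{ab}}=\mathfrak{gl}_n\oplus\mathfrak{gl}_1$; b) $H^0(\operatorname{Gal}_{L^+_{(\lambda)}},\mathfrak{g}_n^{\tt{der}, \vee})=0$ follows from Lemma~\ref{20150904_Lemma_glob_case} once $\operatorname{Hom}_{\operatorname{Gal}_{L^+_{(\lambda)}}}(\overline{r}_\lambda,\overline{r}_\lambda(1))=0$, part of the finite exclusion above; c) follows from the exact sequence $0\to\mathfrak{g}_n^{\tt{der}}/(\mathfrak{g}_n^{\tt{der}})^{\operatorname{Gal}_{L^+_{(\lambda),\nu}}}\to t_{D^{\boxempty,\chi_\nu,\tt{min}}}\to t_{D^{\chi_\nu,\tt{min}}}\to0$ together with $d^{\boxempty,\tt{min}}_\nu=n^2=\dim\mathfrak{g}_n^{\tt{der}}$; and d) is the vanishing $\delta_\nu=0$ already recorded. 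Theorem~\ref{20160908_MainThm}(2) then yields $H^1_{\mathcal{L}^\perp}(\operatorname{Gal}_{L^+_{(\lambda)},S},\mathfrak{g}_n^{\tt{der}, \vee})=0$ for $\mathcal{L}$ the system of local conditions attached to $^{L^+_{(\lambda)}}D^{\chi,\tt{min}}_{S_{\ell}\langle L^+_{(\lambda)}\rangle}(\overline{r}_\lambda)$; in view of the framed/unframed comparison this is precisely the vanishing of the dual Selmer group of $^{L^+_{(\lambda)}}D^{\boxempty_{S_{\ell}\langle L^+_{(\lambda)}\rangle},\chi,\tt{min}}_{S_{\ell}\langle L^+_{(\lambda)}\rangle,W(k_\lambda)}(\overline{r}_\lambda)$. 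For the second, unrestricted functor one simply observes that a minimally ramified lift is in particular an unrestricted one, so $L_\nu^{\tt{min}}\subseteq L_\nu$ for every $\nu\in S$; hence the annihilator systems satisfy $\mathcal{L}^{\perp}\supseteq(\mathcal{L}^{\tt{min}})^{\perp}$, which forces $H^1_{\mathcal{L}^\perp}=0$ as well. Taking $\Lambda^3_\mathcal{E}$ to be $\Lambda^2_\mathcal{E}$ with the finitely many excluded $\lambda$ removed completes the proof.

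The step requiring the most care is the verification of conditions 1.--6., where each relative-dimension formula must be matched against the local deformation-ring computations assembled in Sections~\ref{20170316_sect_defo_conds} and \ref{20160407_sect_cons}, and where one must ensure that the various ``$\ell\gg0$'' constraints (from Theorem~\ref{20170313_Theorem_smoothness_sm_over_ell} and from the vanishing of $\operatorname{Hom}(\overline{r}_\lambda,\overline{r}_\lambda(1))$) can be taken uniform in $\lambda$. This is possible exactly because the Hodge--Tate weight spread and $n$ are fixed and the base fields $L^+_{(\lambda)}$ enter only through the local degrees $[L^+_{(\lambda),\nu}:\mathbb{Q}_\ell]$, which the dimension bookkeeping of the framework is built to absorb; no input beyond Theorem~\ref{20160908_MainThm}, Corollary~\ref{20170311_some_corollary}, and the local results of Section~\ref{20170316_sect_defo_conds} is needed.
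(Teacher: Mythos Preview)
Your verification of conditions 1.--6.\ and a)--d) for the $\tt{min}$-case is essentially the paper's own argument. For the unrestricted functor, however, you take a genuinely different and more elementary route: the paper invokes the local $R=R^{\tt{min}}$ result of Corollary~\ref{20170314_cor_on_min_is_all} to show that for almost all $\lambda$ the two deformation problems actually \emph{coincide} at each $\nu\in S$, and must then track finite failure sets level by level in the tower $L_0^+\subset L_1^+\subset\cdots$ to argue that the total exclusion still has density zero. Your monotonicity observation avoids all of this, requires no further exclusions beyond those already made for the $\tt{min}$-case, and hence delivers the cofinite $\Lambda^3_{\mathcal{E}}$ for both statements at once.

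One slip needs fixing, though: the annihilator under the Tate pairing is inclusion-\emph{reversing}, so from $L_\nu^{\tt{min}}\subseteq L_\nu$ you obtain $L_\nu^\perp\subseteq (L_\nu^{\tt{min}})^\perp$, not the containment you wrote. With the correct direction a class in $H^1_{\mathcal{L}^\perp}$ has its local restrictions in the smaller $L_\nu^\perp\subseteq (L_\nu^{\tt{min}})^\perp$, hence already lies in $H^1_{(\mathcal{L}^{\tt{min}})^\perp}=0$; this is the inclusion $H^1_{\mathcal{L}^\perp}\subseteq H^1_{(\mathcal{L}^{\tt{min}})^\perp}$ that you need.
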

\begin{proof}
We start with the $\tt{min}$-case:
When applying the framework, we take for $\texttt{sm}$ the condition
parametrizing arbitrary deformations, for $\texttt{crys}$ the condition
parametrizing FL-crystalline deformations (cf. Section \ref{20170321_sect_crys_defos})
and  for $\texttt{min}$ the condition
parametrizing minimally ramified deformations (cf. Section \ref{20170321_sect_min_ram}). Moreover, we take $\chi = \epsilon_{\ell}^{1-n}
\delta_{F|F^+}^{n (\text{mod} 2)}$. Let us now
check the following list of conditions (and we abbreviate 
$L^+ = L^+_{(\lambda)}$ as we check this for a fixed $\lambda\in 
\Lambda^2_{\mathcal{E}}$):
\begin{enumerate}
\item \textbf{(sm/$k$)}: As we took for $\texttt{sm}$ the unrestricted deformation 
condition, we have to check that for each $\nu\in \Omega_{\ell}$ the functor
$^{L^+_{\nu}}D^{\boxempty, \chi_{\nu}}_{W(k_{\lambda})}(\overline{r}_{\lambda, \nu})$ is representable and that
the representing object is formally smooth of relative dimension 
\begin{equation*}
d^{\boxempty, \tt{sm}}_{\nu} = \operatorname{dim}(\mathfrak{g}^{\tt{der}}_{n})([L_{\tilde{\nu}}:\mathbb{Q}_{\ell}] + 1) = n^2([L_{\tilde{\nu}}:\mathbb{Q}_{\ell}] + 1)
= n^2([L^+_{{\nu}}:\mathbb{Q}_{\ell}] + 1).
\end{equation*}
(This also amounts to the vanishing of the error terms $\delta_{\nu}$ in Theorem 
\ref{20160908_MainThm}.)\\
\underline{Check}: Representability was already remarked in Section \ref{20170405_section_liftings_and_defos}.
For the remaining claim, we first refer to 
Proposition \ref{20170214_prop_local_split_defos} in order to get an isomorphism
\begin{equation*}
^{L^+_{\nu}}R^{\boxempty, \chi_{\nu}}_{W(k_{\lambda})}(\overline{r}_{\lambda, \nu})
\cong \,
^{L_{\tilde{\nu}}}D^{\boxempty, \chi_{\nu}}_{W(k_{\lambda})}(\overline{\rho}_{\lambda, \nu}).
\end{equation*}
Now the claim follows from Theorem \ref{20170313_Theorem_smoothness_sm_over_ell}.
\item \textbf{(crys)}: For each $\nu\in\Omega_{\ell}$, the subfunctor 
\begin{equation*}
^{L^+_{\nu}}D^{\boxempty, \chi_{\nu},\tt{crys}}_{W(k_{\lambda})}(\overline{r}_{\lambda, \nu})
\hookrightarrow\,
^{L^+_{\nu}}D^{\boxempty, \chi_{\nu}}_{W(k_{\lambda})}(\overline{r}_{\lambda, \nu})
\end{equation*}
is relatively representable and the representing object is formally smooth
of relative dimension $d^{\boxempty, \tt{crys}}_{\nu} = \operatorname{dim}(\mathfrak{g}^{\tt{der}}_{n}) + 
(\operatorname{dim}(\mathfrak{g}^{\tt{der}}_{n}) - \operatorname{dim}(\mathfrak{b}^{\tt{der}}_{n}))[L^+_{\nu}:\mathbb{Q}_{\ell}]$, where $\mathfrak{b}_n$ denotes the Lie algebra
of a Borel subgroup of $\mathcal{G}_n$.\\
\underline{Check}: By definition, we have 
\begin{equation*}
^{L^+_{\nu}}R^{\boxempty, \chi_{\nu},\tt{crys}}_{W(k_{\lambda})}(\overline{r}_{\lambda, \nu})
\cong \,
^{L_{\tilde{\nu}}}D^{\boxempty, \chi_{\nu},\tt{crys}}_{W(k_{\lambda})}(\overline{\rho}_{\lambda, \nu}).
\end{equation*}
Thus, the claim follows from Lemma \ref{20150712_crystallinity_lemma}.
\item \textbf{(min)}: For each $\nu\in S$, the subfunctor 
\begin{equation*}
^{L^+_{\nu}}D^{\boxempty, \chi_{\nu},\tt{min}}_{W(k_{\lambda})}(\overline{r}_{\lambda, \nu})
\hookrightarrow\,
^{L^+_{\nu}}D^{\boxempty, \chi_{\nu}}_{W(k_{\lambda})}(\overline{r}_{\lambda, \nu})
\end{equation*}
is relatively representable and the representing object is formally smooth
of relative dimension $d^{\boxempty, \tt{min}}_{\nu} = \operatorname{dim}(\mathfrak{g}^{\tt{der}}_{n})$.\\
\underline{Check}: Again, by definition, we have 
\begin{equation*}
^{L^+_{\nu}}R^{\boxempty, \chi_{\nu},\tt{min}}_{W(k_{\lambda})}(\overline{r}_{\lambda, \nu})
\cong \,
^{L_{\tilde{\nu}}}D^{\boxempty, \chi_{\nu},\tt{min}}_{W(k_{\lambda})}(\overline{\rho}_{\lambda, \nu}).
\end{equation*}
Thus, the claim follows from Proposition \ref{20170526_newProponmin}.
\item \textbf{($\infty$)}: For each $\nu\in \Omega_{\infty}$, the local deformation
ring $^{L^+_{\nu}}R^{\boxempty, \chi_{\nu}}_{W(k_{\lambda})}(\overline{r}_{\lambda, \nu})$ is 
formally smooth of relative dimension $d_{\nu}^{\boxempty} = 
\operatorname{dim}(\mathfrak{b}^{\tt{der}}_n)$.\\
\underline{Check}: This was already used, see Remark \ref{20170406_remark_on_mu_m}.
\item \textbf{(Presentability)}: Consider the ring
\begin{equation*}
^{L^+}R^{\tt{loc, min}} := \widehat{\bigotimes}_{\nu\in S_{\ell}\langle L^+\rangle} \, 
^{L^+}\widetilde{R}_{\nu}
\end{equation*}
with  
$\,^{L^+}\widetilde{R}_{\nu} = \,^{L^+_{\nu}}D^{\boxempty, \chi_{\nu}, \tt{min}}_{W(k_{\lambda})} (\overline{r}_{\lambda, \nu})$ if $\nu\in S$ and 
$\,^{L^+_{\nu}}D^{\boxempty, \chi_{\nu}}_{W(k_{\lambda})} (\overline{r}_{\lambda, \nu})$ otherwise.
Then, there exists a presentation 
\begin{equation*}
^{L^+}R^{\boxempty_{S_{\ell}}, \chi}_{S_{\ell}, W(k_{\ell})}(\overline{r}_{\lambda})
\cong\,
^{L^+}R^{\tt{loc, min}}\llbracket X_1, \ldots, X_a\rrbracket /(f_1, \ldots, f_b)
\end{equation*}
with $a-b = (\#S_{\ell}\langle L^+\rangle - 1).\operatorname{dim}(\mathfrak{g}^{\tt{ab}}_n)$.\\
\underline{Check}: This is the content of Proposition \ref{20170329_first_rep_result}, but we have to
check Assumption \ref{20150904_Ass_H0_vanishing_for_presentations}. As $\mathfrak{g}_n^{\tt{der}} = \mathfrak{gl}_n$, 
this condition holds by Corollary \ref{20150904_newH1dualvanishing} for almost all $\lambda$.
\item \textbf{($R=T$)}: The ring $^{L^+}R^{\boxempty_{S_{\ell}}, \chi, \tt{min, crys}}_{S_{\ell}, W(k_{\ell})}(\overline{r}_{\lambda})$ is formally smooth of relative dimension 
$r_0 = \operatorname{dim}(\mathfrak{g}).\#S_{\ell}\langle L^+\rangle - 
\operatorname{dim}(\mathfrak{g}^{\tt{ab}})$.\\
\underline{Check}: This follows from Corollary \ref{20170311_some_corollary}.
\end{enumerate}
We see that the general requirements of Theorem \ref{20160908_MainThm} are met, so let us 
check the additional requirements of part 2. of Theorem \ref{20160908_MainThm}:
\begin{itemize}
\item[a)] The condition $\ell \gg 0$ can be achieved by leaving out finitely many $\lambda$;
\item[b)] The vanishing of $H^0(\operatorname{Gal}_{L^+}, \mathfrak{g}_n^{\tt{der}, \vee})$ can
be checked by observing
\begin{equation*}
H^0(\operatorname{Gal}_{L^+}, \mathfrak{g}_n^{\tt{der}, \vee}) \subset
H^0(\operatorname{Gal}_{L}, \mathfrak{g}_n^{\tt{der}, \vee}) \cong 
H^0(\operatorname{Gal}_{L}, \mathfrak{gl}_n^{\vee}),
\end{equation*}
as the adjoint representation of $\operatorname{Gal}_L$ on $\mathfrak{g}_n^{\tt{der}}$
(via $\overline{r}_{\lambda}$) corresponds to the adjoint representation of 
$\operatorname{Gal}_L$ on $\mathfrak{gl}_n$ (via $\overline{\rho}_{\lambda}$), cf. 
\cite[Section 2.1]{CHT}. Thus, the desired vanishing follows for almost all $\lambda$ by 
Corollary \ref{20150904_newH1dualvanishing}.
\item[c)] For $\nu\in S\langle L^+\rangle$, $\operatorname{dim}(L_{\lambda,\nu}) = 
h^0(\operatorname{Gal}_{L^+_{\nu}}, \mathfrak{g}^{\tt{der}}_n)$: As $\nu$ is split, 
Proposition \ref{20170214_prop_local_split_defos} yields 
$h^0(\operatorname{Gal}_{L^+_{\nu}}, \mathfrak{g}^{\tt{der}}_n) =
h^0(\operatorname{Gal}_{L_{\tilde{\nu}}}, \mathfrak{gl}^{\tt{der}}_n)$, where the
action on $\mathfrak{gl}_n$ is via $\overline{\rho}_{\lambda, \tilde{\nu}}$. The claim thus
follows from \cite[Corollary 2.4.21]{CHT}.
\end{itemize}
The finitely many exclusions which occurred in the above items are now the places we 
must exclude from $\Lambda^2_{\mathcal{E}}$ to get 
$\Lambda^3_{\mathcal{E}}$. This finishes the first part,
i.e. that $^{L^+_{(\lambda)}}D^{\boxempty_{S_{\ell}\langle L^+_{(\lambda)}\rangle}, \chi, \tt{min}}_{S_{\ell}\langle L^+_{(\lambda)}\rangle, W(k_{\lambda})}(\overline{r}_{\lambda}) $ has vanishing dual Selmer group.\\
Concerning the second statement (i.e. the claimed vanishing of the non-minimal dual Selmer group) we first note that on each level $L^+_{(\lambda)}$ we can apply the $R=R^{\tt{min}}$-result of Corollary \ref{20170314_cor_on_min_is_all}, yielding the desired vanishing 
except for a finite failure set. In other words: Fix a place $\lambda'$, then we have
\begin{equation*}
^{L^+_{(\lambda)}}D^{\boxempty_{S_{\ell}\langle L^+_{(\lambda)}\rangle}, \chi, \tt{min}}_{S_{\ell}\langle L^+_{(\lambda)}\rangle, W(k_{\lambda})}(\overline{r}_{\lambda}) 
= \,
^{L^+_{(\lambda)}}D^{\boxempty_{S_{\ell}\langle L^+_{(\lambda)}\rangle}, \chi}_{S_{\ell}\langle L^+_{(\lambda)}\rangle, W(k_{\lambda})}(\overline{r}_{\lambda}) 
\end{equation*}
for all $\lambda$ with $L^+_{(\lambda)} = L^+_{(\lambda')}$, except for a finite
failure set 
$\mathfrak{F}_{\lambda'}$. We should check that the occurrence of these failure sets at
each step in the tower of field extensions 
does not disturb the desired result. For this, recall that the $L^+_{(\lambda)}$ show up
in the tower $F^+ = L^+_0 \subset L^+_1 \subset \ldots$ and that, by the first statement, we have
\begin{equation*}
\operatorname{lim}_{i\rightarrow \infty} \; \delta\Bigl\{ \lambda \Bigl| \;
^{L^+_{(\lambda)}}D^{\boxempty_{S_{\ell}\langle L^+_{(\lambda)}\rangle}, \chi, \tt{min}}_{S_{\ell}\langle L^+_{(\lambda)}\rangle, W(k_{\lambda})}(\overline{r}_{\lambda}) \text{ has
vanishing dual Selmer group, } L^+_{(\lambda)} \subset L^+_i \Bigr\} = 1.
\end{equation*}
But this clearly implies
\begin{equation*}
\operatorname{lim}_{i\rightarrow \infty} \; \delta\Bigl\{ \lambda \Bigl| \;
^{L^+_{(\lambda)}}D^{\boxempty_{S_{\ell}\langle L^+_{(\lambda)}\rangle}, \chi}_{S_{\ell}\langle L^+_{(\lambda)}\rangle, W(k_{\lambda})}(\overline{r}_{\lambda}) \text{ has
vanishing dual Selmer group, } L^+_{(\lambda)} \subset L^+_i, \lambda\notin\mathfrak{F}_{\lambda}  \Bigr\} = 1,
\end{equation*}
completing the proof.
\end{proof}
\begin{proof}[Proof of Theorem \ref{20170301_weaker_main_thrm}.]
The \glqq has vanishing dual Selmer group\grqq-part of
Theorem \ref{20170301_weaker_main_thrm} follows immediately from Theorem \ref{20170314_key_thm_for_proof_of_6_2} and the potential unobstructedness
result of Lemma \ref{20170404_pot_unobstr},
as each $[L^+_{(\lambda)}:F^+]$ is a power of $2$ (and $k_{\lambda}$ has odd characteristic
for $\lambda\in \Lambda^2_{\mathcal{E}}$).\\
It remains to show that for all $\nu\in \Omega^{F^+}_{\ell}$ the local lifting ring
$R^{\boxempty, \chi_{\nu}}_{W(k_{\lambda})}(\overline{r}_{\lambda, \nu})$ is relatively
smooth.
By Theorem \ref{20170313_Theorem_smoothness_sm_over_ell} we know that
\begin{equation*}
\operatorname{lim}_{i\rightarrow \infty} \; \delta\Bigl\{ \lambda \Bigl| \;
R^{\boxempty, \chi_{\nu}}_{W(k_{\lambda})}(\overline{r}_{\lambda}|
\operatorname{Gal}_{L^+_{(\lambda)}, \nu'}) 
\text{ is unobstructed for all }\nu'\in \Omega_{\ell(\lambda)}^{L^+_{(\lambda)}},
 L^+_{(\lambda)} \subset L^+_i \Bigr\} 
\end{equation*}
\begin{equation*}
= \operatorname{lim}_{i\rightarrow \infty} \; \delta\Bigl\{ \lambda \Bigl| \;
\text{ any }\nu'\in \Omega_{\ell(\lambda)}^{L^+_{(\lambda)}}\text{ is split in the extension }
L_{(\lambda)}|L^+_{(\lambda)},
 L^+_{(\lambda)} \subset L^+_i \Bigr\} = 1.
\end{equation*}
Using Proposition \ref{20170405_prop_easy_pot_unobstr} and Corollary
\ref{20170314_cor_317_phd}, the claim follows.
\end{proof}

\section*{Appendix A: A lemma on prime densities in 
non-Galois extensions}
Let us consider a CM-field $F$ with totally real subfield
$F^+$ and we denote by $L^+= F^+(\sqrt{d_1}, \ldots, \sqrt{d_k})$
the totally real extension of $F^+$ of degree $2^k$, obtained by 
adjoining the square roots of some choice of elements
$d_1, \ldots, d_k\in \mathbb{N}$ such that each $d_i$ is a non-square
in the Galois closure $\widetilde{F}^+$ of $F^+$. 
Set $L=L^+.F$.
Then we have:
\begin{thmx}
\label{20170308_Lemma_on_prime_densities}
Let $\Xi_{\mathbb{Q}}$ be the set of all those rational primes $\ell$
with the following property: For any place $\wp$ of $L^+$, 
\begin{equation*}
\left[\wp \text{ divides } \ell \right] \; \Longrightarrow \;
\left[ \wp \text{ splits in } L|L^+ \right].
\end{equation*}
Then the density $\delta(\Xi_{\mathbb{Q}})$ of $\Xi_{\mathbb{Q}}$ in the
set of all rational primes is at least $1-\frac{1}{2^k}$.
\end{thmx}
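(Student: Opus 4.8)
The plan is to rephrase the splitting condition in terms of Frobenius elements and then count via the Chebotarev density theorem.

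\emph{Step 1 (reduction to a Frobenius condition).} First discard the finitely many rational primes ramified in $L$; for the remaining $\ell$ every prime $\wp$ of $L^+$ above $\ell$ is unramified in the quadratic extension $L=L^+(\sqrt{\delta})$, where $F=F^+(\sqrt{\delta})$ with $\delta\in F^+$ totally negative. Such a $\wp$ splits in $L/L^+$ iff $\delta$ becomes a square in the residue field $\kappa(\wp)$; since $\delta$ already lies in $\kappa(\wp^+)\subseteq\kappa(\wp)$ (with $\wp^+:=\wp\cap F^+$), and any element of $\kappa(\wp^+)$ is automatically a square in $\kappa(\wp)$ as soon as $[\kappa(\wp):\kappa(\wp^+)]$ is even, a short computation gives: $\wp$ fails to split in $L/L^+$ exactly when $\wp^+$ is split completely in $L^+/F^+$ and inert in $F/F^+$. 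Hence, up to finitely many exceptions, $\ell\in\Xi_{\mathbb{Q}}$ iff \emph{no} prime of $F^+$ above $\ell$ is simultaneously split completely in $L^+/F^+$ and inert in $F/F^+$. Letting $N$ denote the Galois closure of $L$ over $\mathbb{Q}$ and $G=\operatorname{Gal}(N/\mathbb{Q})$, this is a condition on the Frobenius conjugacy class of $\ell$ in $G$, so by Chebotarev it suffices to show that the set $B\subseteq G$ of ``bad'' classes satisfies $|B|/|G|\le 2^{-k}$.

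\emph{Step 2 (the subfield $M$).} Put $M:=\mathbb{Q}(\sqrt{d_1},\dots,\sqrt{d_k})$, a subfield of $L^+$. Since $[L^+:F^+]=2^k$, the $d_i$ are independent modulo squares in $F^+$, hence also in $\mathbb{Q}$; thus $[M:\mathbb{Q}]=2^k$, $M\cap F^+=\mathbb{Q}$, $L^+=F^+M$, and restriction gives $\operatorname{Gal}(L^+/F^+)\cong\operatorname{Gal}(M/\mathbb{Q})\cong(\mathbb{Z}/2)^k$. If a prime $\wp^+$ of $F^+$ above $\ell$ is split completely in $L^+/F^+=F^+M/F^+$, then its Frobenius in $\operatorname{Gal}(M/\mathbb{Q})$ is trivial; but that Frobenius equals $(\operatorname{Frob}_{\ell}|_M)^{f(\wp^+/\ell)}$, and as $\operatorname{Gal}(M/\mathbb{Q})$ has exponent $2$ this forces either $\operatorname{Frob}_{\ell}|_M=1$ or $f(\wp^+/\ell)$ even. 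The set of $\ell$ with $\operatorname{Frob}_{\ell}|_M=1$ has density exactly $2^{-k}$, so it remains to rule out that a bad prime can arise from a $\wp^+$ of even residue degree.

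\emph{Step 3 (even residue degree, via the CM structure).} The remaining point is the key fact that, in the CM situation under consideration (where $F/F^+$ is unramified at all finite places and ramified at every archimedean place), a prime $\wp^+$ of $F^+$ with $f(\wp^+/\ell)$ even is necessarily split in $F/F^+$. Granting this, no bad $\ell$ can come from an even–residue–degree $\wp^+$, so by Step 2 every bad $\ell$ has $\operatorname{Frob}_{\ell}|_M=1$; thus $B$ lies in the preimage of $1$ under $G\twoheadrightarrow\operatorname{Gal}(M/\mathbb{Q})$, whence $|B|/|G|\le[M:\mathbb{Q}]^{-1}=2^{-k}$ and $\delta(\Xi_{\mathbb{Q}})\ge 1-2^{-k}$. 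Establishing this key fact is the main obstacle: it requires combining the archimedean ramification of the CM extension $F/F^+$ with the narrow–class–group (class field theory) description of this everywhere–finite–unramified quadratic extension, carried through the non-Galois tower $\mathbb{Q}\subseteq F^+\subseteq L^+$. One natural route is to analyse, via the Galois closure, how complex conjugation --- central of order $2$ in $\operatorname{Gal}(\widetilde{F}/\mathbb{Q})$ and lying in the elementary abelian group $\operatorname{Gal}(\widetilde{F}/\widetilde{F}^+)$ --- can occur as a power of a Frobenius element, and to observe that this is impossible for an even power, which is precisely what forces $f(\wp^+/\ell)$ to be odd whenever $\wp^+$ is inert in $F/F^+$.
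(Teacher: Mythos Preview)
Your Steps 1 and 2 are clean and correct. The gap is in Step~3: you identify the needed ``key fact'' (that $f(\wp^+/\ell)$ even forces $\wp^+$ to split in $F/F^+$) but do not prove it --- you explicitly call it ``the main obstacle'' and offer only a sketch via narrow class groups and an extra finite--unramifiedness hypothesis that is not part of the lemma. That route cannot work, because the key fact is \emph{false} for general CM extensions. Take $F^+=\mathbb{Q}(\sqrt{5})$, $F=\mathbb{Q}(\zeta_5)$: a prime $\ell$ which is a primitive root modulo~$5$ has $\operatorname{Frob}_\ell$ of order~$4$, so $f(\wp^+/\ell)=2$ is even, yet $\operatorname{Frob}_\ell^2$ is complex conjugation and $\wp^+$ is inert in $F/F^+$. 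So no amount of class field theory will rescue the argument as you have set it up; some structural input about $\operatorname{Gal}(\widetilde{F}/\mathbb{Q})$ is genuinely required.

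The paper proceeds differently and supplies exactly that input. It works in the compositum $\widetilde{L}=\widetilde{L}^+\cdot F$ and uses the product decomposition
\[
\operatorname{Gal}(\widetilde{L}/\mathbb{Q})\;\cong\;\Gamma\times\Omega\times\Delta,
\qquad \Omega=(\mathbb{Z}/2\mathbb{Z})^k,\ \Delta=\mathbb{Z}/2\mathbb{Z},
\]
with $\operatorname{Gal}(\widetilde{L}/L^+)=H\times\{1\}\times\Delta$. Writing a Frobenius as $(\gamma,\omega,\delta)$, the whole argument is then a one-liner: if $\omega\neq 1$, the least $e$ with $(\gamma,\omega,\delta)^e\in H\times\{1\}\times\Delta$ must be even (since $\Omega$ has exponent~$2$), hence $\delta^e=1$ and the prime of $L^+$ below splits in $L/L^+$. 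The condition $\omega\neq 1$ is conjugation-stable and cuts out a proportion $1-2^{-k}$ of the group, so Chebotarev gives the bound. Under this product structure your ``key fact'' becomes the triviality $\delta^{\text{even}}=1$ in $\mathbb{Z}/2\mathbb{Z}$; what you are missing is precisely this decomposition of the Galois group, not a class-field-theoretic statement about $F/F^+$.
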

\begin{proof}
Consider the following diagram of fields
\begin{equation*}
\xymatrix@C=0.5em@R=0.5em{
&&\widetilde{L} = \widetilde{L}^+.F\ar@{-}[dll]\ar@{-}[drr]\\
\widetilde{L}^+ = \widetilde{F}.L^+ &&&&L\\
&&L^+\ar@{-}[ull]\ar@{-}[urr]\ar@{-}[uu]\\
\widetilde{F}\ar@{-}[uu] &&&& F\\
&&F^+\ar@{-}[ull]_{H}\ar@{-}[urr]^{\Delta}\ar@{-}[uu]\\
&&\mathbb{Q}\ar@{-}[u]\ar@{-}[rrr]_{\Omega}\ar@{-}[uull]^{\Gamma}&&&\mathbb{Q}(\sqrt{d_1}, \ldots, \sqrt{d_k})
}
\end{equation*}
with corresponding Galois groups $\Delta = \mathbb{Z}/2\mathbb{Z}$, 
$\Omega = (\mathbb{Z}/2\mathbb{Z})^k$ and $\Gamma, H$ (for which we
don't make an assumption). By our initial assumption that the 
$d_i$ are not squares we have
\begin{equation*}
\operatorname{Gal}(\widetilde{L}^+|\mathbb{Q}) \cong
\Gamma \times \Omega \quad \text{ and, hence, }\quad
\operatorname{Gal}(\widetilde{L}|\mathbb{Q}) \cong
\Gamma \times \Omega \times \Delta.
\end{equation*}
Now, let $\mathfrak{P}$ be a place of $\widetilde{L}$ with 
corresponding Frobenius element $(\gamma, \omega, \delta) \in 
\operatorname{Gal}(\widetilde{L}|\mathbb{Q})$. As $\Omega$ and $\Delta$
are abelian, the conjugacy class of $\mathfrak{P}$ can be written 
as $\{(u\gamma u^{-1}, \omega, \delta) | u\in \Gamma\}$ and 
consists precisely of the Frobenii of the places of $L$ lying 
over the same rational prime $p$ as $\mathfrak{P}$. Let $\wp$ be
the place of $L$ below $\mathfrak{P}$. Its Frobenius element is 
given by 
\begin{equation*}
(\gamma, \omega, \delta)^{e_{\gamma, \omega, \delta}} \in 
H\times \{1\} \times \Delta = \operatorname{Gal}(\widetilde{L}|
L^+)
\end{equation*}
for $e_{\gamma, \omega, \delta}$ minimal such that 
$(\gamma, \omega, \delta)^{e_{\gamma, \omega, \delta}} \in 
H\times \{1\} \times \Delta$. The condition that $\wp$ splits in
$L|L^+$ then amounts precisely to
$(\gamma, \omega, \delta)^{e_{\gamma, \omega, \delta}} \in 
H\times \{1\} \times \{1\}$, or, written in a more sophisticated way, 
that $q((\gamma, \omega, \delta)^{e_{\gamma, \omega, \delta}}) = 1$, 
where
\begin{equation*}
q: \operatorname{Gal}(\widetilde{L}|L^+) \rightarrow 
\operatorname{Gal}(\widetilde{L}|L^+)/\operatorname{Gal}(\widetilde{L}|
\widetilde{L}^+)
\end{equation*}
is the quotient map.
If $\omega\neq 1$, we clearly must have $2|e_{\gamma, \omega, \delta}$, 
which imples that $\wp$ splits in $L|L^+$. It is also important to note
that the condition $\omega\neq 1$ is not destroyed by conjugation inside
$\operatorname{Gal}(\widetilde{L}|\mathbb{Q})$. Now, set
\begin{equation*}
\Xi^{\ast} = \{(\gamma, \omega, \delta) \in 
\operatorname{Gal}(\widetilde{L}|\mathbb{Q})\;|\; 
q((\gamma, \omega, \delta)^{e_{\gamma, \omega, \delta}}) = 1 \}
\end{equation*}
and consider the subset $\Xi\subset \Xi^{\ast}$ which consists of those
$g\in \Xi^{\ast}$ for which the complete conjugacy class is contained
in $\Xi^{\ast}$, i.e. $\Xi = \{ g\in \Xi^{\ast} \,|\, \langle g \rangle
\subset \Xi^{\ast}\}$.\\
We can give another characterization of this set: $\Xi$ is the union
of all conjugacy classes $\langle g \rangle \subset 
\operatorname{Gal}(\widetilde{L}|\mathbb{Q})$ with the following 
property: If $\mathbf{P}_g$ denotes the set of all places $\mathfrak{P}$
of $\widetilde{L}$ such that $\operatorname{Frob}_{\mathfrak{P}} \in
\langle g \rangle$, then for any place $\wp$ of $L^+$ the following holds: 
\begin{equation*}
\left[\,\exists\, \mathfrak{P}\in \textbf{P}_g \text{ such that 
$\mathfrak{P}$ divides }\wp \,\right] \; \Longrightarrow \;
\left[ \wp \text{ splits in } L|L^+ \right].
\end{equation*}
Then we have 
\begin{equation*}
\# \Xi \geq \# \{ (\gamma, \omega, \delta) \in 
\operatorname{Gal}(\widetilde{L}|\mathbb{Q}) \,|\, \omega\neq 1\} = 
(2^k -1).2.\#\Gamma.
\end{equation*}
As $\Xi_{\mathbb{Q}} = \{\ell \in \operatorname{Pl}_{\mathbb{Q}} \,|\,
\exists \,g\in \Xi \text{ such that $\mathfrak{P}|\ell$ for all }
\mathfrak{P}\in \textbf{P}_g\,\}$, it follows from 
Chebotarev's density theorem that 
\begin{equation*}
\delta(\Xi_{\mathbb{Q}}) \geq \frac{(2^k -1).2.\#\Gamma}{\operatorname{Gal}(\widetilde{L}|\mathbb{Q})} = 
1 - \frac{1}{2^k}.\qedhere
\end{equation*}
\end{proof}

\let\Section\section 
\def\section*#1{\Section{#1}} 

\end{document}